\newcommand{\hh}{
	\bibliographystyle{plain}
	% \bibliography{/Users/sedthilk/Dropbox/bib/complete}
	\bibliography{biblio}
\end{document}}
\newcommand{\remove}[1]{}
\definecolor{Black}{rgb}{0,0, 0}
\definecolor{Blue}{rgb}{0, 0 ,1}
\definecolor{green}{rgb}{0.05,0.9,0.05}
\definecolor{Red}{rgb}{1, 0 ,0}
\definecolor{RRed}{rgb}{.5, 0 ,0}
\definecolor{White}{rgb}{1, 1, 1}
\definecolor{Grey}{rgb}{.6, .6, .6}
\definecolor{Yellow}{rgb}{.55,.55,0}
\definecolor{mustard}{rgb}{1.0, 0.86, 0.35}
\definecolor{applegreen}{rgb}{0.55, 0.71, 0.0}
\definecolor{fluorescentyellow}{rgb}{0.8, 1.0, 0.0}
\tikzset{black node/.style={draw, circle, fill = black, minimum size = 4pt, inner sep = 0pt}}
\tikzset{hblack node/.style={draw, circle, fill = black, minimum size = 5pt, inner sep = 0pt}}
\newcommand{\mynewtheorem}[2]{
\newaliascnt{#1}{dummy}
\newtheorem{#1}[#1]{#2}
\aliascntresetthe{#1}
% maybe we will squish some autoref defaults, but who cares?
\expandafter\def\csname #1autorefname\endcsname{#2}
}
\renewcommand{\deg}{{\bf deg}}
\theoremstyle{plain}
\theoremstyle{definition}
\theoremstyle{remark}
\newcommand{\excl}{{\bf exc}}
\newcommand{\obs}{{\bf obs}}
\newcommand{\Nbb}{\mathbb{N}}
\newcommand{\intv}[1]{\left [ #1 \right ]}
\newcommand{\cupall}{\pmb{\pmb{\bigcup}}}
\newcommand*\samethanks[1][\value{footnote}]{\footnotemark[#1]}
\title{{Minor Obstructions for Apex-Pseudoforests\thanks{Emails:\! {\scriptsize  \sf \href{mailto:livaditisalex@gmail.com}{livaditisalex@gmail.com},\!
\href{mailto:alexsingh@di.uoa.gr}{alexsingh@di.uoa.gr},\!
\href{mailto:giannosstam@di.uoa.gr}{giannosstam@di.uoa.gr},\!
\href{mailto:sedthilk@thilikos.info}{sedthilk@thilikos.info},\!
\href{mailto:kostistsatsanis@gmail.com}{kostistsatsanis@gmail.com}\,.}}}}
\author{\bigskip 
Alexandros Leivaditis%
\thanks{Mathematical Institute, Universiteit Leiden, Netherlands.}~\thanks{Department of Mathematics, National and Kapodistrian University of Athens, Athens, Greece.}% 
\and 
Alexandros Singh%
\thanks{Université Sorbonne Paris Nord, Laboratoire d’Informatique de Paris Nord, CNRS, Villetaneuse, France.}~\thanks{Inter-university Postgraduate Programme ``Algorithms, Logic, and Discrete Mathematics'' (ALMA).}
\and 
Giannos Stamoulis\thanks{Department of Informatics and Telecommunications, National and Kapodistrian University of Athens, Greece}~\samethanks[5]  
\and
Dimitrios  M. Thilikos\thanks{LIRMM, Univ Montpellier, CNRS, Montpellier, France.}~~\,\thanks{Supported by projects DEMOGRAPH (ANR-16-CE40-0028) and ESIGMA (ANR-17-CE23-0010), ELIT (ANR-20-CE48-0008), and the French-German Collaboration ANR/DFG Project UTMA (ANR-20-CE92-0027).} \and Konstantinos Tsatsanis\samethanks[5]~\,\samethanks[3]}
\date{\empty}
\begin{document}
  
%
%
%\tableofcontents

\maketitle

\vspace{-18mm}
\begin{abstract}
	\noindent A graph is called  a  {\em pseudoforest} if none of its connected 
	components contains more than one cycle.
	A graph is  an  {\em apex-pseudoforest} if it can become a pseudoforest by removing one of its vertices.
	We identify 33 graphs that form the minor obstruction set of the class of {apex}-pseudoforests,  i.e., the set of all minor-minimal graphs that are not apex-pseudoforests. 
\end{abstract}
\vspace{-1mm}

\noindent{\bf Keywords:} Graph minors, Minor Obstructions
\vspace{-3mm}

\section{Introduction}

\label{writings}

%
%\href{thilikos.info}{$\bullet$}\rred{$\bullet$}
%\hh
All graphs in this paper are undirected, finite, and simple, i.e., without loops or multiple edges.

A graph $G$ is a {\em pseudoforest} if every connected component of $G$ contains at most one cycle.
We denote by ${\cal P}$ the set of all pseudoforests. 
%\hh

We say that a graph $H$ is a {\em minor} of $G$ if a graph isomorphic to $H$ 
can be obtained by some subgraph of $G$ after applying edge contractions. (As in this paper we consider only simple graphs, we always assume that in case multiple edges are created after a contraction, then these edges are automatically suppressed to simple edges.)
We say that a graph class ${\cal G}$ is {\em minor-closed} if every minor of a graph in ${\cal G}$
is also a member of ${\cal G}$. Given a minor-closed graph class ${\cal G}$,  its {\em minor obstruction set} is defined as the set of all minor-minimal graphs that are not in ${\cal G}$ and is denoted by $\obs({\cal G})$. For simplicity, we drop the term ``minor'' when we refer to an obstruction set as, in this paper, we only  consider minor obstruction sets. We also refer to the members of  $\obs({\cal G})$ as {\em obstructions of} ${\cal G}$.
Given a set of graphs ${\cal H}$ we denote by ${\excl}({\cal H})$ as the set containing every graph $G$ that excludes all graphs in ${\cal H}$ as minors.

Notice that if ${\cal G}$ is minor-closed, then a graph $G$ belongs in ${\cal G}$ iff none of the graphs in $\obs({\cal G})$ is a minor of $G$. In this way, $\obs({\cal G})$ can be seen 
as a complete characterization of ${\cal G}$ in terms of forbidden  minors,
i.e., ${\cal G}=\excl(\obs({\cal G}))$.
%\hh

According to the Roberston and Seymour theorem~\cite{RobertsonSXX}, for every minor-closed graph class ${\cal G}$, the set $\obs({\cal G})$ is finite.
The study of $\obs({\cal G})$ for distinct instantiations of minor-closed graph classes 
is an active topic in graph theory (e.g., see~\cite{BodlaenderThil99,ArnborgPC90forb,JobsonK18allm,CattellDDFL00onco,Chlebikova02thes,Kabanets97reco,GiannopoulouDT12forb,Kuratowski37surl,Archdeacon06akur,DinneenX02mino,KinnersleyL94,DinneenL07prop,DinneenV12obst,CourcelleDF97,Dinneen97,GagarinMC09theb,Lagergren98,RueST12oute,MoharS12obst,BienstockD92ono,Ramachandramurthi97thes,Thilikos00,KoutsonasTY14oute,FellowsJ13fpti,GuptaI97bound,RobertsonST95sach,LiptonMMPRT16sixv,Yu06more,MattmanP16thea, Pierce14PhDThesis}, see also~\cite{Mattman16forb,Adler08open} for  related surveys).

It is easy to observe that pseudoforests form a minor-closed 
graph class. Moreover, it holds that $\obs({\cal P})=\{K_{4}^{-},Z\}$, {where $K_{4}^{-}$ is the {\em diamond graph}, i.e., the complete graph on $4$ vertices minus an edge and $Z$ is the {\em butterfly} graph (also known as {\em bow-tie} or {\em  hourglass} graph), obtained by the disjoint union of two triangles after identifying two of their vertices (see \autoref{ostructionspse}).}To see why $\obs({\cal P})=\{K_{4}^{-},Z\}$, notice that these two obstructions express the existence of two cycles in the same connected component of a graph.\medskip

\begin{figure}[H]
		\centering
				\scalebox{.53}{\begin{tikzpicture}[myNode/.style = hblack node, scale=0.8]
					%Α_{2}
					\node[myNode] (n1) at (1.5,1) {}; 
					\node[myNode] (n1) at (2.5,1) {};
					\node[myNode] (n1) at (2,2) {};
					\node[myNode] (n1) at (2,0.1) {};

					\draw (1.5,1) -- (2.5,1);
					\draw (2.5,1) -- (2,2);
					\draw (2,2) -- (1.5,1);
					\draw (2,0.1) -- (1.5,1);
					\draw (2,0.1) -- (2.5,1);

					\begin{scope}[xshift=3cm]
					
					\node[myNode] (n1) at (1,0) {}; 
					\node[myNode] (n2) [right =of n1] {};
					\coordinate (Middle) at ($(n1)!0.5!(n2)$);
					\node[myNode] (n3) [above =0.8cm of Middle] {};
					\node[myNode] (n4) [above =1.5cm of n1] {};
					\node[myNode] (n5) [above =1.5cm of n2] {};

					\foreach \from/\to in               {n1/n2,n1/n3,n2/n3,n4/n5,n4/n3,n5/n3}
					\draw (\from) -- (\to);
					\end{scope}
				\end{tikzpicture}}	
		\caption{The graphs in $\obs({\cal P})$.}\label{ostructionspse}
\end{figure}

Given a non-negative integer $k$ and a graph class ${\cal G}$, we say that 
a graph  $G$ is a {\em $k$-apex of} ${\cal G}$ if it can be transformed to a member of ${\cal G}$ after removing at most $k$ of its vertices. We also use the term {\em apex of } ${\cal G}$ as a shortcut of $1$-apex of ${\cal G}$.
We denote the set of all $k$-apices of ${\cal G}$ by  ${\cal A}_{k}({\cal G})$.

The problem of characterizing 
$k$-apices of graph classes, has attracted a lot of attention, both 
from the combinatorial and algorithmic point of view. This problem
can be seen as a part  of the wider family of {\sl Graph Modification Problems} (where the modification is the removal of a vertex).

It is easy to see that if ${\cal G}$ is minor-closed, then ${\cal A}_{k}({\cal G})$ is also minor-closed for every $k\geq 0$. Therefore $\obs({\cal A}_{k}({\cal G}))$ 
can be seen as a complete characterization of  $k$-apices of ${\cal G}$.  The study of $\obs({\cal A}_{k}({\cal G}))$
when ${\cal G}$ is some minor-closed graph class has attracted some special attention and can generate several known graph invariants.
For instance, graphs with a vertex cover of size at most $k$ are the graphs in 
${\cal A}_{k}(\excl(\{K_{2}\}))$, graphs 
with a  feedback vertex set at most $k$ are  
the graphs in ${\cal A}_{k}(\excl(\{K_{3}\}))$, and $k$-apex planar graphs 
are the graphs in ${\cal A}_{k}(\excl(\{K_{5},K_{3,3}\}))$.

The general problem that emerges is, given a finite set of graphs ${\cal H}$
and a positive integer $k$,
to identify the set $${\cal H}^{(k)}:=\obs({\cal A}_{k}({\excl}({\cal H}))).$$
A fundamental result in this direction is that the above problem is computable~\cite{AdlerGK08comp}. Moreover, it was shown in~\cite{FominLMS12plan}
that if ${\cal H}$ contains some planar graph, then every graph in ${\cal H}^{(k)}$
has $O(k^{h})$ vertices, where $h$ is some constant depending (non-constructively) on ${\cal H}$. Also, in~\cite{ZorosPhD17}, it was proved that, under the additional assumption that all graphs in ${\cal H}$ are connected,
this bound becomes linear on $k$ for the intersection of ${\cal H}^{(k)}$
with sparse graph classes such as planar graphs or bounded degree graphs.
An other structural result in this direction is the  
characterization of the disconnected obstructions in ${\cal H}^{(k)}$ in the case where ${\cal H}$  consists only of connected graphs~\cite{Dinneen97}.  

An other direction is to study  ${\cal H}^{(k)}$  for particular instantiations of ${\cal H}$ and $k$. In this direction,  $\{K_{2}\}^{(k)}$ has been identified for $k\in\{1,\ldots,5\}$ in~\cite{CattellDDFL00onco}, for $k=6$ in~\cite{DinneenX02mino} and for $k=7$ in~\cite{DinneenV12obst}, while the graphs in $\{K_{3}\}^{(i)}$ have been identified in~\cite{DinneenCF01forb} for $i\in\{1,2\}$.
Recently, in~\cite{Ding2016}, Ding and Dziobiak identified  the 57 graphs in $\{K_{4},K_{2,3}\}^{(1)}$, i.e., the obstruction
set for apex-outerplanar graphs, and the 25 graphs in $\{K_{4}^{-}\}^{(1)}$,   i.e., the obstruction
set for apex-cactus graphs (as announced in~\cite{DziobiakD2013}).
Moreover, the problem of identifying $\{K_{5},K_{3,3}\}^{(1)}$ (i.e., characterizing 1-apex planar graphs -- also simply known as {\sl apex graphs}) has attracted 
particular attention (see e.g.,~\cite{LiptonMMPRT16sixv,Mattman16forb,Yu06more}). In this direction, Mattman and Pierce conjectured that 
$\{K_{5},K_{3,3}\}^{(n)}$ contains the $Y\Delta Y$-families of $K_{n+5}$ and $K_{3,3,1^n}$ (that is the complete $(n+2)$-partite graph where two parts have three vertices and the rest $n$ parts consist of a single vertex each) and provided evidence on this~\cite{MattmanP16thea}. Moreover,  in~\cite{MattmanP16thea}, they showed that $|\{K_{5},K_{3,3}\}^{(1)}|>150$, $|\{K_{5},K_{3,3}\}^{(2)}|>82$, $|\{K_{5},K_{3,3}\}^{(3)}|>601$, $|\{K_{5},K_{3,3}\}^{(4)}|>520$, and $|\{K_{5},K_{3,3}\}^{(5)}|>608$.
Recently,  Jobson and Kézdy \cite{JobsonK18allm} identified all graphs of connectivity two in $\{K_{5},K_{3,3}\}^{(1)}$, where they also reported that $|\{K_{5},K_{3,3}\}^{(1)}|\geq 401$.

\medskip

In this paper we identify $\{K_{4}^{-}, Z\}^{(1)}$, i.e., the obstruction set 
of apex-pseudoforests. 
Let ${\cal O}^{0}, {\cal O}^{1}, {\cal O}^{2}, {\cal O}^{3}$ be the sets of graphs depicted in Figures \ref{unguided}, \ref{dispense}, \ref{animates}, \ref{machines}, respectively. 
Notice that, for every $i\in\{0,1,2,3\},$ the graphs in ${\cal O}^{i}$ are all $i$-connected but not $(i+1)$-connected.

Our main result is  the following.

\begin{theorem}
	\label{ruthless}
	$\obs({\cal A}_{1}({\cal P}))= {{\cal O}^{0}}\cup{\cal O}^{1}\cup {\cal O}^{2}\cup {\cal O}^{3}.$
\end{theorem}

\begin{figure}[H]
	\centering
	\begin{tabular}{ c c c }
		\subcaptionbox{${O}_{1}^{0}$\label{magnetic}}{
			\scalebox{.53}{\begin{tikzpicture}[myNode/.style = hblack node, scale=0.8]
				%Α_{2}
				\node[myNode] (n1) at (1.5,1) {}; 
				\node[myNode] (n1) at (2.5,1) {};
				\node[myNode] (n1) at (2,2) {};
				\node[myNode] (n1) at (2,0.1) {};
				
				\node[myNode] (n1) at (1.5,4) {}; 
				\node[myNode] (n1) at (2.5,4) {};
				\node[myNode] (n1) at (2,5) {};
				\node[myNode] (n1) at (2,3) {};
				
				\draw (1.5,1) -- (2.5,1);
				\draw (2.5,1) -- (2,2);
				\draw (2,2) -- (1.5,1);
				\draw (2,0.1) -- (1.5,1);
				\draw (2,0.1) -- (2.5,1);
				
				\draw (1.5,4) -- (2.5,4);
				\draw (2.5,4) -- (2,5);
				\draw (2,5) -- (1.5,4);
				\draw (2,3) -- (1.5,4);
				\draw (2,3) -- (2.5,4);
				
				\begin{scope}[on background layer]
				\fill[gray!2] (0.75,5.5) -- (3.25,5.5) -- (3.25,-0.5) -- (0.75,-0.5) -- cycle;
				\end{scope}
				
				\end{tikzpicture}}} &
		\subcaptionbox{${O}_{2}^{0}$\label{consiste}}{
			\scalebox{.53}{\begin{tikzpicture}[myNode/.style = hblack node, scale=0.8]
				
				\begin{scope}[xshift=0.25cm]
				
				\node[myNode] (n1) at (1,0) {}; 
				\node[myNode] (n2) [right =of n1] {};
				\coordinate (Middle) at ($(n1)!0.5!(n2)$);
				\node[myNode] (n3) [above =0.8cm of Middle] {};
				\node[myNode] (n4) [above =1.5cm of n1] {};
				\node[myNode] (n5) [above =1.5cm of n2] {};
				
				\node[myNode] (n6) at (1,3) {}; 
				\node[myNode] (n7) [right =of n6] {};
				\coordinate (Middle2) at ($(n6)!0.5!(n7)$);
				\node[myNode] (n8) [above =0.8cm of Middle2] {};
				\node[myNode] (n9) [above =1.5cm of n6] {};
				\node[myNode] (n10) [above =1.5cm of n7] {};

				\foreach \from/\to in               {n1/n2,n1/n3,n2/n3,n4/n5,n4/n3,n5/n3}
				\draw (\from) -- (\to);
				
				\foreach \from/\to in               {n6/n7,n6/n8,n7/n8,n10/n9,n10/n8,n9/n8}
				\draw (\from) -- (\to);
				\end{scope}
				
				\begin{scope}[on background layer]
				\fill[gray!2] (0.75,5.5) -- (3.25,5.5) -- (3.25,-0.5) -- (0.75,-0.5) -- cycle;
				\end{scope}
				
				\end{tikzpicture}}} &
		\subcaptionbox{${O}_{3}^{0}$\label{narcotic}}{
			\scalebox{.53}{\begin{tikzpicture}[myNode/.style = hblack node, scale=0.8]
				
				\begin{scope}[xshift=0.25cm, yshift=0.8cm]
				
				\node[myNode] (n1) at (1,0) {}; 
				\node[myNode] (n2) [right =of n1] {};
				\coordinate (Middle) at ($(n1)!0.5!(n2)$);
				\node[myNode] (n3) [below =0.7cm of Middle] {};
				\node[myNode] (n4) [above =0.8cm of Middle] {};

				\foreach \from/\to in               {n1/n2,n1/n3,n2/n3,n4/n1,n4/n2}
				\draw (\from) -- (\to);
				
				\node[myNode] (n6) at (1,2.2) {}; 
				\node[myNode] (n7) [right =of n6] {};
				\coordinate (Middle2) at ($(n6)!0.5!(n7)$);
				\node[myNode] (n8) [above =0.8cm of Middle2] {};
				\node[myNode] (n9) [above =1.5cm of n6] {};
				\node[myNode] (n10) [above =1.5cm of n7] {};
				
				\foreach \from/\to in               {n6/n7,n6/n8,n7/n8,n10/n9,n10/n8,n9/n8}
				\draw (\from) -- (\to);
				
				\end{scope}
				
				\begin{scope}[on background layer]
				\fill[gray!2] (0.75,5.5) -- (3.25,5.5) -- (3.25,-0.5) -- (0.75,-0.5) -- cycle;
				\end{scope}
				\end{tikzpicture}}} \\
	\end{tabular}
	\vspace{-2mm}\caption{The set ${\cal O}^{0}$ of obstructions for ${\cal A}_{1}({\cal P})$ with vertex connectivity 0.}
	\label{unguided}
\end{figure}
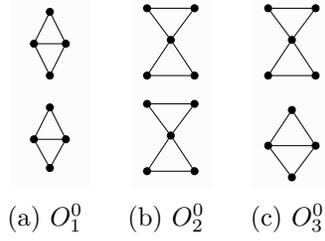
\vspace{-5mm}
\begin{figure}[H]
	\centering
	\begin{tabular}{ c c c c c c c}
		
		\subcaptionbox{${O}_{1}^{1}$\label{consists}}{
			\scalebox{.53}{\begin{tikzpicture}[myNode/.style = hblack node]
				
				\node[myNode] (n1) at (1,1) {};
				\node[myNode] (n1) at (1,2) {};
				\node[myNode] (n1) at (2,1.5) {};
				\node[myNode] (n1) at (3,1) {};
				\node[myNode] (n1) at (3,2) {};
				\node[myNode] (n1) at (4,1.5) {};
				\node[myNode] (n1) at (5,1) {};
				\node[myNode] (n1) at (5,2) {};
				
				\draw (1,1) -- (1,2);
				\draw (1,1) -- (2,1.5);
				\draw (1,2) -- (2,1.5);
				\draw (2,1.5) -- (3,1);
				\draw (2,1.5) -- (3,2);
				\draw (3,1) -- (3,2);
				\draw (5,1) -- (4,1.5);
				\draw (5,2) -- (4,1.5);
				\draw (4,1.5) -- (3,1);
				\draw (4,1.5) -- (3,2);
				\draw (5,1) -- (5,2);
				
				\end{tikzpicture}}} &
		\subcaptionbox{${O}_{2}^{1}$\label{appetite}}{
			\scalebox{.53}{\begin{tikzpicture}[myNode/.style = hblack node]
				
				\node[myNode] (n1) at (1,1) {};
				\node[myNode] (n1) at (1,2) {};
				\node[myNode] (n1) at (2,1.5) {};
				\node[myNode] (n1) at (3,1) {};
				\node[myNode] (n1) at (4,1) {};
				\node[myNode] (n1) at (5,1.5) {};
				\node[myNode] (n1) at (3,2) {};
				\node[myNode] (n1) at (4,2) {};

				\draw (1,1) -- (1,2);
				\draw (1,1) -- (2,1.5);
				\draw (1,2) -- (2,1.5);
				\draw (2,1.5) -- (3,1);
				\draw (2,1.5) -- (3,2);
				\draw (3,1) -- (4,1);
				\draw (3,2) -- (4,2);
				\draw (4,1) -- (5,1.5);
				\draw (4,2) -- (5,1.5);
				\draw (3,1) -- (3,2);
				\draw (4,1) -- (4,2);
				
				\end{tikzpicture}}} &
		\subcaptionbox{${O}_{3}^{1}$\label{adorning}}{
			\scalebox{.53}{\begin{tikzpicture}[myNode/.style = hblack node]
				\node[myNode] (n1) at (1,1) {};
				\node[myNode] (n1) at (1,2) {};
				\node[myNode] (n1) at (2,1.5) {};
				\node[myNode] (n1) at (3,1) {};
				\node[myNode] (n1) at (3,2) {};
				\node[myNode] (n1) at (4,1.5) {};
				\node[myNode] (n1) at (4,0.5) {};
				\node[myNode] (n1) at (4,2.5) {};
				
				\draw (1,1) -- (1,2);
				\draw (1,1) -- (2,1.5);
				\draw (1,2) -- (2,1.5);
				\draw (2,1.5) -- (3,1);
				\draw (2,1.5) -- (3,2);
				\draw (3,1) -- (3,2);
				\draw (4,0.5) -- (4,1.5);
				\draw (4,2.5) -- (4,1.5);
				\draw (4,1.5) -- (3,1);
				\draw (4,1.5) -- (3,2);
				\draw (4,0.5) -- (3,1);
				\draw (4,2.5) -- (3,2);
				
				\end{tikzpicture}}} &
		\subcaptionbox{${O}_{4}^{1}$\label{skeleton}}{
			\scalebox{.53}{\begin{tikzpicture}[myNode/.style = hblack node]
				\node[myNode] (n1) at (3.2,1.2) {};
				\node[myNode] (n1) at (3.2,1.8) {};
				\node[myNode] (n1) at (1,1.5) {};
				\node[myNode] (n1) at (1.85,1.5) {};
				\node[myNode] (n1) at (2.5,0.7) {};
				\node[myNode] (n1) at (2.5,2.3) {};
				\node[myNode] (n1) at (2.5,1.5) {};
				
				\draw (3.2,1.2) -- (3.2,1.8);
				\draw (3.2,1.2) -- (2.5,1.5);
				\draw (3.2,1.8) -- (2.5,1.5);
				\draw (1,1.5) -- (1.85,1.5) ;
				\draw (1,1.5) -- (2.5,0.7);
				\draw (1,1.5) -- (2.5,2.3);
				\draw (1.85,1.5) -- (2.5,0.7);
				\draw (1.85,1.5) -- (2.5,2.3);
				\draw (2.5,0.7) -- (2.5,2.3);
				
				\end{tikzpicture}}} &
		\subcaptionbox{${O}_{5}^{1}$\label{friesian}}{
			\scalebox{.53}{\begin{tikzpicture}[myNode/.style = hblack node]
				\node[myNode] (n1) at (0.3,1.2) {};
				\node[myNode] (n1) at (0.3,1.8) {};
				\node[myNode] (n1) at (1,1.5) {};
				\node[myNode] (n1) at (1.85,1.5) {};
				\node[myNode] (n1) at (2.5,0.7) {};
				\node[myNode] (n1) at (2.5,2.3) {};
				\node[myNode] (n1) at (3.5,1.5) {};
				
				\draw (0.3,1.2) -- (0.3,1.8);
				\draw (0.3,1.2) -- (1,1.5);
				\draw (0.3,1.8) -- (1,1.5);
				\draw (1,1.5) -- (1.85,1.5) ;
				\draw (1,1.5) -- (2.5,0.7);
				\draw (1,1.5) -- (2.5,2.3);
				\draw (1.85,1.5) -- (2.5,0.7);
				\draw (1.85,1.5) -- (2.5,2.3);
				\draw (2.5,0.7) -- (2.5,2.3);
				\draw (2.5,0.7) -- (3.5,1.5);
				\draw (2.5,2.3) -- (3.5,1.5);
				\end{tikzpicture}}} \\
		\subcaptionbox{${O}_{6}^{1}$\label{granting}}{
			\scalebox{.53}{\begin{tikzpicture}[myNode/.style = hblack node]
				
				\node[myNode] (n1) at (1.3,1) {};
				\node[myNode] (n1) at (1.3,2) {};
				\node[myNode] (n1) at (2,1.5) {};
				\node[myNode] (n1) at (3,1) {};
				\node[myNode] (n1) at (3,2) {};
				\node[myNode] (n1) at (4,1.5) {};
				\node[myNode] (n1) at (2,2.3) {};
				\node[myNode] (n1) at (2,0.7) {};
				\node[myNode] (n1) at (5,1.5) {};
				
				\draw (1.3,1) -- (1.3,2);
				\draw (1.3,1) -- (2,1.5);
				\draw (1.3,2) -- (2,1.5);
				\draw (2,1.5) -- (3,1);
				\draw (2,1.5) -- (3,2);
				\draw (3,1) -- (3,2);
				\draw (4,1.5) -- (3,1);
				\draw (4,1.5) -- (3,2);
				\draw (2,0.7) -- (3,1);
				\draw (2,2.3) -- (3,2);
				\draw (2,2.3) -- (2,1.5);
				\draw (2,0.7) -- (2,1.5);
				\draw (3,1) -- (5,1.5);
				\draw (3,2) -- (5,1.5);
				
				\end{tikzpicture}}} &
		
		\subcaptionbox{${O}_{7}^{1}$\label{segments}}{
			\scalebox{.53}{\begin{tikzpicture}[myNode/.style = hblack node]
				
				\node[myNode] (a1) at (240:1) {};
				\node[myNode] (a2) at (300:1) {};
				\node[myNode] (a) at (0,0) {};
				\node[myNode] (ab) at (150:1) {};
				\node[myNode] (b) at (90:1) {};
				\node[myNode] (bc) at (0.5, 1.75) {};
				\node[myNode] (c) at (1,1) {};
				\node[myNode] (cd) at (1.75,0.5) {};
				\node[myNode] (d) at (1,0) {};
				
				\draw[-] (a) -- (a1)-- (a2) -- (a) -- (ab) -- (b) -- (bc) -- (c) -- (cd) -- (d) -- (a) -- (b) -- (c) -- (d);

				\end{tikzpicture}}}&
		
		\subcaptionbox{${O}_{8}^{1}$\label{pensions}}{
			\scalebox{.53}{
				\begin{tikzpicture}[myNode/.style = hblack node]
				
				\node[myNode] (n1) at (0,0) {};
				\node[myNode] (n2) [above of=n1]  {};
				\coordinate (midway1) at ($(n1)!0.5!(n2)$)  {};
				\node[myNode] (n3) [right of=midway1] {};
				
				\node[myNode] (n4) [right=10ex of n3] {};
				\node[myNode] (n5) [above right=2ex and 5ex of n4] {};
				\node[myNode] (n6) [below=5ex of n5] {};
				
				\coordinate (midway2) at ($(n3)!0.5!(n4)$)  {};
				
				\node[myNode] (n7) [above right =15ex and 3.5ex of midway2] {};
				\node[myNode] (n8) [left =6.5ex of n7] {};
				\coordinate (midway3) at ($(n7)!0.5!(n8)$)  {};
				\node[myNode] (n9) [below =5ex of midway3] {};

				\draw (n1) -- (n2);
				\draw (n1) -- (n3);
				\draw (n2) -- (n3);
				
				\draw (n4) -- (n5);
				\draw (n4) -- (n6);
				\draw (n5) -- (n6);
				
				\draw (n9) -- (n8);
				\draw (n9) -- (n7);
				\draw (n7) -- (n8);

				\draw (n4) -- (n3);
				\draw (n9) -- (n2);
				\draw (n9) -- (n5);
				
				\end{tikzpicture}}} &
		
		\subcaptionbox{${O}_{9}^{1}$\label{presents}}{
			\scalebox{.53}{
				\begin{tikzpicture}[myNode/.style = hblack node]
				
				\node[myNode] (n1) at (0,0) {};
				\node[myNode] (n2) [above of=n1]  {};
				\coordinate (midway1) at ($(n1)!0.5!(n2)$)  {};
				\node[myNode] (n3) [right of=midway1] {};
				
				\node[myNode] (n4) [right=10ex of n3] {};
				\node[myNode] (n5) [above right=2ex and 5ex of n4] {};
				\node[myNode] (n6) [below=5ex of n5] {};
				
				\coordinate (midway2) at ($(n3)!0.5!(n4)$)  {};
				
				\node[myNode] (n7) [above right =10ex and 3.5ex of midway2] {};
				\node[myNode] (n8) [left =6.5ex of n7] {};
				\coordinate (midway3) at ($(n7)!0.5!(n8)$)  {};
				\node[myNode] (n9) [above =5ex of midway3] {};

				\draw (n1) -- (n2);
				\draw (n1) -- (n3);
				\draw (n2) -- (n3);
				
				\draw (n4) -- (n5);
				\draw (n4) -- (n6);
				\draw (n5) -- (n6);
				
				\draw (n9) -- (n8);
				\draw (n9) -- (n7);
				\draw (n7) -- (n8);

				\draw (n4) -- (n3);
				\draw (n8) -- (n3);
				\draw (n7) -- (n4);
				
				\end{tikzpicture}}}&
		
		\subcaptionbox{${O}_{10}^{1}$\label{teaching}}{
			\scalebox{.53}{
				\begin{tikzpicture}[myNode/.style = hblack node]
				
				\node[myNode] (n1) at (0,0) {};
				\node[myNode] (n2) [above of=n1]  {};
				\coordinate (midway1) at ($(n1)!0.5!(n2)$)  {};
				\node[myNode] (n3) [right of=midway1] {};
				
				\node[myNode] (n4) [right=10ex of n3] {};
				\node[myNode] (n5) [above right=2ex and 5ex of n4] {};
				\node[myNode] (n6) [below=5ex of n5] {};
				
				\coordinate (midway2) at ($(n3)!0.5!(n4)$)  {};
				
				\node[myNode] (n7) [above right =15ex and 3.5ex of midway2] {};
				\node[myNode] (n8) [left =6.5ex of n7] {};
				\coordinate (midway3) at ($(n7)!0.5!(n8)$)  {};
				\node[myNode] (n9) [below =5ex of midway3] {};

				\draw (n1) -- (n2);
				\draw (n1) -- (n3);
				\draw (n2) -- (n3);
				
				\draw (n4) -- (n5);
				\draw (n4) -- (n6);
				\draw (n5) -- (n6);
				
				\draw (n9) -- (n8);
				\draw (n9) -- (n7);
				\draw (n7) -- (n8);

				\draw (n4) -- (n3);
				\draw (n9) -- (n4);
				\draw (n9) -- (n3);
				
				\end{tikzpicture}}} \\
		\subcaptionbox{${O}_{11}^{1}$\label{immature}}{
			\scalebox{.53}{
				\begin{tikzpicture}[myNode/.style = hblack node]
				
				\node[myNode] (n1) at (0,0) {};
				\node[myNode] (n2) [below of=n1]  {};
				\coordinate (midway1) at ($(n1)!0.5!(n2)$)  {};
				
				\node[myNode] (n4) [right of=midway1] {};
				\node[myNode] (n5) [above right=2.5ex and 5ex of n4] {};
				\node[myNode] (n6) [below of=n5] {};
				\node[myNode] (n7) [right=10ex of n4] {};

				\node[myNode] (n8) [below right of=n2] {};
				\node[myNode] (n9) [below left of=n2] {};
				
				\draw (n1) -- (n2);
				\draw (n2) -- (n4);
				\draw (n1) -- (n4);
				\draw (n2) -- (n6);
				
				\draw (n4) -- (n5);
				\draw (n4) -- (n6);
				\draw (n5) -- (n6);
				\draw (n5) -- (n7);
				\draw (n6) -- (n7);
				
				\draw (n8) -- (n2);
				\draw (n8) -- (n9);
				\draw (n9) -- (n2);
				
				\end{tikzpicture}}} & 
		\subcaptionbox{${O}_{12}^{1}$\label{daylight}}{
			\scalebox{.53}{\begin{tikzpicture}[myNode/.style = hblack node]
				
				\node[myNode] (n1) at (0,0) {};
				\node[myNode] (n2) [above of=n1]  {};
				\coordinate (midway1) at ($(n1)!0.5!(n2)$)  {};
				\node[myNode] (n3) [right of=midway1] {};
				
				\node[myNode] (n4) [above of=n3] {};
				
				\coordinate (midway2) at ($(n3)!0.5!(n4)$)  {};
				\node[myNode] (n5) [right of=midway2] {};
				
				\node[myNode] (n6) [above right=2.5ex and 6ex of n5] {};
				\node[myNode] (n7) [below of=n6] {};
				
				\node[myNode] (n8) [below right=2.5ex and 6ex of n7] {};
				\node[myNode] (n9) [above of= n8] {};
				
				\draw (n1) -- (n2);
				\draw (n1) -- (n3);
				\draw (n2) -- (n3);
				\draw (n3) -- (n5);
				\draw (n4) -- (n5);
				\draw (n4) -- (n3);
				\draw (n5) -- (n6);
				\draw (n5) -- (n7);
				\draw (n6) -- (n7);
				\draw (n7) -- (n8);
				\draw (n7) -- (n9);
				\draw (n9) -- (n8);
				\draw (n7) -- (n3);

				\end{tikzpicture}}} \\
		
	\end{tabular}
	\vspace{-2mm}\caption{The set ${\cal O}^{1}$ of obstructions for ${\cal A}_{1}({\cal P})$ of vertex connectivity 1.}
	\label{dispense}
\end{figure}
%
%\section*{The set $\rred{{\cal O}^{2}}$ of 2-connected obstructions for $M_0.$}
%
\vspace{-5mm}
\begin{figure}[H]
	\centering
	\begin{tabular}{ c c c c c c c c c }
		
		\subcaptionbox{${O}_{1}^{2}$\label{connects}}{
			\scalebox{.53}{\begin{tikzpicture}[myNode/.style = hblack node]
				
				%D1
				\node[myNode] (n1) at (2,3) {};
				\node[myNode] (n1) at (1,2) {};
				\node[myNode] (n1) at (3,2) {};
				\node[myNode] (n1) at (2,1) {};
				\node[myNode] (n1) at (1.5,2) {};
				\node[myNode] (n1) at (2.5,2) {};
				
				\draw (2,3) -- (1,2);
				\draw (2,3) -- (3,2);
				\draw (2,3) -- (1.5,2);
				\draw (2,3) -- (2.5,2);
				\draw (2,1) -- (1,2);
				\draw (2,1) -- (3,2);
				\draw (2,1) -- (1.5,2);
				\draw (2,1) -- (2.5,2);
				\draw (1,2) -- (1.5,2);
				\draw (2.5,2) -- (3,2);
				\end{tikzpicture}}} &
		\subcaptionbox{${O}_{2}^{2}$\label{handling}}{
			\scalebox{.53}{\begin{tikzpicture}[myNode/.style = hblack node]
				
				\node[myNode] (n1) at (1.7,3) {};
				\node[myNode] (n1) at (1,2.4) {};
				\node[myNode] (n1) at (2.4,2.4) {};
				\node[myNode] (n1) at (1.7,1.7) {};
				\node[myNode] (n1) at (1,1) {};
				\node[myNode] (n1) at (2.4,1) {};
				\node () at (2.6,1) {};
					
				\draw (1.7,3) -- (1,2.4);
				\draw (1.7,3) -- (2.4,2.4);
				\draw (1,2.4) -- (2.4,2.4);
				\draw (1,2.4) -- (1,1);
				\draw (1,2.4) -- (2.4,1);
				\draw (2.4,2.4) -- (1,1);
				\draw (2.4,2.4) -- (2.4,1);
				\draw (1,1) -- (2.4,1);
				\end{tikzpicture}}} &
		\subcaptionbox{${O}_{3}^{2}$\label{softened}}{
			\scalebox{.53}{\begin{tikzpicture}[myNode/.style = hblack node]
				\node[myNode] (n1) at (1.5,2.7) {};
				\node[myNode] (n1) at (1,2) {};
				\node[myNode] (n1) at (2,2) {};
				\node[myNode] (n1) at (1.5,0.3) {};
				\node[myNode] (n1) at (1,1) {};
				\node[myNode] (n1) at (2,1) {};
				\node[myNode] (n1) at (0.3,1.5) {};
				\node[myNode] (n1) at (2.7,1.5) {};
				
				\draw (1.5,2.7) -- (1,2);
				\draw (1.5,2.7) -- (2,2);
				\draw (1.5,0.3) -- (1,1);
				\draw (1.5,0.3) -- (2,1);
				\draw (0.3,1.5) -- (1,1);
				\draw (0.3,1.5) -- (1,2);
				\draw (2.7,1.5) -- (2,1);
				\draw (2.7,1.5) -- (2,2);
				\draw (1,2) -- (2,2);
				\draw (1,2) -- (1,1);
				\draw (2,2) -- (2,1);
				\draw (1,1) -- (2,1);
				\end{tikzpicture}}} &
		\subcaptionbox{${O}_{4}^{2}$\label{begrudge}}{
			\scalebox{.53}{\begin{tikzpicture}[myNode/.style = hblack node]
				
				%D4
				\node[myNode] (n1) at (1,5) {}; 
				\node[myNode] (n1) at (3,5) {};
				\node[myNode] (n1) at (2,5.6) {};
				\node[myNode] (n1) at (2,6.6) {};
				\node[myNode] (n1) at (2,5) {}; 
				\node[myNode] (n1) at (1.5,4.5) {};
				\node[myNode] (n1) at (2.5,4.5) {};
				
				\draw (1,5) -- (3,5);
				\draw (1,5) -- (2,5.6);
				\draw (1,5) -- (2,6.6);
				\draw (3,5) -- (2,5.6);
				\draw (3,5) -- (2,6.6);
				\draw (2,5.6) -- (2,6.6);
				\draw (1.5,4.5) -- (1,5); 
				\draw (1.5,4.5) -- (2,5);
				\draw (2.5,4.5) -- (2,5);
				\draw (2.5,4.5) -- (3,5);
				
				\end{tikzpicture}}} &
		
		\subcaptionbox{${O}_{5}^{2}$\label{bringing}}{
			\scalebox{.53}{\begin{tikzpicture}[myNode/.style = hblack node]
				
				%D5
%				\node[myNode] (n1) at (2,1.5) {};
%				\node[myNode] (n1) at (3,1) {};
%				\node[myNode] (n1) at (4,1) {};
%				\node[myNode] (n1) at (5,1.5) {};
%				\node[myNode] (n1) at (3,2) {};
%				\node[myNode] (n1) at (4,2) {};
%				
%				\draw (2,1.5) -- (3,1);
%				\draw (2,1.5) -- (3,2);
%				\draw (3,1) -- (4,1);
%				\draw (3,2) -- (4,2);
%				\draw (4,1) -- (5,1.5);
%				\draw (4,2) -- (5,1.5);
%				\draw (3,1) -- (3,2);
%				\draw (4,1) -- (4,2);
%				\draw (3,1) -- (4,2);
%				\draw (3,2) -- (4,1);
				
				\node[myNode] (n1) at (0.8,5) {}; 
				\node[myNode] (n1) at (3.2,5) {};
				\node[myNode] (n1) at (2,5.5) {};
				\node[myNode] (n1) at (2,6.7) {};
				\node[myNode] (n1) at (2,4.5) {}; 
				\node[myNode] (n1) at (2.3,5.8) {};
				
				\draw (0.8,5) -- (3.2,5);
				\draw (0.8,5) -- (2,5.5);
				\draw (0.8,5) -- (2,6.7);
				\draw (3.2,5) -- (2,5.5);
				\draw (3.2,5) -- (2,6.7);
				\draw (2,5.5) -- (2,6.7);
				\draw (0.8,5) -- (2,4.5); 
				\draw (3.2,5) -- (2,4.5);
				\draw (2,6.7) -- (2.3,5.8);
				\draw (2,5.5) -- (2.3,5.8);
				\end{tikzpicture}}} &
		\subcaptionbox{${O}_{6}^{2}$\label{operetta}}{
			\scalebox{.53}{\begin{tikzpicture}[myNode/.style = hblack node]
				
				%D6
%				\node[myNode] (n1) at (3.5,2) {};
%				\node[myNode] (n1) at (3.25,2.4) {};
%				\node[myNode] (n1) at (3,1) {};
%				\node[myNode] (n1) at (4,1) {};
%				\node[myNode] (n1) at (5,1.5) {};
%				\node[myNode] (n1) at (3,2) {};
%				\node[myNode] (n1) at (4,2) {};
%				
%				\draw (3,1) -- (4,1);
%				\draw (3,2) -- (4,2);
%				\draw (4,1) -- (5,1.5);
%				\draw (4,2) -- (5,1.5);
%				\draw (3,1) -- (3,2);
%				\draw (4,1) -- (4,2);
%				\draw (3,1) -- (4,2);
%				\draw (3,2) -- (4,1);
%				\draw (3.25,2.4) -- (3,2);
%				\draw (3.25,2.4) -- (3.5,2);
\node[myNode] (n1) at (1,5) {}; 
\node[myNode] (n1) at (3,5) {};
\node[myNode] (n1) at (2,5.6) {};
\node[myNode] (n1) at (2,6.6) {};
\node[myNode] (n1) at (2,4.5) {};
\node[myNode] (n1) at (2.8,6.4) {};
\node[myNode] (n1) at (2.5,5.8) {};

\draw (1,5) -- (3,5);
\draw (1,5) -- (2,5.6);
\draw (1,5) -- (2,6.6);
\draw (3,5) -- (2,5.6);
\draw (3,5) -- (2,6.6);
\draw (2,5.6) -- (2,6.6);
\draw (2,4.5) -- (1,5);
\draw (2,4.5) -- (3,5);
\draw (2.8,6.4) -- (2.5,5.8);
\draw (2.8,6.4) -- (2,6.6);
				
				\end{tikzpicture}}} \\
		\subcaptionbox{${O}_{7}^{2}$\label{uttering}}{
			\scalebox{.53}{\begin{tikzpicture}[myNode/.style = hblack node]
				
				\node[myNode] (n1) at (2,0.5) {};
				\node[myNode] (n1) at (2,2.5) {};
				\node[myNode] (n1) at (2,1.5) {};
				\node[myNode] (n1) at (3,1) {};
				\node[myNode] (n1) at (4,1) {};
				\node[myNode] (n1) at (5,1.5) {};
				\node[myNode] (n1) at (3,2) {};
				\node[myNode] (n1) at (4,2) {};
				
				\draw (2,0.5) -- (3,1);
				\draw (2,2.5) -- (3,2);
				\draw (2,2.5) -- (2,1.5);
				\draw (2,0.5) -- (2,1.5);
				\draw (2,1.5) -- (3,1);
				\draw (2,1.5) -- (3,2);
				\draw (3,1) -- (4,1);
				\draw (3,2) -- (4,2);
				\draw (4,1) -- (5,1.5);
				\draw (4,2) -- (5,1.5);
				\draw (3,1) -- (3,2);
				\draw (4,1) -- (4,2);
				\end{tikzpicture}}} &
		\subcaptionbox{${O}_{8}^{2}$\label{furrowed}}{
			\scalebox{.53}{\begin{tikzpicture}[myNode/.style = hblack node]
				
				\node[myNode] (n1) at (1,7) {};
				\node[myNode] (n1) at (3,7) {};
				\node[myNode] (n1) at (2,7) {};
				\node[myNode] (n1) at (1.5,6) {};
				\node[myNode] (n1) at (2.5,6) {};
				\node[myNode] (n1) at (2,5) {};
				\node[myNode] (n1) at (2,6.3) {};
				
				\draw (1,7) -- (2,7);
				\draw (3,7) -- (2,7);
				\draw (1,7) -- (1.5,6);
				\draw (2.5,6) -- (3,7);
				\draw (2.5,6) -- (2,7);
				\draw (1.5,6) -- (2,7);
				\draw (2.5,6) -- (1.5,6);
				\draw (2.5,6) -- (2,5);
				\draw (1.5,6) -- (2,5);
				\draw (2,6.3) -- (1.5,6);
				\draw (2,6.3) -- (2.5,6);
				\draw (2,6.3) -- (2,7);
				\end{tikzpicture}}} &
		
		\subcaptionbox{${O}_{9}^{2}$\label{holiness}}{
			\scalebox{.53}{\begin{tikzpicture}[myNode/.style = hblack node]
				
				\node[myNode] (n1) at (0.2,2) {};
				\node[myNode] (n1) at (1,2.7) {};
				\node[myNode] (n1) at (1,1.3) {};
				\node[myNode] (n1) at (2,3.1) {};
				\node[myNode] (n1) at (2,0.9) {};
				\node[myNode] (n1) at (3,2) {};
				\node[myNode] (n1) at (4,2) {};
				
				\draw (0.2,2) -- (1,2.7);
				\draw (0.2,2) -- (1,1.3);
				\draw (1,2.7) -- (1,1.3);
				\draw (1,2.7) -- (2,3.1);
				\draw (1,1.3) -- (2,0.9);
				\draw (2,3.1) -- (3,2);
				\draw (2,3.1) -- (4,2);
				\draw (2,0.9) -- (3,2);
				\draw (2,0.9) -- (4,2);
				\draw (3,2) -- (4,2);
				\end{tikzpicture}}} &
		\subcaptionbox{${O}_{10}^{2}$\label{supports}}{
			\scalebox{.53}{\begin{tikzpicture}[myNode/.style = hblack node]
				
				\node[myNode] (n1) at (0.3,0.3) {};
				\node[myNode] (n1) at (1,0.3) {};
				\node[myNode] (n1) at (2.1,0.3) {};
				\node[myNode] (n1) at (0.7,1) {};
				\node[myNode] (n1) at (1.1,1.3) {};
				\node[myNode] (n1) at (1.5,1) {};
				\node[myNode] (n1) at (1.1,1.8) {};
				\node[myNode] (n1) at (2,-0.7) {};
				
				\draw (0.3,0.3) -- (1,0.3);
				\draw (0.3,0.3) -- (0.7,1);
				\draw (1,0.3) -- (0.7,1);
				\draw (1,0.3) -- (1.5,1);
				\draw (1,0.3) -- (2.1,0.3);
				\draw (1,0.3) -- (2,-0.7);
				\draw (2.1,0.3) -- (1.5,1);
				\draw (2.1,0.3) -- (2,-0.7);
				\draw (0.7,1) -- (1.1,1.8);
				\draw (0.7,1) -- (1.5,1);
				\draw (0.7,1) -- (1.1,1.3);
				\draw (1.1,1.3) -- (1.5,1);
				\draw (1.5,1) -- (1.1,1.8);
				\end{tikzpicture}}} &
		\subcaptionbox{${O}_{11}^{2}$\label{conceive}}{
			\scalebox{.53}{\begin{tikzpicture}[myNode/.style = hblack node]
				
				\node[myNode] (n1) at (1,7) {};
				\node[myNode] (n1) at (3,7) {};
				\node[myNode] (n1) at (2,7) {};
				\node[myNode] (n1) at (1.5,6) {};
				\node[myNode] (n1) at (2.5,6) {};
				\node[myNode] (n1) at (2,5) {};
				\node[myNode] (n1) at (2.5,6.65) {};
				\node[myNode] (n1) at (1.5,6.65) {};
				\node[myNode] (n1) at (2,5.65) {};

				\draw (1,7) -- (2,7);
				\draw (3,7) -- (2,7);
				\draw (1,7) -- (1.5,6);
				\draw (2.5,6) -- (3,7);
				\draw (2.5,6) -- (2,7);
				\draw (1.5,6) -- (2,7);
				\draw (2.5,6) -- (1.5,6);
				\draw (2.5,6) -- (2,5);
				\draw (1.5,6) -- (2,5);
				\draw (2,5.65) -- (1.5,6);
				\draw (2,5.65) -- (2.5,6);
				\draw (1.5,6.65) -- (1.5,6);
				\draw (1.5,6.65) -- (2,7);
				\draw (2.5,6.65) -- (2.5,6);
				\draw (2.5,6.65) -- (2,7);
				\end{tikzpicture}}} &
		
		\subcaptionbox{${O}_{12}^{2}$\label{typifies}}{
			\scalebox{.53}{\begin{tikzpicture}[myNode/.style = hblack node]
				
				\node[myNode] (n1) at (2,0.5) {};
				\node[myNode] (n2) at (2,2.5) {};
				\node[myNode] (n3) at (2,1.5) {};
				\node[myNode] (n4) at (3,1) {};
				\node[myNode] (n5) at (4,1) {};
				\node[myNode] (n6) at (5,1.5) {};
				\node[myNode] (n7) at (3,2) {};
				\node[myNode] (n8) at (4,2) {};
				
				\draw (n1) -- (n4);
				\draw (n2) -- (n7);
				\draw (n2) -- (n3);
				\draw (n1) -- (n3);
				\draw (n3) -- (n4);
				\draw (n3) -- (n7);
				\draw (n4) -- (n5);
				\draw (n7) -- (n8);
				\draw (n5) -- (n6);
				\draw (n8) -- (n6);
				\draw (n4) -- (n8);
				\draw (n5) -- (n8);
				\end{tikzpicture}}} \\
		
		\subcaptionbox{${O}_{13}^{2}$\label{solution}}{
			\scalebox{.53}{\begin{tikzpicture}[myNode/.style = hblack node]
				
				\node[myNode] (n1) at (0,0) {};
				\node[myNode] (n2) [right of=n1]  {};
				\coordinate (peak) at ($(n1)!0.5!(n2)$)  {};
				\node[myNode] (n3) [above of=peak] {};
				
				\node[myNode] (n4) [right of=n2]  {};
				\node[myNode] (n5) [right of=n4]  {};
				\coordinate (peak2) at ($(n2)!0.5!(n4)$)  {};
				\node[myNode] (n6) [above of=peak2] {};
				\coordinate (peak3) at ($(n4)!0.5!(n5)$)  {};
				\node[myNode] (n7) [above of=peak3] {};
				
				\draw (n1) -- (n2);
				\draw (n1) -- (n3);
				\draw (n2) -- (n3);
				\draw (n2) -- (n4);
				\draw (n2) -- (n6);
				\draw (n4) -- (n5);
				\draw (n4) -- (n7);
				\draw (n4) -- (n6);
				\draw (n3) -- (n6);
				\draw (n5) -- (n7);
				\draw (n6) -- (n7);
				
				\end{tikzpicture}}} &

		\subcaptionbox{${O}_{14}^{2}$\label{barracks}}{
			\scalebox{.53}{
				\begin{tikzpicture}[myNode/.style = hblack node]
				
				\node[myNode] (n1) at (0,0) {};
				\node[myNode] (n2) [above of=n1]  {};
				\coordinate (midway1) at ($(n1)!0.5!(n2)$)  {};
				\node[myNode] (n3) [right of=midway1] {};
				
				\node[myNode] (n4) [right=10ex of n3] {};
				\node[myNode] (n5) [above right=2ex and 5ex of n4] {};
				\node[myNode] (n6) [below=5ex of n5] {};
				
				\coordinate (midway2) at ($(n3)!0.5!(n4)$)  {};
				
				\node[myNode] (n7) [above right =10ex and 3.5ex of midway2] {};
				\node[myNode] (n8) [left =6.5ex of n7] {};
				\coordinate (midway3) at ($(n7)!0.5!(n8)$)  {};
				\node[myNode] (n9) [above =5ex of midway3] {};

				\draw (n1) -- (n2);
				\draw (n1) -- (n3);
				\draw (n2) -- (n3);
				
				\draw (n4) -- (n5);
				\draw (n4) -- (n6);
				\draw (n5) -- (n6);
				
				\draw (n9) -- (n8);
				\draw (n9) -- (n7);
				\draw (n7) -- (n8);

				\draw (n4) -- (n3);
				\draw (n8) -- (n2);
				\draw (n7) -- (n5);
				
				\end{tikzpicture}}}&
		
		\subcaptionbox{${O}_{15}^{2}$\label{straight}}{
			\scalebox{.53}{
				\begin{tikzpicture}[myNode/.style = hblack node]
				
				\node[myNode] (n1) at (0,0) {};
				\node[myNode] (n2) [above of=n1]  {};
				\coordinate (midway1) at ($(n1)!0.5!(n2)$)  {};
				\node[myNode] (n3) [left of=midway1] {};
				
				\node[myNode] (n4) [right=20ex of midway1] {};
				\node[myNode] (n5) [above left=2ex and 5ex of n4] {};
				\node[myNode] (n6) [below=5ex of n5] {};
				
				\node[myNode] (n7) [right=6.4ex of n2] {};
				\node[myNode] (n8) [below of=n7] {};

				\draw (n1) -- (n2);
				\draw (n1) -- (n3);
				\draw (n2) -- (n3);
				
				\draw (n4) -- (n5);
				\draw (n4) -- (n6);
				\draw (n5) -- (n6);
				
				\draw (n7) -- (n8);
				\draw (n2) -- (n7);
				\draw (n7) -- (n5);
				
				\draw (n8) -- (n6);
				\draw (n8) -- (n1);
				
				\end{tikzpicture}}}

	\end{tabular}
	\vspace{-2mm}\caption{The set ${\cal O}^{2}$ of  obstructions for ${\cal A}_{1}({\cal P})$ with vertex connectivity 2.}
	\label{animates}
\end{figure}
\vspace{-5mm}
%\section*{The set $\rred{{\cal O}^{3}}$ of {\geq 3}-connected obstructions for $M_0.$}
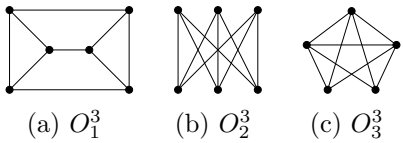
\begin{figure}[H]
	\centering
	\begin{tabular}{  c c c  }
		
		\subcaptionbox{${O}_{1}^{3}$\label{japanese}}{
			\scalebox{.53}{\begin{tikzpicture}[myNode/.style = hblack node]
				
				\node[myNode] (n1) at (1,3) {};
				\node[myNode] (n1) at (2,2) {};
				\node[myNode] (n1) at (1,1) {};
				\node[myNode] (n1) at (4,3) {};
				\node[myNode] (n1) at (3,2) {};
				\node[myNode] (n1) at (4,1) {};
				
				\draw (1,3) -- (2,2);
				\draw (1,1) -- (2,2);
				\draw (1,3) -- (4,3);
				\draw (1,3) -- (1,1);
				\draw (1,1) -- (4,1);
				\draw (2,2) -- (3,2);
				\draw (3,2) -- (4,1);
				\draw (3,2) -- (4,3);
				\draw (4,1) -- (4,3);
				
				\end{tikzpicture}}} &
		\subcaptionbox{${O}_{2}^{3}$\label{persuade}}{
			\scalebox{.53}{\begin{tikzpicture}[myNode/.style = hblack node]
				
				\node[myNode] (n1) at (1,3) {};
				\node[myNode] (n1) at (2,3) {};
				\node[myNode] (n1) at (3,3) {};
				\node[myNode] (n1) at (1,1) {};
				\node[myNode] (n1) at (2,1) {};
				\node[myNode] (n1) at (3,1) {};
				
				\draw (1,3) -- (1,1);
				\draw (1,3) -- (2,1);
				\draw (1,3) -- (3,1);
				\draw (2,3) -- (1,1);
				\draw (2,3) -- (2,1);
				\draw (2,3) -- (3,1);
				\draw (3,3) -- (1,1);
				\draw (3,3) -- (2,1);
				\draw (3,3) -- (3,1);
				
				\end{tikzpicture}}} &
		\subcaptionbox{${O}_{3}^{3}$\label{rigorous}}{
			\scalebox{.53}{\begin{tikzpicture}[myNode/.style = hblack node]
				
				\node[myNode] (n1) at (2,2.6) {};
				\node[myNode] (n1) at (0.875,1.75) {};
				\node[myNode] (n1) at (3.125,1.75) {};
				\node[myNode] (n1) at (1.4,0.625) {};
				\node[myNode] (n1) at (2.6,0.625) {};
				
				\draw (2,2.6) -- (0.875,1.75);
				\draw (2,2.6) -- (3.125,1.75);
				\draw (2,2.6) -- (1.4,0.625);
				\draw (2,2.6) -- (2.6,0.625);
				\draw (0.875,1.75) -- (3.125,1.75);
				\draw (0.875,1.75) -- (1.4,0.625);
				\draw (0.875,1.75) -- (2.6,0.625);
				\draw (3.125,1.75) -- (1.4,0.625);
				\draw (3.125,1.75) -- (2.6,0.625);
				
				\end{tikzpicture}}} \\
		
	\end{tabular}
	\vspace{-2mm}\caption{The set ${\cal O}^{3}$ obstructions for ${\cal A}_{1}({\cal P})$ with vertex connectivity $3.$}
	\label{machines}
\end{figure}

We set  ${\cal O} = {\cal O}^{0}\cup {\cal O}^{1} \cup {\cal O}^{2}\cup{\cal O}^{3}.$
Notice that since $\hyperref[persuade]{{O}_{2}^{3}}=K_{3,3}$ and $\hyperref[rigorous]{{O}_{3}^{3}}$ is a subgraph of $K_{5}$, all graphs in ${\cal O}\setminus\{\hyperref[persuade]{{O}_{2}^{3}},\hyperref[rigorous]{{O}_{3}^{3}}\}$ are  planar.
For the proof of~\autoref{ruthless}, we first note, by inspection, 
that $\obs({\cal A}_{1}({\cal P}))\supseteq  {\cal O}.$ As such an inspection might be 
 quite tedious to do manually for all the 33 graphs in ${\cal O}$, 
one may use a computer program that can do this in an automated way (see~\href{http://www.cs.upc.edu/~sedthilk/oapf/}{www.cs.upc.edu/$\sim$sedthilk/oapf/} for code in {\sf SageMath} that can do this). The main contribution of the paper is that ${\cal O}$ is a complete list, i.e., that $\obs({\cal A}_{1}({\cal P}))\subseteq  {\cal O}.$

Our proof strategy is to assume that there exists a graph $G\in  \obs({\cal A}_{1}({\cal P}))\setminus {\cal O}$  and gradually restrict the structure of $G$ by deriving contradictions to some of the conditions of the following observation.

\begin{observation}
	\label{selected}
	If $G\in \obs({\cal A}_{1}({\cal P}))\setminus  {\cal O}$ then $G$ satisfies the following conditions:
	\begin{enumerate}
		\item $G\not\in {\cal A}_{1}({\cal P}),$
		\item if $G'$ is a  minor of $G$ that is different than $G$,  then $G'\in{\cal A}_{1}({\cal P}),$ and 
		\item none of the graphs in ${\cal O}$ is a minor of $G.$
	\end{enumerate}
\end{observation}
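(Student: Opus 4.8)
The plan is to derive all three items directly from the definition of the minor-obstruction set, using the inspection-based fact recorded just above the statement, namely that ${\cal O}\subseteq\obs({\cal P}^{(1)})$, together with the already-noted fact that ${\cal P}^{(1)}$ is minor-closed. In particular, no structural analysis of the 33 graphs is required here; everything follows by unwinding what membership in $\obs({\cal P}^{(1)})$ means.

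First I would handle items (1) and (2) simultaneously, since both are immediate consequences of $G\in\obs({\cal P}^{(1)})$. By definition, $\obs({\cal P}^{(1)})$ is the set of minor-minimal graphs that do not belong to ${\cal P}^{(1)}$. Hence $G\in\obs({\cal P}^{(1)})$ gives at once that $G\not\in{\cal P}^{(1)}$, which is item (1); and minor-minimality says precisely that every proper minor $G'$ of $G$ satisfies $G'\in{\cal P}^{(1)}$, which is item (2). Nothing beyond reading off the definition is needed.

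For item (3) I would argue by contradiction, exploiting the antichain property of obstruction sets. Suppose some $H\in{\cal O}$ is a minor of $G$. Since by hypothesis $G\notin{\cal O}$ while $H\in{\cal O}$, the two graphs are distinct, so $H$ is a \emph{proper} minor of $G$. Item (2) then forces $H\in{\cal P}^{(1)}$. On the other hand, the inspection recorded before the statement gives ${\cal O}\subseteq\obs({\cal P}^{(1)})$, so $H\in\obs({\cal P}^{(1)})$ and therefore $H\not\in{\cal P}^{(1)}$, a contradiction. Thus no member of ${\cal O}$ is a minor of $G$.

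The statement is essentially a bookkeeping observation, so I do not anticipate a genuine obstacle; the only point requiring care is the distinctness step in item (3), where one must invoke $G\notin{\cal O}$ to rule out the trivial case $H=G$ and thereby reduce to a proper minor, so that item (2) becomes applicable. This is exactly where the full hypothesis $G\in\obs({\cal P}^{(1)})\setminus{\cal O}$, rather than merely $G\in\obs({\cal P}^{(1)})$, is used.
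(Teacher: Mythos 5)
Your proposal is correct and matches exactly what the paper intends: the paper states this as an unproved observation, since items (1) and (2) are just the definition of $\obs({\cal P}^{(1)})$ as the set of minor-minimal graphs outside ${\cal P}^{(1)}$, and item (3) follows from the inspection fact ${\cal O}\subseteq\obs({\cal P}^{(1)})$ combined with minor-minimality. Your careful handling of the distinctness step (using $G\notin{\cal O}$ to ensure any $H\in{\cal O}$ that is a minor of $G$ is a \emph{proper} minor) is precisely the right bookkeeping.
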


The rest of the paper is dedicated to the proof of \autoref{ruthless} and is organized as follows.
In~\autoref{supplied} we give the basic definitions and some preliminary results. In \autoref{fissures} 
we prove some auxiliary results that restrict the structure of the graphs in $\obs({\cal A}_{1}({\cal P}))\setminus  {\cal O}$.
In \autoref{refusing} we use the results of \autoref{fissures} in order to, first, prove that 
graphs in $\obs({\cal A}_{1}({\cal P}))\setminus  {\cal O}$ are biconnected  (\autoref{destined})  and, 
next, prove that the 
graphs in $\obs({\cal A}_{1}({\cal P}))\setminus  {\cal O}$ are triconnected (\autoref{personal}). The proof of \autoref{ruthless}
follows from the fact that 
every triconnected  graph either contains a graph in $ {{\cal O}^{3}}$ 
or it is a graph in ${\cal A}_{1}({\cal P})$ (\autoref{reliably}, proved in~\autoref{supplied}).

\section{Definitions and preliminary results}
\label{supplied}

\textbf{Sets and integers.}
We denote by $\Nbb$ the set of all non-negative integers and we
set $\Nbb^+=\Nbb\setminus\{0\}.$
Given two integers $p$ and $q,$  we set $\intv{p,q}=\{p,\ldots,q\}$
and given a $k\in\Nbb^+$ we denote $[k]=[1,k].$ Given a set $A,$ we denote by $2^{A}$ the set of all its subsets and we define 
$\binom{A}{2}:=\{e\mid e\in 2^{A}\wedge |e|=2\}.$
If ${\cal S}$ is a collection of objects where the operation $\cup$ is defined, then we  denote $\cupall{\cal S}=\bigcup_{X\in{\cal S}}X.$

\bigskip
\noindent
\textbf{Graphs}.   Given a graph $G,$ we denote by $V(G)$ the set of vertices of  $G$ and by $E(G)$ the set of the edges of $G.$ For an edge $e=\{x,y\}\in E(G),$ we use instead the notation $e=xy,$ that is equivalent to  $e=yx.$
{Given a vertex $v \in V(G),$ we define the \emph{neighborhood} of
	$v$ as $N_G(v) = \{u \mid u \in V(G), \{u,v\} \in E(G)\}$ and the \emph{closed neighborhood}
	of $v$ as $N_G[v] = N_G(v) \cup \{v\}.$} If $X\subseteq V(G),$ then we write $N_{G}(X)=(\bigcup_{v\in X}N_{G}(v))\setminus X.$
The {\em  degree} of a vertex $v$ in $G$ is defined as $\deg_{G}(v)=|N_{G}(v)|.$
We define $\delta(G)=\min\{\deg_{G}(x)\mid x\in V(G)\}.$
Given two graphs $G_{1},G_{2}$, we define the \emph{union} of $G_{1},G_{2}$ as
the graph $G_{1}\cup G_{2}=(V(G_{1})\cup V(G_{2}), E(G_{1})\cup E(G_{2}))$ and the \emph{intersection} of
$G_{1},G_{2}$ as the graph $G_{1}\cap G_{2}=(V(G_{1})\cap V(G_{2}), E(G_{1})\cap E(G_{2}))$.
A \emph{subgraph} of a graph $G$ is every graph $H$
where $V(H)\subseteq V(G)$ and $E(H)\subseteq E(G).$
If $S \subseteq V(G),$ the subgraph of $G$ \emph{induced by} $S,$ denoted by $G[S],$ is the graph $(S, E(G) \cap \binom{S}{2}).$
We also define $G \setminus S$ to be the subgraph of $G$ induced by $V(G) \setminus S.$
If $S \subseteq E(G),$ we denote by $G \setminus S$ the graph $(V(G), E(G) \setminus S).$
Given a vertex $x\in V(G)$ we define $G\setminus x=G\setminus \{x\}$ and given an edge $e\in E(G)$ we define $G\setminus e=G\setminus \{e\}.$

\paragraph{Paths and separators.}
If $s,t\in V(G)$ are two distinct vertices, an {\em  $(s,t)$-path} of $G$
is any connected subgraph $P$ of $G$ with maximum degree at most $2,$ where $\deg_{P}(s) = 1$ and $\deg_{P}(t) = 1.$
If $s\in V(G)$, an {\em  $(s,s)$-path} of $G$
is the subgraph of $G$ consisting of the single vertex $s$.
The {\em distance} between $s$ and $t$ in $G$ is the minimum number of edges 
of an $(s,t)$-path in $G.$ 
Given a path $P,$ we say that $v \in V(P)$ is an \emph{internal vertex} of $P$ if $\deg_{P}{(v)} = 2,$ while if $\deg_{P}{(v)} = 1$ we say that $v$ is a {\em terminal vertex} of $P.$
We say that two paths $P_{1}$ and $P_{2}$ in $G$ are {\em internally vertex disjoint}
if none of the internal vertices of the one is an internal vertex of the other.
Given an integer $k$ and a graph $G,$ we say that $G$ is {\em $k$-connected} if for each $\{u,v\} \in \binom{V(G)}{2},$
there exist  $k$ pairwise internally vertex disjoint $(u,v)$-paths of $G,$ say $P_{1},\ldots,P_{k},$ such that
%\gstam{δεν αρκεί να πούμε ότι "$G$ is $k$-connected if for each $\{u,v\} \in \binom{V}{2},$ there exists  $k$ pairwise internally disjoint $(u,v)$-paths of $G$", απ' τη στιγμή που ορίσαμε τα εσωτερικώς διακεκριμένα μονοπάτια?}
for each $\{i,j\}\in\binom{[k]}{2},$  $P_{i} \not = P_{j},$ $V(P_{i}) \cap V(P_{j}) = \{u,v\}.$
%Such  paths are called {\em internally vertex disjoint}.
We call 2-connected graphs {\em  biconnected} and 3-connected graphs {\em  triconnected}.
Given a set $S\subseteq V(G),$ we say that $S$ is a {\em  separator} of $G$ if $G$ has fewer connected components than $G\setminus S.$ We call a separator of size $k$ a {\em k-separator}. Notice that, by Menger's theorem,
a graph is $k$-connected iff it does not contain a separator of size less than $k.$

We say that $e\in E(G)$ is a {\em  bridge} if $G$ has fewer connected components than $G\setminus e.$
A graph that does not contain any bridge is called {\em  bridgeless}.
 A {\em  block} of a graph $G$ is either a bridge or a maximal biconnected subgraph of $G$.
 A block of a graph $G$ is called {\em non-trivial} if it is not a bridge.

A vertex $v\in V(G)$ is a {\em  cut-vertex} of $G$ if $\{v\}$ is a separator of $G.$ 
We also say that $S$ is a {\em  rich separator}
if $G\setminus S$ has at least 2 more connected components than $G.$

\paragraph{Special graphs.}
By $K_{r}$ we denote the complete graph on $r$ vertices. Similarly,  by $K_{r_{1},r_{2}}$ we denote the complete bipartite graph of which one part has $r_{1}$ vertices and the other $r_{2}.$ We denote by $K_{r}^{-}$ the graph obtained by $K_{r}$ after removing any edge. 

For an $r\geq 3,$ we denote by $C_{r}$ the connected graph on $r$ vertices of degree 2 (i.e., the cycle on $r$ vertices).
If $G$ is a graph and $C$ is a subgraph of $G$ isomorphic to $C_{r}$ for some $r\geq 3$,
then an edge $e=\{u,v\}\in E(G)\setminus E(C)$ where $u,v\in V(C)$ is called \emph{chord} of $C$. 

For $r\geq 3,$ the {\em $r$-wheel}, denoted by $W_{r},$ is the graph obtained by adding a new vertex $v_{\rm new},$ called the {\em  central vertex} of $W_{r},$ to $C_{r}$ 
along with edges, called {\em spokes}, connecting each vertex of $C_{r}$ with $v_{\rm new}.$ The subgraph $W_{r}\setminus v_{\rm new}$ is called the {\em circumference} of $W_{r}.$

{We denote by $Z$ the {\em butterfly} graph, that is the graph obtained by the disjoint union of two $K_{3}$ after identifying two of their vertices (see rightmost figure of \autoref{ostructionspse}).}
%\aliv{Why the whole text again?}\gstam{Αυτό είναι εντάξει;}
%\aliv{Κομπλέ}
% Given a graph $G$ and an $r\geq 1$ we denote by $rG$ the graph with $r$ connected components, each isomorphic to $G.$\sed{Is this used somewhere? I have not seen it!}

A graph $G$ is {\em  outerplanar} if it can be embedded in the plane so that there's no crossing edges and all its vertices lie on the same face.
It is known that the obstruction set of the class of outerplanar 
graphs is $\{K_{2,3},K_{4}\}$. The outer face of such an embedding contains every vertex of $G.$ Thus, we can observe the following:

\begin{observation}\label{decoding}
	If $G$ is biconnected and outerplanar then $G$ contains a {\em Hamiltonian cycle}, i.e. a cycle that contains every vertex of $G.$
\end{observation}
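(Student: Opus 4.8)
The plan is to read the Hamiltonian cycle directly off the boundary of the outer face of an outerplanar embedding. Fix an embedding of $G$ in the plane with no crossing edges in which every vertex lies on the boundary of one common face; as the preceding discussion records, we may take this common face to be the outer (unbounded) face $f_\infty$, so that every vertex of $G$ lies on the boundary of $f_\infty$. Let $W$ be the facial walk bounding $f_\infty$, i.e. the closed walk obtained by traversing the boundary of $f_\infty$ once. The whole proof then reduces to a single claim: that $W$ is in fact a \emph{cycle}. Indeed, once this is known, $W$ is a cycle passing through every vertex of $G$ (since all vertices lie on the boundary of $f_\infty$), which is precisely a Hamiltonian cycle.

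To establish that $W$ is a cycle I would use biconnectivity to exclude the two ways in which a facial walk can fail to be a simple cycle. First, since $G$ is a simple biconnected graph it has at least three vertices and contains no bridge; hence no edge is traversed twice along $W$, because an edge repeated on a single facial walk would be a bridge. Second, suppose some vertex $v$ appeared more than once along $W$. Then the cyclic arrangement (rotation) of edges at $v$ would split the remainder of the boundary into two arcs meeting the plane in distinct ``sectors'' around $v$, and deleting $v$ would separate the vertices of one arc from those of the other; thus $\{v\}$ would be a separator of $G$, i.e. $v$ a cut-vertex, contradicting that biconnected graphs have no separator of size $1$. Since $W$ repeats neither an edge nor a vertex, it is a cycle, and the conclusion follows.

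The only genuinely delicate point is the second one: that a repeated vertex on the outer facial walk forces a cut-vertex. This is where planarity of the embedding and $2$-connectivity must be combined with some care, since the argument relies on the local rotation at $v$ separating the boundary into parts that cannot be reconnected without passing through $v$. I would present this through the standard fact that in a $2$-connected plane graph the boundary of every face is a cycle, applied here to the outer face $f_\infty$; this packages the topological bookkeeping cleanly and avoids an ad hoc ear-decomposition argument. With that fact in hand the conclusion is immediate: the boundary of $f_\infty$ is a cycle whose vertex set is all of $V(G)$, hence a Hamiltonian cycle of $G$.
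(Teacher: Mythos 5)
Your proof is correct and matches the paper's intent exactly: the paper states this observation without proof, justifying it only by the preceding remark that all vertices of an outerplanar embedding lie on the outer face, which is precisely the argument you flesh out. Your packaging of the delicate step via the standard fact that every face of a $2$-connected plane graph is bounded by a cycle is the right way to make that implicit reasoning rigorous.
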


%\red{In the rest of the paper, whenever we consider a biconnected and outerplanar graph, following Observation~\ref{decoding}, we will directly consider its Hamiltonian cycle.}\gstam{Either we write something like this or cite Observation~\ref{decoding} every time we use it.}

% \end{document}
\paragraph{Minors and topological minors.} We define $G / e,$ the graph obtained from the graph $G$ by {\em contracting} an edge $e = xy \in E(G),$ to be the graph obtained by replacing the edge $e$ by a new vertex $v_{e}$ which becomes adjacent to all neighbors of $x,y$ (apart from $y$ and $x$) and deleting vertices $x,y$.  
Given two graphs $H$ and $G$ we say that $H$ is a {\em  minor} of $G,$ denoted by $H\leq G,$ if $H$ can be obtained by some subgraph of $G$ after contracting edges.
% We say that $H$ is a {\em  proper minor} of $G$ if it is a minor of $G$ but is not isomorphic to $G.$

Given a set ${\cal H}$ of graphs, we write ${\cal H}\leq G$
to denote that there exists an $H\in{\cal H}$ such that $H\leq G$ and we defined $\excl({\cal H})=\{G\mid {\cal H}\nleq G\}$. 
If $H\nleq G$, then we say that {\em $G$ is $H$-minor free}, or, in short, {\em $H$-free}. Also, given a graph $G$ and a set of graphs ${\cal H}$
we say that $G$ is ${\cal H}$-free if it is $H$-free, for each $H\in {\cal H}$.
Given a graph class ${\cal G}$ we say that ${\cal G}$ is {\em  minor-closed} if for every graph $H$ such that $H\leq G$ and $G\in {\cal G}$, it holds that $H\in{\cal G}.$
We also define $\obs({\cal G})$ as the set of all minor-minimal graphs that do not belong in ${\cal G}$
and we call $\obs({\cal G})$ the {\em obstruction set} of the class ${\cal G}.$
%Notice that if ${\cal G}$ is a minor-closed family then $H\in {\cal G}\iff H$ is $\obs({\cal G})$-free.
%According to the theorem of Robertson and Seymour, 
%for every minor-closed graph class ${\cal G},$ the set $\obs(G)$ is a finite set~\cite{RobertsonSXX}.
%For example, it is known that 
%if ${\cal G}$ is the class of all outerplanar graphs then $\obs({\cal G})=\{K_{2,3},K_{4}\},$
%i.e., outerplanar graphs are the $\{K_{2,3},K_{4}\}$-free graphs.

If $e=xy$ is an edge of a graph $G$ then the operation of replacing $e$ by a path of length $2,$ i.e two edges
$\{x,v_{e}\},\{v_{e},y\},$ where $v_{e}$ is a new vertex,
% \gstam{Η $v_{e}$ δημιουργει παρεξήγηση με την αντίστοιχη προκύπτουσα από σύνθλιψη κορυφή. Δεν χρησιμοποιήται πουθενά αλλού, Μήπως να φύγει εντελώς από εδώ?}
is called {\em subdivision} of $e.$ A graph $G$ is called a {\em subdivision} of a graph
$H$ if $G$ can be obtained from $H$ by repeatedly subdividing edges, i.e. by replacing 
some edges of $H$ with new paths between its endpoints, so that the intersection of any 
two such paths is either empty or a vertex of $H.$
The original vertices of $H$ are called {\em branch vertices},
while the new vertices are called {\em subdividing vertices}.
If a graph $G$ contains a subdivision of $H$ as a subgraph, then $H$ is a {\em topological minor} of $G.$
It is easy to see that if $H$ is a topological minor of $G$ then it is also a minor of $G.$

Let $G$ be a subdivision of some wheel $W_r$. In keeping with the notation previously introduced for wheels, we define the {\em spokes} of $G$ to be the paths of $G$ produced by the subdivision of the spokes of $W_r$ and similarly we define the {\em circumference} of $G$ to be the cycle of $G$ produced by the subdivision of the circumference of $W_r.$

The following is an easy consequence of Dirac's Theorem~\cite{Dirac52apro}, stating 
that if $\delta(G)\geq 3,$ then $G$ contains $K_{4}$ as a minor.
\begin{proposition}\label{hegelian}
	Let $G$ be a biconnected outerplanar graph with at least 3 vertices. Then there exist at least two vertices of $G$ of degree 2.
\end{proposition}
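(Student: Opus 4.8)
The plan is to argue by contradiction, using Dirac's Theorem in the form quoted just before the statement: a graph with minimum degree at least $3$ has $K_{4}$ as a minor. Since $G$ is biconnected it has $\delta(G)\ge 2$, and since $G$ is $K_{4}$-free the contrapositive of Dirac's Theorem gives $\delta(G)\le 2$; hence $\delta(G)=2$ and $G$ has at least one vertex of degree $2$. To produce a second one, I would suppose that $G$ has exactly one vertex $v$ of degree $2$, with $N_{G}(v)=\{a,b\}$, and aim to build from $G$ a minor of minimum degree at least $3$: such a minor contains $K_{4}$ by Dirac's Theorem, and since being a minor is transitive this contradicts that $G$ is $K_{4}$-free. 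I would organise this as a minimal counterexample argument, choosing $G$ with $|V(G)|$ smallest; the base case is the triangle $K_{3}$, all three of whose vertices have degree $2$.

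The easy case is $ab\notin E(G)$. Here I would contract the edge $va$: the resulting vertex is adjacent exactly to $(N_{G}(a)\setminus\{v\})\cup\{b\}$, so it has degree $\deg_{G}(a)\ge 3$, while $b$ merely trades its neighbour $v$ for the contracted vertex and so keeps degree $\deg_{G}(b)\ge 3$; every other vertex is untouched and has degree at least $3$. Thus this minor of $G$ has minimum degree at least $3$, Dirac's Theorem yields $K_{4}\le G$, and we reach a contradiction.

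The hard part, which I expect to be the main obstacle, is the case $ab\in E(G)$, i.e.\ when $v$ lies on a triangle $\{v,a,b\}$: now any single contraction at $v$ suppresses a parallel edge and lowers the degree of one of $a,b$, so it need not raise the minimum degree. The two observations I would exploit are, first, that $v$ can belong to no $K_{4}$-model (its only neighbours $a,b$ are already adjacent, so $v$ is minor-irrelevant), whence $G\setminus v$ is again biconnected and $K_{4}$-free; and second, that applying minimality to $G\setminus v$ forces its degree-$2$ vertices to be exactly $a$ and $b$ (all other vertices keep degree at least $3$ there), so $\deg_{G}(a)=\deg_{G}(b)=3$. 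Writing $c,d$ for the third neighbours of $a,b$, I would then split into two subcases. If $c=d$, then $\{v,a,b\}$ has $\{c\}$ as its only outside neighbour, so either $|V(G)|=4$ and $c$ also has degree $2$, or $c$ is a cut-vertex of $G$; both contradict the hypotheses. If $c\neq d$, I would contract $a$ into $c$ and $b$ into $d$, obtaining either a biconnected $K_{4}$-free minor of minimum degree at least $3$ (Dirac gives $K_{4}$) or a strictly smaller biconnected $K_{4}$-free graph with at most one degree-$2$ vertex (contradicting minimality). The delicate point to handle carefully is the bookkeeping of the degenerate configurations (such as $cd\in E(G)$) in which a contraction suppresses a parallel edge and can create a fresh degree-$2$ vertex; these are finitely many local patterns, and each is disposed of either by exhibiting $K_{4}$ directly or by descending to a strictly smaller counterexample, which is exactly where the minimal-counterexample framing does the work.
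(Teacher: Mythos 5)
The paper gives no written proof of this proposition --- it is asserted as an easy consequence of Dirac's theorem --- so there is nothing to compare your argument against line by line; what matters is whether your Dirac-plus-minimal-counterexample scheme is sound, and in substance it is. Your Case 1 ($ab\notin E(G)$) is complete and correct: contracting $va$ gives a simple minor of minimum degree at least $3$, and Dirac's theorem yields $K_{4}\leq G$. Two points in Case 2 need repair, though neither breaks the argument. First, the biconnectivity of $G\setminus v$ does not follow from $v$ being ``minor-irrelevant''; that observation only gives $K_{4}$-freeness, which is automatic for a subgraph. The correct (easy) argument: if $z$ were a cut vertex of $G\setminus v$, then $z$ would already be a cut vertex of $G$, because in $G\setminus z$ every path through $v$ passes consecutively through $a,v,b$ and can be rerouted along the edge $ab$ (and if $z\in\{a,b\}$, then $v$ is a leaf of $G\setminus z$).

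Second, the dichotomy you state in the subcase $c\neq d$ is off: the branch ``minimum degree at least $3$'' can never occur, since $v$ is still a degree-$2$ vertex of $G':=G/ac/bd$. What has to be shown --- and does hold --- is that the other branch always occurs. The verification uses the structure you already derived: from $N_{G}(a)=\{v,b,c\}$ and $N_{G}(b)=\{v,a,d\}$ one gets $b\notin N_{G}(c)$ and $a\notin N_{G}(d)$, hence $N_{G}(a)\cap N_{G}(c)=N_{G}(b)\cap N_{G}(d)=\emptyset$; therefore the two contractions lower the degree of no vertex outside the merged ones, both merged vertices end with degree at least $3$ (even when $cd\in E(G)$, using $\deg_{G}(c),\deg_{G}(d)\geq 3$), and $v$ is the unique degree-$2$ vertex of $G'$. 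Biconnectivity of $G'$ also holds: contracting an edge $xy$ of a biconnected graph on at least four vertices preserves biconnectivity unless $G\setminus\{x,y\}$ is disconnected, and every component of $G\setminus\{a,c\}$ must (by biconnectivity of $G$) contain a neighbour of $a$ other than $c$, i.e.\ one of the two adjacent vertices $v,b$, so there is only one such component; the same argument applies to the second contraction. With these checks written out, your descent closes all cases; the ``bookkeeping of degenerate configurations'' you deferred is exactly this, and it does go through.
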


\paragraph{\bf Triconnected components.}
Let $q\in\Bbb{N}^{+}$. Let $G$ be a graph and $S\subseteq V(G)$ and let $V_{1},\ldots,V_{q}$ be  the vertex sets of the connected components of $G\setminus S.$ We define ${\cal C}(G,S)=\{G_{1},\ldots,G_{q}\}$ where, for $i\in [q],$
$G_{i}$ is the graph obtained from $G[V_{i}\cup S]$ if we add all edges  between vertices in $S.$ We call the members of the set ${\cal C}(G,S)$ {\em augmented connected components}. Given a vertex $x\in V(G)$ we define ${\cal C}(G,x) = {\cal C}(G,\{x\}).$

It is easy to observe the following:

\begin{observation}\label{bicoobs}
Let $G$ be a biconnected graph and $S$ be a 2-separator of $G$. Then every $H\in {\cal C}(G,S)$ is biconnected.
\end{observation}

To get some intuition why the above observation holds, we set $S=\{x,y\}$ and notice that, for
every pair $u,v$ of vertices of $H\setminus S$,
among two vertex-disjoint $(u,v)$-paths in $G$, at most one ``exits'' $H$ and this path can be ``represented'' by the edge $xy$ of $H$.

Given a graph $G,$ the set ${\cal Q}(G)$ of its {\em  triconnected components} is recursively defined as follows:
\begin{itemize}
	\item If $G$ is triconnected or a complete graph on at most $3$ vertices, then ${\cal Q}(G)=\{G\}.$
	\item If $G$ contains a separator $S$ where $|S|\leq 2,$ then ${\cal Q}(G)=\bigcup_{H\in{\cal C}(G,S)}{\cal Q}(H).$
\end{itemize}

Notice that all graphs in ${\cal Q}(G)$ are either complete graphs on at most 3 vertices 
or triconnected graphs (graphs without any separator of size less than 3).
The study of triconnected components of  plane graphs
dates back to the work of Saunders Mac Lane in~\cite{maclane1937} (see also~\cite{Tutte66}).
{He also proved that the set ${\cal Q}(G)$ is uniquely determined (up to isomorphism), although different choices of the 2-separators may have been chosen in its recursive definition.}
%\begin{proposition}[\cite{maclane1937}]
%For every graph $G$, the set ${\cal Q}(G)$ is well-defined.
%\end{proposition}}
% Also, given $G,$ ${\cal Q}(G)$ can be constructed in linear time using the celebrated algorithm of Hopcroft and Tarjan in~\cite{HT73}.
\begin{observation}
	\label{supplier}
	Let $G$ be a graph. All graphs in ${\cal Q}(G)$ are topological minors of $G.$
\end{observation}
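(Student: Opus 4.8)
The plan is to argue by induction on $|V(G)|$, following the recursive definition of ${\cal Q}(G)$, and to reduce everything to a single local statement about augmented connected components. In the base case $G$ is triconnected or a clique on at most three vertices, so ${\cal Q}(G)=\{G\}$ and $G$ is trivially a topological minor of itself. For the inductive step, assume $G$ has a separator $S$ with $|S|\le 2$, so that ${\cal Q}(G)=\bigcup_{H\in{\cal C}(G,S)}{\cal Q}(H)$. Every $F\in{\cal Q}(G)$ then belongs to ${\cal Q}(H)$ for some $H\in{\cal C}(G,S)$, and each such $H=G[V_i\cup S]$ (plus edges inside $S$) has strictly fewer vertices than $G$: a separator leaves at least two components, so $V_i$ omits at least one vertex of $G$, giving $|V(H)|=|V_i|+|S|<|V(G)|$. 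Since the topological-minor relation is transitive (a subdivision of a subdivision is a subdivision), the induction hypothesis gives $F$ a topological minor of $H$, and it suffices to prove the one key step below.

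\emph{Key step: every $H\in{\cal C}(G,S)$ is a topological minor of $G$.} Recall that $H$ is obtained from the induced subgraph $G[V_i\cup S]$ by adding all edges inside $S$. If $|S|\le 1$ no edge is added, so $H=G[V_i\cup S]$ is literally a subgraph of $G$ and the claim is immediate. The only substantial case is $|S|=2$, say $S=\{u,v\}$, where the edge $uv$ may have been added. If $uv\in E(G)$ then again $H=G[V_i\cup S]$ is a subgraph. Otherwise I would realise $uv$ by an $(u,v)$-path $P$ whose internal vertices all lie outside $V_i\cup\{u,v\}$; taking $G[V_i\cup\{u,v\}]$ together with $P$ produces a subgraph of $G$ that is a subdivision of $H$, with $P$ playing the role of the subdivided edge $uv$ and all other edges of $H$ present in $G$ and sharing no internal vertices with $P$. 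This is exactly what being a topological minor requires.

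The hard part — and the only place the argument is not purely formal — is producing the path $P$, i.e. showing that $u$ and $v$ are joined through a component other than $V_i$. Here I would use that $S$ may be taken to be an inclusion-minimal separator of the (connected) graph under consideration. Minimality forces neither $u$ nor $v$ to be a cut-vertex, and then a short argument settles the matter: if some component of $G\setminus S$ were adjacent to only one of $u,v$, deleting that vertex would isolate the component, contradicting minimality. Hence \emph{every} component of $G\setminus S$ is adjacent to both $u$ and $v$. Since $S$ separates $G$ there is a component $V_j\neq V_i$; choosing a neighbour of $u$ and a neighbour of $v$ inside $V_j$ and connecting them through the connected set $V_j$ yields the required $(u,v)$-path $P$, internally disjoint from $V_i\cup S$. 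I would flag this step as the main obstacle, since the literal nondeterministic phrasing of the decomposition (permitting a non-minimal $S$) can fail — for instance a path-like separator across a cut-vertex can create an augmented triangle inside a forest — so the proof really hinges on choosing the separator so that each side sees all of $S$, which is precisely guaranteed by minimality.
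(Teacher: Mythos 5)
Your proof is essentially correct, and it is worth saying up front that the paper supplies no proof at all for \autoref{supplier}: it is presented as a self-evident remark, so yours is not a variant of the paper's argument but the missing folklore argument written out. The induction on $|V(G)|$, the use of transitivity of the topological-minor relation, and the reduction to the single key claim that each augmented component $H\in{\cal C}(G,S)$ is a topological minor of $G$ (realising the virtual edge $uv$ by a $(u,v)$-path running through a different component of $G\setminus S$) are all sound. The most valuable part of your proposal is the caveat you flag at the end, which is a genuine defect of the paper's definition rather than of your proof: read literally, the recursion permits \emph{any} separator of size at most two, and then the observation is simply false. Concretely, in the path with vertex sequence $a,b,c,d,e$ the set $\{b,d\}$ is a separator, the augmented component on $\{b,c,d\}$ is a triangle, and a triangle is not a topological minor (not even a minor) of a forest. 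So some discipline in choosing $S$ is indispensable. Your inclusion-minimality requirement is one clean repair; the repair implicit in the classical Mac\,Lane--Tutte decomposition, and the one the paper tacitly relies on (it only ever applies ${\cal Q}$ to biconnected graphs, cf.\ \autoref{fanfares} and \autoref{etiology}), is to split at cut-vertices first, since in a biconnected graph \emph{every} $2$-separator has the property that each component of $G\setminus S$ attaches to both its vertices, and the augmented components are again biconnected, so arbitrary choices are then safe.

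One wrinkle you should make explicit: your claim that minimality forces every component of $G\setminus S$ to be adjacent to both $u$ and $v$ is true only for connected $G$; in a disconnected graph a component disjoint from $S$ is adjacent to neither, and the virtual edge must instead be routed through the component containing $u$ and $v$. You do restrict to ``the (connected) graph under consideration'', but for the induction to remain in that setting you need the observation that every augmented component of a connected graph is itself connected: at least one of $u,v$ has a neighbour in $V_i$ (otherwise $V_i$ would already be a component of $G$), and the added edge $uv$ ties in the other vertex. With that one line added, your induction is self-contained; the genuinely disconnected case is a non-issue anyway, because there the paper's recursion is not even well defined (two disjoint triangles satisfy neither base case and admit no separator of size at most two).
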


Let $G$ be a graph and  $v\in V(G)$ where $\deg_{G}(v)\geq 4.$ Let also ${\cal P}_{v}=\{A,B\}$ be a partition of $N_{G}(v)$
such that $|A|,|B|\geq 2.$  We define the {\em  ${\cal P}_{v}$-split
	of $G$} to be the graph $G'$  obtained by adding,
in the graph $G\setminus v,$ two new adjacent vertices $v_{A}$ and $v_{B}$ and making $v_{A}$ adjacent to the vertices of $A$ and $v_{B}$ adjacent to the vertices of $B.$ If $G'$ can be obtained by some ${\cal P}_{v}$-split
of $G,$ we say that $G'$ is a {\em  splitting} of $G.$
\begin{observation}
	\label{required}
	If $G'$ is a splitting of $G$ then $G$ is a minor of $G'.$
\end{observation}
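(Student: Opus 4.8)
The plan is to exhibit $G$ directly as the result of a single edge contraction applied to $G'$, which immediately certifies that $G$ is a minor of $G'$. Recall that, writing $G'$ for the ${\cal P}_v$-split of $G$ associated to a vertex $v$ with $\deg_G(v)\geq 4$ and a partition ${\cal P}_v=\{A,B\}$ of $N_G(v)$, the vertex set of $G'$ is $(V(G)\setminus\{v\})\cup\{v_A,v_B\}$ and its edges are those of $G\setminus v$, the new edge $v_Av_B$, all edges joining $v_A$ to the vertices of $A$, and all edges joining $v_B$ to the vertices of $B$.

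First I would contract the single edge $v_Av_B$ in $G'$. By the definition of contraction given above, this replaces $v_A$ and $v_B$ by one new vertex $v_e$ that becomes adjacent to every neighbor of $v_A$ or of $v_B$ other than $v_A,v_B$ themselves. The neighbors of $v_A$ apart from $v_B$ are exactly the vertices of $A$, and the neighbors of $v_B$ apart from $v_A$ are exactly the vertices of $B$; hence $v_e$ is adjacent precisely to $A\cup B=N_G(v)$, while the rest of $G'$, namely the subgraph induced by $V(G)\setminus\{v\}$, is untouched and equals $G\setminus v$.

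The one point that needs checking — and essentially the only obstacle — is the simple-graph bookkeeping: since the paper suppresses any multiple edges created by a contraction, I must verify that no such suppression discards an adjacency. Because $\{A,B\}$ is a \emph{partition} of $N_G(v)$, the sets $A$ and $B$ are disjoint, so no vertex of $G\setminus v$ is joined to both $v_A$ and $v_B$; consequently the contraction of $v_Av_B$ creates no parallel edges and $v_e$ retains exactly the neighborhood $N_G(v)$. (Had $A$ and $B$ been allowed to overlap, some adjacency could have been lost, but the partition hypothesis rules this out.)

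Finally I would identify $v_e$ with $v$ and conclude that $G'/v_Av_B$ is isomorphic to $G$: the two graphs agree on $V(G)\setminus\{v\}$, carry the same induced subgraph $G\setminus v$ there, and the remaining vertex ($v_e$, respectively $v$) has neighborhood $N_G(v)$ in each. Since a single edge contraction is a minor operation, this shows $G\leq G'$, as required.
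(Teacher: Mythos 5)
Your proof is correct and is exactly the argument the paper leaves implicit: the statement appears there as an observation with no written proof, the intended justification being precisely yours, namely that contracting the new edge $v_Av_B$ in $G'$ yields a graph isomorphic to $G$. One minor quibble: your parenthetical claim that an overlap between $A$ and $B$ could lose an adjacency is not accurate---by the paper's convention, parallel edges created by a contraction are suppressed to a single simple edge, so the adjacency would survive in any case---but since $\{A,B\}$ is a partition this side remark never comes into play and does not affect the proof.
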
 

\begin{proposition}[Tutte~\cite{Tutte61athe}]
	\label{marching}
	A graph $G$ is triconnected if and only if there is a sequence of triconnected graphs $G_{0},\ldots,G_{q}$ such that 
	$G_{0}$ is isomorphic to $W_{r}$ for some $r\geq 3,$ $G_{q}=G,$ and for $i\in[q],$ $G_{i}$ is a splitting of $G_{i-1}$ or there exists an $e\in E(G_{i})$ such that $G_{i-1}=G_{i}\setminus e.$
\end{proposition}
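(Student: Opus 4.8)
The plan is to prove the two implications separately, treating the ``if'' direction as the routine one and concentrating the real work on the ``only if'' direction. For the ``if'' direction I would show by induction along the sequence $G_{0},\ldots,G_{q}$ that every $G_{i}$ is triconnected, so that in particular $G=G_{q}$ is. The base case is that every wheel $W_{r}$ is triconnected, which is immediate since deleting any two of its vertices leaves a connected graph (the hub together with a nonempty arc of the rim). For the inductive step, an edge addition $G_{i}=G_{i-1}\setminus e$, read upward as $G_{i}=G_{i-1}+e$, cannot create a separator, so it preserves triconnectivity. The substantive operation is the splitting.

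For splittings I would prove the following: if $G$ is triconnected and $G'$ is a $\mathcal{P}_{v}$-split of $G$ with parts $A,B$ where $|A|,|B|\ge 2$, then $G'$ has no separator of size at most $2$. Assume $\{s,t\}$ is such a separator of $G'$ and do a short case analysis on how many of $v_{A},v_{B}$ lie in $\{s,t\}$. If neither does, then since $v_{A}v_{B}\in E(G')$ the two split vertices lie in a common component of $G'\setminus\{s,t\}$, and merging them back into $v$ exhibits $\{s,t\}$ as a $2$-separator of $G$. If $\{s,t\}=\{v_{A},v_{B\}}$, then $G'\setminus\{v_{A},v_{B}\}=G\setminus v$, which is connected. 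If exactly one of them, say $v_{A}$, lies in $\{s,t\}$ with partner $t\in V(G)$, then because $|B|\ge 2$ the vertex $v_{B}$ retains a neighbor outside $\{t\}$, and deleting $v_{B}$ as well shows that $\{v,t\}$ separates $G$. Each case contradicts the triconnectivity of $G$.

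For the ``only if'' direction I would induct on $|V(G)|$. The base case $|V(G)|=4$ is exactly $G=K_{4}=W_{3}$, since a triconnected graph on four vertices has minimum degree $3$. For $|V(G)|\ge 5$, if $G$ is itself a wheel we are done, so assume it is not. The engine is Tutte's classical lemma that such a $G$ has a \emph{reducible} edge $e$: either $G\setminus e$ is triconnected, or $G/e$ is triconnected with both endpoints of degree at least $3$. In the first case $G\setminus e$ is a triconnected graph with fewer edges, so by the induction hypothesis it has a generating sequence from a wheel, which we extend by the single step $G=(G\setminus e)+e$. In the second case $H:=G/e$ is triconnected on fewer vertices; by the induction hypothesis it has a generating sequence, and I would extend it by reversing the contraction of $e=xy$: split $v_{e}$ into two adjacent vertices playing the roles of $x$ and $y$, sending the private neighbors of $x$ and of $y$ to the respective sides and distributing the common neighbors so that both sides have size at least $2$, and then perform a final batch of edge additions restoring the edges from $x$ and $y$ to their common neighbors. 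By \autoref{required} this is a legitimate reversal and yields $G$ exactly.

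The hard part is twofold. First, and foremost, is the existence of a reducible edge, i.e. Tutte's lemma itself; I expect to establish it by an extremal argument: assuming no edge of $G$ is reducible, consider, over all edges whose contraction destroys triconnectivity, a witnessing $3$-separator chosen so that a largest component of its removal is as large as possible, and then contract along a suitable edge leaving that component to produce a strictly larger one, a contradiction unless $G$ is forced to be a wheel. Second is a bookkeeping obstacle specific to the operations in the statement: a splitting requires the split vertex to have degree at least $4$ and to \emph{partition} (rather than share) its neighborhood, so when $G/e$ is triconnected but $v_{e}$ has degree exactly $3$ the inverse contraction is not directly a legal splitting. I would therefore need the reducible edge supplied by the lemma to be chosen either deletable or contractible with $v_{e}$ of degree at least $4$, recovering the shared neighbors of $x$ and $y$ only through the edge-addition steps. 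Checking that the constraints $|A|,|B|\ge 2$ and the degree condition can always be met simultaneously, so that each reduction step reverses to a legal run of splittings and edge additions, is the delicate point I would have to verify with care.
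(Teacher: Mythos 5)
The paper offers no proof of \autoref{marching} at all: it is imported verbatim from Tutte's 1961 paper \emph{A theory of 3-connected graphs}, so your attempt can only be measured against the classical argument, not against anything internal to this paper. Your ``if'' direction is correct and complete in outline: wheels are triconnected, adding an edge cannot create a new separator, and your three-case analysis showing that a ${\cal P}_{v}$-split of a triconnected graph is again triconnected is sound. (One small repair: you declare the induction for the converse on $|V(G)|$, but in the deletion case $G\setminus e$ has the same vertex set, so the measure must be $|V(G)|+|E(G)|$ or lexicographic on the pair.)

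The genuine gaps are both in the ``only if'' direction, and you have named one of them yourself. First, the existence of a reducible edge in a triconnected non-wheel is not a lemma you may quote and then ``expect to establish'' in one sentence: it is the entire mathematical content of the theorem. Your extremal sketch (over all witnessing $3$-separators pick one with a largest component, then enlarge it) is the standard opening move for the \emph{contractible-edge} lemma, but the full argument---in particular the case analysis showing that total failure forces $G$ to be a wheel, and the interplay between the deletion and contraction alternatives---is several pages in Tutte's paper and in textbook treatments, and nothing in your proposal carries that weight. Second, the mismatch you flag is a real hole rather than bookkeeping: with the lemma in the form you state it (contraction preserving triconnectivity with both endpoints of degree at least $3$), the case $\deg_{G/e}(v_{e})=3$ genuinely cannot be reversed, since a $3$-element neighborhood admits no partition into two parts of size at least $2$, and no amount of subsequent edge addition can substitute for the missing split. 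The standard repair is to prove Tutte's lemma in its original, sharper form: every triconnected graph that is not a wheel has an edge $e=xy$ such that either $G\setminus e$ is triconnected, or $G/e$ is triconnected \emph{and} $x,y$ have no common neighbour. In the latter case $\deg_{G/e}(v_{e})=\deg_{G}(x)+\deg_{G}(y)-2\geq 4$, the sets $N_{G}(x)\setminus\{y\}$ and $N_{G}(y)\setminus\{x\}$ partition $N_{G/e}(v_{e})$ with both parts of size at least $2$, and the splitting along this partition reverses the contraction exactly, with no extra edge additions needed. Your proposed strengthening (``deletable, or contractible with $\deg(v_{e})\geq 4$'') does follow from this sharper lemma and would suffice, but until either version is actually proved the induction does not close.
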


The next proposition is a direct consequence of~\autoref{supplier} and \autoref{marching}.

\begin{proposition}
	\label{shimmers}
	Let $G$ be a graph. $K_{4}\not\leq G$ if and only if none of the graphs in ${\cal Q}(G)$ is triconnected.
\end{proposition}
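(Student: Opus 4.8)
The plan is to prove the equivalence in its contrapositive form, namely that $K_{4}\leq G$ if and only if some graph in ${\cal Q}(G)$ is triconnected, and to treat the two implications separately.

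For the implication that a triconnected member of ${\cal Q}(G)$ forces a $K_{4}$-minor, I would argue as follows. Suppose $H\in{\cal Q}(G)$ is triconnected. By \autoref{supplier}, $H$ is a topological minor, hence a minor, of $G$, so it suffices to show $K_{4}\leq H$. Applying \autoref{marching} to $H$, there is a sequence $H_{0},\ldots,H_{q}=H$ with $H_{0}\cong W_{r}$ for some $r\geq 3$, where each $H_{i}$ is either a splitting of $H_{i-1}$ or satisfies $H_{i-1}=H_{i}\setminus e$ for some edge $e$. In the first case $H_{i-1}\leq H_{i}$ by \autoref{required}, and in the second case $H_{i-1}$ is a subgraph of $H_{i}$, so again $H_{i-1}\leq H_{i}$; by transitivity of the minor relation, $W_{r}\leq H$. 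Since contracting $r-3$ edges of the circumference of $W_{r}$ yields $W_{3}=K_{4}$, we obtain $K_{4}\leq W_{r}\leq H\leq G$, as required.

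For the converse I would induct on $|V(G)|$, showing that if no member of ${\cal Q}(G)$ is triconnected then $K_{4}\not\leq G$. If $G$ is triconnected then ${\cal Q}(G)=\{G\}$ consists of a triconnected graph, contrary to hypothesis; hence the only base case is that $G$ is a clique on at most $3$ vertices, where $K_{4}\not\leq G$ is immediate. Otherwise $G$ has a separator $S$ with $|S|\leq 2$ and ${\cal Q}(G)=\bigcup_{G'\in{\cal C}(G,S)}{\cal Q}(G')$; since each augmented component $G'$ has fewer vertices than $G$ and inherits the hypothesis that none of ${\cal Q}(G')$ is triconnected, the induction hypothesis gives $K_{4}\not\leq G'$ for every $G'\in{\cal C}(G,S)$. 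It then remains to prove the localization statement: if $K_{4}\leq G$ then $K_{4}\leq G'$ for some $G'\in{\cal C}(G,S)$; combined with the previous line this forces $K_{4}\not\leq G$ and closes the induction.

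The localization step is where I expect the real work to lie. My plan is to exploit that $K_{4}$ has maximum degree $3$, so $K_{4}\leq G$ is equivalent to $G$ containing a subdivision $T$ of $K_{4}$, which is a biconnected subgraph. Between any two of the four branch vertices of $T$ there are three internally vertex-disjoint paths in $T$; since $|S|\leq 2$, no two branch vertices can lie in different connected components of $G\setminus S$, as such a pair would admit at most $|S|\leq 2$ internally disjoint paths through $S$. Hence all branch vertices lie in $V_{i_{0}}\cup S$ for a single component $V_{i_{0}}$ of $G\setminus S$. Every subdivided edge of $K_{4}$ then joins two branch vertices of $V_{i_{0}}\cup S$, and any portion of $T$ leaving $V_{i_{0}}\cup S$ must enter and exit through $S$; because the augmented component $G'=G[V_{i_{0}}\cup S]$ carries all edges inside $S$, making $S$ a clique on at most $2$ vertices, each such external excursion runs between the two vertices of $S$ and can be rerouted along the edge of $S$ present in $G'$. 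Performing this rerouting exhibits a subdivision of $K_{4}$ inside $G'$, giving $K_{4}\leq G'$. The delicate point, and the main obstacle, is to make this rerouting rigorous when several subdivided edges make external excursions or when a vertex of $S$ is itself a branch vertex; a short case analysis on the at most two vertices of $S$ should settle it, and the same argument covers the easy case $|S|\leq 1$, where $T$ already lies in a single $G'$ because a biconnected subgraph has no cut-vertex.
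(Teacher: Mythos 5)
Your proof is correct, and it does substantially more than the paper, which disposes of this proposition in a single sentence declaring it ``a direct consequence of \autoref{supplier} and \autoref{marching}.'' That remark transparently yields only your first implication: if $H\in{\cal Q}(G)$ is triconnected, then \autoref{marching} builds $H$ from some wheel $W_{r}$ by splittings and edge additions, both of which preserve the minor relation (\autoref{required}, respectively taking subgraphs), so $K_{4}=W_{3}\leq W_{r}\leq H\leq G$ via \autoref{supplier} --- exactly your argument. The converse implication --- the one the paper actually needs later, since \autoref{etiology} uses that $K_{4}\leq G$ forces a triconnected member of ${\cal Q}(G)$ --- is nowhere argued in the paper, and your induction on $|V(G)|$ plus the localization lemma (a $K_{4}$-minor survives into some augmented component across a separator of size at most $2$) supplies exactly the missing content. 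Moreover, the case analysis you defer is cleaner than you fear: every excursion of the subdivision $T$ outside $G[V_{i_{0}}\cup S]$ has both endpoints in $S$ (no edge of $G$ joins distinct components of $G\setminus S$), forcing $|S|=2$, say $S=\{x,y\}$; and two distinct excursions would yield two distinct $x$--$y$ subpaths of $T$, which the structure of a subdivision forbids: a subdividing vertex among $x,y$ lies on a unique branch-to-branch path of $T$, which contains a unique $x$--$y$ subpath, while if both $x,y$ are branch vertices then any excursion must be the unique path of $T$ joining them. So at most one excursion exists in all of $T$, and rerouting it through the edge $xy$ of the augmented component is immediate. The only ingredient you invoke that the paper never states is that $K_{4}\leq G$ if and only if $G$ contains a subdivision of $K_{4}$ (because $K_{4}$ has maximum degree $3$); this is standard, but since the paper records only the reverse direction (topological minor implies minor), it merits a citation or a two-line proof.
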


% 
% 
% \paragraph{Pseudoforests.} A {\em pseudoforest} is a graph whose connected components have at most one cycle.
% We denote by ${\cal P}$ the set of all pseudoforests and observe that ${\cal P}$ is minor-closed.
% As we already observe in the introduction, $\obs({\cal P})=\ourobs$. 
% We say that a graph is {\em  $k$-apex pseudoforest}, or just {\em apex pseudoforest} when $k=1,$ if it contains $k$ vertices whose removal creates a pseudoforest.  Our purpose is to determine the set $\obs({\cal A}_{1}({\cal P})).$

\begin{lemma}
	\label{reliably}
	If $G$ is a triconnected graph that is not isomorphic to $W_{r},$ for every $r\geq 3,$ then $ {{\cal O}^{3}}\leq G.$
\end{lemma}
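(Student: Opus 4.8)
The plan is to reduce the statement, via Tutte's synthesis of triconnected graphs (\autoref{marching}), to understanding a single operation applied to a wheel, and then to exhibit one of the three triconnected obstructions $O_1^3,O_2^3,O_3^3$ of \autoref{machines} — the triangular prism, the graph $K_{3,3}$, and $K_5\setminus e$ — as a minor of $G$. By \autoref{marching} there is a sequence $W_r=G_0,G_1,\ldots,G_q=G$ in which each $G_i$ is a splitting of $G_{i-1}$ or satisfies $G_{i-1}=G_i\setminus e$ for some edge $e$. In the first case $G_{i-1}\leq G_i$ by \autoref{required}, and in the second $G_{i-1}\leq G_i$ since $G_{i-1}$ is a subgraph of $G_i$; hence $G_0\leq G_1\leq\cdots\leq G$, and it suffices to find an obstruction inside any single $G_i$. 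The key simplification I would use is that a single operation on a wheel never returns a wheel: adding an edge to $W_r$ produces $2r+1$ edges, whereas a wheel on $r+1$ vertices has $2r$ edges; and a split of the central vertex produces two vertices of degree at most $r-1$, so no vertex is adjacent to all others. Since $G\neq W_r$ forces $q\geq 1$, the graph $G_1$ exists, is a minor of $G$, is \emph{not} a wheel, and is obtained from $W_r$ by exactly one operation. Finally $r\geq 4$, since $W_3=K_4$ is complete and $3$-regular and therefore admits neither an edge addition nor a split (a split needs a vertex of degree $\geq 4$), which would force $G=K_4=W_3$, a wheel.

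Next I would distinguish the two operations producing $G_1$ from $W_r$. A rim vertex has degree $3$, hence cannot be split, so either \textbf{(a)} $G_1$ is $W_r$ together with a chord of the rim cycle $C_r$, or \textbf{(b)} $G_1$ is a split of the central vertex $v_{\rm new}$. For case \textbf{(a)}, let the chord join the rim vertices $a$ and $b$. Because $a$ and $b$ are non-adjacent on $C_r$ and $r\geq 4$, each of the two $a$--$b$ arcs of $C_r$ has at least one internal vertex; contracting the interior of each arc to a single vertex, say $X$ and $Y$, leaves the five vertices $\{a,b,X,Y,v_{\rm new}\}$ with every pair adjacent except $X$ and $Y$. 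This is a copy of $K_5\setminus e=O_3^3$, so $O_3^3\leq G_1\leq G$.

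The substantial case is \textbf{(b)}, the central split, which I expect to be the main obstacle because it must be handled uniformly over all $r$ and all admissible partitions of the rim. Here the rim $C_r$ inherits a $2$-colouring of its vertices recording which of the two new hubs $v_A,v_B$ each is joined to, both colour classes have size $\geq 2$, and $v_Av_B$ is an edge. I would organise the argument by the maximal monochromatic runs of this cyclic colouring; these alternate in colour and so number $2m$, with $m$ runs of each colour. If $m=1$, each colour class is a single arc of size $\geq 2$, and contracting each arc into two vertices yields two triangles $\{\alpha_1,\alpha_2,v_A\}$ and $\{\beta_1,\beta_2,v_B\}$ joined by the perfect matching $\alpha_1\beta_2,\alpha_2\beta_1,v_Av_B$ — the prism $O_1^3$. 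If $m\geq 2$, contracting each run to a single vertex gives an alternating cycle $c_1c_2\cdots c_{2m}$ with $v_A$ adjacent to the odd-indexed $c_i$ and $v_B$ to the even-indexed ones; then $\{c_1,c_3,v_B\}$ and $\{c_2,c_{2m},v_A\}$ are the two sides of a $K_{3,3}$ subdivision, every required connection being a single edge except $c_3c_{2m}$, which is supplied by the internally disjoint arc $c_3c_4\cdots c_{2m}$. This gives $O_2^3\leq G_1\leq G$. Combining the three cases, $G$ contains a member of ${\cal O}$, as required; the one place demanding care is the bookkeeping of runs and arc-contractions in case \textbf{(b)} that forces the collapse to the prism or to $K_{3,3}$ regardless of $r$ and of the chosen partition.
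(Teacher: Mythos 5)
Your proposal is correct and follows essentially the same route as the paper's proof: both invoke Tutte's wheel theorem (\autoref{marching}), reduce to the first graph $G_{1}$ of the sequence via $G_{1}\leq G$, rule out $r=3$, and then extract ${O}_{3}^{3}$ ($K_{5}$ minus an edge) from an edge addition and ${O}_{1}^{3}$ (the prism) or ${O}_{2}^{3}$ ($K_{3,3}$) from a split of the hub. The only cosmetic difference is that you organize the split case by maximal monochromatic runs of the rim colouring, whereas the paper picks two vertices from each side of the partition and distinguishes according to whether they interleave on the rim --- an equivalent case distinction.
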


\begin{proof}
	Let $G$ be a triconnected graph not isomorphic to a wheel. By \autoref{marching}, there exists a sequence of graphs
	\begin{equation*}
	W_{r}=G_{0},G_{1},\ldots ,G_{q}=G
	\end{equation*}
	for some $r\geq 3,$ such that for every $i\in\intv{q},$ $G_{i}$ is a splitting of $G_{i-1}$ or there
	exists an edge $e\in E(G_{i})$ such that $G_{i-1}=G_{i}\setminus e.$
	Observe that, since $G\not\cong W_{r}$, we have that $q\geq 1.$ 
	Also observe that $r\geq 4,$
	since if $r=3$ then $q=0$ due to the fact that none of the vertices of $W_{3}$ can be split and all of them are adjacent to one another.
%	\aliv{Is the clarification of $r=3\Rightarrow q=0$ necessary?}\gstam{I think yes!}
	
	Let now $z$ be the central vertex of $W_r$ and $C_r=W_r\setminus z.$ We examine how the graph $G_{1}$ may occur from $W_{r}.$ 
	For that, we distinguish the following two cases and our strategy, in both cases, is to prove that ${\cal O}^{3}\leq G$.
	\smallskip
	
	\noindent{\em  Case 1:} There exists an edge $e\in E(G_{1})$ such that $W_{r}= G_{1}\setminus e.$
	Let $e=uv$ for some $u,v\in V(G_{1})=V(W_{r}).$
	Since every vertex in $V(C_{r})$ is in the neighborhood of $z$ in $W_r$ then $u\neq z$ and $v\neq z.$
	Now, since $u,v$ are not adjacent vertices in $W_{r}$, there exists an internal vertex in each of the two $(u,v)$-paths of the graph $C_r,$ say $x,y,$ respectively (see \autoref{ulterior}).
	Therefore, by contracting each of the $(x,u),(x,v),(y,v),(y,u)$-paths of $C_r$ to an edge we get $\hyperref[rigorous]{{O}_{3}^{3}}$ as a minor of $G_{1}.$ 
	Now, by \autoref{required}, $G_{1}$ is a minor of $G$ and therefore $\hyperref[rigorous]{{O}_{3}^{3}}\leq G$.
	
	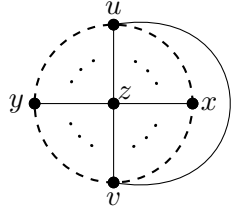
\begin{figure}[H]
		\centering
		\begin{tikzpicture}[every node/.style = black node, circle dotted/.style={dash pattern=on .05mm off 2mm,line cap=round}, scale = 0.7]

		\draw[dashed, thick] (0,0) circle (1.5);
		\foreach \t in {1, ..., 4} {
			\draw (\t * 90:1.5) node {} -- (0,0);
			\draw[line width = 0.4mm,circle dotted] (\t * 90 + 25:0.9) arc (\t * 90 + 25: \t * 90 + 65:0.9);
		}
		\draw (0,0) node {};
		\draw (0,0) node[label=45:$z$] {};
		\draw (90:1.5) node[label=90:$u$] {};
		\draw (180:1.5) node[label=180:$y$] {};
		\draw (270:1.5) node[label=270:$v$] {};
		\draw (360:1.5) node[label=360:$x$] {};
		\draw (90:1.5) .. controls +(10:3) and +(350:3) .. (270:1.5) {};
		
		\end{tikzpicture}
		\vspace{-2mm}\caption{The structure of the graph $G$ in Case 1.}\label{ulterior}
	\end{figure}
	
	\smallskip
	
	\noindent{\em  Case 2:} $G_{1}$ is a splitting of $W_{r}.$
	Observe that $G_{1}$ is a splitting of $W_{r}$ obtained by a ${\cal P}_{z}$-split. 
	So, let ${\cal P}_{z}=\{A,B\}$ and $v_{A},v_{B}$ the new adjacent vertices of $G_{1},$ where $N_{G_{1}}(v_{A}) = A$ and $N_{G_{1}}(v_{B}) = B.$
	We have that $|A|,|B|\geq 2$ and so there exist $x_{1},y_{1}\in A$ and $x_{2},y_{2}\in B.$
	We now distinguish the following subcases:
	\smallskip
	
	\noindent{\em Subcase 2.1:} One of the two $(x_{1},y_{1})$-paths in $C_r$ contains both of $x_{2},y_{2}$ (see leftmost figure of \autoref{wrapping}).
		This implies that $\hyperref[japanese]{{O}_{1}^{3}}\leq G_{1}$ and, as in case 1, it follows that $\hyperref[japanese]{{O}_{1}^{3}} \leq G$.
	\smallskip
		
	\noindent{\em Subcase 2.2:} Each one of the two $(x_{1},y_{1})$-paths in $C_r$ contains exactly one of $x_{2},y_{2}$ (see rightmost figure of \autoref{wrapping}).
		This implies that $\hyperref[persuade]{{O}_{2}^{3}}\leq G_{1}$ and, as in case 1, it follows that $\hyperref[persuade]{{O}_{2}^{3}}\leq G$.
	
	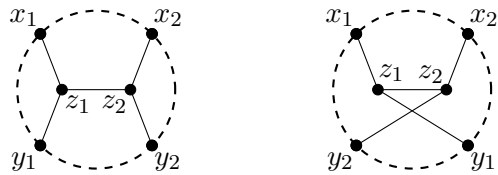
\begin{figure}[H]
		\centering
		\begin{tikzpicture}[every node/.style = black node, scale = 0.7,circle dotted/.style={dash pattern=on .05mm off 2mm,line cap=round}]
		
		\begin{scope}
		\draw[dashed, thick] (0,0) circle (1.5);
		\foreach \t in {1, 2} {
			\draw (\t * 90+45:1.5) node {}-- (-0.65,0);
			\draw (\t* 90+225:1.5) node {}-- (0.65,0);
		}
		\draw (135:1.5) node[label=135:$x_{1}$] {};
		\draw (225:1.5) node[label=225:$y_{1}$] {};
		\draw (315:1.5) node[label=315:$y_{2}$] {};
		\draw (45:1.5) node[label=45:$x_{2}$] {};
		\draw (180:0.65) node[label=324:$z_{1}$] {} -- (360:0.65) node[label=216:$z_{2}$] {};
		\end{scope}
		
		\begin{scope}[xshift = 6cm]
		\draw[dashed, thick] (0,0) circle (1.5);
		
		\foreach \t in {1, 2} {
			\draw (\t * 180+315:1.5) node {}-- (-0.65,0);
			\draw (\t* 180+45:1.5) node {}-- (0.65,0);
		}
		\draw (135:1.5) node[label=135:$x_{1}$] {};
		\draw (225:1.5) node[label=225:$y_{2}$] {};
		\draw (315:1.5) node[label=315:$y_{1}$] {};
		\draw (45:1.5) node[label=45:$x_{2}$] {};
		\draw (180:0.65) node[label=67.5:$z_{1}$] {} -- (360:0.65) node[label=135:$z_{2}$] {};
		\end{scope}
		
		\end{tikzpicture}
		\vspace{-2mm}\caption{The structure of the graph $G$ in the two Subcases of Case 2.}\label{wrapping}
	\end{figure}

	Since we have exhausted all possible cases for $G_{1}$ we conclude that ${{\cal O}^{3}}\leq G$.
\end{proof}

\paragraph{Disconnected obstructions.} %We need the following result by Dinneen (see~\cite{Dinneen97}).
%\sed{Θὰ ἤθελα νὰ ἐπαληθεύσετε ὅτι πράγματι αὐτὸ ποὺ γράφω ἐδῶ εἶναι ἰσοδύναμο με το αποτέλεσμα του Dinneen (ἐγῶ εἶμαι σίγουρος!). }
%\reviewer{Dineen’s proposition is not really needed, and makes this discussion unnecessarily complicated. I recommend a short,	direct proof of Lemma 2.9 appealing to elementary arguments.}

%\begin{proposition}
%	\label{residual}
%	Let ${\cal H}$ be a set of connected graphs.
%	For every $k\in\Nbb$, if  $H\in {\cal H}^{(k)}$
%	and $H_{1},\ldots,H_{r}$ are the connected components of $H$, then there is a sequence $k_{1},\ldots,k_{r}$ such that $\sum_{i\in[r]}k_i=k+1$ and $H_{i}\in   {\cal H}^{(k_{i}-1)}, i\in[r]$.
%\end{proposition}

We now prove that every disconnected graph in $\obs({\cal A}_{1}({\cal P}))$ is in ${\cal O}^{0}$.

\begin{lemma}
	\label{hundreds}
	If $G\in \obs({\cal A}_{1}({\cal P}))\setminus  {\cal O}$, then $G$ is connected.
\end{lemma}

%\gstam{Υπάρχει απόδειξη γραμμένη στα σχόλια.}
%\begin{proof}
%	As both graphs in $\obs({\cal P})=\ourobs$ are connected, \autoref{residual} applies for ${\cal H}=\obs({\cal P})$ and $k=1$. This means that  that ${\cal O}^{0}$
%	contains all disconnected graphs in $\obs({\cal A}_{1}({\cal P}))$. Therefore every $G\in \obs({\cal A}_{1}({\cal P}))\setminus  {\cal O}$ should be a connected graph.
%\end{proof}

% \aliv{Εδώ θα μπορούσαμε να μην γράψουμε απόδειξη και να πούμε ότι το αποτέλεσμα έπεται από το αντίστοιχο αποτέλεσμα του Dinneen.}
 \begin{proof}
 Suppose, to the contrary, that $G$ is not connected.
 Notice that, since $G\not\in {\cal A}_{1}({\cal P})$, there exists a connected component $H$ of $G$ that contains at least two cycles. Also notice that, due to
 $\{\hyperref[magnetic]{{O}_{1}^{0}},\hyperref[consiste]{{O}_{2}^{0}},\hyperref[narcotic]{{O}_{3}^{0}}\}$-freeness of $G$, $H$ is the unique connected component of $G$ that contains at least two cycles.
 Now, since $G$ is not connected, $H$ is a minor of $G$ that is not isomorphic to $G$. This together with the fact that $G\in \obs({\cal A}_{1}({\cal P}))$  implies that $H\in {\cal A}_{1}({\cal P})$ and therefore there exists a vertex $v\in V(H)$ such that $H\setminus v\in {\cal P}$.
 But then, since every connected component of $G$ different than $H$ contains at most one cycle, we have that $G\setminus v\in {\cal P}$ which implies that $G\in {\cal A}_{1}({\cal P})$, a cotnradiction.
 \end{proof}

%\hh

\section{Auxiliary lemmata}
\label{fissures}

%The structure of this section is as follows:

%We begin by presenting, in \autoref{obtained}, three reductions: conditions which hold for every $G\in \obs({\cal A}_{1}({\cal P}))$ and serve to greatly simplify the class of graphs we must consider as potential %obstructions. 

%In Section 2.2, we present a series of useful Lemmata which will repeatedly be used in the course of our proof.  

%Following that, in Sections 2.3-6, we consider the potential connectivity of a supposed graph $G\in  \obs({\cal A}_{1}({\cal P}))\setminus  {\cal O}$ and show that it can be neither 0-, 1-, 2-, or $\geq 3$-connected %and therefore no such $G$ exists, thereby proving the desired inverse inclusion.
% \gstam{Να μεταφερθεί το disconnected case εδώ;}
% 

By \autoref{hundreds}, 
we know that a graph $G\in \obs({\cal A}_{1}({\cal P}))\setminus  {\cal O}$ 
should be connected. In this section we prove a series of lemmata
that  further restrict the structure of  the graphs in $\obs({\cal A}_{1}({\cal P}))\setminus  {\cal O}$.

\subsection{General properties of the obstructions}

\label{obtained}

Given a graph $G$ and a vertex $v\in V(G)$ we say that $v$ is {\em  simplicial} if $G[N_{G}(v)]$ is isomorphic to $K_{r}$ for $r = \deg_{G}(v).$

Given a graph class ${\cal G}$, a graph $G$, and a vertex $x$, where $G\setminus x\in{\cal G},$ then we say that $x$ is a {\em ${\cal G}$-apex} of $G.$

\begin{lemma}
	\label{generals}
	If $G\in \obs({\cal A}_{1}({\cal P}))\setminus  {\cal O}$ then
	\begin{enumerate}
		\item $\delta(G)\geq 2,$
		\item $G$ is bridgeless, and
		\item all its vertices of degree $2$ are simplicial.
	\end{enumerate}
\end{lemma}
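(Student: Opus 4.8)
The plan is to establish all three items through the same minimal-obstruction paradigm supplied by \autoref{selected}: starting from $G\in\obs({\cal P}^{(1)})\setminus{\cal O}$, I perform one minor operation to obtain a proper minor $G'$, invoke minimality to get $G'\in{\cal P}^{(1)}$, take an apex $w$ of $G'$, and argue that the same (or a nearby) vertex is an apex of $G$ — contradicting $G\notin{\cal P}^{(1)}$. By \autoref{hundreds} I may assume $G$ is connected throughout.

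For item~1, suppose some $v$ satisfies $\deg_{G}(v)\le 1$; note $|V(G)|\ge 2$, since a single vertex lies in ${\cal P}\subseteq{\cal P}^{(1)}$. Then $G\setminus v$ is a proper minor, hence lies in ${\cal P}^{(1)}$ and has an apex $w$. Since $G\setminus w$ arises from the pseudoforest $(G\setminus v)\setminus w$ by reintroducing $v$ with at most one incident edge, and attaching a vertex of degree $\le 1$ creates no new cycle, $G\setminus w$ is a pseudoforest, a contradiction. Item~3 is handled by ``smoothing'': if a degree-$2$ vertex $v$ with neighbours $a,b$ has $ab\notin E(G)$, contract $vb$ to obtain the proper minor $G'$ in which the path $a\,v\,b$ is replaced by the edge $ab$. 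Pick an apex $w$ of $G'\in{\cal P}^{(1)}$. If $w\notin\{a,b\}$, then $G\setminus w$ is exactly $G'\setminus w$ with the edge $ab$ subdivided by $v$; if $w\in\{a,b\}$, it is $G'\setminus w$ with $v$ added as a pendant. As both subdivision and pendant-addition preserve pseudoforests, $G\setminus w\in{\cal P}$, again contradicting $G\notin{\cal P}^{(1)}$.

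The main obstacle is item~2, where the naive move of deleting the bridge fails: if $e=xy$ is a bridge and $G\setminus e=C_x\sqcup C_y$, an apex of $G\setminus e$ need not lift to an apex of $G$, because reattaching $e$ can glue two unicyclic components into a bicyclic one. I would instead \emph{contract} the bridge. Using item~1 ($\delta(G)\ge 2$), each of $C_x,C_y$ is connected with at most one vertex of degree below $2$, so (having at least two vertices) it is not a tree and hence contains a cycle. Consider the proper minor $G/e$, whose contracted vertex $z$ is a cut vertex gluing $C_x$ and $C_y$ at $z=x=y$. The key computation is that a bridge-join and a one-vertex-join of two connected graphs have equal cyclomatic number; hence for every $u\in V(G)\setminus\{x,y\}$ the graphs $G\setminus u$ and $(G/e)\setminus u$ have identical per-component cycle counts, so $(G/e)\setminus u\notin{\cal P}$ because $G\setminus u\notin{\cal P}$. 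Thus the only possible apex of $G/e$ is $z$.

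It remains to exclude $z$, and this is exactly where I would invoke that $G$ is ${\cal O}$-minor-free. If $(G/e)\setminus z=(C_x\setminus x)\sqcup(C_y\setminus y)$ were a pseudoforest, then both $C_x\setminus x$ and $C_y\setminus y$ would be pseudoforests; but $G\setminus x=(C_x\setminus x)\sqcup C_y$ and $G\setminus y=C_x\sqcup(C_y\setminus y)$ are not pseudoforests (since $G\notin{\cal P}^{(1)}$), forcing each of $C_x$ and $C_y$ to contain two cycles. Two vertex-disjoint double cycles produce a member of $\obs({\cal P})=\ourobs$ inside each of $V(C_x)$ and $V(C_y)$, hence a disjoint union of two such graphs — that is, one of the graphs in ${\cal O}^{0}$ — as a minor of $G$, contradicting that no graph of ${\cal O}$ is a minor of $G$ (\autoref{selected}). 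Therefore $(G/e)\setminus z\notin{\cal P}$ as well, so $G/e\notin{\cal P}^{(1)}$, contradicting minimality, and $G$ is bridgeless. The crux is precisely this item: the correct reduction is contraction rather than deletion, and the disconnected obstructions ${\cal O}^{0}$ are exactly what closes the residual case.
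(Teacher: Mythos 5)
Your proposal is correct and takes essentially the same route as the paper: items 1 and 3 are argued identically (delete the low-degree vertex; contract an edge at the non-simplicial degree-2 vertex and case on the location of the apex), and for item 2 the paper likewise \emph{contracts} the bridge rather than deleting it, obtains a ${\cal P}$-apex of $G/e$ by minimality, and rules out every possible location for it. The only difference is cosmetic, namely the order of the case analysis in item 2: the paper invokes ${\cal O}^{0}$-freeness upfront to force one side of the bridge to have at most one cycle and then cases on the apex, whereas you first exclude every apex other than the contracted vertex (via cycle-preservation under bridge contraction, which is the paper's augmented-component argument) and use ${\cal O}^{0}$-freeness only at the end --- the same implication read in the two opposite directions.
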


\begin{proof}
	(1) Consider a vertex $u\in V(G)$ with $\deg_{G}(u) < 2$. If $G\setminus u\in {\cal A}_{1}({\cal P})$, then also $G\in {\cal A}_{1}({\cal P})$, since $u$ does not participate in a cycle, a contradiction.
	
	(2)  Consider an edge $e=xy$ that is a bridge of $G$. By~\autoref{hundreds}, $G$ is connected.
	Since $e$ is a bridge, then $G\setminus e$ contains two connected
	components $H_{1},H_{2}$, such that $x\in V(H_{1})$ and $y\in V(H_{2})$.
	Observe that by $ {{\cal O}^{0}}$-freeness of $G$, one of $H_{1},
	H_{2}$, say $H_{1}$, contains at most one cycle.
	
	Consider the graph $G'= G / e$ and let $v_{e}$ be the vertex formed
	by contracting $e$.
	We denote $H_{1}^{\prime}, H_{2}^{\prime}$ the	graphs obtained from $H_{1}, H_{2}$ by replacing the vertices $x,y$ with $v_{e}$, respectively.
	Observe that $H_{1}^{\prime}$ also contains at most one cycle.
	By minor-minimality of $G$, it follows that $G'\in {\cal A}_{1}({\cal P})$ and
	therefore there exists some $u\in V(G')$ that is a ${\cal P}$-apex of $G'$.
	So, if $u\in V(H_{1}^{\prime})$ then $v_{e}$ is also a ${\cal P}$-apex of $G'$.
	Therefore we consider the case that $u\in V(H_{2}^{\prime})$. If
	$u=v_{e}$, then every connected component of $H_{2}^{\prime}\setminus v_{e}$ contains at most one cycle.
	Since $H_{2}^{\prime}\setminus v_{e} = H_{2}\setminus y$, it follows that
	every connected component of $G\setminus y$ contains at most one cycle, a contradiction.
%	
%	\st{If $u\not=v_{e}$, then consider the augmented connected component $Q'\in {\cal C}(G', u)$ which
%	contains $v_{e}$.}
%
In the case that $u\not=v_{e}$ we consider the augmented connected component $Q'\in {\cal C}(G', u)$ that contains $v_{e}$ and observe that, since  $u$  is a ${\cal P}$-apex of $G'$, $Q'$ contains at most one cycle.
%
%\st{Also, let $Q$ be the augmented connected component of ${\cal C}(G, u)$ that contains $e$.}
%
%\st{Observe that, since $Q'$ contains at most one cycle, the same holds for $Q$.}
%
This implies that if $Q$ is the augmented connected component of ${\cal C}(G, u)$ that contains $e$, then $Q$ also contains at most one cycle.
%
%\st{Hence, $G\setminus u\in {\cal P}$, a contradiction.}
%	
The latter, together with the fact that ${\cal C}(G', u)\setminus\{Q'\}={\cal C}(G, u)\setminus \{Q\}$ implies that $G\setminus u\in {\cal P}$, a contradiction.

	(3) Suppose, to the contrary, that there exists a non-simplicial vertex  $v  \in V(G)$ of degree 2,
	and let $e \in E(G)$ be an edge incident to $v,$ i.e. $e= uv$ for some $u\in V(G).$ By minor-minimality of $G,$ we have that $G' := G / e \in {\cal A}_{1}({\cal P}).$ 
	Let $x$ be an {${\cal P}$-apex} vertex of $G'$ and $v_{e}$ the vertex formed by contracting $e.$ Observe that, every cycle in $G$ that contains $v$ also contains $u$
	and so if $x = v_{e}$ then $u$ is an {${\cal P}$-apex} vertex of $G,$ a contradiction. Therefore, $x \not= v_{e}$ and so $x \in V(G).$
	Since $v$ is a non-simplicial vertex, the contraction of $e$ can only shorten cycles and not destroy them.
	Hence, $x$ is an {${\cal P}$-apex} vertex of $G,$ a contradiction.
\end{proof}

\medskip

For a graph $G\in \obs({\cal A}_{1}({\cal P}))\setminus  {\cal O}$, observe that, due to \autoref{generals}, all of its connected components and blocks contain a cycle.
Therefore, for the rest of the paper, we always assume that blocks are non-trivial.
Moreover, for such $G$, all graphs in ${\cal Q}(G)$ are either triconnected or isomorphic to $K_{3}$.
%\sed{Πού το χρησιμοποιούμε αυτό;}

\subsection{Properties of obstructions containing a \texorpdfstring{$K_{4}$}{K4}}\label{daughter}

We now prove some results which will be useful in the main section of the proof.

\begin{lemma}
	\label{fanfares}
	If $G$ is a biconnected graph such that ${\cal O}\nleq G$, then there exists at most one triconnected graph in ${\cal Q}(G).$
\end{lemma}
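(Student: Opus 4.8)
The plan is to first show that every \emph{triconnected} member of ${\cal Q}(G)$ must be a wheel, and then to reach a contradiction from the existence of two such wheels by exhibiting a member of ${\cal O}$ as a minor of $G$.

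\emph{Reducing to wheels.} Suppose $Q\in{\cal Q}(G)$ is triconnected, so that $Q$ has at least four vertices and is not one of the small cliques. By \autoref{supplier}, $Q$ is a topological minor, and hence a minor, of $G$. If $Q$ were not isomorphic to a wheel, then \autoref{reliably} would yield some obstruction of ${\cal O}^{3}\subseteq{\cal O}$ as a minor of $Q$, and therefore of $G$, contradicting ${\cal O}\nleq G$. Hence every triconnected member of ${\cal Q}(G)$ is a wheel $W_{r}$ with $r\geq 3$ (recall $W_{3}=K_{4}$). Assume, towards a contradiction, that ${\cal Q}(G)$ contains two distinct triconnected members, and fix two of them, $W_{r_{1}}$ and $W_{r_{2}}$.

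\emph{Separating the two wheels.} Unwinding the recursive definition of ${\cal Q}(G)$, there is a step at which these two components first land in different augmented components; that is, there is a separator $S$ of $G$ with $|S|\leq 2$ and distinct $H_{1},H_{2}\in{\cal C}(G,S)$ with $W_{r_{1}}\in{\cal Q}(H_{1})$ and $W_{r_{2}}\in{\cal Q}(H_{2})$. Since $G$ is biconnected it has no cut-vertex, so $|S|=2$; write $S=\{a,b\}$ and let $X,Y$ be the disjoint vertex sets of the components of $G\setminus S$ carrying the private vertices of the two wheels, so that $V(W_{r_{1}})\subseteq X\cup S$, $V(W_{r_{2}})\subseteq Y\cup S$, and there are no edges of $G$ between $X$ and $Y$.

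\emph{Extracting a diamond on each side.} Next I would show that each side contains a $\diama$-minor using only edges of $G$. Indeed, $W_{r_{1}}$ is realised as a subdivision inside $H_{1}=G[X\cup S]$ augmented by the possibly-added edge $ab$; that single edge lies on at most one subdivision path, so deleting the corresponding edge of $W_{r_{1}}$ leaves a subdivision of $W_{r_{1}}\setminus e$ contained entirely in $G[X\cup S]$ and using no added edge. For every $r\geq 3$ and every edge $e$, the graph $W_{r}\setminus e$ contains $\diama$ as a subgraph (check a missing spoke and a missing rim edge, with $W_{3}\setminus e=\diama$ as base case). Thus $G[X\cup S]$ contains a diamond $D_{1}$ and, symmetrically, $G[Y\cup S]$ contains a diamond $D_{2}$. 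Since $X\cap Y=\varnothing$, we get $V(D_{1})\cap V(D_{2})\subseteq\{a,b\}$, and $D_{1}\cup D_{2}$ is a minor of $G$ obtained using only real edges.

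\emph{Assembling an obstruction.} Finally I would show that $D_{1}\cup D_{2}$, together with the ambient biconnectivity of $G$, contains a graph of ${\cal O}$ as a minor, according to $k:=|V(D_{1})\cap V(D_{2})|$. If $k=0$ the diamonds are vertex-disjoint and $D_{1}\cup D_{2}={O}_{1}^{0}\in{\cal O}$. If $k\in\{1,2\}$ I would additionally retain a separator vertex not yet used and the two internally disjoint paths between $a$ and $b$ guaranteed by biconnectivity (one through each side), and exploit the freedom in the choice of each diamond — in particular choosing a shared separator vertex to be a degree-$2$ vertex of a diamond, so that deleting it still leaves a cycle — to produce an obstruction of ${\cal O}^{1}\cup{\cal O}^{2}$. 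In every case ${\cal O}\leq G$, contradicting the hypothesis and proving the lemma. I expect the main difficulty to be exactly this last step: matching each overlap configuration to a concrete obstruction and, in the shared-vertex case, using the second separator vertex and the cross connections to certify that the assembled minor is genuinely not in ${\cal P}^{(1)}$ — a careless pair of diamonds meeting in a single degree-$3$ vertex would itself be an apex-pseudoforest, so the extra structure forced by biconnectivity is essential.
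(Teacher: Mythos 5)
Your first half coincides with the paper's: by \autoref{supplier} and \autoref{reliably}, every triconnected member of ${\cal Q}(G)$ is a wheel (hence yields a $K_{4}$-subdivision), and biconnectivity forces the separator arising in the recursive definition of ${\cal Q}(G)$ to have size exactly $2$. The divergence---and the gap---is in what happens after that. The paper's entire effort goes into a Claim, proved with Menger's theorem, that in \emph{each} augmented component one can reroute so as to obtain a $\diama$-minor whose two degree-$2$ branch vertices are precisely the separator vertices $x$ and $y$ (two disjoint paths from $S$ to the $K_{4}$-subdivision are contracted, and $x,y$ are then slid onto a common subdivided edge). Once both diamonds are anchored at $\{x,y\}$ in the degree-$2$ position, their union is ${O}_{1}^{2}$ and the contradiction is immediate. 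Your diamonds are instead ``floating'': the construction you give (drop the wheel edge whose subdivision path uses the virtual edge $ab$, then find $\diama$ inside $W_{r}\setminus e$) produces a diamond somewhere in $G[X\cup S]$, with no control over whether $a$ or $b$ lies on it at all, let alone in which position.

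Because of this, your case analysis on $k=|V(D_{1})\cap V(D_{2})|$ cannot be closed as stated. The case $k=0$ is fine (disjoint diamonds give ${O}_{1}^{0}$). But for $k\in\{1,2\}$ you appeal to ``the freedom in the choice of each diamond---in particular choosing a shared separator vertex to be a degree-$2$ vertex'': this freedom is precisely what must be proven, it is exactly the content of the paper's Menger-based Claim, and nothing in your construction supplies it. Nor can the issue be waved away, as you yourself observe: two diamonds meeting in a single vertex, or meeting in $\{a,b\}$ with both $a,b$ of degree $3$ in each diamond, are themselves apex-pseudoforests (deleting the shared vertex, resp.\ $a$, leaves a disjoint union of triangles and paths, resp.\ a star), so in those configurations you would have to import additional paths forced by biconnectivity and then match every resulting configuration against the list ${\cal O}$---work that the proposal explicitly defers (``I would additionally retain\ldots'', ``I expect the main difficulty to be exactly this last step''). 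So what you have is a sound outline whose decisive step---anchoring both diamonds at the separator in the degree-$2$ position, or equivalently the full $k\in\{1,2\}$ analysis---is missing, and that step is the heart of the paper's proof rather than a routine verification.
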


\begin{proof}
	Suppose, to the contrary, that there are at least two triconnected graphs in ${\cal Q}(G)$
	and let $H_{1}, H_{2}$ be two of them. 
	Due to the recursive definition of ${\cal Q}(G)$ and by \autoref{supplier}, 
	there exists a separator $S$ such that $H_{1}, H_{2}$ are topological minors
	of some $G_{1},G_{2} \in {\cal C}(G,S),$ respectively. By the
	biconnectivity of $G$ we have that $S$ is a $2$-separator of $G$. Let $S=\{x,y\}$.
	Since $H_{1},H_{2}$ are triconnected graphs and topological minors of $G$,
	\autoref{reliably} implies that each $H_{i}, \ i\in [2]$ is isomorphic to a wheel.
	Thus, $K_{4}$ is a topological minor of both $H_{1}$ and $H_{2}$.
	
%	\st{Let $Q_{1}, Q_{2}$ be the subdivisions of $K_{4}$ in $G_{1},G_{2},$ respectively.}
	
	Let $R_{1}, R_{2}$ be the subdivisions of $K_{4}$ in $G_{1},G_{2},$ respectively.
	\medskip
	
%	\noindent{\em Claim:} For each $G_{i}, i\in\{1,2\}$, {$K_{4}^{-}$}  is a topological minor of $G_{i}$ such that $x,y$ are {the branch vertices of $K_{4}^{-}$ of degree $2$.}

	\noindent{\em Claim:} For each $G_{i}, i\in\{1,2\}$, there is a subgraph $Q_{i}$  of $G_{i}$ such that the following hold:
	\begin{enumerate}
		\item $Q_{i}$ is a subdivision of $K_{4}$ and
		
		\item there exists an edge $e$ of $K_{4}$ such that $x$ and $y$ are vertices of the path of $Q_{i}$ corresponding to $e$.
	\end{enumerate}

\noindent{\em Proof of Claim:}
	Let $i\in \{1,2\}$.
	By Menger's theorem, there exist two disjoint paths from the separator $S$ to $R_{i}$.
		Let $P, P'$ be the shortest such paths. Now, let $R_{i}^{+}$ be the graph $R_{i} \cup P\cup P'\cup \{\{x,y\},\{xy\}\}$, that is the graph $R_{i}$ together with the paths $P,P'$ and the edge $xy$. 
Clearly, $R_{i}^{+}$ is a subgraph of $G_{i}$.
We now describe how to obtain a graph $Q_{i}$ that is a subgraph of $R_{i}^{+}$ and is a subdivision of $K_{4}$, where the path $P\cup P'\cup \{\{x,y\},\{xy\}\}$ is a subdivided edge of $K_{4}$.

Let $z$, $z'$ be the vertices of $R_{i}$ that are endpoints of $P$, $P'$ respectively. For every edge $e$ of $K_{4}$, let $P_{e}$  be the path of $R_{i}$ corresponding to $e$.

We distinguish the following two cases  based on whether every $(z,z')$-path in $R_{i}$ contains a branch vertex as an internal vertex or not.
\smallskip

\noindent{\em Case 1:} There is a $(z,z')$-path in $R_{i}$ such that none of its internal vertices is a branch vertex of $K_{4}$.\smallskip

In this case, there exists a subdivided edge $e$ of $K_{4}$ such that  $z,z'$ are vertices of $P_{e}$. We set $Q_{i}$ to be the graph obtained from $R_{i}^{+}$ by removing all internal vertices of the $(z,z')$-subpath of $P_{e}$ (see \autoref{bananafig1}).
\begin{figure}[H]
	\centering
	\includegraphics[width=5.5cm]{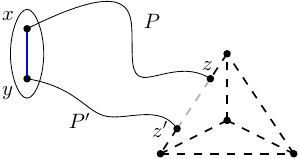}
	\caption{The graph $R_{i}^{+}$ in the case where there is a $(z,z')$-path in $R_{i}$ such that none of its internal vertices is a branch vertex of $K_{4}$. The graph $Q_{i}$ is obtained from $R_{i}^{+}$ after removing all internal vertices of the grey dashed path.}\label{bananafig1}
\end{figure}
%We distinguish two cases, based on whether $z,z'$ are vertices of the same subdivided edge of $K_{4}$, or not:

\noindent{\em Case 2:}
Every $(z,z')$-path in $R_{i}$ contains a branch vertex as an internal vertex.\smallskip

In this case, one of $z,z'$, say $z$, is a subdividing vertex of $K_{4}$. Let $e$ be the subdivided edge of $K_{4}$ that contains $z$. Notice that $z'$ is either a vertex of $P_{e'}$, where $e'$ is the edge of $K_{4}$ such that $e\cap e' =\emptyset$, or 
an internal vertex of $P_{e''}$, where $e''$ is an edge of $K_{4}$ such that $|e\cap e' |=1$.

If $z'$ is a vertex of $P_{e'}$, where $e'$ is the edge of $K_{4}$ such that $e\cap e' =\emptyset$, then let $w$ be an endpoint of $P_{e'}$ such that $w\neq z'$ and let $Q_{i}$ be the graph obtained from $R_{i}^{+}$ by removing all internal vertices of the $(z',w)$-subpath of $P_{e'}$ (see leftmost figure of \autoref{bananafig2}). 

If $z'$ is an internal vertex of $P_{e''}$, where $e''$ is an edge of $K_{4}$ such that $|e\cap e' |=1$, let $w\in V(R_{i})$ be the common endpoint of $P_{e}$ and $P_{e'}$ and let $Q_{i}$ be the graph obtained from $R_{i}^{+}$ by removing all internal vertices of the $(z,w)$-subpath of $P_{e}$ (see rightmost figure of \autoref{bananafig2}).

 \begin{figure}[H]
 	\centering
 	\includegraphics[width=5.5cm]{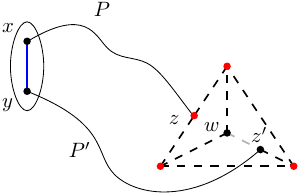}\includegraphics[width=5.5cm]{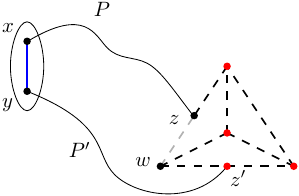}
 	\caption{The graph $R_{i}^{+}$ in Case 2. In both figures, the graph $Q_{i}$ is obtained from $R_{i}^{+}$ by removing all internal vertices of the grey dashed path. The branch vertices of $Q_{i}$ are depicted in red.}\label{bananafig2}
 \end{figure}

Notice that, in all cases above, we obtain a graph $Q_{i}$ that is a subgraph of $G_{i}$ and is a subdivision of $K_{4}$ where the path $P\cup P'\cup \{\{x,y\},\{xy\}\}$ is a subdivided edge of $K_{4}$. Claim follows.

	\smallskip
	
	Therefore, by applying the above Claim for both $G_{1},G_{2}$, and after contracting all edges of  $Q_{i}, i\in\{1,2\}$ that are incident to vertices of degree two in $Q_{i}, i\in\{1,2\}$, we get $\hyperref[connects]{{O}_{1}^{2}}$ as a minor of $G$, a contradiction.
\end{proof}

\medskip

The results of \autoref{reliably} and \autoref{fanfares}, together with \autoref{supplier} and \autoref{shimmers} imply the following corollary:

\begin{corollary}\label{etiology}
	Let $G$ be a biconnected graph such that ${\cal O}\nleq G$ and $K_{4}\leq G.$
	Then there exists a unique triconnected graph $H$ in ${\cal Q}(G)$ such that:
	\begin{itemize}
		\item $H$ is isomorphic to an $r$-wheel for some $r\geq 3$ and
		\item $H$ is a topological minor of $G.$
	\end{itemize}
\end{corollary}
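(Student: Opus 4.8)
The plan is to assemble the four cited results in sequence; since this is advertised as a corollary, I expect no new combinatorial work and the whole argument to be short. First I would invoke \autoref{shimmers}: because $K_{4}\leq G$, the contrapositive of \autoref{shimmers} guarantees that at least one of the graphs in ${\cal Q}(G)$ is triconnected. This secures the \emph{existence} of a triconnected member of ${\cal Q}(G)$.

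Next I would establish \emph{uniqueness}. Since $G$ is biconnected and satisfies ${\cal O}\nleq G$, the hypotheses of \autoref{fanfares} are met, and that lemma tells us that ${\cal Q}(G)$ contains at most one triconnected graph. Combining this with the previous step yields a unique triconnected $H\in{\cal Q}(G)$, which is the candidate asserted in the statement.

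It then remains to identify $H$ as a wheel and to record that it is a topological minor of $G$. The latter is immediate from \autoref{supplier}, which says that every member of ${\cal Q}(G)$ is a topological minor of $G$; in particular $H$ is a topological minor, and hence a minor, of $G$. For the former I would argue by contradiction: suppose $H$ is not isomorphic to $W_{r}$ for any $r\geq 3$. Since $H$ is triconnected, \autoref{reliably} yields ${\cal O}\leq H$, i.e. some obstruction in ${\cal O}$ is a minor of $H$. As $H\leq G$ and the minor relation is transitive, that same obstruction is a minor of $G$, contradicting ${\cal O}\nleq G$. Hence $H\cong W_{r}$ for some $r\geq 3$, giving both bullet points.

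The only delicate point — and the nearest thing to an obstacle — is the bookkeeping of hypotheses: verifying that $G$ meets the biconnectivity and ${\cal O}\nleq G$ requirements of \autoref{fanfares} and \autoref{reliably}, and then using transitivity of $\leq$ (via the fact, already noted in the excerpt, that a topological minor is a minor) to transport the obstruction found \emph{inside} $H$ back up to $G$. Both are routine here, so I anticipate no genuine difficulty in the argument.
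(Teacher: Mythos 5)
Your proof is correct and follows essentially the same route as the paper, which derives \autoref{etiology} precisely by combining \autoref{shimmers} (existence), \autoref{fanfares} (uniqueness), \autoref{supplier} (topological minor), and \autoref{reliably} (wheel structure, via the transitivity argument you describe). Nothing is missing.
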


Let $G$ be a biconnected graph such that ${\cal O}\nleq G$ and  $K_{4}\leq G$
and let $K$ be a subdivision of the (unique) $r$-wheel $H\in {\cal Q}(G),$ as in \autoref{etiology}.
We call the pair $(H,K)$ an {\em ${r}$-wheel-subdivision pair of $G$}.
Notice that there may be many  ${r}$-wheel-subdivision pairs in  $G,$ as there might be  many possible 
choices for $K$, but there is only one choice for $H$.

\bigskip

\begin{lemma}
	\label{analysts}
	Let $G$ be a biconnected graph such that $K_4 \leq G$ and ${\cal O} \not\leq G$. Let $(H,K)$ be an ${r}$-wheel-subdivision pair of $G$.
	Then for every $x,y \in V(K)$ and every $(x,y)$-path that intersects $K$ only in its endpoints, there exists an edge $e\in E(H)$ such that $x,y$ are both vertices of the subdivision of $e$ in $K$.
	
	%\begin{enumerate}
	%\item there exists a separator $S'=\{z,w\}\subseteq V(K)$ such that $z,w$ are adjacent vertices of the $(x,y)$-path in the subdivision of $e,$ and
	%\item for any other separator $S'\subseteq V(K)$ and any $H_{1}\in {\cal C}(G,S), H_{2}\in {\cal C}(G,S')$ such that $K_{4}\not\leq H_{1}$  and %$K_{4}\not\leq H_{2},$ $H_{1}\cap H_{2}$ is either the empty set or a single vertex of $V(K).$  
	%\end{enumerate}
	
\end{lemma}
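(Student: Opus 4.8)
The plan is to argue by contradiction. Suppose the conclusion fails, i.e.\ no edge $e\in E(H)$ has both $x$ and $y$ on its subdivision in $K$. Since the ear $P$ meets $K$ only in its endpoints $x,y$, contracting the internal edges of $P$ turns $K\cup P$ into $K+xy$, the subdivided wheel $K$ together with the single new edge $xy$; as $K\cup P$ is a subgraph of $G$, this $K+xy$ is a minor of $G$, and so it suffices to exhibit some member of $\mathcal{O}$ as a minor of $K+xy$ (or to reach another contradiction), since $\mathcal{O}\not\leq G$. Throughout I write $z$ for the central branch vertex of $K$, and I record for each of $x,y$ the set of edges of $H=W_r$ whose subdivisions contain it: a rim-interior or spoke-interior point has a single such edge, a rim branch vertex has three (its spoke and its two rim edges), and $z$ has all the spokes. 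The failure of the conclusion says precisely that these two edge-sets are disjoint.

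First I would dispose of the case $x=z$ (and symmetrically $y=z$). Then $y$ cannot lie on any spoke, for that spoke would be a common edge, and it cannot be a rim branch vertex $c_i$, for then the spoke $zc_i$ would be common; hence $y$ is strictly interior to a rim edge. But then $zy$ is a new (subdivided) spoke to a point of the rim cycle, so $K+xy$ is exactly a subdivision of $W_{r+1}$, making $W_{r+1}$ a triconnected topological minor of $G$. By the principle that a $3$-connected topological minor of a graph is a topological minor of a single one of its triconnected components, and since by \autoref{etiology} the only triconnected member of $\mathcal{Q}(G)$ is $H=W_r$, we would get that $W_{r+1}$ is a topological minor of $W_r$, which is impossible for reasons of size. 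This removes the case $x=z$.

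In all remaining cases $x,y\neq z$, so each of them lies on the rim or in the interior of a spoke. The idea is now to contract the unused part of $K$ so as to shrink the wheel down to $W_3$ (or $W_4$) while keeping $x$, $y$ and the chord $xy$, and to recognize the outcome as one of the three triconnected obstructions $O_1^{3}$ (the triangular prism), $O_2^{3}=K_{3,3}$, and $O_3^{3}=K_5\setminus e$. Concretely: if $x,y$ both lie on the rim on distinct rim edges, I contract the rim to three spoke-carrying branch vertices arranged so that $x,y$ fall on two different rim edges, obtaining $O_1^{3}$ (and, in the boundary subcase where $x,y$ are two non-adjacent rim branch vertices, $O_3^{3}$); if $x$ lies in a rim interior and $y$ in a spoke interior, the analogous contraction to $W_3$ yields $O_2^{3}=K_{3,3}$; and if $x,y$ lie on two distinct spokes, contracting to $W_3$ again yields $O_1^{3}$. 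In each case the resulting graph is a minor of $K+xy$, hence of $G$, contradicting $\mathcal{O}\not\leq G$; this completes the argument.

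The main obstacle I anticipate is twofold. The delicate part of the case analysis is the bookkeeping of the boundary subcases where $x$ or $y$ is a rim branch vertex (whose support already contains a spoke and two rim edges): one must verify that disjointness of supports still forces one of the three target configurations and that the chosen contractions never identify $x$ with $y$ nor collapse an arc or spoke that the obstruction needs. The other delicate point is the $x=z$ case, which rests on the statement that a $3$-connected (topological) minor of $G$ is realized inside a single triconnected component; I would need to extract this cleanly from the triconnected-component machinery (\autoref{marching}, \autoref{supplier}, \autoref{shimmers}) so that the comparison of $W_{r+1}$ with $W_r$ is legitimate and yields the contradiction.
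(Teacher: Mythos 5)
Your overall strategy matches the paper's: assume $x,y$ lie on subdivisions of different edges of $H$, contract the path to a chord $xy$, and get a contradiction either by exhibiting one of $O_1^{3}$, $O_2^{3}$, $O_3^{3}$ as a minor or, in degenerate situations, by producing a subdivision of a larger wheel that is incompatible with $H$ being the unique triconnected component of $G$. However, there is a genuine gap in your case analysis: you deploy the bigger-wheel argument only when an endpoint equals the central vertex $z$, and you claim that in all remaining cases ($x,y\neq z$) suitable contractions of $K+xy$ yield one of the three triconnected obstructions. This fails when $r=3$ and one of $x,y$, say $x$, is a branch vertex of the circumference. The contrary hypothesis then forces $y$ to avoid the subdivisions of all three edges of $H$ incident to $x$, so $y$ is interior to the opposite rim edge or to a spoke not incident to $x$; in either case $K+xy$ is itself a subdivision of $W_{4}$ with $x$ as its center. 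After suppressing degree-$2$ vertices this graph has only $8$ edges, while each of $O_1^{3}$, $O_2^{3}$, $O_3^{3}$ has $9$; indeed $W_{4}\in{\cal P}^{(1)}$ and ${\cal P}^{(1)}$ is minor-closed, so \emph{no} member of ${\cal O}$ is a minor of $K+xy$, and no contradiction with ${\cal O}\not\leq G$ can be extracted from that subgraph alone. (Concretely, your prism recipe needs three spoke-carrying branch vertices distinct from $x$ and $y$, which do not exist when $r=3$ and $x$ is itself a branch vertex.) This is exactly the situation the paper isolates in Observation~1 of its proof of \autoref{analysts} (ruling out $r=3$ in its Case~1), using the same triconnected-component contradiction that you reserve for $x=z$. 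The repair stays within your framework: since $W_{3}\cong K_{4}$ is vertex-transitive, the subdivision $K$ can be re-parsed with $x$ as its central vertex, after which your $x=z$ argument applies verbatim.

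Two smaller points are mislabelings rather than gaps, since some obstruction still arises: for $r\geq 4$, a rim branch vertex joined to the interior of a spoke not incident to its rim-neighbors yields $O_3^{3}=K_{5}-e$ rather than the prism (the prism is not even a minor of that configuration, as it has no two disjoint triangles), and a rim-interior vertex joined to the interior of a spoke incident to an endpoint of its own rim edge yields the prism rather than $K_{3,3}$. These match the paper's Subcases~1.1 and~2.1, and fixing them is pure bookkeeping; the $r=3$ branch-vertex case is the one place where your argument, as written, cannot be completed without the additional idea.
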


\begin{proof} Recall that  $H$ is isomorphic to an $r$-wheel for some $r\geq 3,$
	and $K$ is a subdivision of $H.$
	Consider an $(x,y)$-path that intersects $K$ only in its endpoints. Suppose, to the contrary, that $x,y$ belong to subdivisions of different edges of $H.$
	We distinguish the following cases: \smallskip
	
	\noindent{{\em Case 1:}  One of $x,y,$ say $x,$ is a branch vertex on the circumference of $K.$} 
	
	\smallskip
	
	First, we observe the following:
	
	\noindent{\em Observation 1:}
	$r \not= 3.$ Indeed, if $H\cong W_{3},$ then since $y$ belongs to the subdivision of an edge of $H$ not incident to $x,$
	a subdivision of a bigger wheel would be formed with $x$ as its central vertex (see \autoref{chrysler}),
	a contradiction to the definition of the triconnected components.
	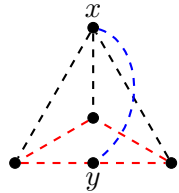
\begin{figure}[H]
		\centering
		\begin{tikzpicture}[every node/.style = black node, scale=.8]
		
		\node[label= above: $x$] (X) at (90:1.5) {};
		\node (Z) at (0,0) {};
		\node (A) at (210:1.5) {};
		\node (B) at (330:1.5) {};
		
		\begin{scope}[on background layer]
		\draw[dashed, thick] (X) -- (A);
		\draw[dashed, thick] (B) -- (X) -- (Z);
		\draw[dashed, thick, red] (B) -- (Z) -- (A) --(B);
		\end{scope}
		
		\node[label = below: $y$] (Y) at ($(A)!0.50!(B)$) {};
		\draw[dashed, thick, blue] (X) .. controls (60:1.5) and (1:1) .. (Y);
		
		\end{tikzpicture}
		\vspace{-2mm}\caption{The $(x,y)$-path (depicted in blue) where $y$ belongs to the subdivision of some edge of $H$ not incident to $x$ (depicted in red) in the proof of Observation 1.}\label{chrysler}
	\end{figure}
	
	Suppose then that $r\geq 4.$
	% and $x,y$ do not belong to the same subdivided edge of $H.$
	Let $x_{1}, x_{2}$ be the vertices adjacent to $x$ on the circumference of $H.$ We distinguish the following subcases:
	
	\smallskip
	
	\noindent{\em Subcase 1.1:} $y$ belongs to the subdivision of some spoke $e$ of $H.$ Then, $e$ is not incident to $x.$
	If $y$ is an internal vertex of the subdivision of a spoke incident to either $x_{1}$ or
	$x_{2}$ then $\hyperref[japanese]{{O}_{1}^{3}}\leq G$ (see leftmost figure of \autoref{stressed}), while if the spoke is not incident to $x_{1}$ or $x_{2}$
	then $\hyperref[rigorous]{{O}_{3}^{3}}\leq G$ (see central figure of \autoref{stressed}), a contradiction in both cases.
	\smallskip
		
	\noindent{\em Subcase 1.2:} $y$ belongs to some subdivided edge $e$ of the circumference of $K.$
	Then, $e$ is different from the subdivided edges corresponding to $x x_{1}, x x_{2}.$
	Hence, $\hyperref[rigorous]{{O}_{3}^{3}}\leq G$ (see rightmost figure of \autoref{stressed}), a contradiction.	\vspace{-2mm}

	\begin{figure}[H]
		\centering
		\begin{tikzpicture}[every node/.style = black node, scale=.8]
		\begin{scope}
		
		\draw[dashed, thick] (0,0) circle (1.5);
		\foreach \t in {1, ..., 4} {
			\draw[dashed, thick] (\t * 90:1.5) node {} -- (0,0);
		}
		
		\node () at (0,0) {};
		\node[label=90:$x$] (X) at (90:1.5) {};
		\node[label=180:$x_{1}$] (X1) at (180:1.5) {};
		\node () at (270:1.5) {};
		\node[label=360:$x_{2}$] (X2) at (360:1.5) {};
		
		\node[label = below: $y$] (Y) at (360:0.75) {};
		\draw[dashed, thick, blue] (X)  to [bend left = 30] (Y);
		
		\end{scope}
		
		\begin{scope}[xshift=6cm]
		
		\draw[dashed, thick] (0,0) circle (1.5);
		\foreach \t in {1, ..., 4} {
			\draw[dashed, thick] (\t * 90:1.5) node {} -- (0,0);
		}
		
		\node () at (0,0) {};
		\node[label=90:$x$] (X) at (90:1.5) {};
		\node[label=180:$x_{1}$] (X1) at (180:1.5) {};
		\node () at (270:1.5) {};
		\node[label=360:$x_{2}$] (X2) at (360:1.5) {};
		
		\node (Y) at (270:0.75) [label = left: $y$] {};
		\draw[dashed, thick, blue] (X) to [bend left = 60] (Y);

		\end{scope}

		\begin{scope}[xshift=12cm]
		
		\draw[dashed, thick] (0,0) circle (1.5);
		\foreach \t in {1, ..., 4} {
			\draw[dashed, thick] (\t * 90:1.5) node {} -- (0,0);
		}
		
		\node () at (0,0) {};
		\node[label=90:$x$] (X) at (90:1.5) {};
		\node[label=180:$x_{1}$] (X1) at (180:1.5) {};
		\node () at (270:1.5) {};
		\node[label=360:$x_{2}$] (X2) at (360:1.5) {};
		
		\node[label = below: $y$] (Y) at (315:1.5) {};
		\draw[dashed, thick, blue] (X)  to [bend left = 30] (Y);
		
		\end{scope}
		
		\end{tikzpicture}
		\vspace{-2mm}\caption{Possible configurations of the $(x,y)$-path (depicted in blue) in the proof of Subcases of Case 1.}\label{stressed}
	\end{figure}

	\noindent{{\em Case 2:} One of $x, y,$ say $x,$ is a subdividing vertex on the circumference of $K.$} \smallskip
	
	Since we have examined the case that one of $x,y$ is a branch vertex on the circumference of $K,$ suppose that $y$ is not such.
	
	Let $e= uv$ be the edge of $H$ whose the corresponding subdivision in $K$ contains $x.$
	
	\noindent{\em Observation 2:} $y$ is not the central vertex of $H.$ This is because, if otherwise, a subdivision of a bigger wheel would be formed in $G$ (see \autoref{adopting}), which is a contradiction to the definition of the triconnected components.	\vspace{-3mm}
		
	\begin{figure}[H]
		\centering
		\begin{tikzpicture}[every node/.style = black node, scale=.8]
		
		\draw[dashed, thick] (0,0) circle (1.5);
		\foreach \t in {1,5,9} {
			\draw[dashed, thick] (\t * 30:1.5) node {} -- (0,0);
		}
		
		\node[label=below left:$y$] (Y) at (0,0) {};
		\node[label=30:$v$] (V) at (30:1.5) {};
		\node[label=180:$u$] (U) at (150:1.5) {};
		\node () at (270:1.5) {};
		\node[label=above:$x$] (X) at (90:1.5) {};
		
		\draw[dashed, thick, blue] (X)  to [bend left = 30] (Y);
		
		\end{tikzpicture}
		\vspace{-2mm}\caption{The $(x,y)$-path (depicted in blue) in the proof of Observation 2, where $y$ is the central vertex of $H.$}\label{adopting}
	\end{figure}
	
	\smallskip
	
	\noindent{\em Subcase 2.1:} $y$ is an internal vertex of some subdivided edge $e'$ of a spoke of $K.$
	If $e'$ is incident to either $u$ or $v,$ then $\hyperref[japanese]{{O}_{1}^{3}}\leq G$ (see leftmost figure of \autoref{suitable}), while if $e'$ is not incident to either $u$ or $v$ then $\hyperref[persuade]{{O}_{2}^{3}}\leq G$ (see central figure of \autoref{suitable})),
	a contradiction in both cases.
	
	\smallskip
	
	\noindent{\em Subcase 2.2:} $y$ is an internal vertex of some subdivided edge of the circumference of $K$ different from $e.$
	Then $\hyperref[japanese]{{O}_{1}^{3}}\leq G$ (see rightmost figure of \autoref{suitable}), a contradiction.
	\begin{figure}[H]
		\centering
		\begin{tikzpicture}[every node/.style = black node, scale=.8]
		
		\begin{scope}
		
		\draw[dashed, thick] (0,0) circle (1.5);
		\foreach \t in {1,5,9} {
			\draw[dashed, thick] (\t * 30:1.5) node {} -- (0,0);
		}
		
		\node () at (0,0) {};
		\node[label=30:$v$] (V) at (30:1.5) {};
		\node[label=180:$u$] (U) at (150:1.5) {};
		\node () at (270:1.5) {};
		\node[label=above:$x$] (X) at (90:1.5) {};
		\node[label = below right: $y$] (Y) at (30:0.75) {};
		
		\draw[dashed, thick, blue] (X) to [bend left = 30] (Y);
		\end{scope}
		
		\begin{scope}[xshift = 6cm]
		
		\draw[dashed, thick] (0,0) circle (1.5);
		\foreach \t in {1,5,9} {
			\draw[dashed, thick] (\t * 30:1.5) node {} -- (0,0);
		}
		
		\node () at (0,0) {};
		\node[label=30:$v$] (V) at (30:1.5) {};
		\node[label=180:$u$] (U) at (150:1.5) {};
		\node () at (270:1.5) {};
		\node[label=above:$x$] (X) at (90:1.5) {};
		\node[label = below right: $y$] (Y) at (270:0.75) {};
		
		\draw[dashed, thick, blue] (X) to [bend left = 60] (Y);
		
		\end{scope}
		
		\begin{scope}[xshift=12cm]
		
		\draw[dashed, thick] (0,0) circle (1.5);
		\foreach \t in {1,5,9} {
			\draw[dashed, thick] (\t * 30:1.5) node {} -- (0,0);
		}
		
		\node () at (0,0) {};
		\node[label=30:$v$] (V) at (30:1.5) {};
		\node[label=180:$u$] (U) at (150:1.5) {};
		\node () at (270:1.5) {};
		\node[label=above:$x$] (X) at (90:1.5) {};
		\node[label = below right: $y$] (Y) at (315:1.5) {};
		
		\draw[dashed, thick, blue] (X) to [bend left = 30] (Y);
		
		\end{scope}
		
		\end{tikzpicture}
		\vspace{-2mm}\caption{Possible configurations of the $(x,y)$-path (depicted in blue) in the proof of Subcases of Case 2.}\label{suitable}
	\end{figure}

	\noindent {\em Case 3:} Both of $x, y$ are internal vertices of the subdivisions of some spokes $e,e'$ of $W_{r},$ respectively.

	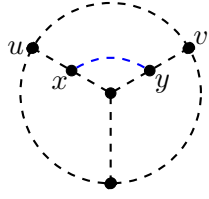
\begin{figure}[H]
		\centering
		\begin{tikzpicture}[every node/.style = black node, scale=.8]
		\begin{scope}
		
		\draw[dashed, thick] (0,0) circle (1.5);
		\foreach \t in {1,5,9} {
			\draw[dashed, thick] (\t * 30:1.5) node {} -- (0,0);
		}
		
		\node () at (0,0) {};
		\node[label=30:$v$] (V) at (30:1.5) {};
		\node[label=180:$u$] (U) at (150:1.5) {};
		\node () at (270:1.5) {};
		\node[label= below left:$x$] (X) at (150:0.75)  {};
		\node[label = below right: $y$] (Y) at (30:0.75) {};
		
		\draw[dashed, thick, blue] (X) to [bend left = 30] (Y);
		
		\end{scope}
		
		\end{tikzpicture}
		\vspace{-2mm}\caption{The $(x,y)$-path (depicted in blue) in the proof of Case 3.}\label{reckoned}
	\end{figure}
	\noindent In this case, $e, e'$ are distinct and so $\hyperref[japanese]{{O}_{1}^{3}}\leq G$ (see \autoref{reckoned}), a contradiction.
\end{proof}

\bigskip

We can now define the notion of a {\em flap}.
Let $G$ be a biconnected graph such that ${\cal O}\nleq G$ and $K_{4}\leq G$.
Let also  $(H,K)$ be an  ${r}$-wheel-subdivision pair of  $G$.
For every 2-separator $S\subseteq V(K)$ of $G$, the {\em flap of $(H,K)$ of base $S$} is the subgraph of $G$ defined as $F=\cupall \{C\in{\cal C}(G,S): K_{4}\not\leq C \mbox{~and~} C \text{ is biconnected}\}$ if $V(F)\neq \emptyset$. If $V(F) =\emptyset$, then the flap of base $S$ is not defined.
Observe that every 2-separator $S\subseteq V(K)$ of $G$ defines
at most one flap of $(H,K)$ of base $S$.
The {\em $(x,y)$-flap} of $(H,K)$, for some 2-separator $\{x,y\}\subseteq V(K)$, is the flap of $(H,K)$ of base $\{x,y\}$.

%We refer to $S$ as the {\em base} of the $(x,y)$-flap.
%We say that a graph $F$ is a {\em flap} of $(H,K)$ if it is an $(x,y)$-flap of $(H,K)$ for some $\{x,y\}\in V(K)$.
Given an $(x,y)$-flap $F$ of $(H,K)$ and a vertex $v\in V(F)$, we say that
$F$ is {\em $v$-oriented} if every cycle of $F$ contains $v$.

\bigskip

Regarding the arguments in the remaining part of \autoref{daughter}, consider a graph
$G\in \obs({\cal A}_{1}({\cal P}))\setminus  {\cal O}$ such that $K_{4}\leq G$.
Observe that $K_{4}$ is a minor of a block $B$ of $G$ and
therefore we can consider an ${r}$-wheel-subdivision pair of $B$.

\begin{lemma}\label{campaign}
	Let $G\in \obs ({\cal A}_{1}({\cal P}))\setminus {\cal O} $ such that $K_{4}\leq G$ and $(H,K)$ be an ${r}$-wheel-subdivision pair of a block $B$ of $G$. If $F$ is an $(x,y)$-flap of $(H,K),$ then it holds that:
	\begin{enumerate}
		\item $F$ is biconnected,
		\item $G[V(F)]$ contains a cycle, and 
		\item there exists an edge $e\in E(H)$ such that $x,y$ are both vertices of the subdivision of $e$ in $K.$
	\end{enumerate}
Moreover, if $F_{1},F_{2}$ are flaps of $(H,K)$ whose bases are $S_{1}, S_{2}$ respectively, where $S_{1}\neq S_{2}$, then $V(F_{1})\cap V(F_{2})\subseteq S_{1}\cap S_{2}$.
\end{lemma}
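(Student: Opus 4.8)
The plan is to handle the three conclusions in the order (1), (2), (3), using the biconnectivity established in (1) as the engine for the other two. Throughout I would write $F=\cupall\{C_1,\ldots,C_m\}$ for the augmented components making up the flap (assuming, as the statement implicitly requires, that $F$ is nonempty), where each $C_i=B[V_i\cup S]+xy$ is biconnected and $K_4$-free, $V_i$ being a connected component of $B\setminus S$ and $S=\{x,y\}$. The key feature is that every $C_i$ contains both $x,y$ and the (possibly artificially added) edge $xy$.

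For (1), I would observe that $F$ is the union of the biconnected graphs $C_1,\ldots,C_m$, which pairwise intersect in at least the edge $xy$, hence in both of its endpoints. A union of biconnected graphs that pairwise share an edge is biconnected: for any vertex $v$, the graph $F\setminus v$ stays connected because each $C_i\setminus v$ is connected and still contains at least one of $x,y$, and all the $C_i\setminus v$ are glued along the surviving endpoint(s) of $xy$. This is the routine part.

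For (3) — which I expect to be the crux — the idea is to feed \autoref{analysts} a suitable $(x,y)$-path. Since $F$ is biconnected (by (1)) and has a vertex outside $S$, there are two internally disjoint $(x,y)$-paths in $F$, and at least one of them, call it $P$, has all its internal vertices in $V(F)\setminus S=\bigcup_i V_i$; in particular $P$ avoids the edge $xy$. I would then distinguish whether $P$ meets $K$ internally. If the interiors $V_i$ are disjoint from $V(K)$, then $P$ intersects $K$ only in its endpoints $x,y$, and \autoref{analysts} immediately yields an edge $e\in E(H)$ whose subdivision in $K$ contains both $x$ and $y$. If instead some $V_i$ meets $V(K)$, then $\{x,y\}$ must be a $2$-separator of $K$: otherwise $K\setminus S$ is connected and lies in a single component of $B\setminus S$ carrying the whole wheel, hence a $K_4$, contradicting that a flap component is $K_4$-free. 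The cut-off piece contained in $V_i$ is then a subpath of the subdivision of a single edge $e$ of $H$, because $H\cong W_r$ is $3$-connected, so the only $2$-separators of its subdivision $K$ are pairs of vertices lying on a common subdivided edge; here $x$ and $y$ are precisely the two ends of that subpath on the subdivision of $e$, and again both lie on the subdivision of $e$. The main obstacle is exactly this separator analysis — establishing that any flap component touching $K$ forces $\{x,y\}$ onto one subdivided edge — for which the $3$-connectivity of the wheel is essential.

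For (2), I would argue by contradiction from (1). A biconnected graph on at least three vertices contains a cycle, and the only edge of $F$ that need not be an edge of $G$ is $xy$; so if no cycle of $F$ avoids $xy$, then $F$ is a single cycle through $xy$, forcing $m=1$ and $B[V(F)]=F\setminus xy$ to be an $(x,y)$-path $x\,w_1\cdots w_k\,y$ with $k\geq 1$. Each internal vertex $w_j$ has degree $2$ in $B$ (its neighbours lie on this path, as $V_1$ is a full component of $B\setminus S$). By \autoref{generals}(3), if $w_j$ has degree $2$ in $G$ it is simplicial, whence its two path-neighbours are adjacent in $G$: for $k\geq 2$ this creates the triangle $x\,w_1\,w_2$ inside $G[V(F)]$, and for $k=1$ it forces $xy\in E(G)$ and hence the triangle $x\,w_1\,y$ — in either case a cycle in $G[V(F)]$, contradicting acyclicity. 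Thus $G[V(F)]$ contains a cycle. The delicate point to verify is that the $w_j$ really have degree $2$ in $G$, equivalently that a flap-interior vertex is not a cut vertex of $G$ carrying a pendant block; this is where the bridgelessness and minor-minimality packaged in \autoref{generals} must be brought to bear.
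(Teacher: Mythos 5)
Parts (1) and (3) of your proposal are correct and follow essentially the same route as the paper. In (3) you spell out what the paper compresses into a single ``hence'': a flap component is biconnected, so it carries an $(x,y)$-path avoiding the virtual edge $xy$, and $K_{4}$-freeness of the component forces the interior of that path to avoid $V(K)$ --- for if the interior met $V(K)$ while $x,y$ lay on no common branch path, then $\{x,y\}$ would not separate $K$ (this is where the triconnectivity of $W_{r}$ enters, in your write-up and, implicitly, in the paper's), so the component would contain all of $K$ and hence a $K_{4}$. After that \autoref{analysts} finishes, exactly as in the paper.

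The genuine gap is in (2), and it is the one you flag yourself: that the internal vertices $w_{j}$ of the path $G[V(F)]$ have degree $2$ in $G$. Under your reading of the flap --- augmented components of ${\cal C}(B,S)$, taken inside the block $B$ --- this does not follow from \autoref{generals}, and no amount of ``bringing bridgelessness and minor-minimality packaged in \autoref{generals} to bear'' will close it: a flap-interior vertex could a priori be a cut vertex of $G$ carrying a pendant triangle, a configuration satisfying all three conclusions of \autoref{generals} (minimum degree $2$, bridgelessness, simpliciality of degree-$2$ vertices) while giving $w_{j}$ degree $4$ in $G$; since the pendant triangle lies in another block of $G$, it contributes no cycle to $G[V(F)]$ either. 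Indeed one can build \autoref{generals}-compliant graphs outside ${\cal P}^{(1)}$ in which a flap induces an acyclic path; what actually expels them from $\obs({\cal P}^{(1)})\setminus{\cal O}$ is an ${\cal O}$-minor (for instance a second triconnected piece created by the same configuration, which yields \hyperref[connects]{$O_{1}^{2}$} by the argument of \autoref{fanfares}) --- an argument of a completely different caliber from \autoref{generals}, and one your proof never makes.

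The repair is not more work along your route but the other reading of the definition, the one the paper's own proof of (3) literally writes: the flap's constituents are members of ${\cal C}(G,S)$, not of ${\cal C}(B,S)$. Then the interior of a flap component is a full connected component of $G\setminus S$, so an interior vertex has \emph{all} of its $G$-neighbours inside the component, and ``degree $2$ in the component'' literally is ``degree $2$ in $G$''; moreover the clause ``$C$ is biconnected'' in the flap's definition --- which is vacuous under your block reading, since augmented components of a biconnected graph along a $2$-separator are automatically biconnected --- now does exactly the needed work: a pendant block hanging at an interior vertex would make that vertex a cut vertex of the augmented component, so such a component is excluded from the flap by definition. With that reading, your argument for (2) (acyclicity of $G[V(F)]$ forces $F$ to be a cycle through the virtual edge, hence $G[V(F)]$ is an $(x,y)$-path whose interior vertices are simplicial of degree $2$, a contradiction) goes through verbatim, and this is what the paper means by calling (1) and (2) direct consequences of the definition and \autoref{generals}.
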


\begin{proof}
%\sed{Αποδείξτε το (1) και το (2)}\gstam{Έγινε!}
	
	(1) and (2) are direct consequences of the definition of the $(x,y)$-flap and \autoref{generals}.
	
   To prove (3), consider an $(x,y)$-flap $F$ of $(H,K)$. Then, from the definition
   of $F$, there exists a biconnected graph $C\in{\cal C}(G,\{x,y\})$ such that 
   $K_{4}\not\leq C$ and hence it contains an $(x,y)$-path that intersects $K$ only in
   its endpoints. Therefore, \autoref{analysts} implies (3).

It remains to prove that if $F_{1},F_{2}$ are flaps of $(H,K)$ whose bases are $S_{1}, S_{2}$ respectively, where $S_{1}\neq S_{2}$, then $V(F_{1})\cap V(F_{2})\subseteq S_{1}\cap S_{2}$. First, we observe that for every $i\in[2]$, $V(F_{i})\cap V(K)\subseteq S_{i}$.
Now, suppose, towards a contradiction, that there exists a $C\in {\cal C}(G,S_{1})$ such that $K_{4}\nleq C$, $C$ is biconnected, and $C\setminus S_{1}$ contains a vertex $x$ of $F_{2}$. 
Since $S_{1}\neq S_{2}$, we have that $S_{1}\cap S_{2}\subsetneq S_{2}$. This, together with the fact that $F$ is biconnected implies that there is a
path in $F_{2}\setminus (S_{1}\cap S_{2})$ that connects $x$ with a vertex $v\in S_{2}\setminus S_{1}\subseteq V(K)$. Therefore, $v$ is a vertex of $(C\setminus S_{1})\cap V(K)$, a contradiction to the fact that $V(F_{1})\cap V(K)\subseteq S_{1}$.
\end{proof}

We conclude this subsection by proving the next results concerning flaps:

\begin{lemma} \label{mobility}
	Let $G$ be a biconnected graph such that ${\cal O}\nleq G$ and $K_{4}\leq G$ and let $(H,K)$ be an ${r}$-wheel-subdivision pair of  $G$. Then  every $(x,y)$-flap $F$ of $(H,K)$  is either $x$-oriented or $y$-oriented.
\end{lemma}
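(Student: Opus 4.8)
The plan is to argue by contradiction: assume $F$ is neither $x$-oriented nor $y$-oriented, so $F$ contains a cycle $C_{x}$ avoiding $x$ and a cycle $C_{y}$ avoiding $y$, and derive ${\cal O}\leq G$, contradicting the hypothesis. A preliminary remark that I would record is that the possibly-augmented edge $xy$ is irrelevant to the orientation question, since any cycle using $xy$ passes through both $x$ and $y$; thus the only obstacles to orientation are exactly the cycles $C_{x}$ and $C_{y}$. First I would pin down the geometry of $\{x,y\}$ in $K$: each biconnected $K_{4}$-free piece defining $F$ contains an $(x,y)$-path meeting $K$ only in its endpoints, so by \autoref{analysts} the vertices $x,y$ both lie on the subdivision of a single edge $e=uv$ of $H$. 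Consequently the rest of $K$ survives in a single augmented component $C_{\mathrm{wheel}}\in{\cal C}(G,\{x,y\})$ that carries a subdivision of $W_{r}$ (with $e$ realized by the arc $u\cdots x$, the edge $xy$, and the arc $y\cdots v$); in particular $K_{4}\leq C_{\mathrm{wheel}}$, and $C_{\mathrm{wheel}}$ meets $F$ only in $\{x,y\}$.

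Next I would harvest from the wheel side a fixed gadget. Running the argument used in the proof of the Claim inside \autoref{fanfares} for the component $C_{\mathrm{wheel}}$ (which contains a $K_{4}$-subdivision through $x$ and $y$), I would obtain a $\diama$-minor $D$ of $C_{\mathrm{wheel}}$ whose two degree-$2$ branch vertices are exactly $x$ and $y$. Because $C_{\mathrm{wheel}}$ and $F$ share only $\{x,y\}$, this $D$ is internally disjoint from $F$, so $D\cup F$ is a genuine minor of $G$. I would also note the sanity check that makes orientation the decisive dichotomy: if $F$ were, say, $x$-oriented, then deleting $x$ from $D\cup F$ would leave the triangle of $D\setminus x$ plus an acyclic remainder, i.e.\ a pseudoforest, so no obstruction would arise; it is precisely the failure of both orientations that should force an ${\cal O}$-minor.

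The heart of the proof is then the flap-side analysis, organized by the two pseudoforest-obstructions $\diama$ and $\butter$. Since $F$ is biconnected and $K_{4}$-free (hence $W_{r}$-free and series-parallel) and both $F\setminus x$ and $F\setminus y$ carry cycles, I would use Menger's theorem to route internally disjoint paths from $C_{x}$ and $C_{y}$ to $\{x,y\}$ and thereby extract, attached at $x,y$, one of a bounded list of configurations. The two I expect to be decisive are: (a)~a $\diama$-minor of $F$ with $x,y$ as its degree-$2$ branch vertices (two internally disjoint $(x,y)$-paths joined by a path), in which case gluing it to $D$ along $\{x,y\}$ yields two diamonds sharing exactly their degree-$2$ vertices, i.e.\ $\hyperref[connects]{{O}_{1}^{2}}\leq G$; and (b)~a $\butter$-type configuration, a cycle through $x$ but not $y$ and a cycle through $y$ but not $x$ meeting in a single vertex, in which case combining it with $D$ and contracting the connecting paths produces a small graph that is not a $1$-apex pseudoforest and, after routine contractions, contains a connectivity-$2$ member of ${\cal O}$ as a minor.

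The main obstacle is exactly this flap-side case analysis. I would need to prove that configurations of type (a) and (b) (together with any low-connectivity degeneracies, e.g.\ when $C_{x}$ and $C_{y}$ are vertex-disjoint or when a cycle avoids both $x$ and $y$) are exhaustive for a biconnected, $K_{4}$-free $F$ that is neither $x$- nor $y$-oriented, and to verify that the Menger paths can be chosen internally disjoint and disjoint from $D$ so that the claimed minors really appear. The most delicate part is the butterfly case: identifying the precise member of ${\cal O}$ that emerges and checking the sequence of contractions that realizes it, since here the output is not the clean $\hyperref[connects]{{O}_{1}^{2}}$ but a smaller connectivity-$2$ obstruction obtained only after reducing the combined graph $D\cup F$.
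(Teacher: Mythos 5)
Your overall strategy is the same as the paper's: negate the conclusion, take a cycle of $F$ avoiding $x$ and a cycle avoiding $y$, case on how these cycles intersect, and play each configuration against the wheel structure attached at $x$ and $y$. The preparatory geometry is sound: by \autoref{analysts} (equivalently \autoref{campaign}(3)) the vertices $x,y$ lie on the subdivision of a single edge of $H$, and the wheel side does carry a $\diama$-minor whose two degree-$2$ branch sets contain $x$ and $y$ respectively. (Two caveats you gloss over: this should be phrased as a minor, not a topological minor --- if $x$ is a subdividing vertex of $K$, its only neighbours point towards $u$ and towards $y$, so it cannot literally be a degree-$2$ branch vertex of a $\diama$-subdivision on the wheel side --- and the minor model must avoid the augmented edge $xy$ of the component, which a fanfares-style extraction has to check.) The one case you complete, namely that the two cycles share at least two vertices, so that the flap carries a second $\diama$ and gluing yields $\hyperref[connects]{{O}_{1}^{2}}\leq G$, agrees exactly with the paper's corresponding subcase.

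The genuine gap is that you stop precisely where the content of the proof lies, and you say so yourself (``the main obstacle'', ``the most delicate part''). Three of the four configurations are left unresolved: (i) the two cycles meet in exactly one vertex, where the paper exhibits $\hyperref[begrudge]{{O}_{4}^{2}}$ (consistent with, but never pinned down by, your guess of a connectivity-$2$ obstruction); (ii) the two cycles are disjoint, where the paper exhibits $\hyperref[consists]{{O}_{1}^{1}}$, a $\diama$ with a triangle attached at each degree-$2$ vertex; and (iii) some cycle of $F$ avoids both $x$ and $y$, where the paper uses biconnectivity of $F$ to route two disjoint paths from that cycle to $x$ and $y$, contracts them, and obtains $\hyperref[holiness]{{O}_{9}^{2}}$. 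Identifying these members of ${\cal O}$ and verifying the contraction sequences is not routine bookkeeping to be deferred; it \emph{is} the proof of \autoref{mobility}. By contrast, the exhaustiveness you single out as the hard part is immediate: if no cycle of $F$ avoids both $x$ and $y$, then the cycle witnessing failure of $x$-orientation contains $y$ and vice versa, and one simply distinguishes whether the two cycles share $0$, $1$, or at least $2$ vertices. As written, your proposal establishes the lemma only in the last of these cases.
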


\begin{proof}
	Consider an $(x,y)$-flap $F$ of $(H,K)$ for which the contrary holds.
	We distinguish the following cases:
	
	\smallskip
	
	\noindent{\em Case 1:} There exists a cycle $C$ in $F$ disjoint to both
	$x$ and $y$. Then, {since by \autoref{campaign}(1)},
	 $F$ is biconnected, there exist two disjoint paths
	$P_{1},P_{2}$ connecting the cycle $C$ with $x,y$, respectively.
	Hence, by contracting all the edges of $P_{1},P_{2}$ we form
	$\hyperref[holiness]{O_{9}^{2}}$ as a minor of $G$, a contradiction
	(see \autoref{loveless}).	\vspace{-3mm}
	
	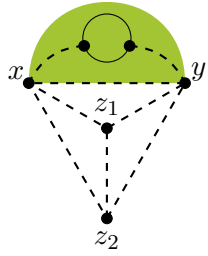
\begin{figure}[H]
		\centering
		\begin{tikzpicture}[every node/.style = black node, scale=.8]
		\fill[applegreen!80] (150:1.5) .. controls +(0,1.8) and +(0,1.8) .. (30:1.5) {};
		
		\draw[dashed, thick] (270:1.5) -- (30:1.5) -- (0,0) -- (150:1.5) -- (270:1.5);
		\draw[dashed, thick] (30:1.5) -- (150:1.5) ;
		\draw[dashed, thick] (270:1.5) -- (0,0);
		\coordinate (C) at (0,1.5) {};
		\draw[-, name path=path1] (C) circle (0.4);
		
		\path[name path=path2] (150:1.5) to [bend left=60] (30:1.5);
		
		\draw [name intersections={of= path1 and path2}]
		\foreach \s in {1,2}{
			(intersection-\s) node {}
		};
		\coordinate (I1)  at (intersection-1);
		\coordinate (I2)  at (intersection-2);
		
		\draw[dashed, thick] (I1) to [bend right=30] (150:1.5) (I2) to [bend left=30] (30:1.5);
		
		\draw (150:1.5) node[label=150:$x$] {};
		\draw (0,0) node[label=90:$z_{1}$] {};
		\draw (270:1.5) node[label=270:$z_{2}$] {};
		\draw (30:1.5) node[label=30:$y$] {};
		
		\end{tikzpicture}
		\vspace{-2mm}\caption{An example of an $(x,y)$-flap that contains a cycle disjoint to $x,y$ in the proof of Case 1.}\label{loveless}
	\end{figure}
	
	\smallskip 
	
	\noindent{\em Case 2:} There exists a cycle $C$ of $F$ that contains $x$
	but not $y$ and a cycle $C'$ that contains $y$ but not $x$. Then,
	if $C,C'$ are disjoint, $\hyperref[consists]{O_{1}^{1}}\leq G$, if
	the share only one vertex, $\hyperref[begrudge]{O_{4}^{2}}\leq G$, and if
	they share more than one vertex, $\hyperref[connects]{O_{1}^{2}}\leq G$,
	a contradiction in all cases (see figure \autoref{feelings}).

	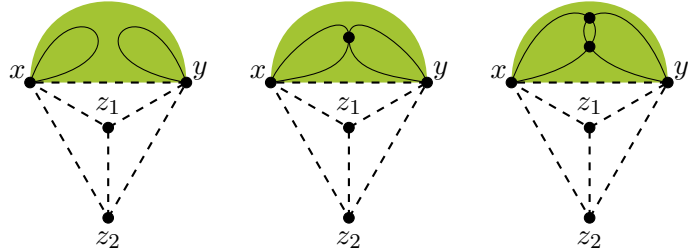
\begin{figure}[H]
		\centering
		\vspace{-1cm}
		\begin{tikzpicture}[every node/.style = black node, scale=.8]
		\begin{scope}
		\fill[applegreen!80] (150:1.5) .. controls +(0,1.8) and +(0,1.8) .. (30:1.5) {};
		
		\draw[dashed, thick] (270:1.5) -- (30:1.5) -- (0,0) -- (150:1.5) -- (270:1.5);
		\draw[dashed, thick] (30:1.5) -- (150:1.5) ;
		\draw[dashed, thick] (270:1.5) -- (0,0);
		
		\draw[-] (150:1.5).. controls ++(10:2.2) and ++(70:2).. (150:1.5);
		\draw[-] (30:1.5).. controls ++(170:2.2) and ++(110:2).. (30:1.5);
		
		\draw (150:1.5) node[label=150:$x$] {};
		\draw (0,0) node[label=90:$z_{1}$] {};
		\draw (270:1.5) node[label=270:$z_{2}$] {};
		\draw (30:1.5) node[label=30:$y$] {};
		
		\end{scope}
		
		\begin{scope}[xshift=4cm]
		
		\fill[applegreen!80] (150:1.5) .. controls +(0,1.8) and +(0,1.8) .. (30:1.5) {};
		
		\draw[dashed, thick] (270:1.5) -- (30:1.5) -- (0,0) -- (150:1.5) -- (270:1.5);
		\draw[dashed, thick] (30:1.5) -- (150:1.5) ;
		\draw[dashed, thick] (270:1.5) -- (0,0);

		\draw[-] (150:1.5) to [out=70, in=90] (90:1.5) to [out=-90, in=10] (150:1.5);
		\draw[-] (30:1.5) to [out=110, in=90] (90:1.5) to [out=-90, in=170] (30:1.5);
		
		\node () at (90:1.5) {};
		
		\draw (150:1.5) node[label=150:$x$] {};
		\draw (0,0) node[label=90:$z_{1}$] {};
		\draw (270:1.5) node[label=270:$z_{2}$] {};
		\draw (30:1.5) node[label=30:$y$] {};
		
		\end{scope}
		
		\begin{scope}[xshift=8cm]
		
		\fill[applegreen!80] (150:1.5) .. controls +(0,1.8) and +(0,1.8) .. (30:1.5) {};
		
		\draw[dashed, thick] (270:1.5) -- (30:1.5) -- (0,0) -- (150:1.5) -- (270:1.5);
		\draw[dashed, thick] (30:1.5) -- (150:1.5) ;
		\draw[dashed, thick] (270:1.5) -- (0,0);
		
		\draw[-, name path=path1] (150:1.5).. controls ++(10:2.7) and ++(70:2.5).. (150:1.5);
		\draw[-, name path=path2] (30:1.5).. controls ++(170:2.7) and ++(110:2.5).. (30:1.5);
		
		\draw [name intersections={of= path1 and path2}]
		\foreach \s in {1,2}{
			(intersection-\s) node {}
		};
		
		\draw (150:1.5) node[label=150:$x$] {};
		\draw (0,0) node[label=90:$z_{1}$] {};
		\draw (270:1.5) node[label=270:$z_{2}$] {};
		\draw (30:1.5) node[label=30:$y$] {};
		
		\end{scope}
		\end{tikzpicture}
		\vspace{-2mm}\caption{The ways $C, C^{\prime}$ may intersect in the proof of Case 2.}\label{feelings}
	\end{figure}

	We arrived at a contradiction in both cases. Hence, Lemma follows.
\end{proof}

\begin{lemma}
	\label{patience}
	Let $G\in \obs({\cal A}_{1}({\cal P}))\setminus  {\cal O}$ such that
	$K_{4}\leq G$ and let $(H,K)$ be an ${r}$-wheel-subdivision pair of a block $B$ of $G$.
	Then if $r\geq 4$, the center of $K$ is a ${\cal P}$-apex vertex of $B$.
\end{lemma}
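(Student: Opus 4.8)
The plan is to argue by contradiction. Let $z$ be the branch vertex of $K$ that is the center of the wheel and suppose $z$ is not a ${\cal P}$-apex of $B$, i.e. $B\setminus z\notin{\cal P}$. Since $B$ is biconnected and has at least four vertices (as $K_{4}\leq B$), the graph $B\setminus z$ is connected; being connected and not a pseudoforest, it has cycle rank at least $2$, so by $\obs({\cal P})=\ourobs$ it contains $\diama$ or $\butter$ as a minor. The rim cycle $C$ of $K$ already lies in $B\setminus z$ and accounts for one cycle, so there is a second cycle independent from $C$; this second cycle uses an edge off $C$ and hence lives either on a subdivided spoke (after deleting $z$) or inside a flap of $(H,K)$. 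I write $M$ for a connected ``double-cycle'' witness formed by $C$ and this second cycle joined by a path, so that $\diama\leq M$ or $\butter\leq M$ and $M$ avoids $z$.

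The only place where the hypothesis $r\geq4$ enters is the following observation: for $r\geq4$ the center is the \emph{unique} ${\cal P}$-apex of the wheel itself, since $W_{r}\setminus z\cong C_{r}\in{\cal P}$ whereas deleting any rim vertex leaves a fan containing two triangles through $z$, i.e. a $\butter$; this fails for $r=3$, where $W_{3}=K_{4}$ and every deletion yields a triangle. Consequently $z$, together with any four spokes and the corresponding rim arcs, supplies a $\butter$-minor through $z$ that can be routed through any chosen four rim branch-vertices. The strategy is then to combine this ``wheel-$\butter$'', which is destroyed only by deleting $z$, with the witness $M$, which avoids $z$, so as to exhibit a member of ${\cal O}$ as a minor of $B$; as $B$ is a subgraph of $G$ and ${\cal O}\not\leq G$ by item~3 of \autoref{selected}, this yields the desired contradiction.

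To make the combination precise I would invoke the flap machinery already established. By \autoref{campaign} and \autoref{mobility}, the off-rim cycle of $M$ sits in a flap attached along a single subdivided edge $e\in E(H)$, and all cycles of that flap pass through one endpoint of its $2$-separator; together with \autoref{generals} this pins down exactly how $M$ meets $C$ and the spokes. When $M$ can be realized vertex-disjointly from four spokes and their rim arcs, the wheel-$\butter$ and $M$ form two vertex-disjoint members of $\ourobs$, that is, a graph in ${\cal O}^{0}$, a contradiction. When instead every available wheel-$\butter$ is forced to share a rim vertex or a spoke with $M$ (most notably when the rim cycle itself is one of the two cycles), I would contract along the shared part and identify the resulting minor with one of the connected obstructions in ${\cal O}^{1}\cup{\cal O}^{2}\cup{\cal O}^{3}$.

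I expect the case analysis of the previous paragraph to be the main obstacle: one must enumerate the possible positions of the off-rim cycle relative to $e$, to the (at most two) spokes incident with the endpoints of $e$, and to $z$, and then verify in each configuration that the union of $M$ with the wheel contracts to a concrete obstruction of ${\cal O}$. The recurring role of $r\geq4$ is to guarantee that, after removing the boundedly many rim branch-vertices and spokes consumed by $M$, enough spokes and rim length survive to complete a $\butter$ through $z$ either disjointly from $M$ or sharing a single controlled vertex, so that some member of ${\cal O}$ always appears.
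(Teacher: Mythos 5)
Your proposal correctly identifies the relevant machinery (\autoref{campaign} and \autoref{mobility}) and the general reason the hypothesis $r\geq 4$ matters, but it is not a proof: the entire content of the lemma is deferred to a case analysis that you explicitly acknowledge has not been carried out (``one must enumerate the possible positions of the off-rim cycle\dots''), and that analysis is precisely where all the work lies. Worse, the dichotomy you propose to organize it is internally inconsistent. You define $M$ as a witness formed by the rim cycle $C$ together with a second cycle, so $M$ contains every rim arc of $K$; consequently the branch ``when $M$ can be realized vertex-disjointly from four spokes and their rim arcs'' can never occur, and your argument collapses entirely into the unperformed branch. A further unjustified step is the claim that the second cycle ``lives either on a subdivided spoke (after deleting $z$) or inside a flap'': a subdivided spoke minus $z$ is a path and carries no cycle at all, and a cycle of $B\setminus z$ other than $C$ could a priori weave through several flaps and portions of $K$; confining it to a single flap is exactly the kind of statement that must be earned by exhibiting obstructions, not assumed. (Also, your ``fan'' remark is imprecise: for $r=4$ the two triangles obtained by deleting a rim vertex of $W_{4}$ share a spoke edge, giving $\diama$ rather than $\butter$, and in the subdivision $K$ there need be no triangles at all.)

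For comparison, the paper's proof avoids any global witness and consists of two short local claims, each established by producing a concrete member of ${\cal O}$: (i) every flap of $(H,K)$ is a $(z,y)$-flap where $z$ is the center --- otherwise, depending on whether the attaching edge $e\in E(H)$ lies on the circumference or is a spoke, one gets ${O}_{2}^{2}\leq G$ or ${O}_{4}^{1}\leq G$; and (ii) every $(z,y)$-flap is $z$-oriented --- otherwise \autoref{mobility} yields a cycle through $y$ avoiding $z$, whence ${O}_{4}^{1}\leq G$. Together with \autoref{campaign}, these two facts immediately imply that every cycle of $B$ other than the circumference of $K$ passes through $z$, which is the assertion of the lemma. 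Note that $r\geq 4$ enters there in order to build those two specific obstructions (for $r=3$ the constructions, and indeed the statement, fail), not to secure a wheel-$\butter$ disjoint from a witness, which, as observed above, is impossible anyway. To salvage your outline you would need to replace the $M$-based dichotomy by local claims of this type and actually produce the obstruction in each configuration.
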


\begin{proof} 
	
	We start with the following claim:

	\medskip

    \noindent{\em Claim: For every $(x,y)$-flap of $(H,K)$, one of $x,y$ is the centre of $K$.}
    
    \smallskip
    
    \noindent{\em Proof of Claim:} Let $F$ be an $(x,y)$-flap.
    \autoref{campaign}(3)
    implies that there exists an edge $e$ of $H$ such that $x,y$ are vertices of the
    subdivision of $e$ in $K$. Suppose, towards a contradiction, that neither of
    $x,y$ is the center of $K$. If $e$ is in the circumference of $H$ then,
    since by \autoref{campaign}(1),(2),
     $F$ is biconnected and $G[V(F)]$ contains a cycle,
    $\hyperref[handling]{{O}_{2}^{2}}\leq G$ (see left figure of
    \autoref{linearly}), while if $e$ is a spoke of $H$, then, similarly, $\hyperref[skeleton]{{O}_{4}^{1}}\leq G$ (see right figure of \autoref{linearly}),
    a contradiction in both cases.

	\begin{figure}[H]
		\centering
		\begin{tikzpicture}[every node/.style = black node, circle dotted/.style={dash pattern=on .05mm off 2mm,line cap=round}, scale=.8]
		\begin{scope}
		\draw[dashed, thick] (0,0) circle (1.5);
		
		\foreach \t in {1, ..., 4} {
			\draw[dashed, thick] (\t * 90:1.5) -- (0,0);
			\draw[line width = 0.4mm,circle dotted] (\t * 90 + 25:0.9) arc (\t * 90 + 25: \t * 90 + 65:0.9);
		}
		\foreach \t in {1, ..., 4} {
			\draw (\t * 90:1.5) node {};
		}
		
		\begin{scope}[on background layer]
		\fill[applegreen!90] (115:1.5) to [bend right=50] (140:2.2) to  [bend right=50] (165:1.5) to [bend left=23] (115:1.5);
		\end{scope}

		\draw (0,0) node {};
		\draw (115:1.5) node[label=80:$y$] {};
		\draw (165:1.5) node[label=200:$x$] {};
		\end{scope}
		
		\begin{scope}[xshift=6cm]
		
		\draw[dashed, thick] (0,0) circle (1.5);
		
		\foreach \t in {1, ..., 4} {
			\draw[dashed, thick] (\t * 90:1.5) -- (0,0);
			\draw[line width = 0.4mm,circle dotted] (\t * 90 + 25:0.9) arc (\t * 90 + 25: \t * 90 + 65:0.9);
		}
		\foreach \t in {1, ..., 4} {
			\draw (\t * 90:1.5) node {};
		}
		
		\begin{scope}
		\fill[applegreen!90] (0:1.2) to [bend right=90, min distance=.7cm] (0:0.4);
		\end{scope}

		\draw (0,0) node {};
		\draw (0:1.2) node[label=270:$y$] {};
		\draw (0:0.4) node[label=270:$x$] {};
		
		\end{scope}
		\end{tikzpicture}
		\vspace{-2mm}\caption{Possible configurations of an $(x,y)$-flap (depicted in green) in the proof of Observation.}\label{linearly}
	\end{figure}
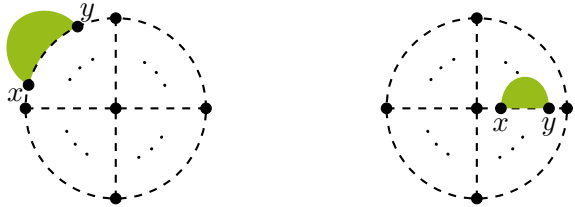

	Let now $z$ be the centre of $K$. According to the above Claim,
	every flap of $(H,K)$ is a $(z,y)$-flap. It also holds
	that every $(z,y)$-flap $F$ is $z$-oriented. Indeed, if
	otherwise, then \autoref{mobility} implies that there would exist a
	cycle in $F$ containing $y$ but not $z$ and hence
	$\hyperref[skeleton]{{O}_{4}^{1}}\leq G,$ a contradiction.
	
	Therefore, taking into account that $B$ is a block of $G$,
	and, due to \autoref{fanfares}, $H$ is the unique triconnected graph in
	${\cal Q}(G)$, we derive that every
	cycle in $B$, except for the circumference of $K,$
	contains $z$ and so $z$ is a ${\cal P}$-apex vertex of $B$.
\end{proof}

\subsection{Properties of obstructions containing a \texorpdfstring{$K_{2,3}$}{K2,3}}

The purpose of this section is to prove \autoref{caillois} that gives us some information on the structure of a connected graph $G$ such that $ {\cal O}\nleq G$, $K_{4}\nleq G$, and  $K_{2,3}\leq G$.\medskip

Let $S$ be a 2-separator of $G$ and $B$ be a (non-trivial) block of some $H\in {\cal C}(G,S)$. We say that $B$ is an {\em S-block} of $G$ if $S\subseteq V(B)$.
Also, if $S$ is a rich 2-separator and at least three graphs in ${\cal C}(G,S)$ contain $S$-blocks, we call $S$ a {\em b-rich separator}.
{We say that a b-rich separator $S$ of $G$ is {\em nice} if for every $H\in{\cal C}(G,S)$ that contains an $S$-block, it holds that the graph $G[V(H)]$ contains a cycle.}

We start with an easy observation.

\begin{observation}
	\label{negative}
	Let $G$ be a $K_{4}$-free graph such that $K_{2,3} \leq G.$ Then the connectivity of $G$ is at most 2 and $G$ contains a b-rich separator. 
\end{observation}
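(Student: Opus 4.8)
The plan is to produce an explicit $b$-rich separator out of a $K_{2,3}$-subdivision. Since $\Delta(K_{2,3})=3$, the hypothesis $K_{2,3}\le G$ is equivalent to $G$ containing a \emph{subdivision} of $K_{2,3}$ (the standard fact that, for host-independent reasons, a minor of maximum degree $\le 3$ is a topological minor). I fix one such subdivision, with branch vertices $a,b$ (the two vertices of degree $3$) joined by three internally disjoint $(a,b)$-paths $Q_1,Q_2,Q_3$, each of length at least $2$. Setting $I_j=V(Q_j)\setminus\{a,b\}$, each $I_j$ is a nonempty connected subset of $V(G)\setminus\{a,b\}$, and each $Q_j$ reaches both $a$ and $b$ through $I_j$. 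I will show that the candidate $S=\{a,b\}$ is a $b$-rich separator.

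The whole statement reduces to proving that $I_1,I_2,I_3$ lie in three \emph{distinct} connected components of $G\setminus\{a,b\}$. Once this holds, $G\setminus S$ has at least three components, so $S$ is a $2$-separator; moreover the three components $V_1,V_2,V_3$ with $I_j\subseteq V_j$ are pairwise distinct. In the augmented component $G_j\in{\cal C}(G,S)$ associated with $V_j$, the edge $ab$ is present (it is added when forming ${\cal C}(G,S)$), so $Q_j$ together with $ab$ is a cycle through both $a$ and $b$; this cycle lies in a block of $G_j$ containing $S$, i.e.\ $G_j$ has an $S$-block. Hence at least three members of ${\cal C}(G,S)$ contain $S$-blocks, and $S$ is $b$-rich.

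It therefore remains to rule out two interiors sharing a component, and this is where the only real work lies. Suppose, towards a contradiction, that (say) $I_1$ and $I_2$ both lie in one component $W$ of $G\setminus\{a,b\}$. Since $G[W]$ is connected and $I_1,I_2$ are disjoint connected sets, a shortest $(I_1,I_2)$-path $R$ in $G[W]$ meets $I_1$ only at its start $u$, meets $I_2$ only at its end $v$, and avoids $a,b$ (as $R\subseteq W$); thus $R$ is internally disjoint from $Q_1$ and from $Q_2$. If $R$ is also internally disjoint from $Q_3$, then $a,b,u,v$ are the branch vertices of a $K_4$-subdivision whose six arcs are the two halves of $Q_1$ (split at $u$), the two halves of $Q_2$ (split at $v$), the path $R$, and the path $Q_3$ as the $a$–$b$ arc; this gives $K_4\le G$, contradicting that $G$ is $K_4$-free.

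The main obstacle is the remaining possibility that $R$ passes through the interior $I_3$ of the third path, which would wreck the disjointness of the $a$–$b$ arc $Q_3$ with $R$. I handle it by truncation and re-selection: let $w$ be the first vertex of $I_3$ met along $R$ starting from $u$, and let $R'$ be the initial segment $u\ldots w$ of $R$. Then $R'$ is internally disjoint from all of $Q_1,Q_2,Q_3$, its ends are $u\in I_1$ and $w\in I_3$, and now $a,b,u,w$ are the branch vertices of a $K_4$-subdivision, using the two halves of $Q_1$, the two halves of $Q_3$ (split at $w$), the arc $R'$, and $Q_2$ as the $a$–$b$ arc — again forcing $K_4\le G$, a contradiction. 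Since both cases are impossible, no component of $G\setminus\{a,b\}$ contains two of the $I_j$, which completes the argument and shows that $S=\{a,b\}$ is a $b$-rich separator.
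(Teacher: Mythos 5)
Your proof is correct. The paper states this result as an \emph{Observation} with no proof at all, so there is no argument of the authors to compare against; what you have written supplies exactly the details they took for granted, and it is surely the intended reasoning. Specifically: passing from the $K_{2,3}$-minor to a $K_{2,3}$-subdivision is legitimate since $\Delta(K_{2,3})\leq 3$; taking $S=\{a,b\}$ to be the two degree-$3$ branch vertices of the resulting theta-subgraph is the natural candidate; and your use of $K_{4}$-freeness to force the three interiors $I_{1},I_{2},I_{3}$ into pairwise distinct components of $G\setminus S$ is sound, with the truncation step (cutting the connecting path $R$ at its first vertex of $I_{3}$ and re-choosing the four branch vertices of the $K_{4}$-subdivision) correctly disposing of the only delicate case. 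The closing step also meshes exactly with the paper's definitions: since each augmented component in ${\cal C}(G,S)$ contains the added edge $ab$, the cycle $Q_{j}+ab$ lies in a block of that component containing both vertices of $S$, so three members of ${\cal C}(G,S)$ contain $S$-blocks and $S$ is b-rich.
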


We now prove the following:

\begin{lemma}
\label{princess}
Let $G\in\obs({\cal A}_{1}({\cal P}))\setminus  {\cal O}$ be a graph.
Every b-rich separator of $G$ is nice.
\end{lemma}

\begin{proof}
Let $S=\{x,y\}$ be a b-rich separator of $G$.
Suppose that $S$ is not nice.
Then, there is an $H\in {\cal C}(G,S)$ that contains an $S$-block and the graph $G[V(H)]$ does not contain a cycle.
Given that $H$ contains an $S$-block, there is a biconnected graph that is a subgraph of $H$ and its vertex set contains $x$ and $y$.
Therefore, $H$ contains an $(x,y)$-path $P$ that does not contain the edge $xy$ (which, by definition of ${\cal C}(G,S)$, is an edge of $H$).
Notice that $P$ is also a subgraph of $G[V(H)]$.

We claim that for every internal vertex $v$ of $P$, it holds that $\deg_{G}(v) = 2$.
To see why this holds, first observe that $\deg_{G}(v)\geq\deg_{P}(v)= 2$ and,
suppose towards a contradiction, that there is a third neighbor $u\in V(G)$ of
$v$. Observe that $u\in V(H)\setminus V(P)$, since if $u\in V(P)$ then
$G[V(H)]$ contains a cycle, contradicting our initial assumption, and if $v\in V(G)\setminus V(H)$, then $S$ is not a separator of $G$, a contradiction.
Also, every $(u,v)$-path in $G$ contains the edge $uv$, since, otherwise, the existence of a $(u,v)$-path in $G$ that avoids $uv$ implies the existence of a cycle in $G[V(H)]$, a contradiction to our initial assumption.
But then the edge $uv$ then $uv$ is a bridge of $G$, a contradiction to the fact that $G$ is bridgeless (due to  \autoref{generals}).

Thus, for every internal vertex $v$ of $P$, it holds that $\deg_{G}(v) = 2$ and, by  \autoref{generals}, every such vertex has to be simplicial.
This implies the existence of a cycle in $G[V(H)]$, a contradiction to our initial assumption.
\end{proof}
%\blue{REMOVE:
%	
%Using \autoref{generals}, we easily derive the next observation that will be frequently used in the course of the remaining proofs of this section.
%\begin{observation}
%	\label{princess}
%	Let $G\in\obs({\cal A}_{1}({\cal P}))\setminus  {\cal O}$ be a graph, $S$ be a b-rich separator of $G$, and $H\in{\cal C}(G,S)$ that contains an $S$-block.
%	Then the graph $G[V(H)]$ contains a cycle.
%\end{observation}}

We now argue why in a graph $G$ that is connected and $K_4$-free and where ${\cal O}\nleq G$ and $K_{2,3} \leq G$, the existence of more than two nice b-rich separators implies the existence of a ${\cal P}$-apex vertex of $G$ that is the intersection of all such separators.
We stress that the following lemma is not stated for graphs in $\obs({\cal A}_{1}({\cal P}))\setminus  {\cal O}$ and, therefore, we can not use \autoref{princess} and assume that every b-rich separator is nice.

\begin{lemma}
	\label{symphony}
%	Let $G\in \obs({\cal A}_{1}({\cal P}))\setminus  {\cal O}$ be a connected $K_{4}$-free graph such that $K_{2,3} \leq G$. Then $G$ contains a unique b-rich separator.
%%	
%	\red{Let $G$ be a connected $K_{4}$-free graph such that ${\cal O}\nleq G$ and $K_{2,3} \leq G$. If for every b-rich separator $S$ of $G$ and every $H\in{\cal C}(G,S)$ which contains an $S$-block the graph $G[V(H)]$ contains a cycle, then $G$ contains a unique b-rich separator.}
%	
%	\aliv{Maybe too explicit?}
%	\green{Let $G$ be a connected $K_{4}$-free graph such that ${\cal O}\nleq G$ and $K_{2,3} \leq G$. Suppose that for every b-rich separator $S$ of $G$ and every $H\in{\cal C}(G,S)$ which contains an $S$-block the graph $G[V(H)]$ contains a cycle. If there exist at least two b-rich separators $S_{1},S_{2}$, then    $S_{1}\cap S_{2}=\{x\}$ for some ${\cal P}$-apex vertex of $G$.}
%	
	{Let $G$ be a connected $K_{4}$-free graph such that ${\cal O}\nleq G$ and $K_{2,3} \leq G$ and let $S_{1}, \ldots, S_{k}$ be the b-rich separators of $G$. If $k\geq 2$ and every $S_{i}, i\in [k]$, is nice, then there is a vertex $x\in V(G)$ such that $\bigcap\limits_{i=1}^{k} S_{i} = \{x\}$ and $x$ is a ${\cal P}$-apex vertex of $G$.}
\end{lemma}

\begin{proof}
	Suppose that $k\geq 2$ and every b-rich separator $S_{i}, i\in[k]$ of $G$ is nice.
	We first prove the following claim:
	\medskip
	
	\noindent {\em Claim 1:} There exists some $x\in V(G)$ such that $\bigcap\limits_{i=1}^{k} S_{i} = \{x\}$.
	
	\smallskip
	
	\noindent {\em Proof of Claim 1:} We first prove that every two b-rich separators of $G$ have a non-empty intersection. 
	Suppose, towards a contradiction, that there exist $i,j\in[k]$, where $i\neq j$ and
	$S_{i}\cap S_{j}= \emptyset$.
	%   Due to  \autoref{princess}, every augmented connected component in both of ${\cal C}(G,S_{1}), {\cal C}(G,S_{2})$ must contain at least one cycle.
	Due to $K_{4}$-freeness of $G$, there exists an augmented component $H\in {\cal C}(G,S_{i})$ such that $S_{j} \subseteq V(H)$.
	Observe that, since $S_{i}$ is nice, there exist at least two augmented connected components that contain $S_{i}$-blocks in ${\cal C}(G,S_{i}) \setminus \{H\}$ that contain a cycle and together form $K_{4}^{-}$ as a minor.
	By applying the same arguments symmetrically, there exist at least two augmented components in $ {\cal C}(G,S_{j})$ that do not contain $S_{i}$, which together also form $K_{4}^{-}$ as a minor.
	Then, notice that $\hyperref[magnetic]{{O}_{1}^{0}}\leq G$ (see leftmost figure of \autoref{marchers}), a contradiction.

	Now, suppose that there exist three distinct $i,j,l\in [k]$ such that
    $S_{i} \cap S_{j} \cap S_{l} = \emptyset$. Notice that since $S_{j} \cap S_{l}\not = \emptyset$, there exists a unique augmented connected component of ${\cal C}(G,S_{i})$ that contains both $S_{j}, S_{l}$,
	while there also exist at least two other augmented connected components that
	contain $S_{i}$-blocks and,	since $S_{i}$ is nice, form $K_{4}^{-}$ as a minor.
	By applying the same argument to ${\cal C}(G,S_{j}), {\cal C}(G,S_{l})$, we have that for each said separator we can form $K_{4}^{-}$ as a minor and therefore, $\hyperref[conceive]{{O}_{11}^{2}}\leq G$ (see rightmost figure of \autoref{marchers}), a contradiction.
	
	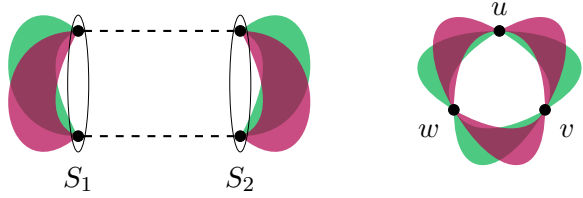
\begin{figure}[H]
		\centering
		\vspace{-.7cm}
		\begin{tikzpicture}[myNode/.style = black node, scale=0.7]
				
				\begin{scope}
				
				\node[myNode] (A) at (0,2) {};
				\node[myNode] (B) at (0,0) {};
				\node[myNode] (C) [right=2cm of A] {};
				\node[myNode] (D) [right=2cm of B] {};
				
				\begin{scope}[on background layer]%LeftFlaps
				\fill[green!70!blue, opacity=0.7]
				(A.center) to [out=200, in=120] (B.center) to [out=180, in=140, min distance =2cm] (A.center); 
				\fill[red!70!blue, opacity=0.7]
				(B.center) to [out=160, in=240] (A.center) to [out=180, in=220, min distance=2cm] (B.center); 
				\end{scope}
				
				\begin{scope}[on background layer,xscale=-1]%LeftFlaps
				\fill[green!70!blue, opacity=0.7]
				(C.center) to [out=200, in=120] (D.center) to [out=180, in=140, min distance =2cm] (C.center); 
				\fill[red!70!blue, opacity=0.7]
				(D.center) to [out=160, in=240] (C.center) to [out=180, in=220, min distance=2cm] (D.center); 
				\end{scope}
				
				\draw[very thin] ($(A)!0.5!(B)$) ellipse (0.2 and 1.3);
				\node [below=.2cm of B] {$S_{i}$};
				\draw[very thin] ($(C)!0.5!(D)$) ellipse (0.2 and 1.3);
				\node [below=.2cm of D] {$S_{j}$};
				
				\draw[dashed,thick] (A)      to  (C);
				\draw[dashed,thick] (B)      to  (D);
				
				\end{scope}

				\begin{scope}[xshift=8cm, yshift =1cm]

				\node[myNode, label=above:$u$] (A) at (90:1) {};
				\node[myNode, label=below left:$w$] (B) at (210:1) {};
				\node[myNode, label=below right:$v$] (C) at (330:1) {};

				\begin{scope}[on background layer]%LeftFlaps
				\fill[green!70!blue, opacity=0.7]
				(B.center) to [out=120, in=210] (A.center) to [out=180, in=150, min distance =1.5cm] (B.center); 
				\fill[red!70!blue, opacity=0.7]
				(B.center) to [out=90, in=180] (A.center) to [out=150, in=120, min distance=1.5cm] (B.center); 
				\end{scope}
				
				\begin{scope}[on background layer, xscale=-1]%RightFlaps
				\fill[green!70!blue, opacity=0.7]
				(C.center) to [out=120, in=210] (A.center) to [out=180, in=150, min distance =1.5cm] (C.center); 
				\fill[red!70!blue, opacity=0.7]
				(C.center) to [out=90, in=180] (A.center) to [out=150, in=120, min distance=1.5cm] (C.center); 
				\end{scope}
				
				\begin{scope}[on background layer, yscale=-1]%DownFlaps
				\fill[green!70!blue, opacity=0.7]
				(B.center) to [out=60, in=150] (C.center) to [out=120, in=90, min distance =1.5cm] (B.center); 
				\fill[red!70!blue, opacity=0.7]
				(B.center) to [out=30, in=120] (C.center) to [out=90, in=60, min distance=1.5cm] (B.center); 
				\end{scope}
				
				\end{scope}
				\end{tikzpicture}
				\vspace{-2mm}\caption{The proof of Claim 1.}\label{marchers}
	\end{figure}
	
	Therefore, $\bigcap\limits_{i=1}^{k} S_{i} = \{x\}$ for some $x\in V(G)$.
	Claim 1 follows.
	
	\medskip

	Consider the b-rich separators $S_{1},S_{2}$ of $G$. According to Claim 1, we
	have that $S_{1} = \{x,u_{1} \},$ $S_{2} = \{x,u_{2} \}$, for some $u_{1},u_{2}\in V(B)$, where $u_{1}\neq u_{2}$.
	Let $$H_{1} = \cupall \{ H \in {\cal C}(G,S_{1}) : u_{2} \notin V(H) \}\mbox{~~and~~}H_{2} = \cupall \{ H \in {\cal C}(G,S_{2}) : u_{1} \notin V(H) \}.$$ 
%	\st{Consider $H = G \setminus ((V(H_{1}) \cup V(H_{2}))\setminus\{x,u_{1},u_{2}\})$.}
	
	{Also, let $\bar{H} = G \setminus ((V(H_{1}) \cup V(H_{2}))\setminus\{u_{1},u_{2}\})$.} See \autoref{2020figure} for an illustration of the above graphs. For each $i\in[2]$, observe that, since $S_{i}$ is nice, we have that $K_{4}^{-}\leq H_{i}$.
	\begin{figure}[H]
\centering
\begin{tikzpicture}[myNode/.style = black node, scale=.8]
\begin{scope}
\node[myNode, label=above:$x$] (A) at (90:2) {};
\node[myNode, label=below left:$u_{1}$] (B) at (210:2) {};
\node[myNode, label=below right:$u_{2}$] (C) at (330:2) {};

\begin{scope}[on background layer]%LeftFlaps 
\fill[red!70!blue, opacity=0.7]
(B.center) to [out=90, in=190] (A.center) to [out=150, in=120, min distance=2.5cm] (B.center);
\fill[green!70!blue, opacity=0.7]
(B.center) to [out=110, in=210] (A.center) to [out=180, in=150, min distance =2.5cm] (B.center);

%\coordinate (C1) at (170:2.3){};
%\draw[-, name path = path1] (C1) circle (0.4);

%\path[name path=path2] (C1) to [bend right =20] (B);

%\draw [name intersections={of= path1 and path2}]
%\foreach \s in {1}{
%	(intersection-\s) node {}
%};
%\node[myNode] (I1)  at (intersection-1) {};

%\draw[dashed,thick] (I1) to [bend right =20] (B);

\end{scope}

\begin{scope}[on background layer, xscale=-1]%RightFlaps
\fill[red!70!blue, opacity=0.7]
(C.center) to [out=90, in=190] (A.center) to [out=150, in=120, min distance=2.5cm] (C.center); 
\fill[green!70!blue, opacity=0.7]
(C.center) to [out=110, in=210] (A.center) to [out=180, in=150, min distance =2.5cm] (C.center);

%\coordinate (C2) at (170:2.3){};
%\draw[-, name path = path1] (C2) circle (0.4);
%
%\path[name path=path2] (C2) to [bend right =20] (C);
%
%\draw [name intersections={of= path1 and path2}]
%\foreach \s in {1}{
%	(intersection-\s) node {}
%};
%\node[myNode] (I2)  at (intersection-1) {};

%\draw[dashed,thick] (I2) to [bend right =20] (C);
\end{scope}

\begin{scope}[on background layer]
\fill [mustard!80] (B.center) to [bend left =60] (C.center) to [bend left = 60] (B.center);

\coordinate (Mid) at ($(C)!0.5!(B)$) {};
\node[label=center:$\bar{H}$] () at ($(Mid)$) {};
%\draw[dashed, thick] (B) to [bend right =20] (C);
\end{scope}

\end{scope}
\end{tikzpicture}
\caption{An illustration of the graphs $H_{1}$, $H_{2}$ (the red and green parts on the left and on the right respectively) and the graph $\bar{H}$, depicted in yellow. }\label{2020figure}
	\end{figure}

%	\st{Since for each $i\in [2]$, $\bar{H}\setminus u_{i}$ is a connected subgraph}\reviewer{ Please provide more justification.}
%	
%	\st{of $G\setminus H_{i}$ which is also connected, we easily derive the following:}
	
	{Notice that for each $i\in [2]$, the graph $G\setminus H_{i}$ is connected and the graph $\bar{H}\setminus u_{i}$ is a connected subgraph of $G\setminus H_{i}$. Therefore, we can derive the following:}
	
	\smallskip
	
	\noindent {\em Observation 1:} If $y\in V(\bar{H})$, then there exists a $(y, u_{1})$-path in $\bar{H}\setminus u_{2}$ and a $(y,u_{2})$-path in $\bar{H}\setminus u_{1}$. In particular, $u_{1}, u_{2}$ are not cut-vertices of $\bar{H}$.
	
	{It remains to prove that $x$ is a ${\cal P}$-apex vertex of $G$. For that,
	suppose, towards a contradiction, that there exist} two cycles $C_{1}, C_{2}$, that are connected in $G\setminus x$. We will now argue that the following holds:

	\medskip
	
	\noindent {\em Claim 2:} $C_{1}, C_{2}$ are in $\bar{H}$. \smallskip
	
	\noindent {\em Proof of Claim 2:}
	{If $C_{1}, C_{2}$ are both in some $H_{i}, \ i\in [2]$, say $H_{1}$, then $H_{1}\setminus x \not\in{\cal P}$ and so $\obs({\cal P})\leq H_{1}\setminus x$.}
	{Hence, since $K_{4}^{-}\leq H_{2}$, $\{\hyperref[magnetic]{{O}_{1}^{0}},\hyperref[narcotic]{{O}_{3}^{0}}\}\leq G$, a contradiction.}
	Also, if each $C_{i}$ belongs to different $H_{j}, \ i,j\in [2]$, say
    $C_{1}\subseteq H_{1}$ and $C_{2}\subseteq H_{2}$, then since $C_{1}, C_{2}$ are connected in $G\setminus x$, we have $\hyperref[daylight]{{O}_{12}^{1}}\leq G$ (see left figure of \autoref{mystical}), a contradiction.
	
	Therefore, at least one of $C_{1}, C_{2}$ is in  $\bar{H}$. Suppose then,
	towards a contradiction, that one of $C_{1},C_{2}$, say $C_{1}$, is in
	$H_{1}\setminus x$. Then $C_{2}\subseteq \bar{H}$. Observation 1 implies that
	$C_{2}$ is connected with $u_{1}$ through a path disjoint from $u_{2}$ and thus, if $u_{2}\notin V(C_{2}), \ \hyperref[narcotic]{{O}_{3}^{0}}\leq G$ (see central figure of \autoref{mystical}), while if $u_{2}\in V(C_{2})$, then by contracting all edges in the said path, we get $\hyperref[granting]{{O}_{6}^{1}}\leq G$ (see right figure of \autoref{mystical}),  a contradiction in both cases. Claim 2 follows.

	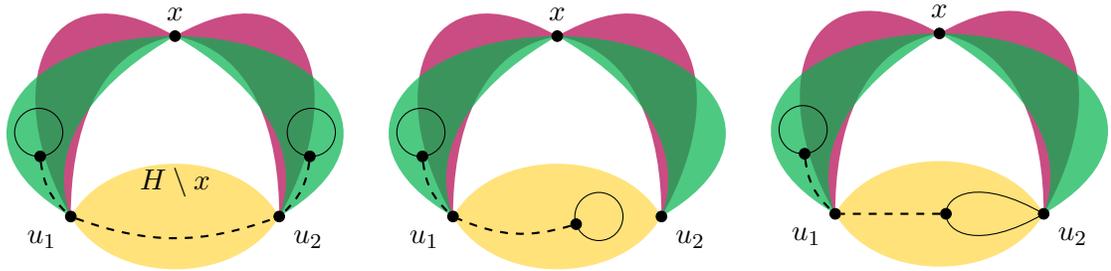
\begin{figure}[H]
		\centering
		\begin{subfigure}{.3\linewidth}
			\begin{tikzpicture}[myNode/.style = black node, scale=.8]
			\begin{scope}
			\node[myNode, label=above:$x$] (A) at (90:2) {};
			\node[myNode, label=below left:$u_{1}$] (B) at (210:2) {};
			\node[myNode, label=below right:$u_{2}$] (C) at (330:2) {};

			\begin{scope}[on background layer]%LeftFlaps 
			\fill[red!70!blue, opacity=0.7]
			(B.center) to [out=90, in=190] (A.center) to [out=150, in=120, min distance=2.5cm] (B.center);
			\fill[green!70!blue, opacity=0.7]
			(B.center) to [out=110, in=210] (A.center) to [out=180, in=150, min distance =2.5cm] (B.center);
			
			\coordinate (C1) at (170:2.3){};
			\draw[-, name path = path1] (C1) circle (0.4);
			
			\path[name path=path2] (C1) to [bend right =20] (B);
			
			\draw [name intersections={of= path1 and path2}]
			\foreach \s in {1}{
				(intersection-\s) node {}
			};
			\node[myNode] (I1)  at (intersection-1) {};
			
			\draw[dashed,thick] (I1) to [bend right =20] (B);
			
			\end{scope}
			
			\begin{scope}[on background layer, xscale=-1]%RightFlaps
			\fill[red!70!blue, opacity=0.7]
			(C.center) to [out=90, in=190] (A.center) to [out=150, in=120, min distance=2.5cm] (C.center); 
			\fill[green!70!blue, opacity=0.7]
			(C.center) to [out=110, in=210] (A.center) to [out=180, in=150, min distance =2.5cm] (C.center);

			\coordinate (C2) at (170:2.3){};
			\draw[-, name path = path1] (C2) circle (0.4);
			
			\path[name path=path2] (C2) to [bend right =20] (C);
			
			\draw [name intersections={of= path1 and path2}]
			\foreach \s in {1}{
				(intersection-\s) node {}
			};
			\node[myNode] (I2)  at (intersection-1) {};
			
			\draw[dashed,thick] (I2) to [bend right =20] (C);
			\end{scope}
			
			\begin{scope}[on background layer]
			\fill [mustard!80] (B.center) to [bend left =60] (C.center) to [bend left = 60] (B.center);
			
			\coordinate (Mid) at ($(C)!0.5!(B)$) {};
			\node[label=above:$\bar{H}$] () at ($(Mid)$) {};
			\draw[dashed, thick] (B) to [bend right =20] (C);
			\end{scope}
			
			\end{scope}
			
			\end{tikzpicture}
		\end{subfigure}
		\begin{subfigure}{.3\linewidth}
			\begin{tikzpicture}[myNode/.style = black node, scale=.8]
			\begin{scope}
			\node[myNode, label=above:$x$] (A) at (90:2) {};
			\node[myNode, label=below left:$u_{1}$] (B) at (210:2) {};
			\node[myNode, label=below right:$u_{2}$] (C) at (330:2) {};

			\begin{scope}[on background layer]%LeftFlaps 
			\fill[red!70!blue, opacity=0.7]
			(B.center) to [out=90, in=190] (A.center) to [out=150, in=120, min distance=2.5cm] (B.center);
			\fill[green!70!blue, opacity=0.7]
			(B.center) to [out=110, in=210] (A.center) to [out=180, in=150, min distance =2.5cm] (B.center);
			
			\coordinate (C1) at (170:2.3) {};
			\draw[-, name path = path1] (C1) circle (0.4);
			
			\path[name path=path2] (C1) to [bend right =20] (B);
			
			\draw [name intersections={of= path1 and path2}]
			\foreach \s in {1}{
				(intersection-\s) node {}
			};
			\node[myNode] (I1)  at (intersection-1) {};
			
			\draw[dashed,thick] (I1) to [bend right =20] (B);
			
			\end{scope}
			
			\begin{scope}[on background layer, xscale=-1]%RightFlaps
			\fill[red!70!blue, opacity=0.7]
			(C.center) to [out=90, in=190] (A.center) to [out=150, in=120, min distance=2.5cm] (C.center); 
			\fill[green!70!blue, opacity=0.7]
			(C.center) to [out=110, in=210] (A.center) to [out=180, in=150, min distance =2.5cm] (C.center);

			\end{scope}
			
			\begin{scope}[on background layer]
			\fill [mustard!80] (B.center) to [bend left =60] (C.center) to [bend left = 60] (B.center);
			
			\coordinate (C2) at ($(B)!0.7!(C)$) {};
			\draw[-, name path = path1] (C2) circle (0.4);
			
			\path[name path=path2] (C2) to [bend left =20] (B);
			
			\draw [name intersections={of= path1 and path2}]
			\foreach \s in {1}{
				(intersection-\s) node {}
			};
			\node[myNode] (I2)  at (intersection-1) {};
			
			\draw[dashed,thick] (I2) to [bend left =20] (B);
			\end{scope}
			
			\end{scope}
			
			\end{tikzpicture}
		\end{subfigure}
		\begin{subfigure}{.3\linewidth}
			\begin{tikzpicture}[myNode/.style = black node, scale=.8]
			\begin{scope}
			\node[myNode, label=above:$x$] (A) at (90:2) {};
			\node[myNode, label=below left:$u_{1}$] (B) at (210:2) {};
			\node[myNode, label=below right:$u_{2}$] (C) at (330:2) {};

			\begin{scope}[on background layer]%LeftFlaps 
			\fill[red!70!blue, opacity=0.7]
			(B.center) to [out=90, in=190] (A.center) to [out=150, in=120, min distance=2.5cm] (B.center);
			\fill[green!70!blue, opacity=0.7]
			(B.center) to [out=110, in=210] (A.center) to [out=180, in=150, min distance =2.5cm] (B.center);
			
			\coordinate (C1) at (170:2.3) {};
			\draw[-, name path = path1] (C1) circle (0.4);
			
			\path[name path=path2] (C1) to [bend right =20] (B);
			
			\draw [name intersections={of= path1 and path2}]
			\foreach \s in {1}{
				(intersection-\s) node {}
			};
			\node[myNode] (I1)  at (intersection-1) {};
			
			\draw[dashed,thick] (I1) to [bend right =20] (B);
			
			\end{scope}
			
			\begin{scope}[on background layer, xscale=-1]%RightFlaps
			\fill[red!70!blue, opacity=0.7]
			(C.center) to [out=90, in=190] (A.center) to [out=150, in=120, min distance=2.5cm] (C.center); 
			\fill[green!70!blue, opacity=0.7]
			(C.center) to [out=110, in=210] (A.center) to [out=180, in=150, min distance =2.5cm] (C.center); 
			
			\end{scope}
			
			\begin{scope}[on background layer]
			\fill [mustard!80] (B.center) to [bend left =60] (C.center) to [bend left = 60] (B.center);
			
			\draw[-, name path = path1] (C.center).. controls ++(150:2.5) and ++(210:2.5).. (C.center);
			
			\path[name path=path2] (C) to (B);
			
			\draw [name intersections={of= path1 and path2}]
			\foreach \s in {1}{
				(intersection-\s) node {}
			};
			\node[myNode](I3)  at (intersection-1) {};
			
			\draw[dashed,thick] (I3) to (B);
			\end{scope}
			
			\end{scope}
			
			\end{tikzpicture}
		\end{subfigure}
		
		\vspace{-2mm}\caption{The cycles $C_{1}$ and $C_{2}$ in  the proof of Claim 2.}
		\label{mystical}
	\end{figure}

	\noindent {\em Observation 2:} $\{u_{1}, u_{2}\}\subseteq V(C_{1}\cup C_{2})$. Indeed, suppose that there exists an $i\in [2]$ such that $u_{i}\notin V(C_{1}\cup C_{2})$. By Claim 2, $C_{1}, C_{2}$ are in $\bar{H}$ and Observation 1 implies that there exists a path connecting $C_{1}, C_{2}$ avoiding $u_{i}$. Hence,
	{$\bar{H}\setminus u_{i}\not\in{\cal P}$ and so $\obs({\cal P})\leq \bar{H}\setminus u_{i}$ which, together with the fact that $K_{4}^{-}\leq H_{i}$, 
    implies that} $\{\hyperref[magnetic]{{O}_{1}^{0}},\hyperref[narcotic]{{O}_{3}^{0}}\}\leq G$,  a contradiction. \medskip
	
	\noindent {\em Claim 3:} There exists a block $H'$ of $\bar{H}$ that contains both $C_{1}, C_{2}$ and is outerplanar. \smallskip
	
	\noindent {\em Proof of Claim 3:} We start with the following observation:
	
	\smallskip
	
	\noindent {\em Observation 3:} There exists some block $H'$ of $\bar{H}$ that contains both $u_{1}, u_{2}$.
	Indeed, suppose towards a contradiction, that there exists some cut-vertex $v\in V(\bar{H})$ separating $u_{1}, u_{2}$ in $\bar{H}$. Then, there exist $D_{1}, D_{2}\in {\cal C}(\bar{H}, v)$ such that $D_{1}\neq D_{2}, u_{1}\in V(D_{1})$ and $u_{2}\in V(D_{2})$. By Observation 2, we can assume that $C_{1}\subseteq D_{1}$ and $C_{2}\subseteq D_{2}$, which in turn implies that
	$\hyperref[softened]{O_{3}^{2}}\leq G$ (see \autoref{inchoate}), a contradiction.
	
	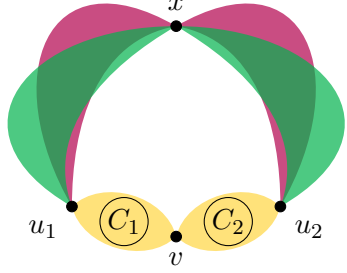
\begin{figure}[H]
		\centering
		\vspace{-1cm}
		\begin{tikzpicture}[myNode/.style = black node, scale=.8]
		\begin{scope}
		\node[myNode, label=above:$x$] (A) at (90:2) {};
		\node[myNode, label=below left:$u_{1}$] (B) at (210:2) {};
		\node[myNode, label=below right:$u_{2}$] (C) at (330:2) {};
		\node[myNode, label=below:$v$] (v) at (270:1.5) {};

		\begin{scope}[on background layer]%LeftFlaps 
		\fill[red!70!blue, opacity=0.7]
		(B.center) to [out=90, in=190] (A.center) to [out=150, in=120, min distance=2.5cm] (B.center);
		\fill[green!70!blue, opacity=0.7]
		(B.center) to [out=110, in=210] (A.center) to [out=180, in=150, min distance =2.5cm] (B.center);
		\end{scope}
		
		\begin{scope}[on background layer, xscale=-1]%RightFlaps
		\fill[red!70!blue, opacity=0.7]
		(C.center) to [out=90, in=190] (A.center) to [out=150, in=120, min distance=2.5cm] (C.center); 
		\fill[green!70!blue, opacity=0.7]
		(C.center) to [out=110, in=210] (A.center) to [out=180, in=150, min distance =2.5cm] (C.center); 
		\end{scope}
		
		\begin{scope}[on background layer]
		\fill [mustard!80] (B.center) to [bend left =60] (v.center) to [bend left = 60] (B.center);
		
		\coordinate (d1) at ($(v)!0.5!(B)$) {};
		\draw[-] (d1) circle (0.4);
		\node () at ($(d1)$) {$C_{1}$};
		
		\fill [mustard!80] (C.center) to [bend left =60] (v.center) to [bend left = 60] (C.center);
		
		\coordinate (d2) at ($(v)!0.5!(C)$) {};
		\draw[-] (d2) circle (0.4);
		\node () at ($(d2)$) {$C_{2}$};

		\end{scope}
		
		\end{scope}
		
		\end{tikzpicture}
		\vspace{-2mm}\caption{The cycles $C_{1}, C_{2}$ in the proof of Observation 3.}
		\label{inchoate}
	\end{figure}
		
	\smallskip
	
	According to Observation 3, let $H'$ be a block of $\bar{H}$ that contains both $u_{1}, u_{2}$. Suppose that some $C_{i}, \ i\in[2]$, say $C_{1}$, is not in $H'$.
	Then, there exists some cut-vertex $u$ of $\bar{H}$ such that $C_{1}$ is in some $H''\in {\cal C}(\bar{H}, u)$ that does not contain $H'$ as a subgraph.
	Observation 1 implies that $u\notin\{u_{1}, u_{2}\}$.
	Therefore, $\hyperref[adorning]{O_{3}^{1}}\leq G$, a contradiction.
	Thus, $H'$ contains $C_{1}, C_{2}$.

    {We now prove that $H'$ is outerplanar. We have that $K_{4}\not\leq G$ and
    so $K_{4}\not\leq H'$. Hence, it suffices to prove that $K_{2,3}\not\leq H'$.
    For that, suppose, towards a contradiction, that $K_{2,3}\leq H'$. Then, 
    \autoref{negative} implies that $H'$ contains a b-rich separator $S$. But then
    $S$ is also a b-rich separator of $G$. Indeed, this holds since 
    $K_{4}$-freeness of $G$ implies that every two $A,B\in{\cal C}(H',S)$ that 
    contain $S$-blocks are not connected in $G\setminus S$. Now, Claim 1 implies that
    $x\in S$, a contradiction to the fact that
     $S\subseteq H'\subseteq \bar{H}$. Claim 3 follows.
     }
	
	\medskip
	
	We now return to the proof of the lemma. According to  Claim 3, let $H'$ be a block of $\bar{H}$ that contains both $C_{1}, C_{2}$ and is outerplanar. {Since $H'$ is biconnected and outerplanar, then by
	\autoref{decoding}, $H'$ contains a Hamiltonian cycle, namely $C$. }
%	Also, let $C$ be the Hamiltonian cycle of $H'$, which exists due to biconnectivity and outerplanarity of $H'$.\gstam{cite Observation~\ref{decoding}}.
	Since $C_{1}, C_{2}\subseteq H'$, there exists some chord $e$ of $C$. To complete the proof of the Lemma, we distinguish the following cases:
	
	\smallskip
	
	\noindent {\em Case 1:} $u_{1}u_{2}\not\in E(C)$.
	Let $P_{1}, P_{2}$ be the connected components of $(C\setminus u_{1})\setminus u_{2}$. Notice that $K_{4}$-freeness of $G$ implies that $e$ is incident to vertices of some $P_{i}, \ i\in [2]$.  But then, $\hyperref[conceive]{O_{11}^{2}}\leq G$, a contradiction (see leftmost figure in \autoref{fakfbask}).
	
	\smallskip
	
	\noindent {\em Case 2:} $u_{1}u_{2}\in E(C)$. Then, the existence of $e$ implies
	that $\hyperref[supports]{O_{10}^{2}}\leq G$, a contradiction (see rightmost figure of \autoref{fakfbask}).

	\begin{figure}[h]
		\centering
		\begin{subfigure}{0.3\linewidth}
			\begin{tikzpicture}[myNode/.style = black node, scale=.8]
			\begin{scope}
			\node[myNode, label=above:$x$] (A) at (90:2) {};
			\node[myNode, label=below left:$u_{1}$] (B) at (210:2) {};
			\node[myNode, label=below right:$u_{2}$] (C) at (330:2) {};

			\begin{scope}[on background layer]%LeftFlaps 
			\fill[red!70!blue, opacity=0.7]
			(B.center) to [out=90, in=190] (A.center) to [out=150, in=120, min distance=2.5cm] (B.center);
			\fill[green!70!blue, opacity=0.7]
			(B.center) to [out=110, in=210] (A.center) to [out=180, in=150, min distance =2.5cm] (B.center);
			\end{scope}
			
			\begin{scope}[on background layer, xscale=-1]%RightFlaps
			\fill[red!70!blue, opacity=0.7]
			(C.center) to [out=90, in=190] (A.center) to [out=150, in=120, min distance=2.5cm] (C.center); 
			\fill[green!70!blue, opacity=0.7]
			(C.center) to [out=110, in=210] (A.center) to [out=180, in=150, min distance =2.5cm] (C.center); 
			\end{scope}
			
			\begin{scope}[on background layer]
			\fill [mustard!80] (B.center) to [bend left =60] (C.center) to [bend left = 60] (B.center);
			\draw[dashed, thick] (B.center) to [bend left =60] (C.center) to [bend left = 60] (B.center);

			\path[name path=path1] (215:2.5) to (325:2.5);
			\path[name path=path2] (B.center) to [bend right =60] (C.center);
			
			\draw [name intersections={of= path1 and path2}]
			\foreach \s in {1}{
				(intersection-\s) node {}
			};
			\node[myNode](I1)  at (intersection-1) {};
			\node[myNode](I2)  at (intersection-2) {};
			
			\draw[thick] (I1.center) to (I2.center);
			
			\coordinate (Mid) at ($(C)!0.5!(B)$) {};
			\node[label=center:$e$] () at ($(Mid)$) {};
			%\draw[dashed, thick] (B) to [bend right =20] (C);
			\end{scope}
			
			\end{scope}
			\end{tikzpicture}
		\end{subfigure}
		\begin{subfigure}{0.3\linewidth}
			\begin{tikzpicture}[myNode/.style = black node, scale=.8]
			\begin{scope}
			\node[myNode, label=above:$x$] (A) at (90:2) {};
			\node[myNode, label=below left:$u_{1}$] (B) at (210:2) {};
			\node[myNode, label=below right:$u_{2}$] (C) at (330:2) {};

			\begin{scope}[on background layer]%LeftFlaps 
			\fill[red!70!blue, opacity=0.7]
			(B.center) to [out=90, in=190] (A.center) to [out=150, in=120, min distance=2.5cm] (B.center);
			\fill[green!70!blue, opacity=0.7]
			(B.center) to [out=110, in=210] (A.center) to [out=180, in=150, min distance =2.5cm] (B.center);
			\end{scope}
			
			\begin{scope}[on background layer, xscale=-1]%RightFlaps
			\fill[red!70!blue, opacity=0.7]
			(C.center) to [out=90, in=190] (A.center) to [out=150, in=120, min distance=2.5cm] (C.center); 
			\fill[green!70!blue, opacity=0.7]
			(C.center) to [out=110, in=210] (A.center) to [out=180, in=150, min distance =2.5cm] (C.center); 
			\end{scope}
			
			\begin{scope}[on background layer]
			\fill [mustard!80] (B.center) to (C.center) to [bend left = 80] (B.center);
			\draw[dashed, thick] (B.center) to [bend right =80] (C.center);
			\draw (B.center) to (C.center);
			
			\path[name path=path1] (230:2.5) to (325:2.5);
			\path[name path=path2] (B.center) to [bend right =80] (C.center);
			
			\draw [name intersections={of= path1 and path2}]
			\foreach \s in {1}{
				(intersection-\s) node {}
			};
			\node[myNode](I1)  at (intersection-1) {};
			\node[myNode](I2)  at (intersection-2) {};
			
			\draw[thick] (I1.center) to (I2.center);
			\coordinate (Mid) at ($(C)!0.5!(B)$) {};
			\node[label=below:$e$] () at ($(Mid)$) {};
			%\draw[dashed, thick] (B) to [bend right =20] (C);
			\end{scope}
			
			\end{scope}
			\end{tikzpicture}
		\end{subfigure}
		\caption{The chord $e$ of the Hamiltonian cycle $C$ of $H'$ in the case analysis in the end of the proof of \autoref{symphony}.}\label{fakfbask}
	\end{figure}
	
	This completes the proof of the Lemma.
\end{proof}

%  \noindent {\em Sublemma:}
%  Let $G$ be a $K_{4}$-free biconnected graph such that ${\cal O}\nleq G$, $K_{2,3}\leq G$ and $G\notin {\cal A}_{1}({\cal P})$. Also, let $S = \{x,y\}$ be a b-rich separator of $G$. Then this b-rich separator is unique and $xy\in E(G)$.

%  \smallskip

%  \noindent {\em Proof of Sublemma:}
% Recall that from \autoref{symphony}, since $G\notin {\cal A}_{1}({\cal P})$, then $S$ is the unique b-rich separator of $G$.
% Consider some $H\in {\cal C}(G,S)$ and let $C_{H}$ be the Hamiltonian cycle of $H$, which exists due to biconnectivity and outerplanarity of $H$.

% We now prove that $xy\in E(G)$.
% Suppose to the contrary that $x,y$ are not adjacent.
% Consider the $(x,y)$-paths $P_{1},P_{2}$ of $C_{H}$, which are not edges since $x, y$ are not adjacent.
% Due to $K_{4}$-freeness of $G$ there cannot be any chord between any two vertices internal to $P_{1},P_{2}$.
% Therefore $S = \{x,y\}$ is a separator of $P_{1},P_{2}$ in $H$ which contradicts the fact that $H \in {\cal C}(G,S)$. Sublemma follows.

\medskip

We now prove the following result:

\begin{lemma}\label{outerplanarfl}
Let $G$ be a $K_{4}$-free graph that contains a b-rich separator $S$. If $S$ is the unique b-rich separator of $G$, then every $H\in {\cal C}(G,S)$ is an outerplanar graph.
\end{lemma}

\begin{proof}
	Let $S$ be the unique b-rich separator of $G$ and let $H\in {\cal C}(G,S)$. To prove that $H$ is an outerplanar graph, it we prove that $\{K_4, K_{2,3}\}\not\leq H$. Since $K_{4}\nleq G$ implies that $K_{4}\nleq H$, we now aim to argue that $K_{2,3}\nleq H$.
	Suppose to the contrary that $K_{2,3}\leq H$, which by 
	\autoref{negative} implies that $H$ contains a b-rich separator $S'$, where $S'\neq S$. Since $K_{4}\nleq G$, we have that every two $A,B\in{\cal C}(H,S')$ that contain $S'$-blocks are not connected in $G\setminus S'$ and therefore $S'$ is a b-rich separator of $G$, a contradiction to the uniqueness of $S$.
\end{proof}

Let $G$ be a $K_4$-free biconnected graph with $K_{2,3} \leq G$ and $S$ a b-rich-separator of $G$, such that every $H\in {\cal C}(G,S)$ is outerplanar. For an $H\in {\cal C}(G,S)$, we denote $C_{H}$ the Hamiltonian cycle of $H$, which exists due to \autoref{bicoobs} and
\autoref{decoding}.
Given an $x\in V(G)$, we say that an edge $e\in E(G)$ is an {\em x-chord} of $G$, if there exists an $ H\in {\cal C}(G,S)$ such that $e$ is a chord of $C_{H}$ incident to $x$.
Also, given two vertices $x,y\in V(G)$, we say that an edge $e$ is an {\em (x,y)-disjoint chord} of $G$ if there exists an $ H\in {\cal C}(G,S)$ such that $e$ is a chord of $C_{H}$ disjoint to $x,y$.

We conclude this subsection by proving the following result:

\begin{lemma}
	\label{caillois}
{Let $G$ be a $K_{4}$-free biconnected graph such that ${\cal O}\nleq G$ and $K_{2,3}\leq G$. Also, let $S = \{x,y\}$ be a b-rich separator of $G$.
	If every $H\in {\cal C}(G,S)$ is an outerplanar graph and  if there exist at least two augmented connected components in ${\cal C}(G,S)$ not isomorphic to a cycle, then one of the following holds:
	\begin{itemize}
		
		\item There exists a unique $(x,y)$-disjoint chord of $G$ and there do not exist both $x$-chords and $y$-chords of $G$, or
		
		\item There do not exist $(x,y)$-disjoint chords of $G$ and there exists at most one $x$-chord or at most one $y$-chord of $G$.
		
\end{itemize}}
\end{lemma}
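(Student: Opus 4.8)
The plan is to first pin down the global shape of $G$ around $S$, then prove uniqueness, and finally carry out the chord analysis. Throughout I use that, since $G$ is biconnected, every augmented component $H\in{\cal C}(G,S)$ is itself biconnected (a split component of a $2$-connected graph, together with the virtual edge $xy$, is $2$-connected), so each $S$-block is in fact all of $H$ and contains a cycle through both $x$ and $y$.

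\textbf{Reduction to outerplanar components.} First I would show every $H\in{\cal C}(G,S)$ is outerplanar, so that the Hamiltonian cycle $C_H$, hence the chord terminology, is well defined via \autoref{decoding}. Since $G$ is $K_4$-free, so is each $H$. If $K_{2,3}\leq H$ for some $H$, then by \autoref{negative} $H$ has a b-rich separator $S'$; as in Observation~4 of the proof of \autoref{symphony}, any edge of $G$ joining two distinct augmented components of ${\cal C}(H,S')$ would create a $K_4$, so $S'$ is also a b-rich separator of $G$. Thus a non-outerplanar component produces a second b-rich separator, which is exactly what the uniqueness argument below forbids; hence every $H$ is $K_{2,3}$-free and $K_4$-free, i.e.\ outerplanar.

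\textbf{Uniqueness.} Each b-rich separator $T=\{a,b\}$ yields three internally disjoint $(a,b)$-paths (one through each of its three biconnected $S$-blocks, off the edge $ab$), hence $\diama\leq G$ with $a,b$ as the two branch vertices of degree $3$. I then run the dichotomy of \autoref{symphony}'s Claim~1, but replacing the appeal to \autoref{princess} by the biconnectivity of the $S$-blocks established above: if two b-rich separators were disjoint their two diamonds would be vertex-disjoint, giving ${O}_{1}^{0}\leq G$; and three pairwise-intersecting b-rich separators with empty common intersection would combine into ${O}_{11}^{2}\leq G$. Hence all b-rich separators share one vertex, say $x$. It remains to exclude a second b-rich separator $\{x,z\}$ meeting $S$ only in $x$; for this I would use the extra cycles coming from the components that are not cycles (this is where the ``at least two non-cycle components'' hypothesis is genuinely needed), locating an outerplanar block carrying the two independent cycles forced by the two bundles at $x$ and deriving ${O}_{10}^{2}\leq G$ or ${O}_{11}^{2}\leq G$, as at the end of \autoref{symphony}. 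This gives uniqueness of $S$.

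\textbf{The chord dichotomy.} With $S=\{x,y\}$ fixed and all components outerplanar, view $G$ as a ``book'': the poles $x,y$ together with $\geq 3$ outerplanar biconnected pages $H_i$, each page contributing the cycle $C_{H_i}$ through $x,y$ (two $x$--$y$ arcs) plus non-crossing chords, with at least two pages carrying a chord. I would then rule out, one by one, the three forbidden patterns whose negation is precisely the stated alternative: (a) two $(x,y)$-disjoint chords; (b) one $(x,y)$-disjoint chord together with both an $x$-chord and a $y$-chord; (c) no $(x,y)$-disjoint chord but at least two $x$-chords and at least two $y$-chords. For each pattern I contract every page to its skeleton --- a chordless page to a single $x$--$y$ arc, a chorded page to its cycle plus the offending chord --- reducing $G$ to a bounded graph in which the relevant $K_4$-free obstruction (${O}_{1}^{0}$, ${O}_{3}^{0}$, ${O}_{13}^{2}$, ${O}_{11}^{2}$ and their relatives) appears as a minor, using a chordless third page as the ``return'' arc that closes the second independent cycle. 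Negating (a)--(c) yields the two bullets of the statement.

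\textbf{Main obstacle.} The crux is the finite but delicate case analysis of the last paragraph (and the one-shared-vertex subcase of uniqueness): enumerating, up to the $x\leftrightarrow y$ symmetry and up to permuting the pages, how chords can sit relative to the poles and the two arcs, and for each bad placement exhibiting an explicit vertex-disjoint minor model of a concrete obstruction. Outerplanarity (non-crossing chords) and $K_4$-freeness are what keep this enumeration finite and force the arcs to be long enough that the targeted minors are genuinely present; assembling the minor models so that the third page's arc supplies the missing disjoint path is where essentially all of the work lies.
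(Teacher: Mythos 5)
Your plan for the chord dichotomy itself coincides with the paper's: the paper first shows there is at most one $(x,y)$-disjoint chord (your pattern (a), via $O_{1}^{0}$, $O_{3}^{0}$, $O_{15}^{2}$), then that a surviving $(x,y)$-disjoint chord forces all other chords to a single pole (your pattern (b), via $O_{8}^{1}$, $O_{13}^{2}$, $O_{12}^{2}$, $O_{7}^{2}$), and finally that without an $(x,y)$-disjoint chord one cannot have two $x$-chords and two $y$-chords (your pattern (c), split according to which pairs of chords lie in a common augmented component, via $O_{2}^{0}$, $O_{12}^{1}$, $O_{3}^{2}$, $O_{3}^{0}$, $O_{6}^{1}$, $O_{1}^{0}$, $O_{11}^{2}$). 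So on that part you have the right skeleton, but you defer the entire case analysis, and that analysis \emph{is} the proof; a referee could not check anything from the sketch alone.

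The genuine gap is your uniqueness/outerplanarity step. The paper never proves uniqueness inside this lemma: it imports it from \autoref{symphony}, whose proof is carried out for $G\in\obs({\cal P}^{(1)})\setminus{\cal O}$ and crucially uses that no vertex of such a $G$ is a ${\cal P}$-apex (``$G\setminus x$ contains two cycles connected in $G\setminus x$''). That ingredient is not available under the hypotheses of \autoref{caillois}, and it cannot be replaced by the ``at least two non-cycle components'' condition as you propose: under the stated local hypotheses, uniqueness is simply false. Concretely, let $G$ have vertices $x,y,z,a_{1},a_{2},b,c,d,e$ and edges $xa_{1},a_{1}a_{2},a_{2}y,xa_{2}$; $xb,by$; $xe,ez,zc,cy,zd,dy$. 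This $G$ is biconnected; suppressing degree-$2$ vertices and deleting parallel edges reduces it to a triangle on $x,y,z$, so it is series-parallel and hence $K_{4}$-free; $G\setminus y$ is connected with exactly one cycle, so $G\in{\cal P}^{(1)}$ and therefore ${\cal O}\nleq G$; and $K_{2,3}\leq G$ (parts $\{x\},\{y\}$ and $\{a_{2}\},\{b\},\{e,z,c\}$). The pair $\{x,y\}$ is a b-rich separator, and two of the three augmented components of ${\cal C}(G,\{x,y\})$ (the one on $\{x,y,a_{1},a_{2}\}$ and the one containing $z$) are not cycles --- yet $\{y,z\}$ is a second b-rich separator. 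The same example defeats your outerplanarity reduction: the component of ${\cal C}(G,\{x,y\})$ containing $z$ has a $K_{2,3}$ minor, so its Hamiltonian cycle, and with it the chord vocabulary, does not exist.

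Note where exactly your argument dies: your steps ruling out disjoint b-rich separators (via $O_{1}^{0}$) and three pairwise-crossing ones (via $O_{11}^{2}$) do survive, since biconnectivity of the augmented components can indeed replace \autoref{princess}; in the example above the two b-rich separators duly share the vertex $y$. It is your last step --- excluding a second b-rich separator through a common vertex by ``using the extra cycles coming from the non-cycle components'' --- that cannot be pushed through: those extra cycles can all be arranged around the shared vertex without creating any obstruction, exactly as in the example. So this lemma is only correct in the ambient situation of \autoref{symphony} (a block of a graph in $\obs({\cal P}^{(1)})\setminus{\cal O}$), which is how the paper states its proof and how the lemma is later applied in \autoref{elicited} and \autoref{toulouse}; any blind derivation of uniqueness from the local hypotheses, yours included, must fail.
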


\begin{proof}
%	Recall that, by \autoref{symphony}, $S$ is the unique b-rich separator of $G$. 
%	\red{This, together with the fact that $K_{4}\nleq G$ and \autoref{negative}  imply that every $H\in {\cal C}(G,S)$ is outerplanar.}\gstam{Maybe we should make a separate observation as the one above.}
	Suppose that every $H\in {\cal C}(G,S)$ is outerplanar and there exist at least
	two augmented connected components in ${\cal C}(G,S)$ not isomorphic to a cycle.
	Note that since $G$ is biconnected, it holds that every $H\in {\cal C}(G,S)$ is also biconnected and so by \autoref{decoding}, for every $H$ we can consider its 
	Hamiltonian cycle, which we denote by $C_{H}$.
	Keep in mind that if some $H\in {\cal C}(G,S)$ is not isomorphic to a cycle, then $C_{H}$ contains some chord.
%	\reviewer{To define $C_{H}$, it requires that $H$ is outerplanar.
%		But I don’t see why $H$ is outerplanar. It might be true if you use the
%		uniqueness of the b-rich separator, but it requires a proof.}
	Also, observe that $K_{4}$-freeness of $G$ implies that $xy\in E(C_{H})$.
	
	\medskip
	
	\noindent {\em Claim 1:} There exists at most one $(x,y)$-disjoint chord in $G$.
	
	\smallskip
	
	\noindent {\em Proof of Claim 1:}
	Suppose to the contrary that there exist two $(x,y)$-disjoint chords, namely $e,e'$.
	{If there exists some $H\in{\cal C}(G,S)$ such that $e,e'$ are chords of
	$C_{H}$, then we have that $\obs({\cal P})\leq H\setminus S$. Therefore,
	by our assumption that there exists some $H'\in {\cal C}(G,S)$ different than $H$ that contains a chord implies that
	$K_{4}^{-}\leq G\setminus( H\setminus S)$ and hence
	$\{\hyperref[magnetic]{O_{1}^{0}},\hyperref[narcotic]{O_{3}^{0}}\}\leq G$, a contradiction.
	Therefore, there exist different $H,H'\in {\cal C}(G,S)$ such that
	$e,e'$ are chords of $C_{H}, C_{H'}$, respectively.} Hence,
	$\hyperref[straight]{O_{15}^{2}}\leq G$ (see \autoref{shelters}),
	a contradiction. Claim 1 follows.
	
	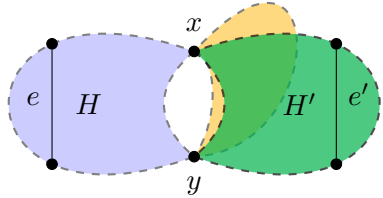
\begin{figure}[H]
		\centering
		\vspace{-1.5cm}
		\begin{tikzpicture}[x  = {(0:1cm)}, y  = {(90:1cm)},z= {(30:0.8cm)}, myNode/.style = black node,scale=0.7]
		
		\begin{scope}
		\node[label=above:$x$] (A) at (0,2,0) {};
		\node[label=below:$y$] (B) at (0,0,0) {};
		
		\begin{scope}[canvas is yz plane at x=0]
		
		\filldraw[fill=red!30!yellow,  opacity=0.5,dashed, thick]
		(A.center) to [bend right=60, min distance=0.5cm] (B.center) to [bend left =110, min distance =4cm] (A.center);
		
		\end{scope}

		\begin{scope}[canvas is xy plane at z=0]
		
		\begin{scope}%Left Flap
		\filldraw[fill=blue!40,  opacity=0.5,dashed, thick, name path=path1]
		(A.center) to [bend right=50, min distance=1cm] (B.center) to [bend left =110, min distance =5cm] (A.center);
		
		\node () at (-2,1) {$H$};
		
		\path[name path=path2] (-2.7,3) -- (-2.7,-1);
		
		\draw [name intersections={of= path1 and path2}]
		\foreach \s in {1,2}{
			(intersection-\s) node {}
		};
		\node[myNode] (I1)  at (intersection-1) {};
		\node[myNode] (I2)  at (intersection-2) {};
		
		\draw[-] (I1) to coordinate[pos=.7] (e1) (I2);
		\end{scope}
		
		\begin{scope}%Right Flap
		\filldraw[fill=green!70!blue,  opacity=0.7, dashed, thick, name path=path3]
		(A.center) to [bend left=50, min distance =1cm] (B.center) to [bend right =110, min distance =5cm] (A.center); 
		
		\node () at (2,1) {$H'$};

		\path[name path=path4] (2.7,3) -- (2.7,-1);
		
		\draw [name intersections={of= path3 and path4}]
		\foreach \s in {1,2}{
			(intersection-\s) node {}
		};
		\node[myNode] (I3)  at (intersection-1) {};
		\node[myNode] (I4)  at (intersection-2) {};
		
		\draw[-] (I3) to coordinate[pos=.7] (e2) (I4);
		
		\end{scope}
		
		\end{scope}
		
		\node[myNode] () at (A) {};
		\node[myNode] () at (B) {};
		\node[label=100:$e$] () at ($(e1)$) {};
		\node[label=80:$e'$] () at ($(e2)$) {};
		
		\end{scope}
		\end{tikzpicture}
		\vspace{-0.5cm}
		\vspace{-2mm}\caption{The chords $e, e'$ in the second part of the proof of Claim 1.}
		\label{shelters}
	\end{figure}
	
	We now distinguish the following cases depending on whether there exists an $(x,y)$-disjoint chord:
	
	\medskip
	
	\noindent{\em Case 1:} There exists an $(x,y)$-disjoint chord of $G$.
	
	\noindent Let $e$ be an $(x,y)$-disjoint chord of $G$ and let $H\in {\cal C}(G,S)$ be the
	augmented component, where $e$ is a chord of $C_{H}$. Claim 1 implies
	that $e$ is the unique $(x,y)$-disjoint chord of $G$ and thus every
	other chord of each $C_{H'}, \ H'\in {\cal C}(G,S)$, is either an $x$-chord or a $y$-chord of $G$.

	Recall that there exists some $H'\in{\cal C}(G,S)$ different than $H$ that is not isomorphic to a cycle.
	Therefore, $C_{H'}$ contains some chord $e'$ that is either an $x$-chord or a
	$y$-chord of $G$. Assume, without loss of generality, that $e'$ is an $x$-chord of $G$.
	We prove the following claim:
	
	\medskip
	
	\noindent {\em Claim 2:} Every edge of $G$ that is a chord of some
	$C_{H''}, \ H''\in {\cal C}(G,S)$, different from $e$ is an $x$-chord of $G$.
	
	\smallskip
	
	\noindent {\em Proof of Claim 2:} Suppose, towards a contradiction, that there exists
	some $H''\in {\cal C}(G,S)$ such that $C_{H''}$ contains a chord $e''$ different
	from $e$ which is not an $x$-chord of $G$. Then, Claim 1 implies that $e''$ is a $y$-chord of $G$.
	Observe that $H''\in\{H, H'\}$, because otherwise
	$\hyperref[pensions]{O_{8}^{1}}\leq G$, a contradiction.
	Therefore, if $H''=H$, then 
	$\{\hyperref[typifies]{O_{12}^{2}},\hyperref[solution]{O_{13}^{2}}\}\leq G$
	(see left and central figure of \autoref{primeval}), while if $H''=H'$,
	$\hyperref[uttering]{O_{7}^{2}}\leq G$ (see right figure of \autoref{primeval}),
	a contradiction in both cases. Claim 2 follows.	\vspace{-3mm}
	
	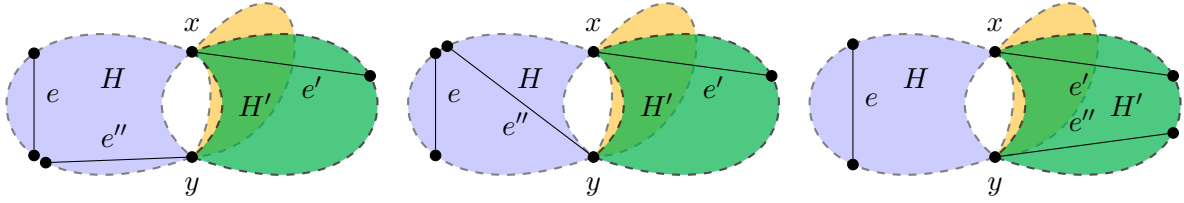
\begin{figure}[H]
	\vspace{-1cm}
	\hspace{-0.6cm}
	\begin{subfigure}{0.3\textwidth}
			\begin{tikzpicture}[x  = {(0:1cm)}, y  = {(90:1cm)},z= {(30:0.8cm)}, myNode/.style = black node, scale=.7]
			\begin{scope}
			
			\node[label=above:$x$] (A) at (0,2,0) {};
			\node[label=below:$y$] (B) at (0,0,0) {};
			
			\begin{scope}[canvas is yz plane at x=0]
			
			\filldraw[fill=red!30!yellow,  opacity=0.5,dashed, thick]
			(A.center) to [bend right=60, min distance=0.5cm] (B.center) to [bend left =110, min distance =4cm] (A.center);
			
			\end{scope}

			\begin{scope}[canvas is xy plane at z=0]
			
			\begin{scope}%Left Flap
			\filldraw[fill=blue!40,  opacity=0.5,dashed, thick, name path=path1]
			(A.center) to [bend right=50, min distance=1cm] (B.center) to [bend left =110, min distance =5cm]   coordinate[pos=0.27] (yy) (A.center);
			
			\node () at (-1.5,1.5) {$H$};
			
			\path[name path=path2] (-3,3) -- (-3,-1);
			
			\draw [name intersections={of= path1 and path2}]
			\foreach \s in {1,2}{
				(intersection-\s) node {}
			};
			\node[myNode] (I1)  at (intersection-1) {};
			\node[myNode] (I2)  at (intersection-2) {};
			
			\draw[-] (I1) to coordinate[pos=.7] (e1) (I2);
			\end{scope}
			
			\begin{scope}%Right Flap
			\filldraw[fill=green!70!blue,  opacity=0.7, dashed, thick]
			(A.center) to [bend left=50, min distance =1cm] (B.center) to [bend right =110, min distance =5cm]coordinate[pos=0.6] (xx) (A.center); 
			
			\node () at (1.2,1) {$H'$};

			\draw[-] (A.center) to coordinate[pos=.5] (e2) (xx) node[myNode] () {};
			\draw[-] (B.center) to coordinate[pos=.3] (e3) (yy) node[myNode] () {};
			
			\end{scope}
			
			\end{scope}
			
			\node[myNode] () at (A) {};
			\node[myNode] () at (B) {};
			\node[label=80:$e$] () at ($(e1.center)$) {};
			\node[label=-10:$e'$] () at ($(e2.center)$) {};
			\node[label=170:$e''$] () at ($(e3.center)$) {};
			
			\end{scope}
			\end{tikzpicture}
	\end{subfigure}
	~
	\begin{subfigure}{0.3\textwidth}
			\begin{tikzpicture}[x  = {(0:1cm)}, y  = {(90:1cm)},z= {(30:0.8cm)}, myNode/.style = black node, scale=.7]
			\begin{scope}
			
			\node[label=above:$x$] (A) at (0,2,0) {};
			\node[label=below:$y$] (B) at (0,0,0) {};
			
			\begin{scope}[canvas is yz plane at x=0]
			
			\filldraw[fill=red!30!yellow,  opacity=0.5,dashed, thick]
			(A.center) to [bend right=60, min distance=0.5cm] (B.center) to [bend left =110, min distance =4cm] (A.center);
			
			\end{scope}

			\begin{scope}[canvas is xy plane at z=0]
			
			\begin{scope}%Left Flap
			\filldraw[fill=blue!40,  opacity=0.5,dashed, thick, name path=path1]
			(A.center) to [bend right=50, min distance=1cm] (B.center) to [bend left =110, min distance =5cm]   coordinate[pos=0.73] (yy) (A.center);
			
			\node () at (-1.2,1.5) {$H$};
			
			\path[name path=path2] (-3,3) -- (-3,-1);
			
			\draw [name intersections={of= path1 and path2}]
			\foreach \s in {1,2}{
				(intersection-\s) node {}
			};
			\node[myNode] (I1)  at (intersection-1) {};
			\node[myNode] (I2)  at (intersection-2) {};
			
			\draw[-] (I1) to coordinate[pos=.7] (e1) (I2);
			\end{scope}
			
			\begin{scope}%Right Flap
			\filldraw[fill=green!70!blue,  opacity=0.7, dashed, thick]
			(A.center) to [bend left=50, min distance =1cm] (B.center) to [bend right =110, min distance =5cm]  coordinate[pos=0.6] (xx) (A.center); 
			
			\node () at (1.2,1) {$H'$};

			\draw[-] (A.center) to coordinate[pos=.5] (e2) (xx) node[myNode] () {};
			\draw[-] (B.center) to coordinate[pos=.3] (e3) (yy) node[myNode] () {};
			
			\end{scope}
			
			\end{scope}
			
			\node[myNode] () at (A) {};
			\node[myNode] () at (B) {};
			\node[label=80:$e$] () at ($(e1)$) {};
			\node[label=-10:$e'$] () at ($(e2)$) {};
			\node[label=180:$e''$] () at ($(e3)$) {};
			
			\end{scope}
			\end{tikzpicture}
	\end{subfigure}
	~
	\begin{subfigure}{0.3\textwidth}		
			\begin{tikzpicture}[x  = {(0:1cm)}, y  = {(90:1cm)},z= {(30:0.8cm)}, myNode/.style = black node, scale=.7]
			\begin{scope}
			
			\node[label=above:$x$] (A) at (0,2,0) {};
			\node[label=below:$y$] (B) at (0,0,0) {};
			
			\begin{scope}[canvas is yz plane at x=0]
			
			\filldraw[fill=red!30!yellow,  opacity=0.5,dashed, thick]
			(A.center) to [bend right=60, min distance=0.5cm] (B.center) to [bend left =110, min distance =4cm] (A.center);
			
			\end{scope}

			\begin{scope}[canvas is xy plane at z=0]
			
			\begin{scope}%Left Flap
			\filldraw[fill=blue!40,  opacity=0.5,dashed, thick, name path=path1]
			(A.center) to [bend right=50, min distance=1cm] (B.center) to [bend left =110, min distance =5cm] (A.center);
			
			\node () at (-1.5,1.5) {$H$};
			
			\path[name path=path2] (-2.7,3) -- (-2.7,-1);
			
			\draw [name intersections={of= path1 and path2}]
			\foreach \s in {1,2}{
				(intersection-\s) node {}
			};
			\node[myNode] (I1)  at (intersection-1) {};
			\node[myNode] (I2)  at (intersection-2) {};
			
			\draw[-] (I1) to coordinate[pos=.7] (e1) (I2);
			\end{scope}
			
			\begin{scope}%Right Flap
			\filldraw[fill=green!70!blue,  opacity=0.7, dashed, thick]
			(A.center) to [bend left=50, min distance =1cm] (B.center) to [bend right =110, min distance =5cm]  coordinate[pos=0.4] (yy) coordinate[pos=0.6] (xx) (A.center); 
			
			\node () at (2.5,1) {$H'$};

			\draw[-] (A.center) to coordinate[pos=.3] (e2) (xx) node[myNode] () {};
			\draw[-] (B.center) to coordinate[pos=.3] (e3) (yy) node[myNode] () {};
			
			\end{scope}
			
			\end{scope}
			
			\node[myNode] () at (A) {};
			\node[myNode] () at (B) {};
			\node[label=80:$e$] () at ($(e1)$) {};
			\node[label=-20:$e'$] () at ($(e2)$) {};
			\node[label=40:$e''$] () at ($(e3)$) {};
			
			\end{scope}
			\end{tikzpicture}
	\end{subfigure}		
	\vspace{-0.5cm}
		\vspace{-2mm}\caption{The possible configurations of the chords $e,e', e''$ in the proof of Claim 2}
		\label{primeval}
	\end{figure}
	
	Therefore, in this case, $e$ is the unique $(x,y)$-disjoint chord of
	$G$ and there do not exist $y$-chords.
	
	\medskip 
	
	\noindent{\em Case 2:} There do not exist $(x,y)$-disjoint chords of $G$.
	
	In this case we prove that there exists at most one $x$-chord or at most one 
	$y$-chord of $G$, which will conclude the proof of the Lemma.
    For that, suppose, towards a contradiction, that there exist two
	$x$-chords of $G$, namely  $e_{1}^{x}$ and $e_{2}^{x}$, and two $y$-chords of $G$,
	namely $e_{1}^{y}$ and $e_{2}^{y}$.
	We say that a pair of edges $(e,e')$,
	where $e,e'\in\{e_{1}^{x},e_{2}^{x},e_{1}^{y},e_{2}^{y}\}$ is {\em homologous}
	if there exists some $H\in {\cal C}(G,S)$ such that $e,e'\in E(H)$.

	We now distinguish the following subcases:\medskip

	\noindent {\em Subcase 2.1:} $(e_{1}^{x},e_{2}^{x})$ is not homologous and $(e_{1}^{y},e_{2}^{y})$ is not homologous.
	Then, $\{\hyperref[consiste]{O_{2}^{0}},\hyperref[daylight]{O_{12}^{1}},\hyperref[softened]{O_{3}^{2}}\}\leq G$ depending whether there exist 0, 1 or 2 homologous pairs $(e,e')$ , where $e\in\{e_{1}^{x},e_{2}^{x}\}, e'\in\{e_{1}^{y},e_{2}^{y}\}$
	(see \autoref{striving}). In any case we have a contradiction.	\vspace{-3mm}
	
	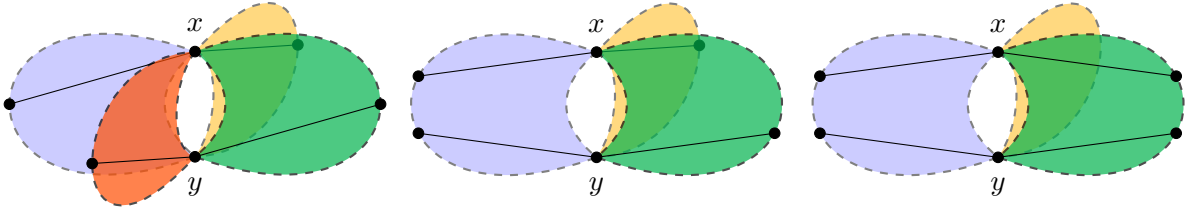
\begin{figure}[H]
	\vspace{-1cm}
	\hspace{-0.6cm}
		\begin{subfigure}{0.3\textwidth}
		\begin{tikzpicture}[x  = {(0:1cm)}, y  = {(90:1cm)},z= {(30:0.8cm)}, myNode/.style = black node, scale=.7]
		\begin{scope}
		
		\node[label=above:$x$] (A) at (0,2,0) {};
		\node[label=below:$y$] (B) at (0,0,0) {};
		
		\begin{scope}[canvas is yz plane at x=0]
		
		\filldraw[fill=red!30!yellow,  opacity=0.5,dashed, thick]
		(A.center) to [bend right=60, min distance=0.5cm] (B.center) to  [bend left =110, min distance =4cm] coordinate[pos=0.5] (e1) (A.center);
		
		\draw[-] (A.center) to (e1) node[myNode] () {};
		
		\end{scope}

		\begin{scope}[canvas is xy plane at z=0]
		
		\begin{scope}%Left Flap
		\filldraw[fill=blue!40,  opacity=0.5,dashed, thick]
		(A.center) to [bend right=50, min distance=1cm] (B.center) to [bend left =110, min distance =5cm] coordinate[pos=0.5] (e2) (A.center);

		\draw[-] (A.center) to (e2) node[myNode] () {};
		
		\end{scope}
		
		\begin{scope}%Right Flap
		\filldraw[fill=green!70!blue,  opacity=0.7, dashed, thick]
		(A.center) to [bend left=50, min distance =1cm] (B.center) to [bend right =110, min distance =5cm] coordinate[pos=0.5] (e3) (A.center); 
		
		\draw[-] (B.center) to (e3) node[myNode] () {};
		\end{scope}
		
		\end{scope}
		
		\begin{scope}[canvas is zy plane at x=0]
		
		\filldraw[fill=red!70!yellow,  opacity=0.7,dashed, thick]
		(A.center) to [bend right=60, min distance=0.5cm] (B.center) to [bend left =110, min distance =4cm] coordinate[pos=0.5] (e4) (A.center);
		
		\draw[-] (B.center) to (e4) node[myNode] () {};
		
		\end{scope}
		\node[myNode] () at (A) {};
		\node[myNode] () at (B) {};
		
		\end{scope}
		\end{tikzpicture}
	\end{subfigure}
	~
	\begin{subfigure}{0.3\textwidth}
	\begin{tikzpicture}[x  = {(0:1cm)}, y  = {(90:1cm)},z= {(30:0.8cm)}, myNode/.style = black node, scale=.7]
			\begin{scope}
			
			\node[label=above:$x$] (A) at (0,2,0) {};
			\node[label=below:$y$] (B) at (0,0,0) {};
			
			\begin{scope}[canvas is yz plane at x=0]
			
			\filldraw[fill=red!30!yellow,  opacity=0.5,dashed, thick]
			(A.center) to [bend right=60, min distance=0.5cm] (B.center) to [bend left =110, min distance =4cm] coordinate[pos=0.5] (e11) (A.center);
			
			\draw[-] (A.center) to (e11) node[myNode] () {};
			
			\end{scope}

			\begin{scope}[canvas is xy plane at z=0]
			
			\begin{scope}%Left Flap
			\filldraw[fill=blue!40,  opacity=0.5,dashed, thick]
			(A.center) to [bend right=50, min distance=1cm] (B.center) to [bend left =110, min distance =5cm] coordinate[pos=0.4] (e22) coordinate[pos=0.6] (e33) (A.center);
			
			\draw[-] (A.center) to (e33) node[myNode] () {};
			\draw[-] (B.center) to (e22) node[myNode] () {};
			
			\end{scope}
			
			\begin{scope}%Right Flap
			\filldraw[fill=green!70!blue,  opacity=0.7, dashed, thick]
			(A.center) to [bend left=50, min distance =1cm] (B.center) to [bend right =110, min distance =5cm] coordinate[pos=0.4] (e44) (A.center);

			\draw[-] (B.center) to (e44) node[myNode] () {};
			\end{scope}
			
			\end{scope}
			
			\begin{scope}[canvas is zy plane at x=0]
			
			\path (A.center) to [bend right=60, min distance=0.5cm] (B.center) to [bend left =110, min distance =4cm] coordinate[pos=0.5] (e4) (A.center);
			\end{scope}
			
			\node[myNode] () at (A) {};
			\node[myNode] () at (B) {};
			
			\end{scope}
	\end{tikzpicture}
	\end{subfigure}
	~
	\begin{subfigure}{0.3\textwidth}			
	\begin{tikzpicture}[x  = {(0:1cm)}, y  = {(90:1cm)},z= {(30:0.8cm)}, myNode/.style = black node, scale=.7]	
			\begin{scope}
			
			\node[label=above:$x$] (A) at (0,2,0) {};
			\node[label=below:$y$] (B) at (0,0,0) {};
			
			\begin{scope}[canvas is yz plane at x=0]
			
			\filldraw[fill=red!30!yellow,  opacity=0.5,dashed, thick]
			(A.center) to [bend right=60, min distance=0.5cm] (B.center) to [bend left =110, min distance =4cm] (A.center);
			
			\end{scope}
			
			\begin{scope}[canvas is xy plane at z=0]
			
			\begin{scope}%Left Flap
			\filldraw[fill=blue!40,  opacity=0.5,dashed, thick]
			(A.center) to [bend right=50, min distance=1cm] (B.center) to [bend left =110, min distance =5cm] coordinate[pos=0.4] (c1) coordinate[pos=0.6] (c2) (A.center);
			
			\draw[-] (B.center) to (c1) node[myNode] () {};
			\draw[-] (A.center) to (c2) node[myNode] () {};
			
			\end{scope}
			
			\begin{scope}%Right Flap
			\filldraw[fill=green!70!blue,  opacity=0.7, dashed, thick]
			(A.center) to [bend left=50, min distance =1cm] (B.center) to [bend right =110, min distance =5cm] coordinate[pos=0.4] (c3) coordinate[pos=0.6] (c4) (A.center); 
			
			\draw[-] (B.center) to (c3) node[myNode] () {};
			\draw[-] (A.center) to (c4) node[myNode] () {};
			
			\end{scope}
			
			\end{scope}
			
			\begin{scope}[canvas is zy plane at x=0]
			
			\path (A.center) to [bend right=60, min distance=0.5cm] (B.center) to [bend left =110, min distance =4cm] coordinate[pos=0.5] (e4) (A.center);
			\end{scope}
			
			\node[myNode] () at (A) {};
			\node[myNode] () at (B) {};
			
			\end{scope}
	\end{tikzpicture}
	\end{subfigure}
	\vspace{-1cm}
		\vspace{-2mm}\caption{The possible configurations of the edges $e_{1}^{y},e_{2}^{y}, e_{1}^{x},$ and $e_{2}^{x}$ in the proof of Subcase 2.1.}\label{striving}
	\end{figure}

	\noindent {\em Subcase 2.2:} $(e_{1}^{x},e_{2}^{x})$ is homologous and $(e_{1}^{y},e_{2}^{y})$ is not homologous.
	
	Let $H\in{\cal C}(G,S)$ be the component such that $e_{1}^{x},e_{2}^{x}\in E(H)$.
	Notice that since $(e_{1}^{y},e_{2}^{y})$ is not homologous, at most one of
	$e_{1}^{y},e_{2}^{y}$ is in $E(H)$. If none of $e_{1}^{y},e_{2}^{y}$ is in $E(H)$, then
	$\hyperref[narcotic]{O_{3}^{0}}\leq G$, while if some of $e_{1}^{y},e_{2}^{y}$ is in $E(H)$, then $\hyperref[daylight]{{O}_{11}^{1}}\leq G,$
	 a contradiction (see \autoref{blocking}).	\vspace{-2mm}
	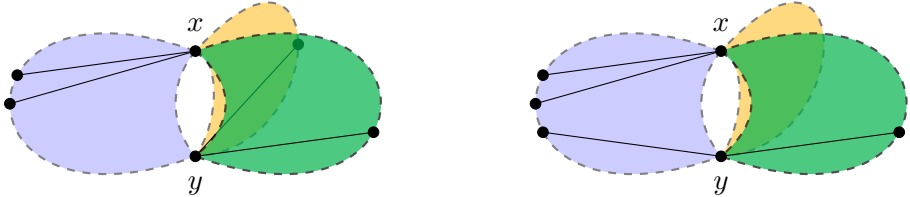
\begin{figure}[H]
	\centering
	\vspace{-1cm}	
	\begin{subfigure}{0.4\textwidth}
		\begin{tikzpicture}[x  = {(0:1cm)}, y  = {(90:1cm)},z= {(30:0.8cm)}, myNode/.style = black node, scale=.7]
		\begin{scope}
		
		\node[label=above:$x$] (A) at (0,2,0) {};
		\node[label=below:$y$] (B) at (0,0,0) {};
		
		\begin{scope}[canvas is yz plane at x=0]
		
		\filldraw[fill=red!30!yellow,  opacity=0.5,dashed, thick] (A.center) to [bend right=60, min distance=0.5cm] (B.center) to [bend left =110, min distance =4cm] coordinate[pos=0.5] (c4) (A.center);
		
		\draw[-] (B.center) to (c4) node[myNode] () {};
		\end{scope}

		\begin{scope}[canvas is xy plane at z=0]
		
		\begin{scope}%Left Flap
		\filldraw[fill=blue!40,  opacity=0.5,dashed, thick]
		(A.center) to [bend right=40, min distance=0.7cm] (B.center) to [bend left =110, min distance =5cm] coordinate[pos=0.5] (c1) coordinate[pos=0.6] (c2) (A.center);
		
		\draw[-] (A.center) to (c1) node[myNode] () {};
		\draw[-] (A.center) to (c2) node[myNode] () {};
		
		\end{scope}
		
		\begin{scope}%Right Flap
		\filldraw[fill=green!70!blue,  opacity=0.7, dashed, thick]
		(A.center) to [bend left=50, min distance =1cm] (B.center) to [bend right =110, min distance =5cm] coordinate[pos=0.4] (c3)  (A.center); 
		
		\draw[-] (B.center) to (c3) node[myNode] () {};
		
		\end{scope}
		
		\end{scope}

		\node[myNode] () at (A) {};
		\node[myNode] () at (B) {};

		\end{scope}
		
		\end{tikzpicture}
	\end{subfigure}
	~
	\begin{subfigure}{0.4\textwidth}
			\begin{tikzpicture}[x  = {(0:1cm)}, y  = {(90:1cm)},z= {(30:0.8cm)}, myNode/.style = black node, scale=.7]
			\begin{scope}[xshift=8cm]
			
			\node[label=above:$x$] (A) at (0,2,0) {};
			\node[label=below:$y$] (B) at (0,0,0) {};
			
			\begin{scope}[canvas is yz plane at x=0]
			
			\filldraw[fill=red!30!yellow,  opacity=0.5,dashed, thick] (A.center) to [bend right=60, min distance=0.5cm] (B.center) to [bend left =110, min distance =4cm] (A.center);
			
			\end{scope}

			\begin{scope}[canvas is xy plane at z=0]
			
			\begin{scope}%Left Flap
			\filldraw[fill=blue!40,  opacity=0.5,dashed, thick]
			(A.center) to [bend right=40, min distance=0.7cm] (B.center) to [bend left =110, min distance =5cm] coordinate[pos=0.4] (e3) coordinate[pos=0.5] (e1) coordinate[pos=0.6] (e2) (A.center);
			
			\draw[-] (A.center) to (e1) node[myNode] () {};
			\draw[-] (A.center) to (e2) node[myNode] () {};
			\draw[-] (B.center) to (e3) node[myNode] () {};

			\end{scope}
			
			\begin{scope}%Right Flap
			\filldraw[fill=green!70!blue,  opacity=0.7, dashed, thick]
			(A.center) to [bend left=50, min distance =1cm] (B.center) to [bend right =110, min distance =5cm] coordinate[pos=0.4] (e4) (A.center); 
			
			\draw[-] (B.center) to (e4) node[myNode] () {};
			
			\end{scope}
			
			\end{scope}
			
			\node[myNode] () at (A) {};
			\node[myNode] () at (B) {};
			
			\end{scope}
			\end{tikzpicture}
			\end{subfigure}
			\vspace{-.7cm}
		\vspace{-2mm}\caption{The possible configurations of the edges $e_{1}^{y},e_{2}^{y}, e_{1}^{x},$ and $e_{2}^{x}$ in the proof of Subcase 2.2.}\label{blocking}
	\end{figure}

	\noindent {\em Subcase 2.3:} $(e_{1}^{x},e_{2}^{x})$ is not homologous and $(e_{1}^{y},e_{2}^{y})$ is homologous. This case is symmetric to the previous one.
	
	\begin{figure}[H]
	\centering
	\vspace{-1.5cm}
	\begin{subfigure}{0.4\textwidth}
			\begin{tikzpicture}[x  = {(0:1cm)}, y  = {(90:1cm)},z= {(30:0.8cm)}, myNode/.style = black node, scale=.7]
			\begin{scope}

			\node[label=above:$x$] (A) at (0,2,0) {};
			\node[label=below:$y$] (B) at (0,0,0) {};
			
			\begin{scope}[canvas is yz plane at x=0]
			
			\filldraw[fill=red!30!yellow,  opacity=0.5,dashed, thick] (A.center) to [bend right=60, min distance=0.5cm] (B.center) to [bend left =110, min distance =4cm] (A.center);

			\end{scope}

			\begin{scope}[canvas is xy plane at z=0]
			
			\begin{scope}%Left Flap
			\filldraw[fill=blue!40,  opacity=0.5,dashed, thick]
			(A.center) to [bend right=40, min distance=0.7cm] (B.center) to [bend left =110, min distance =5cm] coordinate[pos=0.4] (c1) coordinate[pos=0.6] (c2) (A.center);
			
			\draw[-] (A.center) to (c1) node[myNode] () {};
			\draw[-] (A.center) to (c2) node[myNode] () {};
			
			\end{scope}
			
			\begin{scope}%Right Flap
			\filldraw[fill=green!70!blue,  opacity=0.7, dashed, thick]
			(A.center) to [bend left=50, min distance =1cm] (B.center) to [bend right =110, min distance =5cm] coordinate[pos=0.4] (c3) coordinate[pos=0.6] (c4)  (A.center); 
			
			\draw[-] (B.center) to (c3) node[myNode] () {};
			\draw[-] (B.center) to (c4) node[myNode] () {};
			\end{scope}
			
			\end{scope}

			\node[myNode] () at (A) {};
			\node[myNode] () at (B) {};

			\end{scope}
	\end{tikzpicture}
	\end{subfigure}
	~
	\begin{subfigure}{0.4\textwidth}
			\begin{tikzpicture}[x  = {(0:1cm)}, y  = {(90:1cm)},z= {(30:0.8cm)}, myNode/.style = black node, scale=.7]
			\begin{scope}
			
			\node[label=above:$x$] (A) at (0,2,0) {};
			\node[label=below:$y$] (B) at (0,0,0) {};
			
			\begin{scope}[canvas is yz plane at x=0]
			
			\filldraw[fill=red!30!yellow,  opacity=0.5,dashed, thick] (A.center) to [bend right=60, min distance=0.5cm] (B.center) to [bend left =110, min distance =4cm] (A.center);
			
			\end{scope}

			\begin{scope}[canvas is xy plane at z=0]
			
			\begin{scope}%Left Flap
			\filldraw[fill=blue!40,  opacity=0.5,dashed, thick]
			(A.center) to [bend right=40, min distance=0.7cm] (B.center) to [bend left =110, min distance =5cm] coordinate[pos=0.35] (e3) coordinate[pos=0.45] (e4) coordinate[pos=0.55] (e1) coordinate[pos=0.65] (e2) (A.center);
			
			\draw[-] (A.center) to (e1) node[myNode] () {};
			\draw[-] (A.center) to (e2) node[myNode] () {};
			\draw[-] (B.center) to (e3) node[myNode] () {};
			\draw[-] (B.center) to (e4) node[myNode] () {};
			
			\end{scope}
			
			\begin{scope}%Right Flap
			\filldraw[fill=green!70!blue,  opacity=0.7, dashed, thick]
			(A.center) to [bend left=50, min distance =1cm] (B.center) to [bend right =110, min distance =5cm] (A.center);

			\end{scope}
			
			\end{scope}
			
			\node[myNode] () at (A) {};
			\node[myNode] () at (B) {};
			
			\end{scope}
			\end{tikzpicture}
	\end{subfigure}	
		\vspace{-0.5cm}
		\vspace{-2mm}\caption{The possible configurations of the cycles $e_{1}^{y},e_{2}^{y}, e_{1}^{x},$ and $e_{2}^{x}$ in the proof of Subcase 2.4.}\label{gasoline}
	\end{figure}
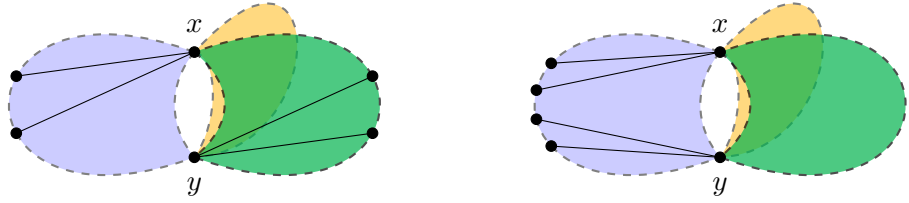
	
	\noindent {\em Subcase 2.4:} $(e_{1}^{x},e_{2}^{x})$ is homologous and $(e_{1}^{y},e_{2}^{y})$ is homologous.
	Let $H\in{\cal C}(G,S)$ be the component such that $e_{1}^{x},e_{2}^{x}\in E(H)$ and
	$H'\in{\cal C}(G,S)$ be the component such that $e_{1}^{y},e_{2}^{y}\in E(H')$.
	If $H\not=H'$, then  $\hyperref[magnetic]{O_{1}^{0}}\leq G$, while if $H=H'$,
	then {$\hyperref[solution]{{O}_{13}^{2}}\leq G.$} In both cases we have a contradiction (see \autoref{gasoline}).
\end{proof}
%\hh

\section{Confining connectivity}
\label{refusing}

In this section we further restrict the structure of a graph $G\in \obs({\cal A}_{1}({\cal P}))\setminus  {\cal O}$. The first step is to prove that $G$ is biconnected (\autoref{destined}) and the second one is
to prove that $G$ is triconnected (\autoref{personal}).

\subsection{Proving biconnectivity}

In this section we prove that every graph in  $\obs({\cal A}_{1}({\cal P}))\setminus  {\cal O}$ is biconnected (\autoref{destined}). For this we prove a series of lemmata that gradually restrict the structure of such a graph.

We begin by making two observations.
We have that \autoref{generals} implies that every block of a graph $G\in \obs({\cal A}_{1}({\cal P}))\setminus  {\cal O}$ has a cycle and so by the $\hyperref[teaching]{{O}_{10}^{1}}$-freeness of such a graph we derive the following:

\begin{observation}\label{theories}
	If $G\in \obs({\cal A}_{1}({\cal P}))\setminus  {\cal O}$ then every block of $G$ contains at most 2 cut-vertices.
\end{observation}

Also, for a graph $G\in \obs({\cal A}_{1}({\cal P}))\setminus  {\cal O}$ we have that $G\not\in {\cal A}_{1}({\cal P})$ and this implies the following:

\begin{observation}\label{divorces}
	If $G\in \obs({\cal A}_{1}({\cal P}))\setminus  {\cal O}$ is a connected graph, then
	for every cut-vertex $v\in V(G)$ there exists an $H\in {\cal C}(G,v)$ such that
	$H\setminus v\not\in{\cal P}$, or equivalently $\obs({\cal P})\leq H\setminus v$.
\end{observation}

\begin{lemma}  \label{busyness}
	If $G\in \obs({\cal A}_{1}({\cal P}))\setminus  {\cal O}$, then $G$ cannot have more than 1 cut-vertex.
\end{lemma}

\begin{proof}
	Recall that, from~\autoref{hundreds}, $G$ is connected.
	Suppose then, towards a contradiction, that $G$ has at least two cut-vertices.
	Then, there exists a block $B$ containing two cut-vertices $u_{1},u_{2}$, which due to \autoref{generals} is not a bridge.
	Let $$D_{1} = \cupall \{ H \in {\cal C}(G,u_{1}) : u_{2} \notin V(H) \}\mbox{~~and~~}D_{2} = \cupall \{ H \in {\cal C}(G,u_{2}) : u_{1} \notin V(H) \},$$ 
	We now prove a series of claims:
	
	\medskip
	
	\noindent{\em Claim 1:} Both $D_{1}, D_{2}$ are isomorphic to $K_{3}.$
	
	\noindent{\em Proof of Claim 1:}
	Suppose, to the contrary, that one of $D_{1}, D_{2}$, say $D_{1}$,
	contains two cycles, which is equivalent to $\obs({\cal P})\leq D_{1}$,
	since $D_{1}$ is connected. Let $H\in {\cal C}(G,u_{1})$ be the component that contains $u_{2}$.
	We distinguish two cases:
	
	\smallskip
	
	\noindent {\em Case 1:} $D_{1}\setminus u_{1}\in {\cal P}$. Then, by
	\autoref{divorces}, $\obs({\cal P})\leq H\setminus u_{1}$ and therefore, since
	$V(D_{1})\cap V(H\setminus u_{1})=\emptyset$ and $\obs({\cal P})\leq D_{1}$,
	we have that $ {{\cal O}^{0}}\leq G$, a contradiction.
	
	\smallskip
	
	\noindent {\em Case 2:} $D_{1}\setminus u_{1}\not\in {\cal P}$, or equivalently $\obs({\cal P})\leq D_{1}\setminus u_{1}$.
	Observe that, since $H$ contains the cut-vertex $u_{2}$, there exist two blocks $H_{1},H_{2}$ of $G$ in $H$ such that $V(H_{1})\cap V(H_{2})=\{u_{2}\}$.
	Then, since, by \autoref{generals}, each block of $G$ contains a cycle, we have that the butterfly graph $Z$ is a minor of $H$.
	Hence, $\{\hyperref[consiste]{{O}_{2}^{0}},\hyperref[narcotic]{{O}_{3}^{0}}\}\leq G$, a contradiction.
	
	\smallskip
	
	Therefore, both $D_{1}, D_{2}$ contain at most one cycle and, since both are non-empty, \autoref{generals}(1) implies Claim 1.
	
	\medskip
	
	\noindent{\em Claim 2:} Every cycle in $B$ contains either $u_{1}$ or $u_{2}$.
	
	\noindent{\em Proof of Claim 2:}
	Suppose, to the contrary, that there exists a cycle $C$ containing neither $u_{1}$ nor $u_{2}$. By Menger's Theorem there exist two internally vertex disjoint $(u_{1},u_{2})$-paths $P_{1}, P_{2}$. We distinguish the following cases:
	
	\smallskip
	
	\noindent {\em Case 1:} Both of $P_{1},P_{2}$ intersect $C$.
	
	Let $z_{1},z_{2}$ be the vertices where $P_{1},P_{2}$ meet $C$ for the
	first time, respectively. Let, also, $w_{1},w_{2}$ be the vertices
	that $P_{1},P_{2}$ meet $C$ for the last time, respectively. Since
	$V(P_{1})\cap V(P_{2})=\{u_{1},u_{2}\}$ we have that
	$\{z_{1}, w_{1}\} \cap \{z_{2},w_{2}\}=\emptyset$.
	If $z_{1}\neq w_{1}$ or $z_{2}\neq w_{2}$, say $z_{1}\neq w_{1}$,
	then by contracting the edges in the $(w_{1},u_{2})$-subpath of
	$P_{1}$ we form $\hyperref[consists]{O_{1}^{1}}$ as a minor of $G$, a contradiction (see left figure of \autoref{concepts}).
	Therefore, we have that $z_{1}=w_{1}$ and $z_{2}=w_{2}$, 
	{i.e. both $P_{1},P_{2}$ intersect $C$ in exactly one vertex,}
	in which
	case we have again $\hyperref[consists]{O_{1}^{1}}$ as a minor of $G$, a contradiction (see right figure of \autoref{concepts}).	\vspace{-3mm}
	
	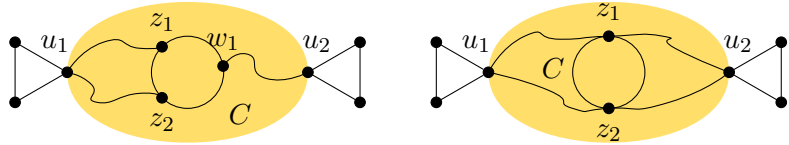
\begin{figure}[H]
		\centering
		\begin{tikzpicture}[myNode/.style = black node, scale=.8]
		
		\begin{scope}
		\node[myNode] (u1) at (0,0) {};
		\node[label=above:$u_1$] () at (-0.2,0) {};
		\node[myNode]  (u2) at (4,0) {};
		\node[label=above:$u_2$] () at (4.15,0) {};
		
		\draw[-] (u1) to (150:1) node[myNode] {} to (210:1) node[myNode] {} to (u1);
		
		\begin{scope}[xscale=-1, xshift=-4cm]
		\draw[-] (0,0) to (150:1) node[myNode] {} to (210:1) node[myNode] {} to (0,0);
		\end{scope}
		
		\begin{scope}[xshift=2cm]
		\draw (0,0) circle (0.6);
		\node[myNode, label=above:$z_1$] (z1) at (135:0.6) {};
		\node[myNode, label=below:$z_2$] (z2) at (225:0.6) {};
		\node[myNode, label=above:$w_1$] (w1) at (10:0.6) {};
		\node[label=below right:$C$] () at (-30:0.4) {};
		\end{scope}
		
		\draw plot [smooth, tension=1.2] coordinates { (u1) (0.6,0.5) (1.2,0.4) (z1)};
		\draw plot [smooth, tension=1.2] coordinates { (u1) (0.4,-0.2) (0.5,-0.5) (1.2,-0.35) (z2)};
		\draw plot [smooth, tension=1.2] coordinates { (w1) (3,0.3) (3.3,-0.1) (u2)};
		
		\begin{scope}[on background layer]
		\fill[mustard!90] (u1.center) to [bend left = 90] (u2.center) to [bend left=90] (u1.center);
		\end{scope}
		\end{scope}
		
		\begin{scope}[xshift=7cm]
		\node[myNode] (u1) at (0,0) {};
		\node[label=above:$u_1$] () at (-0.2,0) {};
		\node[myNode]  (u2) at (4,0) {};
		\node[label=above:$u_2$] () at (4.15,0) {};
		
		\draw[-] (u1) to (150:1) node[myNode] {} to (210:1) node[myNode] {} to (u1);
		
		\begin{scope}[xscale=-1, xshift=-4cm]
		\draw[-] (0,0) to (150:1) node[myNode] {} to (210:1) node[myNode] {} to (0,0);
		\end{scope}
		
		\begin{scope}[xshift=2cm]
		\draw (0,0) circle (0.6);
		\node[myNode, label=above:$z_1$] (z1) at (90:0.6) {};
		\node[myNode, label=below:$z_2$] (z2) at (-90:0.6) {};
		\node[label=left:$C$] () at (170:0.4) {};
		
		\draw plot [smooth, tension=1.2] coordinates { (u1) (-1.4,0.6) (-0.8,0.55) (z1) (0.9,0.6) (1.1,0.4) (u2)};
		\draw plot [smooth, tension=1.2] coordinates { (u1) (-1,-0.3) (-0.6,-0.6) (z2) (1,-0.5) (u2)};
		\end{scope}
		
		\begin{scope}[on background layer]
		\fill[mustard!90] (u1.center) to [bend left = 90] (u2.center) to [bend left=90] (u1.center);
		\end{scope}
		\end{scope}
		
		\end{tikzpicture}
		\vspace{-2mm}\caption{The paths $P_{1}$ and $P_{2}$ in Case 1 of Claim 2.}\label{concepts}
	\end{figure}

	%  \aliv{Είναι το "for the first time" σαφές;}
	
	\smallskip
	
	\noindent {\em Case 2:} Either $P_{1}$ or $P_{2}$ is disjoint to $C$.
	
	Say, without loss of generality, that $V(P_{1})\cap V(C)=\emptyset$.
%	Let $v_{new} \notin V(G)$ and consider the graph $B'$ obtained by
%	adding $v_{new}$ to $B$ and making it adjacent to $u_{1},u_{2}$.
%	Observe that $B'$ is also biconnected. Then, by Menger's theorem,
%	there exist two paths from $v_{new}$ to $C$ intersecting only in
%	$v_{new}$.
	Applying Menger's theorem for the vertex sets $\{u_1,u_2\}$ and $V(C)$ in $B$, we deduce the existence of two vertices $x,y \in V(C)$, an $(x,u_{1})$-path $Q_{1}$ in $B$,
	and a $(y,u_{2})$-path $Q_{2}$ in $B$ such that $V(Q_{1}) \cap V(Q_{2}) = \emptyset$.
	Let $z_{1},z_{2}$ be the vertices where $Q_{1},Q_{2}$ meet $P_{1}$
	for the first time, respectively (starting from $x$ and $y$,
	respectively). 
%	Then, suppose that, without loss of
%	generality, $z_{1}$ is closest to $u_{1}$ in $P_{1}$ than $z_{2}$.
	{Then, by contracting every edge of $P_1$ except of one edge of the $(z_1,z_2)$-subpath of $P_{1}$, we form
		$\hyperref[presents]{O_{9}^{1}}$ as a minor of $G$, a contradiction (see \autoref{fortress}).}
%	\aliv{Is this really w.l.o.g.?}\gstam{Δεν μας νοιάζει ποιο από τα δύο είναι πιο κοντά στο $u_1$. Εναλλακτικά μπορούμε να πούμε ότι συνθλίβουμε όλο το $P_1$  εκτός από το $(z_1,z_2)$-υπομονοπάτι.}\aliv{Καλό αυτό! Δεν έχει διαφορά ναι αλλά δεν είναι συμμετρικά τα γραφήματα που βγαίνουν. Δεν ξέρω βέβαια αν πειράζει, πιο πολύ τυπικό είναι το θέμα.}\gstam{Τώρα; Το μπλε...}\aliv{OK}
%	Hence, by contracting all the edges of the
%	$(z_{1},u_{1})$-,$(z_{2},u_{2})$-subpaths of $P_{1}$ we form
%	$\hyperref[presents]{O_{9}^{1}}$ as a minor of $G$, a contradiction (see \autoref{fortress}).
	Claim 2 follows.	\vspace{-2mm}

	\begin{figure}[H]
		\centering
		\begin{tikzpicture}[myNode/.style = black node]
		
		\begin{scope}
		\node[myNode] (u1) at (0,0) {};
		\node[label=above:$u_1$] () at (-0.2,0) {};
		\node[myNode]  (u2) at (4,0) {};
		\node[label=above:$u_2$] () at (4.15,0) {};;
		
		\draw[-] (u1) to (150:1) node[myNode] {} to (210:1) node[myNode] {} to (u1);
		
		\begin{scope}[xscale=-1, xshift=-4cm]
		\draw[-] (0,0) to (150:1) node[myNode] {} to (210:1) node[myNode] {} to (0,0);
		\end{scope}
		
		\begin{scope}[xshift=2cm, yshift=-0.3cm]
		\draw (0,0) circle (0.5);
		\node[myNode, label=left:$x$] (x) at (150:0.5) {};
		\node[myNode, label=below:$y$] (y) at (50:0.5) {};

		\draw[-, name path=P1] plot [smooth, tension=1.2] coordinates { (u1) (-1.6,0.5) (-0.8,0.7) (0.1,0.8) (1,0.58) (u2)};
		\draw[-, name path=Q1] plot [smooth, tension=1.2] coordinates { (x) (-0.8,0.9) (-1.5,0.2) (u1)};
		\draw[-, name path=Q2] plot [smooth, tension=1.2] coordinates { (y) (1,0.9) (1.5,0) (u2)};
		\node[label=below:$Q_1$] (y) at (-1.4,0.25) {};
		\node[label=below:$Q_2$] (y) at (1.3,0.1) {};
		\node[label=left:$C$] () at (250:0.5) {};
		
		\begin{scope}
		\draw [name intersections={of= P1 and Q1}]
		(intersection-3) node[myNode, label=above:$z_{1}$] {};
		\end{scope}
		
		\begin{scope}
		\draw [name intersections={of= P1 and Q2}]
		(intersection-1) node[myNode, label=above:$z_{2}$] {};
		\end{scope}
		
		\end{scope}
		
		\begin{scope}[on background layer]
		\fill[mustard!90] (u1.center) to [bend left = 90] (u2.center) to [bend left=90] (u1.center);
		\end{scope}
		\end{scope}
		
		\end{tikzpicture}
		\vspace{-2mm}\caption{The paths $P_{1},Q_{1},Q_{2}$ in Case 2 of Claim 2.}\label{fortress}
	\end{figure}
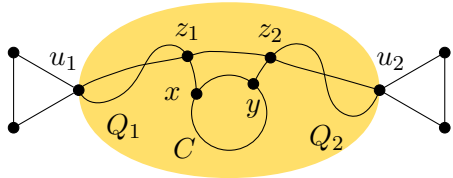

	%   First, observe that there exists an $(x,u_{1})$-path $P_{1}$, where $x\in V(C)$, such that $u_{2}\not\in V(P_{1})$. We assume that $P_1$ intersects $C$ only in $x$. Let $v_{new} \notin V(G)$ and consider the graph $B'$ obtained by adding $v_{new}$ to $B$ and making it adjacent to $u_{1},u_{2}$. Observe that $B'$ is also biconnected. Then, by Menger's theorem, there exist two paths from $v_{new}$ to $C$ intersecting only in $v_{new}$. Therefore, in $B$ there exists a $(x,u_{1})$-path $P_{1}$ and a $(y,u_{2})$-path $P_{2}$ such that $x,y \in V(C)$ and $V(P_{2}) \cap V(P_{2}) = \emptyset$. Since $x$ is not a cut-vertex of $B$ there exists a $(u_{1},y)$-path $Q_{1}$ that does not contain $x$. Let $z$ be the vertex that $Q_{1}$ meets $P_{1}\cup C\cup P_{2}$ for the first time. We will prove that $z=y$. First, by our choice of $Q_{1}$, we have that $z\neq x$. If $z\in V(P_{1})\setminus \{x\}$, 
	%   then $\hyperref[magnetic]{}\leq G$. If $z\in V(P_{1}$
	
	We now return to the proof of the Lemma. Notice that, by \autoref{theories}, $u_{1},u_{2}$ are the only cut-vertices of $G$ contained in $B$. Therefore, Claim 1 implies that
	$D_{1},B,D_{2}$ are the only blocks of $G$.
	
	Let $H_{1}\in {\cal C}(G,u_{2})$ be the component where $u_{1}\in H_{1}$
	and $H_{2}\in {\cal C}(G,u_{1})$ be the component where $u_{2}\in H_{2}$.
	By \autoref{divorces}, we have that $\obs({\cal P})\leq H_{1}\setminus u_{2},H_{2}\setminus u_{1}$,
	or equivalently $H_{1}\setminus u_{2},H_{2}\setminus u_{1}\not\in{\cal P}$.
	Therefore, since $H_{1}\setminus u_{2},H_{2}\setminus u_{1}$ are connected, Claim 2 implies that there exist two cycles $C_{1},C_{2}$ in $B$
	such that $u_{1}\in V(C_{1})$ and $u_{2}\in V(C_{2})$. If $V(C_{1})\cap V(C_{2})=\emptyset$ then $\hyperref[consiste]{{O}_{2}^{0}}\leq G$
	and if $|V(C_{1})\cap V(C_{2})|\geq 2$ then $\hyperref[consists]{{O}_{1}^{1}}\leq G$, a contradiction in both cases.
	Hence, $V(C_{1})\cap V(C_{2})=\{x\}$ for some $x\in V(B)$. As $B$
	is a block of $G$, $x$ is not a cut-vertex of $B$ and so
	there exists a $(u_{1},u_{2})$-path $P$ in $B$ such that $x\not\in V(P)$.	\vspace{-1mm}
	
	%   Let $z_{1}\in V(C_{1})$ be the vertex that $P$ meets $C_{1}$ for the last time.
	%   Also, let $z_{2}\in V(C_{2})$ be the vertex that the $(z_{1},u_{2})$-subpath of $P$ meets $C_{2}$ for the first time.
	%   Consider $Q_{1},Q_{2}$ to be the $(u_{1},z_{1})$-path of $C_{1}$ that does not contain $x$ and $(u_{2},z_{2})$-path of $C_{2}$ that does not contain $x$. Then, by contracting every edge of $Q_{1}.Q_{2}$ we form 
	%   $\hyperref[daylight]{O_{12}^{1}}$ as a minor of $G$, a contradiction.
	
	%   \aliv{Παρακάτω υπάρχουν 2 εναλλακτικοί τρόποι, σε σχόλια. Ο πρώτος είναι πιο αναλυτικός αλλά χρησιμοποιούμε τσάμπα παρεμποδίσεις και ο δεύτερος είναι μια όχι τόσο αναλυτική παρουσίαση του πρώτου.}

	\begin{figure}[H]
		\centering
		\begin{tikzpicture}[myNode/.style = black node, scale=.8]
		
		\begin{scope}
		\node[myNode] (u1) at (0,0) {};
		\node[label=above:$u_1$] () at (-0.2,0) {};
		\node[myNode]  (u2) at (4,0) {};
		\node[label=above:$u_2$] () at (4.15,0) {};;
		
		\draw[-] (u1) to (150:1) node[myNode] {} to (210:1) node[myNode] {} to (u1);
		\begin{scope}[xscale=-1, xshift=-4cm]
		\draw[-] (0,0) to (150:1) node[myNode] {} to (210:1) node[myNode] {} to (0,0);
		\end{scope}
		
		\node[myNode, label=above:$x$] (x) at (2,0) {};
		
		\draw[-] (u1.center) to [out=30, in=110] (x.center) to [out=-110, in=-30] (u1.center);
		\draw[-] (u2.center) to [out=150, in=70] (x.center) to [out=-70, in=-150] (u2.center);
		
		\draw[-] plot [smooth, tension=1.2] coordinates { (u1) (1.2,0.9) (2.5,0.6) (3.5,0.3) (u2)};
		
		\begin{scope}[on background layer]
		\fill[mustard!90] (u1.center) to [bend left = 90] (u2.center) to [bend left=90] (u1.center);
		\end{scope}
		
		\end{scope}
		
		\begin{scope}[xshift=7cm]
		\node[myNode] (u1) at (0,0) {};
		\node[label=above:$u_1$] () at (-0.2,0) {};
		\node[myNode]  (u2) at (4,0) {};
		\node[label=above:$u_2$] () at (4.15,0) {};;
		
		\draw[-] (u1) to (150:1) node[myNode] {} to (210:1) node[myNode] {} to (u1);
		\begin{scope}[xscale=-1, xshift=-4cm]
		\draw[-] (0,0) to (150:1) node[myNode] {} to (210:1) node[myNode] {} to (0,0);
		\end{scope}
		
		\node[myNode, label=above:$x$] (x) at (2,0) {};
		
		\draw[-, name path=C1] (u1.center) to [out=30, in=110] (x.center) to [out=-110, in=-30] (u1.center);
		\draw[-, name path=C2] (u2.center) to [out=150, in=70] (x.center) to [out=-70, in=-150] (u2.center);
		
		\begin{scope}[xshift=2cm]
		\begin{scope}
		\draw[opacity=0, name path=line] (-0.4,1) -- (-0.4,-1);
		\draw [name intersections={of= C1 and line}]
		%(intersection-1) node[label=above:$C_{1}$] (C1) {};
		(intersection-2) node[myNode, label=above:$z_{1}$] (z1) {};
		\end{scope}
		\draw[-, name path=P] plot [smooth, tension=1.2] coordinates { (u1) (-1.2,0.6) (-0.9,-0.8) (z1) (0.3,-0.3) (1.4,0.5) (u2)};
		
		\begin{scope}
		\draw [name intersections={of= P and C2}]
		(intersection-1) node[myNode, label=below:$z_{2}$] {};
		\end{scope}
		\end{scope}
		
		\begin{scope}[on background layer]
		\fill[mustard!90] (u1.center) to [bend left = 90] (u2.center) to [bend left=90] (u1.center);
		\end{scope}
		
		\end{scope}
		
		\end{tikzpicture}
		\vspace{-2mm}\caption{The cycles $C_{1},C_{2}$ and the path $P$ in the end of the proof of the Lemma.}\label{radiance}
	\end{figure}
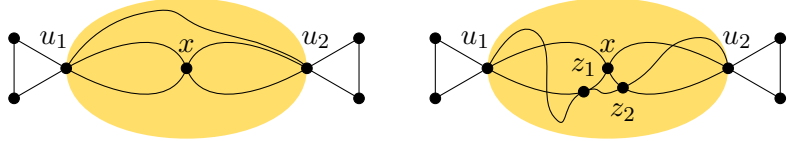
	
	% Then, if $z_{2}\neq u_{2}$ we form a cycle disjoint to both $u_{1},u_{2}$, which contradicts Claim 2, while if $z_{2}=u_{2}$ we form $\hyperref[consists]{{O}_{1}^{1}}$ as a minor of $G$, a contradiction, in both cases.

	If $V(P)\cap V(C_{1}\cup C_{2})=\{u_{1},u_{2}\}$ then $\hyperref[daylight]{O_{12}^{1}}\leq G$, a contradiction (see the leftmost  figure of \autoref{radiance}).
	Therefore $P$ intersects, without loss of generality, $C_{1}$ at a vertex different from $u_{1}$.
	Let $z_{1}\in V(C_{1})$ be the vertex that $P$ meets $C_{1}$ for the last time.
	Also, let $z_{2}\in V(C_{2})$ be the vertex that the $(z_{1},u_{2})$-subpath of $P$ meets $C_{2}$ for the first time.
	Then, the cycle $C_{1}$, the $(z_{1},z_{2})$-subpath of $P$, the $(z_{2},u_{2})$-path in $C_{2}$ that does not contain $x$ and the
	$(x,u_{2})$-path of $C_{2}$ that does not contain $z_{2}$, along with $D_{1},D_{2}$ form $\hyperref[consists]{{O}_{1}^{1}}$ as a minor of $G$, a contradiction (see the rightmost figure of \autoref{radiance}).
\end{proof}

\begin{lemma}\label{currency}
	Let $G\in \obs({\cal A}_{1}({\cal P}))\setminus  {\cal O}$. If $G$ contains a cut-vertex $x$, then ${\cal C}(G,x)=\{B,D\}$ for some biconnected graph $B$ and $D\cong K_{3}$.
\end{lemma}
% 
% \aliv{Το "where $B$ is a biconnected graph" στη διατύπωση του Λήμματος να γίνει "for some biconnected graph $B$";}

\begin{proof}
	{First, suppose that $G$ contains a cut-vertex $x$ and recall that,
	by \autoref{hundreds}, $G$ is connected. Now, by
	\autoref{busyness}, we have that $x$ is the only cut-vertex of $G$
	and therefore \autoref{divorces} implies that there exists some block $B\in{\cal C}(G,x)$ such that $\obs({\cal P})\leq B\setminus x$.}
	
	Let $D = \cupall \{ H \in {\cal C}(G,x) :H\not= B \}$, that is $G\setminus (B\setminus x)$. We will prove that $D\cong K_{3}$.
	Suppose, towards a contradiction, that $D$ contains more than one cycle. Then, since $D$ is connected, we have that $\obs({\cal P})\leq D$
	and so the fact that $D\cap (B\setminus x)=\emptyset$ implies that $ {{\cal O}^{0}}\leq G$, a contradiction.
	Therefore, $D$ contains at most one cycle.
	But since $x$ is a cut-vertex we have that $D\neq \emptyset$ and hence, \autoref{generals}(1) implies that $D\cong K_{3}$ which concludes the proof of the Lemma.
\end{proof}

\begin{lemma}
	\label{absolute}
	If $G\in \obs({\cal A}_{1}({\cal P}))\setminus  {\cal O}$ then  $K_{4}\not\leq G$  or $G$ is biconnected.
\end{lemma}

\begin{proof}
	Suppose, towards a contradiction, that $K_{4}\leq G$ and $G$ is not
	biconnected. Then, since by \autoref{hundreds}, $G$ is connected,
	there exists a cut-vertex $x$ of $G$ and so
	\autoref{currency} implies that ${\cal C}(G,x)=\{B,D\}$,
	where $B$ is a biconnected graph and $D\cong K_3$.
	Now, observe that $K_{4}\leq B$ and, following \autoref{etiology},
	consider an ${r}$-wheel-subdivision pair $(H,K)$ of $B$.
	
	We argue that the following holds:
	\medskip
	
	\noindent{\em Claim 1:} $x$ is a branch vertex of $K$.
	
	\noindent{\em Proof of Claim 1:}	
	We first prove that $x\in V(K)$. Suppose, towards a contradiction,
	that $x\not\in V(K)$. Notice that, since $x\in V(B)$ and $B$
	is a biconnected, then there exist two paths $P_{1},P_{2}$ from $x$ to
	some vertex of $K$ such that $V(P_{1})\cap V(P_{2})=\{x\}$.
	
	Let $u_{1},u_{2}$ be the first time $P_{1}$ and $P_{2}$ meet $K$,
	respectively. Let $P_{1}'$ be the $(x,u_{1})$-subpath of $P_{1}$ and
	$P_{2}'$ be the $(x,u_{2})$-subpath of $P_{2}$. Then, the
	$(u_{1},u_{2})$-path $P_{1}'\cup P_{2}'$ intersects $K$ only in its
	endpoints and so \autoref{analysts} implies that there exists an edge $e\in E(H)$ such that
	$u_{1},u_{2}$ are both vertices of the subdivision of $e$ in $K$ (see \autoref{ceremony}).	\vspace{-3mm}
	
	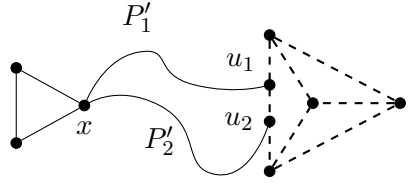
\begin{figure}[H]
		\centering
		\begin{tikzpicture}[myNode/.style = black node]
		
		\node[myNode, label=above:$z_{1}$] (n1) at (10,0) {};
		\node[myNode,  label=below:$z_{2}$] (n2) [below=10ex of n1]  {};
		\coordinate (midway1) at ($(n1)!0.5!(n2)$)  {};
		
		\node[myNode] (n3) [right=10ex of midway1] {};
		\node[myNode] (n4) [right=3ex of midway1] {};
		
		\node[myNode, label=above left:$u_1$] (n5) [above=1ex of midway1] {};
		\node[myNode, label=left:$u_2$] (n6) [below=1ex of midway1] {};
		
		\node[myNode] (n7) [above left=2.5ex and 20ex of midway1] {};
		\node[myNode] (n8) [below of=n7]  {};
		\coordinate (midway2) at ($(n7)!0.5!(n8)$)  {};
		\node[label=below:$x$,myNode] (n9) [right=5ex of midway2] {};
		
		\draw[dashed, thick] (n1) -- (n2);
		\draw[dashed, thick] (n1) -- (n3);
		\draw[dashed, thick] (n1) -- (n4);
		
		\draw[dashed, thick] (n2) -- (n3);
		\draw[dashed, thick] (n2) -- (n4);
		\draw[dashed, thick] (n3) -- (n4);
		
		\draw (n7) -- (n9);
		\draw (n7) -- (n8);
		\draw (n8) -- (n9);
		
		\node (n51) [above right of=n9] {};
		\node (n52) [left of=n5]  {};
		\node (n61) [right of=n9]  {};
		\node (n62) [below left of=n6]  {};
		
		%\draw[decorate, decoration=snake,] (n9) to [bend left=35] (n5);
		\draw plot [smooth, tension=1.2] coordinates { (n9) (n51) (n52) (n5)};
		
		\node[label=above:$P_1'$] () at (n51) {};
		
		%\draw[decorate, decoration=snake] (n9) to [bend right=35] (n6);
		\draw plot [smooth, tension=1.2] coordinates { (n9) (n61) (n62) (n6)};
		
		\node[label=below:$P_2'$] () at (n61) {};
		
		\end{tikzpicture}
		\vspace{-2mm}\caption{The paths $P_{1}', P_{2}'$ from $x$ to $K$ in the proof of Claim 1.}
		\label{ceremony}
	\end{figure}

	Let $P$ be the path corresponding to the subdivision of $e$ in $K$.  
	%\aliv{Μήπως το κομμάτι από 'δω και κάτω είναι υπέρβολή;}
	Let,  also, $z_{1}$ be the branch vertex of $K$ incident to $e$ that is closest
	to $u_{1}$ in $P$ and $z_{2}$ be the other branch vertex of $K$ incident
	to $e$. Then, by contracting all the edges in the
	$(u_{1},z_{1})$-,$(u_{2},z_{2})$-subpaths of $P$ we form
	$\hyperref[skeleton]{{O}_{4}^{1}}$ as a minor of $G$, a contradiction.
	Hence, $x\in V(K)$.
	
	But now, if $x$ is not a branch	vertex of $K$ then
	$\hyperref[skeleton]{{O}_{4}^{1}}\leq G$, a contradiction.
	Claim 1 follows.
	
	\medskip
	We now prove that $H$ is isomorphic to $K_{4}.$ Suppose to the
	contrary, that $H$ is isomorphic to an $r$-wheel for some $r\geq 4$.
	By Claim 1, $x$ is a branch vertex of $K$. Then, $x$ is the center of $K$,
	otherwise $\hyperref[skeleton]{{O}_{4}^{1}}\leq G,$ a contradiction.
	Since $B$ is a block of $G$, by \autoref{patience}, we have that $x$ is a ${\cal P}$-apex vertex of $B$ and so the fact that ${\cal C}(G,x)=\{B,D\}$, where $D\cong K_3$, implies that $x$ is also a ${\cal P}$-apex vertex of $G$, a contradiction. Hence, $r=3$, i.e.
	$H$ is isomorphic to $K_{4}$.
	
	According to Claim 1, we have that $x$ is a branch vertex of $K$. 
	Let $y_{i}, i\in [3],$ be the three other branch vertices of $K,$ as in \autoref{fig24}:\vspace{-3mm}
	
	\begin{figure}[H]
		\centering
		\begin{tikzpicture}[every node/.style = black node, scale=.8]
		
		\draw (0:0.5) node[label=270:$x$] {} -- (120:0.5) node {} -- (240:0.5) node {} -- cycle;
		
		\begin{scope}[on background layer, xshift = 2cm]
		\draw[dashed, thick]  (180:1.5) -- (60:1.5)  (180:1.5) -- (0,0) (180:1.5) -- (300:1.5)  (0,0) -- (60:1.5) -- (300:1.5) -- (0,0) ;
		
		\draw (60:1.5) node[label=60:$y_{2}$] {};
		\draw (0,0) node[label=0:$y_{1}$] {};
		\draw (300:1.5) node[label=300:$y_{3}$] {};
		\end{scope}
		
		\end{tikzpicture}
		\vspace{-2mm}\caption{{Illustration of the branch vertices of $K$.}}\label{fig24}
	\end{figure}
	
%	\st{Keep in mind that, by \autoref{campaign}, for every $(s,t)$-flap $F$ of $(H,K)$ it holds that $F$}
%	
%	\st{is biconnected, $G[V(F)]$ contains a cycle, and there exists an edge $e\in E(H)$}
%	
%	\st{such that $s,t$ belong both to the subdivision of $e$ in $K.$}
%	
%	\medskip
	
	\noindent{\em Claim 2:} Every flap of $(H,K)$ is $x$-oriented. 
	
	\smallskip
	
	\noindent{\em Proof of Claim 2:}
We first prove that for every flap $F$ of $(H,K)$ there is a $t\in V(K)$ such that  $F$ is an $(x,t)$-flap.
For that, consider a flap $F$ whose base does not contain $x$, i.e., $F$ is an $(s,t)$-flap for some $s,t\in V(K)$, where $s,t\neq x$ (the set $\{s,t\}$ is the base of $F$).
By \autoref{campaign}(3), there exists an edge
$e\in E(H)$ such that $s,t$ belong both to the subdivision of $e$ in $K$.
Observe that if $e=y_{i} y_{j}$, then by \autoref{campaign}(1),(2),
$\hyperref[friesian]{{O}_{5}^{1}}\leq G$ (see left figure of \autoref{autonomy}),
while if $e=x y_{i}$ and $s,t$ are both different from $x$,
then, similarly, $\hyperref[consists]{{O}_{1}^{1}}\leq G$
(see right figure of \autoref{autonomy}), a contradiction in both cases.

	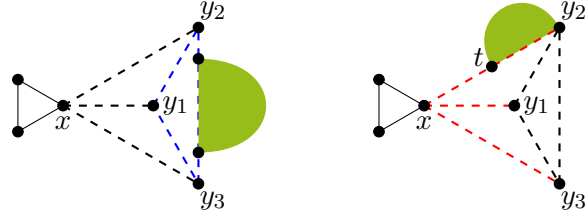
\begin{figure}[H]
		\centering
		\begin{tikzpicture}[every node/.style = black node, scale=.8]
		
		\begin{scope}
		
		\draw (0:0.5) node[label=270:$x$] {} -- (120:0.5) node {} -- (240:0.5) node {} -- cycle;
		
		\begin{scope}[on background layer, xshift=2cm]
		\draw[dashed, thick]  (180:1.5) -- (60:1.5)  (180:1.5) -- (0,0) (180:1.5) -- (300:1.5) ;
		\draw[dashed, thick, blue]  (0,0) -- (60:1.5) -- coordinate[pos=.2] (L) coordinate[pos=.8] (R) (300:1.5) -- cycle ;
		\fill[applegreen!90] (L.center) to [bend left =90, min distance = 1.5cm] (R.center) to (L.center) {}; 
		
		\draw (60:1.5) node[label=60:$y_{2}$] {};
		\draw (0,0) node[label=0:$y_{1}$] {};
		\draw (300:1.5) node[label=300:$y_{3}$] {};
		\node () at (L) {};
		\node () at (R) {};
		\end{scope}
		
		\end{scope}

		\begin{scope}[xshift = 6cm]
				
		\draw (0:0.5) node[label=270:$x$] {} -- (120:0.5) node {} -- (240:0.5) node {} -- cycle;
		
		\begin{scope}[on background layer, xshift=2cm]
	
		\draw[dashed, thick, red]  (180:1.5) -- coordinate[pos=.4] (L) coordinate[pos=.8] (R)  (60:1.5)  (180:1.5) -- (0,0) (180:1.5) -- (300:1.5) ;
		\draw[dashed, thick]  (0,0) -- (60:1.5) -- (300:1.5) -- cycle ;
		\fill[applegreen!90] (L.center) to [bend left =90, min distance = 1cm] (R.center);

		\node () at (R) {};
		\node[label=160:$t$] at (L) {};
		\draw (60:1.5) node[label=60:$y_{2}$] {};
		\draw (0,0) node[label=0:$y_{1}$] {};
		\draw (300:1.5) node[label=300:$y_{3}$] {};
		\end{scope}
		
		\end{scope}
		\end{tikzpicture}
		\vspace{-2mm}\caption{Left figure: An $(s,t)$-flap in the subdivision of some $y_{i} y_{j}$ edge of $H$ (depicted in blue) in the proof of Claim 2. Right figure: An $(s,t)$-flap in the subdivision of some $x y_{i}$ edge of $H$ (depicted in red) in the proof of Claim 2.}\label{autonomy}
		\end{figure}
%		\aliv{Why flap hanging from $y_{2}$? (in figure)}\gstam{FIXED.}\aliv{OK}
	
	Therefore, $F$ is an $(x,t)$-flap for some $t\in V(K)$.
	Observe now that for every $t\in V(K)$, every $(x,t)$-flap of $(H,K)$ is $x$-oriented.
	Indeed, for otherwise, if there exists an $(x,t)$-flap $F$, for some
	$t\in V(K)$, that is not $x$-oriented, then, by \autoref{mobility}, $F$ is $t$-oriented.
	Hence, that there exists a cycle in $F$ containing
	$t$ but not $x$ and so $\hyperref[consists]{{O}_{1}^{1}}\leq G$,
	a contradiction. Claim 2 follows.
	
	\medskip
	
	Therefore, since ${\cal C}(G,x) = \{B,K_{3}\}$ and the fact that,
	by Claim 2, every flap of $(H,K)$ is $x$-oriented, 
	\autoref{fanfares} implies that $x$ is a {${\cal P}$-apex} vertex of $G$ and so $G\in {\cal A}_{1}({\cal P}),$ a contradiction.
\end{proof}

\begin{lemma}\label{elicited}
	If $G\in \obs({\cal A}_{1}({\cal P}))\setminus  {\cal O}$ then  $K_{2,3}\not\leq G$  or $G$ is biconnected.
\end{lemma}

\begin{proof}
	Suppose, towards a contradiction, that $K_{2,3}\leq G$ and $G$ is not
	biconnected. Then, since by \autoref{hundreds}, $G$ is connected,
	there exists a cut-vertex $u$ of $G$ and so
	\autoref{currency} implies that ${\cal C}(G,u)=\{B,T\}$,
	where $B$ is a biconnected graph and $T\cong K_{3}$.
	Note that since $G$ contains a cut-vertex, then by \autoref{absolute} it is
	$K_{4}$-free. Note also that by \autoref{negative} we have that there exists a
	b-rich separator $S=\{x,y\}$ of $B$ and observe that $S$ is also a b-rich separator of $G$.
	Since $G\in \obs({\cal A}_{1}({\cal P}))\setminus
	{\cal O}$, by \autoref{princess}, every b-rich separator of $G$ is nice
	and so \autoref{symphony} implies that $S$ is the unique b-rich separator of $G$, and therefore it is also the unique b-rich separator of $B$.
	We distinguish two cases, based on whether $u$ belongs to $S$ or not:
	
	\smallskip
	
	\noindent {\em Case 1:} The cut-vertex $u$ is neither of $x,y$.
	
	Let $H\in {\cal C}(G, S)$ be the component where $u\in V(H)$ and
	let $\hat{H} = H \setminus (T \setminus u)$.
%	Observe that $B = (G\setminus H) \cup \hat{H}$ and that $\hat{H}\in {\cal C}(B,S)$.
	Observe that $B=G[V(G\setminus H)\cup V(\hat{H})]$ and that $\hat{H}\in {\cal C}(B,S)$.
%	\aliv{$B$ is not equal to this, it misses the edges between $S$ and $H$! Maybe argue with induced subgraphs. }\gstam{FIXED.}\aliv{OK}
	Since $B$ is a block, every $H\in {\cal C}(B,S)$ is biconnected.
	Moreover, since $S$ is the unique b-rich separator of $B$, by
	\autoref{outerplanarfl},  every $H\in {\cal C}(B,S)$  is outerplanar.
	Thus, by \autoref{decoding},
	for every $H\in {\cal C}(B,S)$,
	we can consider the Hamiltonian cycle $C_{H}$ of $H$.
	{Notice that $K_{4}$-freeness of $G$ implies that $xy\in E(C_{H})$}.
	Also, keep in mind that by \autoref{generals}, for every $H'\in {\cal C}(G, S)
    \setminus\{H\}$, $G[V(H')]$ contains a cycle.
%	\aliv{biconnectivity or Lemma 3.1?}\gstam{Lemma 3.1}\aliv{Το αλλάζουμε δηλαδή?}\gstam{Ναι!}\aliv{Done!}
	
	\medskip
	
	\noindent{\em Observation 1:} $B$ does not contain $(x,y)$-disjoint chords.
	Indeed, if $H$ contains an $(x,y)$-disjoint chord,
	{then $\obs({\cal P})\leq H\setminus S$ (see leftmost and cental figure of \autoref{brandeis})}
%	\blue{and since there exist $H_{1}, H_{2}\in{\cal C}(G,S)\setminus \{H\}$ such that $H_{1}\neq H_{2}$ and both $G[V(H_{1})], G[V(H_{2})]$ contain a cycle, $G[V(H_{1})\cup V(H_2)]$ contains $K_{4}^{-}$ as a minor,}
%	\aliv{argue the fact that $K_{4}^{-}$ is a minor due to the other components.}\gstam{Δεν νομίζω ότι χρειάζεται. Παρόλ'αυτά, γράφω κάτι...}
%	\aliv{Ερωτηματικό ήθελα να βάλω για να δούμε αν χρειάζεται. Χρειάζεται λες;}\gstam{Νομίζω πως όχι.}
%	\aliv{Ωραία, ας το σβήσουμε τότε.} 
	and so
	$\{\hyperref[magnetic]{{O}_{1}^{0}}, \hyperref[narcotic]{{O}_{3}^{0}}\}\leq G$,
	while if some $H'\in{\cal C}(B,S)\setminus\{H\}$ contains a $(x,y)$-disjoint chord,
	then $\hyperref[appetite]{{O}_{2}^{1}}\leq G$ (see rightmost figure of \autoref{brandeis}), a contradiction.	\vspace{-3mm}

	\begin{figure}[H]
		\centering
		\vspace{-1cm}
		\begin{subfigure}{0.3\textwidth}
			\begin{tikzpicture}[x  = {(0:1cm)}, y  = {(90:1cm)},z= {(30:0.8cm)}, myNode/.style = black node, scale=0.6]
			
			\begin{scope}
			
			\node[label=above:$x$] (A) at (0,2,0) {};
			\node[label=below:$y$] (B) at (0,0,0) {};
			
			\begin{scope}[canvas is yz plane at x=0]
			
			\filldraw[fill=red!30!yellow,  opacity=0.5,dashed, thick] (A.center) to [bend right=60, min distance=0.5cm] (B.center) to [bend left =110, min distance =4cm] (A.center);
			
			\end{scope}

			\begin{scope}[canvas is xy plane at z=0]
			
			\begin{scope}%Left Flap
			\filldraw[fill=blue!40,  opacity=0.5,dashed, thick]
			(A.center) to [bend right=40, min distance=0.7cm] (B.center) to [bend left =110, min distance =5cm] coordinate[pos=0.2] (e1) coordinate[pos=0.5] (u) coordinate[pos=0.8] (e2) (A.center);

			\draw[-]  (e1) node[myNode] () {} to (e2) node[myNode] () {};
			\node[myNode] (uu) at ($(u)$) {};
			\coordinate[left=0.5cm of uu] (v) {};
			\node[myNode] (u1) [above =0.25cm of v] {};
			\node[myNode] (u2) [below =0.25cm of v] {};
			
			\draw[-] (uu) to (u1) to (u2) to (uu);
			
			\end{scope}
			
			\begin{scope}%Right Flap
			\filldraw[fill=green!70!blue,  opacity=0.7, dashed, thick]
			(A.center) to [bend left=50, min distance =1cm] (B.center) to [bend right =110, min distance =5cm] (A.center);

			\end{scope}
			
			\end{scope}
			
			\node[myNode] () at (A) {};
			\node[myNode] () at (B) {};
			
			\end{scope}
			\end{tikzpicture}
		\end{subfigure}
		~
		\begin{subfigure}{0.3\textwidth}
			\begin{tikzpicture}[x  = {(0:1cm)}, y  = {(90:1cm)},z= {(30:0.8cm)}, myNode/.style = black node, scale=0.6]
			
			\begin{scope}
			
			\node[label=above:$x$] (A) at (0,2,0) {};
			\node[label=below:$y$] (B) at (0,0,0) {};
			
			\begin{scope}[canvas is yz plane at x=0]
			
			\filldraw[fill=red!30!yellow,  opacity=0.5,dashed, thick] (A.center) to [bend right=60, min distance=0.5cm] (B.center) to [bend left =110, min distance =4cm] (A.center);
			
			\end{scope}

			\begin{scope}[canvas is xy plane at z=0]
			
			\begin{scope}%Left Flap
			\filldraw[fill=blue!40,  opacity=0.5,dashed, thick]
			(A.center) to [bend right=40, min distance=0.7cm] (B.center) to [bend left =110, min distance =5cm] coordinate[pos=0.5] (u) coordinate[pos=0.6] (e1) coordinate[pos=0.9] (e2) (A.center);

			\draw[-]  (e1) node[myNode] () {} to (e2) node[myNode] () {};
			\node[myNode] (uu) at ($(u)$) {};
			\coordinate[left=0.5cm of uu] (v) {};
			\node[myNode] (u1) [above =0.25cm of v] {};
			\node[myNode] (u2) [below =0.25cm of v] {};
			
			\draw[-] (uu) to (u1) to (u2) to (uu);
			
			\end{scope}
			
			\begin{scope}%Right Flap
			\filldraw[fill=green!70!blue,  opacity=0.7, dashed, thick]
			(A.center) to [bend left=50, min distance =1cm] (B.center) to [bend right =110, min distance =5cm] (A.center);

			\end{scope}
			
			\end{scope}
			
			\node[myNode] () at (A) {};
			\node[myNode] () at (B) {};
			
			\end{scope}
		\end{tikzpicture}
		\end{subfigure}
		~
		\begin{subfigure}{0.3\textwidth}
			\begin{tikzpicture}[x  = {(0:1cm)}, y  = {(90:1cm)},z= {(30:0.8cm)}, myNode/.style = black node, scale=0.6]
				
			\begin{scope}
			
			\node[label=above:$x$] (A) at (0,2,0) {};
			\node[label=below:$y$] (B) at (0,0,0) {};
			
			\begin{scope}[canvas is yz plane at x=0]
			
			\filldraw[fill=red!30!yellow,  opacity=0.5,dashed, thick] (A.center) to [bend right=60, min distance=0.5cm] (B.center) to [bend left =110, min distance =4cm] (A.center);
			
			\end{scope}

			\begin{scope}[canvas is xy plane at z=0]
			
			\begin{scope}%Left Flap
			\filldraw[fill=blue!40,  opacity=0.5,dashed, thick]
			(A.center) to [bend right=40, min distance=0.7cm] (B.center) to [bend left =110, min distance =5cm] coordinate[pos=0.5] (u) (A.center);
			
			\node[myNode] (uu) at ($(u)$) {};
			\coordinate[left=0.5cm of uu] (v) {};
			\node[myNode] (u1) [above =0.25cm of v] {};
			\node[myNode] (u2) [below =0.25cm of v] {};
			
			\draw[-] (uu) to (u1) to (u2) to (uu);
			
			\end{scope}
			
			\begin{scope}%Right Flap
			\filldraw[fill=green!70!blue,  opacity=0.7, dashed, thick]
			(A.center) to [bend left=50, min distance =1cm] (B.center) to [bend right =110, min distance =5cm] coordinate[pos=0.2] (e1) coordinate[pos=0.5] (u) coordinate[pos=0.8] (e2) (A.center);

			\end{scope}
			
			\end{scope}
			
			\draw[-]  (e1) node[myNode] () {} to (e2) node[myNode] () {};

			\node[myNode] () at (A) {};
			\node[myNode] () at (B) {};
			
			\end{scope}
			
			\end{tikzpicture}
		\end{subfigure}
		\vspace{-0.5cm}
		\vspace{-2mm}\caption{The $(x,y)$-disjoint chord in the proof of Observation 1}\label{brandeis}
	\end{figure}
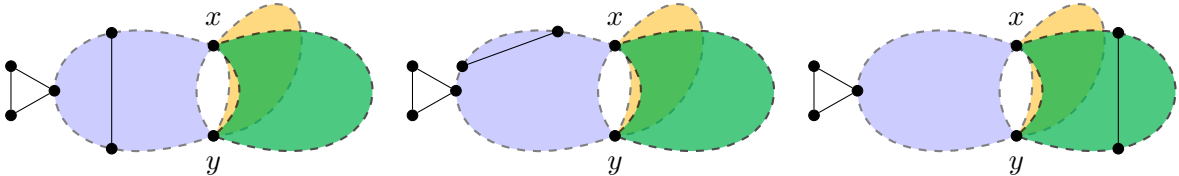
	
	\noindent{\em Claim 1:} $B\setminus (\hat{H}\setminus S)$ does not contain both $x$-chords and $y$-chords.\smallskip
	
	\noindent {\em Proof of Claim 1:}
	Suppose, to the contrary, that there exist an $x$-chord and a $y$-chord in $B\setminus (\hat{H}\setminus S)$, say $e_{x}$ and $e_{y}$, respectively.
%	\reviewer{  I think you may be assuming tacitly that $\hat{H}\not\cong K_{3}$. If
%		$\hat{H}\cong K_{3}$ then you must add $O^{1}_{3}$ and $O^{1}_{2}$
%		to the list of obstructions obtained from configurations in \autoref{unmasked}.}
	%	\st{If there exists some $H'\in {\cal C}(B,S)\setminus \{\hat{H}\}$ such that $e_{x}, e_{y}\in E(H')$, then $\hyperref[uttering]{{O}_{7}^{2}}\leq G$,}\reviewer{It seems to me that $O_{7}^{2}$ should be $O^{1}_{3}$
%		when $e_{x}$ and $e_{y}$ are not crossing.
%		And you need the $K_{4}$-freeness to eliminate the case that $e_{x}$
%		and $e_{y}$ are crossing.}
	
	{We first consider the case that there exists some $H'\in{\cal C}(B,S)\setminus \{\hat{H}\}$ such that $e_{x}, e_{y}\in E(H')$. Due to $K_{4}$-freeness of $G$ we have that $e_{x}, e_{y}$ are not ``crossing'' i.e., if $e_{x}=xx'$ and $e_{y}=yy'$, then the $(x,y')$-path in $C_{H'}\setminus y$ contains $x'$.
%	\aliv{Δεν αναφέρουμε ξανά το non-crossing ενώ το χρησιμοποιούμε. Τι κάνουμε;}\gstam{Θα έλεγα να το αποφύγουμε παρακάτω και να χρησιμοποιούμε την εξωεπιπεδότητα. Κι εδώ, αν συμφωνείς, θα έλεγα να χρησιμοποιήσουμε την εξωεπιπεδότητα και όχι το $K_4$-freeness.} \aliv{Συμφωνώ, αλλά εδώ θα έλεγα να το αφήσουμε έτσι.}
	This implies that $\hyperref[adorning]{{O}_{3}^{1}}\leq G$, a contradiction (see the leftmost figure in \autoref{unmasked}).}
%	
%	\st{while if $e_{x}$ and $e_{y}$ are in different augmented connected components in ${\cal C}(G,S) \setminus \{\hat{H}\}$, then $\hyperref[presents]{{O}_{9}^{1}}\leq G$.}
%	
	{In the case where $e_{x}$ and $e_{y}$ are in different augmented connected components in ${\cal C}(G,S) \setminus \{\hat{H}\}$, we have that $\hyperref[presents]{{O}_{10}^{1}}\leq G,$
	a contradiction (see the rightmost figure in \autoref{unmasked}). Claim 1 follows.}
%    \aliv{Isn't it $\hyperref[presents]{{O}_{10}^{1}}$ instead of $\hyperref[presents]{{O}_{9}^{1}}$?}\gstam{FIXED.}\aliv{OK}
	\vspace{-3mm}
	
	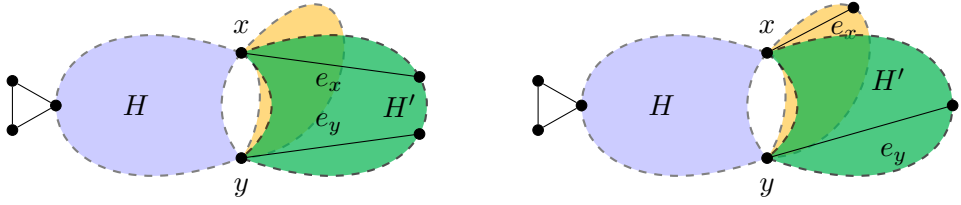
\begin{figure}[H]
		\centering
		\vspace{-1cm}
		\begin{subfigure}{0.4\textwidth}
			\begin{tikzpicture}[x  = {(0:1cm)}, y  = {(90:1cm)},z= {(30:0.8cm)}, myNode/.style = black node, scale=.7]
			
			\begin{scope}
			
			\node[label=above:$x$] (A) at (0,2,0) {};
			\node[label=below:$y$] (B) at (0,0,0) {};
			
			\begin{scope}[canvas is yz plane at x=0]
			
			\filldraw[fill=red!30!yellow,  opacity=0.5,dashed, thick] (A.center) to [bend right=60, min distance=0.5cm] (B.center) to [bend left =110, min distance =4cm] (A.center);
			
			\end{scope}

			\begin{scope}[canvas is xy plane at z=0]
			
			\begin{scope}%Left Flap
			\filldraw[fill=blue!40,  opacity=0.5,dashed, thick]
			(A.center) to [bend right=40, min distance=0.7cm] (B.center) to [bend left =110, min distance =5cm] coordinate[pos=0.5] (u) (A.center);

			\node[myNode] (uu) at ($(u)$) {};
			\coordinate[left=0.5cm of uu] (v) {};
			\node[myNode] (u1) [above =0.25cm of v] {};
			\node[myNode] (u2) [below =0.25cm of v] {};
			
			\node () at (-2,1) {$\hat{H}$};
			
			\draw[-] (uu) to (u1) to (u2) to (uu);
			
			\end{scope}
			
			\begin{scope}%Right Flap
			\filldraw[fill=green!70!blue,  opacity=0.7, dashed, thick]
			(A.center) to [bend left=50, min distance =1cm] (B.center) to [bend right =110, min distance =5cm] coordinate[pos=0.4] (b) coordinate[pos=0.6] (a) (A.center); 
			
			\draw[-] (A.center) to coordinate[pos=0.3] (e1) (a) node[myNode] () {};
			\draw[-] (B.center) to coordinate[pos=0.3] (e2) (b) node[myNode] () {};
			
			\node[label=-20:$e_{x}$] () at (e1) {};
			\node[label=30:$e_{y}$] () at (e2) {};
			
			\node () at (3,1) {$H'$};
			
			\end{scope}
			
			\end{scope}
			
			\node[myNode] () at (A) {};
			\node[myNode] () at (B) {};
			
			\end{scope}
			\end{tikzpicture}
			\end{subfigure}
			~
			\begin{subfigure}{0.4\textwidth}
			\begin{tikzpicture}[x  = {(0:1cm)}, y  = {(90:1cm)},z= {(30:0.8cm)}, myNode/.style = black node, scale=.7]
			
			\begin{scope}
			
			\node[label=above:$x$] (A) at (0,2,0) {};
			\node[label=below:$y$] (B) at (0,0,0) {};
			
			\begin{scope}[canvas is yz plane at x=0]
			
			\filldraw[fill=red!30!yellow,  opacity=0.5,dashed, thick] (A.center) to [bend right=60, min distance=0.5cm] (B.center) to [bend left =110, min distance =4cm] coordinate[pos=0.7] (aa) (A.center);
			
			\draw[-] (A.center) to coordinate[pos=0.5] (c1) (aa) node[myNode] () {};
			\node[label=right:$e_{x}$] () at (c1) {};
			
			\end{scope}

			\begin{scope}[canvas is xy plane at z=0]
			
			\begin{scope}%Left Flap
			\filldraw[fill=blue!40,  opacity=0.5,dashed, thick]
			(A.center) to [bend right=40, min distance=0.7cm] (B.center) to [bend left =110, min distance =5cm] coordinate[pos=0.5] (u) (A.center);

			\node[myNode] (uu) at ($(u)$) {};
			\coordinate[left=.5cm of uu] (v) {};
			\node[myNode] (u1) [above =0.25cm of v] {};
			\node[myNode] (u2) [below =0.25cm of v] {};
			
			\node () at (-2,1) {$\hat{H}$};
			
			\draw[-] (uu) to (u1) to (u2) to (uu);
			
			\end{scope}
			
			\begin{scope}%Right Flap
			\filldraw[fill=green!70!blue,  opacity=0.7, dashed, thick]
			(A.center) to [bend left=50, min distance =1cm] (B.center) to [bend right =110, min distance =5cm] coordinate[pos=0.5] (cc) (A.center);

			\draw[-] (B.center) to coordinate[pos=0.5] (c2) (cc) node[myNode] () {};

			\node[label=-10:$e_{y}$] () at (c2.center) {};
			
			\node () at (2.3,1.5) {$H'$};
			
			\end{scope}
			
			\end{scope}
			
			\node[myNode] () at (A) {};
			\node[myNode] () at (B) {};
			
			\end{scope}
			
			\end{tikzpicture}
			\end{subfigure}
			\vspace{-.5cm}
		\vspace{-2mm}\caption{The chords $e_{x}, e_{y}$ in the proof of Claim 1}\label{unmasked}
	\end{figure}
%	
%	\st{According to Claim 1, suppose that $B\setminus (\hat{H}\setminus \{x,y\})$ contains $x$-chords but not $y$-chords.}
	
%	\red{According to Observation 1 and Claim 1, we assume that for every $H'\in {\cal C}(B,S)\setminus \{\hat{H}\}$, either every chord of $C_{H'}$ is an $x$-chord or every chord of $C_{H'}$ is a $y$-chord.}
	
	We also observe the following:
	
	\smallskip
	
	{\noindent {\em Observation 2:} If there is an $x$-chord (resp. $y$-chord) of $C_{H'}$, for some $H'\in {\cal C}(B,S)\setminus \{\hat{H}\}$, then every chord of $C_{\hat{H}}$ is a $y$-chord (resp. $x$-chord). 
	Indeed, suppose that there exists an $x$-chord of $C_{H'}$, for some $H'\in {\cal C}(B,S)\setminus \{\hat{H}\}$, and notice that if there exist both $x$-chords and a $y$-chords in $\hat{H}$, then $\hyperref[granting]{{O}_{6}^{1}}\leq G,$  a contradiction (see \autoref{recovers}) while, if all chords of $C_{\hat{H}}$ are $x$-chords, then due to Observation 1 and Claim 1, for every $H'\in {\cal C}(B,S)\setminus \{\hat{H}\}$, every chord of $C_{H'}$ is an $x$-chord, and thus $x$ is a ${\cal P}$-apex of $G$, a contradiction.	\vspace{-4mm}}
	
%	OLD OBSERVATION
%	Every chord of $C_{\hat{H}}$ is a $y$-chord. 
%	Indeed, if there exist an $x$-chord and a $y$-chord in $\hat{H}$, then $\hyperref[granting]{{O}_{6}^{1}}\leq G,$  a contradiction (see \autoref{recovers}). Also, if all chords of $C_{\hat{H}}$ are $x$-chords, then $x$ is a ${\cal P}$-apex of $G$, a contradiction.	\vspace{-4mm}
%	
	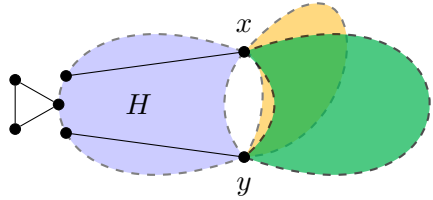
\begin{figure}[H]
		\centering
		\vspace{-1cm}
			\begin{tikzpicture}[x  = {(0:1cm)}, y  = {(90:1cm)},z= {(30:0.8cm)}, myNode/.style = black node, scale=.7]
			
			\begin{scope}
			
			\node[label=above:$x$] (A) at (0,2,0) {};
			\node[label=below:$y$] (B) at (0,0,0) {};
			
			\begin{scope}[canvas is yz plane at x=0]
			
			\filldraw[fill=red!30!yellow,  opacity=0.5,dashed, thick] (A.center) to [bend right=60, min distance=0.5cm] (B.center) to [bend left =110, min distance =4cm] (A.center);

			\end{scope}

			\begin{scope}[canvas is xy plane at z=0]
			
			\begin{scope}%Left Flap
			\filldraw[fill=blue!40,  opacity=0.5,dashed, thick]
			(A.center) to [bend right=40, min distance=0.7cm] (B.center) to [bend left =110, min distance =5cm] coordinate[pos=0.4] (cc) coordinate[pos=0.5]  (u) coordinate[pos=0.6] (bb) (A.center);

			\node[myNode] (uu) at ($(u)$) {};
			\coordinate[left=0.5cm of uu] (v) {};
			\node[myNode] (u1) [above =0.25cm of v] {};
			\node[myNode] (u2) [below =0.25cm of v] {};
			
			\node () at (-2,1) {$\hat{H}$};
			
			\draw[-] (uu) to (u1) to (u2) to (uu);
			
			\end{scope}
			
			\begin{scope}%Right Flap
			\filldraw[fill=green!70!blue,  opacity=0.7, dashed, thick]
			(A.center) to [bend left=50, min distance =1cm] (B.center) to [bend right =110, min distance =5cm] (A.center);

			\end{scope}
			
			\end{scope}
			
			\node[myNode] () at (A) {};
			\node[myNode] () at (B) {};
			
			\draw[-] (B.center) to (cc) node[myNode] () {};
			\draw[-] (A.center) to (bb) node[myNode] () {};
			\end{scope}
			\end{tikzpicture}
		\vspace{-.5cm}
		\vspace{-2mm}\caption{An $x$-chord and a $y$-chord of $C$ in Observation 2 of Case 1.}\label{recovers}
	\end{figure}
	
	\noindent {\em Claim 2:} %\st{Either every $C_{H'},  H'\in {\cal C}(B,S)\setminus \{\hat{H}\}$ is chordless or $C_{\hat{H}}$ is chordless.}%\gstam{chordless}
	{Either $C_{H'}$ is chordless for every $H'\in {\cal C}(B,S)\setminus \{\hat{H}\}$, or $C_{\hat{H}}$ is chordless.}
	
	\smallskip
	
	\noindent {\em Proof of Claim 2:} 
	Suppose to the contrary that 
%	
%	\st{there exist a chord $e$ of some $C_{H'}, \ H'\in {\cal C}(B,S)\setminus \{\hat{H}\}$ and a chord $e'$ of $C_{\hat{H}}$.}\reviewer{It is better to say “there exist $H'\in{\cal C}(B,S)\setminus\{\hat{H}\}$, a chord $e$ of some $C_{H'}$,
%		and a chord $e'$ of $C_{\hat{H}}$.}
%	
	{there exist an $H'\in {\cal C}(B,S)\setminus \{\hat{H}\}$, a chord $e$ of $C_{H'}$,
		and a chord $e'$ of $C_{\hat{H}}$.
%	\st{Claim 1 implies that $e$ is an $x$-chord, while Observation 1 implies that $e'$ is a $y$-chord.}\reviewer{ I do not see how the argument works. Claim 1 does not
%		ensure that $e$ is an $x$-chord. It seems to me that Observation 1 implies
%		that each of $e$ and $e'$ is either an $x$-chord or a $y$-chord. And Claim 1
%		and Observation 1 cannot exclude the case that both $e,e'$ are $x$-chords.}
%	
If $e$ is an $x$-chord (resp. $y$-chord), then by Observation 2, $e'$ is a $y$-chord (resp. $x$-chord). 
Thus, $\hyperref[daylight]{{O}_{11}^{1}}\leq G,$  a contradiction (see \autoref{ldsjgalja}). Claim 2 follows.}

	\begin{figure}[H]
	\centering
	\vspace{-1cm}
	\begin{tikzpicture}[x  = {(0:1cm)}, y  = {(90:1cm)},z= {(30:0.8cm)}, myNode/.style = black node, scale=.7]
	
	\begin{scope}
	
	\node[label=above:$x$] (A) at (0,2,0) {};
	\node[label=below:$y$] (B) at (0,0,0) {};
	
	\begin{scope}[canvas is yz plane at x=0]
	
	\filldraw[fill=red!30!yellow,  opacity=0.5,dashed, thick] (A.center) to [bend right=60, min distance=0.5cm] (B.center) to [bend left =110, min distance =4cm] (A.center);

	\end{scope}

	\begin{scope}[canvas is xy plane at z=0]
	
	\begin{scope}%Left Flap
	\filldraw[fill=blue!40,  opacity=0.5,dashed, thick]
	(A.center) to [bend right=40, min distance=0.7cm] (B.center) to [bend left =110, min distance =5cm] coordinate[pos=0.4] (cc) coordinate[pos=0.5]  (u) coordinate[pos=0.6] (bb) (A.center);

	\node[myNode] (uu) at ($(u)$) {};
	\coordinate[left=0.5cm of uu] (v) {};
	\node[myNode] (u1) [above =0.25cm of v] {};
	\node[myNode] (u2) [below =0.25cm of v] {};
	
	\node () at (-2,0.8) {$\hat{H}$};
	
	\draw[-] (uu) to (u1) to (u2) to (uu);
	
	\end{scope}
	
	\begin{scope}%Right Flap
	\filldraw[fill=green!70!blue,  opacity=0.7, dashed, thick]
	(A.center) to [bend left=50, min distance =1cm] (B.center) to [bend right =110, min distance =5cm] coordinate[pos=0.5] (cc) (A.center); 
	
	\draw[-] (B.center) to coordinate[pos=0.5] (c2) (cc) node[myNode] () {};

	\node[label=160:$e'$] () at (c2.center) {};
	
	\node () at (2.3,1.5) {$H'$};
	
	\end{scope}
	
	\end{scope}
	
	\node[myNode] () at (A) {};
	\node[myNode] () at (B) {};
	
%	\draw[-] (B.center) to (cc) node[myNode] () {};
	\draw[-] (A.center) to  coordinate[pos=0.8] (e) (bb) node[myNode] () {};
	\node[label=10:$e$] () at (e) {};
	\end{scope}
	\end{tikzpicture}
	\vspace{-.5cm}
	\vspace{-2mm}\caption{The chords $e$, $e'$ in the proof of Claim 2.}\label{ldsjgalja}
\end{figure}
	
	\medskip
	
	According to Claim 2, either every $C_{H'}, \ H'\in {\cal C}(B,S)\setminus \{\hat{H}\}$ is chordless or $C_{\hat{H}}$ is chordless which, together with Observation 1 and Claim 1, implies that either $y$ or $x$, respectively, is a ${\cal P}$-apex of $G$, a contradiction.
%	\aliv{Maybe add in the argument Obs. 1 and Claim 1?}\gstam{Added!}
%	\aliv{OK}
	
	\medskip\medskip
	
	\noindent {\em Case 2:} The cut-vertex $u$ is either $x$ or $y$.
	
	Assume, without loss of generality, that $u=y$. We first prove the following:
	
	\medskip
	
	\noindent {\em Claim 3:} There exists a unique augmented connected component in ${\cal C}(B,S)$ that is not isomorphic to a cycle.
	
	\smallskip
	
	\noindent {\em Proof of Claim 3:}
	First, notice that if each augmented connected component in ${\cal C}(B,S)$ is
	isomorphic to a cycle, then $G\in {\cal A}_{1}({\cal P})$, a contradiction.
	Therefore, there exists an augmented connected component in ${\cal C}(B,S)$ that is not isomorphic to a cycle.
	Suppose, towards a contradiction, that ${\cal C}(B,S)$ contains two augmented connected components not isomorphic to a cycle.
	We distinguish the following subcases:
	
	\smallskip
	
	\noindent {\em Subcase 2.1:} $B$ contains an $(x,y)$-disjoint chord.
	
	Let $e$ be an $(x,y)$-disjoint chord in $B$. Then, \autoref{caillois} implies
	that $e$ is the unique $(x,y)$-disjoint chord of $B$ and $B$ does not contain both
	$x$-chords and $y$-chords. Let $H\in {\cal C}(B,S)$ be the component where $e\in E(H)$.
	
	Recall that there exists some $H'\in {\cal C}(B,S)\setminus \{H\}$ that is not
	isomorphic to a cycle and therefore $C_{H'}$ contains a chord $e'$.
	If $e'$ is an $x$-chord, then $\hyperref[presents]{{O}_{9}^{1}}\leq G$
	(see \autoref{ordinary}), {a contradiction. If $e'$ is a $y$-chord,
	then \autoref{caillois} implies that} there does not exist an
	$x$-chord and therefore $y$ is a ${\cal P}$-apex of $G$, a contradiction.	\vspace{-3mm}	\vspace{-2mm}
	
	\begin{figure}[H]
		\centering
		\vspace{-1cm}
			\begin{tikzpicture}[x  = {(0:1cm)}, y  = {(90:1cm)},z= {(30:0.8cm)}, myNode/.style = black node, scale=.7]
			
			\begin{scope}
			
			\node[label=above:$x$] (A) at (0,2,0) {};
			\node[label=below right:$y$] (B) at (0,0,0) {};
			
			\begin{scope}[canvas is yz plane at x=0]
			
			\filldraw[fill=red!30!yellow,  opacity=0.5,dashed, thick] (A.center) to [bend right=60, min distance=0.5cm] (B.center) to [bend left =110, min distance =4cm] (A.center);
			
			\end{scope}

			\begin{scope}[canvas is xy plane at z=0]
			
			\begin{scope}%Left Flap
			\filldraw[fill=blue!40,  opacity=0.5,dashed, thick]
			(A.center) to [bend right=40, min distance=0.7cm] (B.center) to [bend left =110, min distance =5cm] coordinate[pos=0.2] (e1)  coordinate[pos=0.8] (e2) (A.center);

			\draw[-]  (e1) node[myNode] () {} to coordinate[pos=0.5] (e) (e2) node[myNode] () {};
			
			\node[label=left:$e$] () at (e) {};

			\end{scope}
			
			\begin{scope}%Right Flap
			\filldraw[fill=green!70!blue,  opacity=0.7, dashed, thick]
			(A.center) to [bend left=50, min distance =1cm] (B.center) to [bend right =110, min distance =5cm] coordinate[pos=0.5] (e3) (A.center); 
			
			\draw[-] (A.center) to coordinate[pos=0.5] (ee) (e3) node[myNode] () {};
			\node[label=below:$e'$] () at (ee) {};
			\end{scope}
			
			\end{scope}
			
			\node[myNode] () at (A) {};
			\node[myNode] () at (B) {};

			\node[myNode] (u1) [below left= 0.3cm and 0.2 cm of B]  {};
			\node[myNode] (u2) [below right= 0.05cm and 0.2cm of u1]  {};
			\draw[-] (B.center) -- (u1) -- (u2)-- (B.center);

			\end{scope}

			\end{tikzpicture}
		\vspace{-.5cm}
		\vspace{-1mm}\caption{The chords $e,e'$ in the first part the proof of Subcase 2.1.}\label{ordinary}
	\end{figure}
	
	\noindent {\em Subcase 2.2:} $B$ does not contain an $(x,y)$-disjoint chord.
	
	Then \autoref{caillois} implies that $H$ contains at most one $x$-chord or at most one $y$-chord.
	If there exists at most one $x$-chord, then $y$ is a ${\cal P}$-apex of $G$, a contradiction.
	Therefore there exists at most one $y$-chord.
	
	Suppose that there exists a $y$-chord, namely $e_{y}$, and let $H\in {\cal C}(B,S)$ be the component where $e_{y}\in E(H)$.
	Recall, again, that there exists some $H'\in {\cal C}(B,S)\setminus \{H\}$ that is
	not isomorphic to a cycle and therefore $C_{H'}$ contains a chord $e'$.
	Since $e_{y}$ is the only $y$-chord in $B$, $e'$ is an $x$-chord.
	Observe that, since $y$ is not a ${\cal P}$-apex vertex of $G$,
	there exists an $x$-chord $e''$ such that $e''\not=e'$.
	If $e''\notin E(H)$, then $\{\hyperref[consiste]{{O}_{2}^{0}}, \hyperref[narcotic]{{O}_{3}^{0}}\}\leq G,$ while if $e''\in E(H)$, then $\hyperref[daylight]{{O}_{12}^{1}}\leq G$ (see \autoref{classify}),  a contradiction in both cases. Claim 3 follows.
		\vspace{-3mm}
	
	\begin{figure}[H]
		\centering
		\vspace{-1cm}
			\begin{tikzpicture}[x  = {(0:1cm)}, y  = {(90:1cm)},z= {(30:0.8cm)}, myNode/.style = black node, scale=.7]
			\begin{scope}
			
			\node[label=above:$x$] (A) at (0,2,0) {};
			\node[label=below right:$y$] (B) at (0,0,0) {};
			
			\begin{scope}[canvas is yz plane at x=0]
			
			\filldraw[fill=red!30!yellow,  opacity=0.5,dashed, thick] (A.center) to [bend right=60, min distance=0.5cm] (B.center) to [bend left =110, min distance =4cm] (A.center);
			
			\end{scope}

			\begin{scope}[canvas is xy plane at z=0]
			
			\begin{scope}%Left Flap
			\filldraw[fill=blue!40,  opacity=0.5,dashed, thick]
			(A.center) to [bend right=40, min distance=0.7cm] (B.center) to [bend left =110, min distance =5cm] coordinate[pos=0.4] (e1)  coordinate[pos=0.6] (e2) (A.center);

			\draw[-]  (e1) node[myNode] () {} to coordinate[pos=0.8] (ey) (B.center);
			\draw[-]  (e2) node[myNode] () {} to coordinate[pos=0.8] (e) (A.center);
			
			\node[label=160:$e_{y}$] () at (ey) {};
			\node[label=-160:$e''$] () at (e) {};

			\end{scope}
			
			\begin{scope}%Right Flap
			\filldraw[fill=green!70!blue,  opacity=0.7, dashed, thick]
			(A.center) to [bend left=50, min distance =1cm] (B.center) to [bend right =110, min distance =5cm] coordinate[pos=0.5] (e3) (A.center); 
			
			\draw[-] (A) to coordinate[pos=0.5] (ee) (e3) node[myNode] () {};
			\node[label=below:$e'$] () at (ee) {};
			\end{scope}
			
			\end{scope}
			
			\node[myNode] () at (A) {};
			\node[myNode] () at (B) {};

			\node[myNode] (u1) [below left= 0.3cm and 0.2 cm of B]  {};
			\node[myNode] (u2) [below right= 0.05cm and 0.2cm of u1]  {};
			\draw[-] (B.center) -- (u1) -- (u2)-- (B.center);

			\end{scope}
			\end{tikzpicture}
		\vspace{-.5cm}
		\vspace{-1mm}\caption{The chords $e_{y},e', e''$ in the last part of the proof of Subcase 2.2.}\label{classify}
	\end{figure}

	We now proceed in order to conclude Case 2 of the Lemma. According to Claim 3, let $H$ be the unique augmented connected component of $B$ that is not isomorphic to a cycle. Therefore, due to \autoref{generals}, every $H'\in {\cal C}(B,S)\setminus \{H\}$ is isomorphic to $K_{3}$.

	We have that $H$ is outerplanar and due to \autoref{currency}, $H$ is also biconnected. Let $C$ be the Hamiltonian cycle of $H$, which exists by \autoref{decoding}.
	Notice again that, $K_{4}$-freeness of $G$ implies we have that $xy\in E(C).$
    Therefore, the graph $G$ is as in \autoref{construe} below:	\vspace{-3mm}
	\begin{figure}[H]
		\centering
		\vspace{-.5cm}
		\begin{tikzpicture}[myNode/.style = black node, scale=.7]
		
		\node[myNode, label=above:$x$] (A) at (0,2) {};
		\node[myNode, label=below:$y$] (B) at (0,0) {};
		
		\node[myNode] (v1) [below left= 0.1cm and 0.5cm of A] {};
		\node[myNode] (v2) [below left= 0.9cm and 0.5cm of A] {};
		\foreach \from/\to in {v1/A,v1/B,v2/A,v2/B, A/B}
		\draw (\from) -- (\to);
		
		\coordinate (Middle) at ($(A)!0.5!(B)$);
		\node (H) [right =0.3cm of Middle] {$H$};
		
		\node[myNode] (u1) [below left= 0.cm and 0.6 cm of B]  {};
		\node[myNode] (u2) [below right= 0.4cm and 0.25cm of u1]  {};
		\draw[-] (B.center) -- (u1) -- (u2)-- (B.center);
		
		\begin{scope}[on background layer]
		\filldraw[fill=green!70!blue,  opacity=0.7, dashed, thick]
		(B.center) to [bend right =110, min distance =5cm] coordinate[pos=0.2] (E11) coordinate[pos=0.3] (E21) coordinate[pos=0.7] (E22) coordinate[pos=0.8] (E12) (A.center);
		\end{scope}
		\end{tikzpicture}
		\vspace{-.5cm}
		\vspace{-2mm}\caption{{Illustration of the structure of the graph $G$ in Case 2.}}\label{construe}
	\end{figure}

	Observe that every $(x,y)$-disjoint chord, $x$-chord, and $y$-chord of $B$ is a chord of $C$.
	\smallskip
	
	\noindent{\em Claim 4:} There do not exist $(x,y)$-disjoint chords in $B$.
	
	\smallskip
	
	\noindent{\em Proof of Claim 4:}
	Suppose, towards a contradiction, that there exists an $(x,y)$-disjoint chord
	in $B$, namely $e.$ We observe the following:
	
	\smallskip
	\noindent{\em Observation 3:} There exists at most one $(x,y)$-disjoint chord in $B$.
	Indeed, if the contrary holds then $\{\hyperref[magnetic]{{O}_{1}^{0}},
	\hyperref[narcotic]{{O}_{3}^{0}}\}\leq G$ (see \autoref{minority}),
	a contradiction. 	\vspace{-3mm}
	\begin{figure}[H]
		\centering
		\vspace{-0.5cm}
		\begin{tikzpicture}[myNode/.style = black node, scale=.7]
		
		\begin{scope}
		\node[myNode, label=above:$x$] (A) at (0,2) {};
		\node[myNode, label=below:$y$] (B) at (0,0) {};
		
		\node[myNode] (v1) [below left= 0.1cm and .5cm of A] {};
		\node[myNode] (v2) [below left= 0.9cm and .5cm of A] {};
		\foreach \from/\to in {v1/A,v1/B,v2/A,v2/B, A/B}
		\draw (\from) -- (\to);
		
		\coordinate (Middle) at ($(A)!0.5!(B)$);
		\node (H) [right =0.3cm of Middle] {$H$};
		
		\node[myNode] (u1) [below left= 0.cm and 0.6 cm of B]  {};
		\node[myNode] (u2) [below right= 0.4cm and 0.25cm of u1]  {};
		\draw[-] (B.center) -- (u1) -- (u2)-- (B.center);

		\begin{scope}[on background layer]
		\filldraw[fill=green!70!blue,  opacity=0.7, dashed, thick]
		(B.center) to [bend right =110, min distance =5cm] coordinate[pos=0.2] (E11) coordinate[pos=0.3] (E21) coordinate[pos=0.7] (E22) coordinate[pos=0.8] (E12) (A.center);
		\end{scope}
		
		\node[myNode] (E11) at (E11) {};
		\node[myNode] (E22) at (E22) {};
		\node[myNode] (E21) at (E21) {};
		\node[myNode] (E12) at (E12) {};
		
		\draw[-] (E11) -- (E12) (E21) -- (E22);
		
		\end{scope}
		
		\begin{scope}[xshift=6cm]
		\node[myNode, label=above:$x$] (A) at (0,2) {};
		\node[myNode, label=below:$y$] (B) at (0,0) {};
		
		\node[myNode] (v1) [below left= 0.1cm and .5cm of A] {};
		\node[myNode] (v2) [below left= 0.9cm and .5cm of A] {};
		\foreach \from/\to in {v1/A,v1/B,v2/A,v2/B, A/B}
		\draw (\from) -- (\to);
		
		\coordinate (Middle) at ($(A)!0.5!(B)$);
		\node (H) [right =0.3cm of Middle] {$H$};
		
		\node[myNode] (u1) [below left= 0.cm and 0.6 cm of B]  {};
		\node[myNode] (u2) [below right= 0.4cm and 0.25cm of u1]  {};
		\draw[-] (B.center) -- (u1) -- (u2)-- (B.center);

		\begin{scope}[on background layer]
		\filldraw[fill=green!70!blue,  opacity=0.7, dashed, thick]
		(B.center) to [bend right =110, min distance =5cm] coordinate[pos=0.1] (E11) coordinate[pos=0.4] (E21) coordinate[pos=0.6] (E22) coordinate[pos=0.9] (E12) (A.center);
		\end{scope}
		
		\node[myNode] (E11) at (E11) {};
		\node[myNode] (E22) at (E22) {};
		\node[myNode] (E21) at (E21) {};
		\node[myNode] (E12) at (E12) {};
		
		\draw[-] (E11) -- (E21) (E12) -- (E22);
		
		\end{scope}
		
		\end{tikzpicture}
		\vspace{-.5cm}
		\vspace{-0mm}\caption{The two $(x,y)$-disjoint chords in Observation 1 of Case 2.}
		\label{minority}
	\end{figure}

	Observation 3 implies that $e$ is the unique $(x,y)$-disjoint chord in $B$.
	Now, if there exists some $x$-chord in $B$ then 
	$\{\hyperref[segments]{{O}_{7}^{1}},\hyperref[immature]{{O}_{11}^{1}}\} \leq G$
	(see \autoref{cadillac}),
	a contradiction. Therefore, every edge $e'\in E(H)$ except for $e$ that is a chord of
	$C$, is a $y$-chord and thus $y$ is a {${\cal P}$-apex} vertex of $G,$
	a contradiction. Claim 4 follows.	\vspace{-3mm}
	
	\begin{figure}[H]
		\centering
		\vspace{-.5cm}
		\begin{tikzpicture}[myNode/.style = black node, scale=.7]
		
		\begin{scope}
		\node[myNode, label=above:$x$] (A) at (0,2) {};
		\node[myNode, label=below:$y$] (B) at (0,0) {};
		
		\node[myNode] (v1) [below left= 0.1cm and 0.5cm of A] {};
		\node[myNode] (v2) [below left= 0.9cm and 0.5cm of A] {};

		\node[myNode] (u1) [below left= 0.cm and 0.6 cm of B]  {};
		\node[myNode] (u2) [below right= 0.4cm and 0.25cm of u1]  {};
		
		\begin{scope}[on background layer]
		\filldraw[fill=green!70!blue,  opacity=0.7, dashed, thick]
		(B.center) to [bend right =110, min distance =5cm] coordinate[pos=0.3] (A') coordinate[pos=0.7] (B') coordinate[pos=0.8] (C) (A.center);
		\end{scope}  
		
		\coordinate (Middle) at ($(A)!0.5!(B)$);
		\node (H) [right =0.3cm of Middle] {$H$};
		%Για το Πριόνι

		\foreach \from/\to in {v1/A,v1/B,v2/A,v2/B}
		\draw (\from) -- (\to);
		
		\draw[-] (B.center) to (u1);
		\draw[-] (B.center) to (u2);
		\draw[-] (u1) to (u2);
		
		\draw[-] (A)    to (B);
		\draw[-]  (A') node[myNode] () {} to (B') node[myNode] (){};
		\node (e) [right =1.5cm of Middle] {$e$};
		\draw[-] (A) to (C) node[myNode] (){};

		\end{scope}
		
		\begin{scope}[xshift=6cm]
		\node[myNode, label=above:$x$] (A) at (0,2) {};
		\node[myNode, label=below:$y$] (B) at (0,0) {};
		
		\node[myNode] (v1) [below left= 0.1cm and 0.5cm of A] {};
		\node[myNode] (v2) [below left= 0.9cm and 0.5cm of A] {};

		\node[myNode] (u1) [below left= 0.cm and 0.6 cm of B]  {};
		\node[myNode] (u2) [below right= 0.4cm and 0.25cm of u1]  {};
		
		\begin{scope}[on background layer]
		\filldraw[fill=green!70!blue,  opacity=0.7, dashed, thick]
		(B.center) to [bend right =110, min distance =5cm] coordinate[pos=0.3] (A') coordinate[pos=0.7] (B') coordinate[pos=0.2] (C) (A.center);
		\end{scope}  
		
		\coordinate (Middle) at ($(A)!0.5!(B)$);
		\node (H) [above right =0.3cm and 1.3cm of Middle] {$H$};
		%Για το Πριόνι

		\foreach \from/\to in {v1/A,v1/B,v2/A,v2/B}
		\draw (\from) -- (\to);
		
		\draw[-] (B.center) to (u1);
		\draw[-] (B.center) to (u2);
		\draw[-] (u1) to (u2);
		
		\draw[-] (A)    to (B);
		\draw[-]  (A') node[myNode] () {} to (B') node[myNode] (){};
		\node (e) [right =1.5cm of Middle] {$e$};
		\draw[-] (A) to (C) node[myNode] (){};

		\end{scope}

		\end{tikzpicture}
		\vspace{-3mm}\caption{A $(x,y)$-disjoint chord $e$ and an $x$-chord in the proof of Claim 4 of Case 2.}\label{cadillac}
	\end{figure}

	%   \noindent{\em Claim 3:} There exists exactly one chord containing $x$ and every other chord contains $y.$
	
	%   \noindent{\em Proof of Claim 3:}
	%   Notice that, by Claim 2, every chord of $C$ contains either $x$ or $y.$
	%   Since neither of $x, y$ is an {${\cal S}_{0}$-apex} vertex of $G,$ there exists some chord containing $x$ but not $y$ and some chord containing $y$ but not $x.$
	%   If there exist two chords containing $x,$ then $\hyperref[friesian]{{O}_{5}^{1}} \leq G$ (see \autoref{insecure}), a contradiction. Claim 3 follows.
	
	We now conclude the proof of the Lemma.
	Since $x$ is not an {${\cal P}$-apex} vertex of $G,$ there exists some chord of $C$
	not incident to $x$, namely $e$. By Claim 4, $e$ is a $y$-chord.
		\smallskip
	
%	\st{\noindent {\em Observation 4:} There exists at most one $x$-chord.
%		Indeed, if there exist two $x$-chords,} \newline
%	\st{we have
%		$ \hyperref[immature]{{O}_{11}^{1}}\leq G$ (see \autoref{insecure}), a contradiction.}
%	
%	\smallskip 
%	
%	\aliv{Got rid of Observation 4. I think it is unecessary.}
%	\gstam{Ωραία! Το ξανάγραψα λίγο πιο μαζεμένα.}
%	\aliv{OK}
%	
	{Also, there exists at most one $x$-chord of $C$, since otherwise we have $\hyperref[immature]{{O}_{11}^{1}}\leq G$
    (see \autoref{insecure}), a contradiction.}
    
\vspace{-4mm}	
	
	\begin{figure}[H]
		\centering
		\vspace{-0.5cm}
		\begin{tikzpicture}[myNode/.style = black node, scale=.7]
		
		\node[myNode, label=above:$x$] (A) at (0,2) {};
		\node[myNode, label=below:$y$] (B) at (0,0) {};

		\node[myNode] (v1) [below left= 0.1cm and 0.5cm of A] {};
		\node[myNode] (v2) [below left= 0.9cm and 0.5cm of A] {};
		
		\coordinate (Middle) at ($(A)!0.5!(B)$);
		\node (H) [right =0.3cm of Middle] {$H$};
		
		\node[myNode] (u1) [below left= 0.cm and 0.6 cm of B]  {};
		\node[myNode] (u2) [below right= 0.4cm and 0.25cm of u1]  {};

		\foreach \from/\to in {v1/A,v1/B,v2/A,v2/B}
		\draw (\from) -- (\to);
		
		\draw[-] (B.center) to (u1);
		\draw[-] (B.center) to (u2);
		\draw[-] (u1) to (u2);
		
		\draw[-] (A) to (B);

		\begin{scope}[on background layer]
		\filldraw[fill=green!70!blue,  opacity=0.7, dashed, thick]
		(B.center) to [bend right =110, min distance =5cm] coordinate[pos=0.35] (A') coordinate[pos=0.65] (B') coordinate[pos=0.5] (C) (A.center);
		\end{scope}
		
		\draw[-] (B) to (A') node[myNode] () {};
		\draw[-] (A) to (B') node[myNode] () {};
		\draw[-] (A) to (C) node[myNode] () {};
		
		\end{tikzpicture}
		\vspace{-.5cm}
		\vspace{-0mm}\caption{A $y$-chord and two $x$-chords in the last part of the proof of Case 2.}
		\label{insecure}
	\end{figure}

	Therefore, it holds that $y$ is a {${\cal P}$-apex} vertex of $G,$
	a contradiction. This completes the proof of the Lemma.
\end{proof}

\begin{lemma}\label{destined}
	If $G\in \obs({\cal A}_{1}({\cal P}))\setminus  {\cal O}$ then $G$ is biconnected.
\end{lemma}

\begin{proof}
%	\st{Suppose to the contrary that $G$ has a cut-vertex $x.$}
	{Suppose, towards a contradiction, that $G$ is not biconnected.
	Then, since by \autoref{hundreds}, $G$ is connected, there exists a cut-vertex $x$
    of $G$}.
%	\aliv{Added \autoref{hundreds}}
%	\st{Due to \autoref{busyness}, \autoref{absolute}, and \autoref{elicited} we have that ${\cal C}(G,x) = \{B,D\}$} \newline	
%	\st{where $B$ is outerplanar and biconnected and $D\cong K_{3}$.}
Due to  \autoref{currency} we have that ${\cal C}(G,x) = \{B,D\}$,
where $B$ is biconnected and $D\cong K_{3}$. Also, \autoref{absolute} and \autoref{elicited} imply that $K_{4}\not\leq G$
and $K_{2,3}\not\leq G$ and so $G$ is an outerplanar graph. Thus, $B$ is also outerplanar.
%	{Due to \autoref{absolute} and \autoref{elicited} we have that $K_{4}\not\leq G$
%	and $K_{2,3}\not\leq G$ and so $G$ is an outerplanar graph. Hence, \autoref{currency} implies that ${\cal C}(G,x) = \{B,D\}$,
%	where $B$ is outerplanar and biconnected and $D\cong K_{3}$.}
%    \aliv{Corrected citation: \autoref{currency} instead of \autoref{busyness}.}
	Following \autoref{decoding}, we can consider the Hamiltonian cycle $C$ of $B$. Therefore, the structure of $G$ is as in \autoref{parallel}:
	\vspace{-5mm}
	
	\begin{figure}[H]
		\centering
		\begin{tikzpicture}[myNode/.style = black node, scale=.8]
		
		\draw (0:0.5) node[myNode, label=above:$x$] {} -- (120:0.5) node[myNode] {} -- (240:0.5) node[myNode] {} -- cycle;
		
		\begin{scope}[on background layer]
		\filldraw[fill=mustard!90, dashed, thick] (0:0.5) to [out=60, in=90] (0:4.5) to [out=270, in=-60] (0:0.5);
		\end{scope}   
		
		\end{tikzpicture}
		\vspace{-2mm}\caption{{Illustration of the structure of the graph $G$ in the proof of \autoref{destined}.}}
%		\vspace{-2mm}\caption{The structure of an outerplanar graph $G\in \obs({\cal A}_{1}({\cal P}))\setminus  {\cal O}$ which contains a cut-vertex $x.$}
	\label{parallel}
	\end{figure}

    We make the following observation:
		
		\smallskip
		
		\noindent{\em Observation:} Every two chords not incident to $x$ share a vertex.
		Indeed, suppose that there exist two disjoint chords $e_{1}=u_{1}v_{1},e_{2}=u_{2}v_{2}$
		of $C$ not incident to $x$. The possible configurations of $e_1$,$e_2$, due to outerplanarity of $B$, are depicted in \autoref{splendid}. Thus, $\{\hyperref[appetite]{{O}_{2}^{1}}, \hyperref[adorning]{{O}_{8}^{1}}\}\leq G$, a contradiction.
%		\aliv{Chords do not cross?}\gstam{Θα έλεγα να γράψουμε κάτι σαν: ``''} \aliv{Ωραίο!}
		\vspace{-3mm}
		
		\begin{figure}[H]
			\centering
			\begin{tikzpicture}[myNode/.style = black node, scale=.8]
			
			\draw (0:0.5) node[myNode, label=above:$x$] {} -- (120:0.5) node[myNode] {} -- (240:0.5) node[myNode] {} -- cycle;
			\begin{scope}[on background layer]
			\filldraw[fill=mustard!90, dashed, thick] (0:0.5) to [out=60, in=90] coordinate[pos=0.4] (u1) coordinate[pos=0.7] (u2) (0:4.5) to [out=270, in=-60] coordinate[pos=0.3] (v2) coordinate[pos=0.6] (v1) (0:0.5);
			\end{scope}
			
			\draw[-] (u1) node[myNode, label=above:$u_{1}$] {} to coordinate[pos=.5] (e1) (v1) node[myNode, label=below:$v_{1}$] {};
			\draw[-] (u2) node[myNode, label=above:$u_{2}$] {} to coordinate[pos=.5] (e2) (v2) node[myNode, label=below:$v_{2}$] {};
			
			\node[label=left:$e_{1}$] () at ($(e1)$) {};
			\node[label=left:$e_{2}$] () at ($(e2)$) {};
			
			\begin{scope}[xshift=6cm]
			
			\draw (0:0.5) node[myNode, label=above:$x$] {} -- (120:0.5) node[myNode] {} -- (240:0.5) node[myNode] {} -- cycle;
			\begin{scope}[on background layer]
			\filldraw[fill=mustard!90, dashed, thick] (0:0.5) to [out=60, in=90] coordinate[pos=0.4] (u1) coordinate[pos=0.9] (v1) (0:4.5) to [out=270, in=-60] coordinate[pos=0.1] (v2) coordinate[pos=0.6] (u2) (0:0.5);
			\end{scope}
			
			\draw[-] (u1) node[myNode, label=above:$u_{1}$] {} to coordinate[pos=.5] (e1) (v1) node[myNode, label=right:$v_{1}$] {};
			\draw[-] (u2) node[myNode, label=below:$u_{2}$] {} to coordinate[pos=.5] (e2) (v2) node[myNode, label=right:$v_{2}$] {};
			
			\node[label=below:$e_{1}$] () at ($(e1)$) {};
			\node[label=above:$e_{2}$] () at ($(e2)$) {};
			\end{scope}
			
			\end{tikzpicture}
			\vspace{-2mm}\caption{The two chords of $C$ disjoint to $x,y$ in the proof of Observation 1.}
			\label{splendid}
		\end{figure}

		We now continue with the proof of the Lemma. Since $x$ is not a ${\cal P}$-apex
		vertex of $G$ there exist two chords $e_{1},e_{2}$ not incident to $x$.
		By the above Observation, both $e_{1},e_{2}$ share a vertex $u$.
		But then, since $u$ is not a ${\cal P}$-apex vertex
		of $G$, there exists a chord $e$ not incident to $u$. 
		
		If $e$ is not incident to $x$ then Observation 1 implies that
		$e$ shares a vertex with both $e_{1},e_{2}$, in which case,
		$\hyperref[adorning]{{O}_{3}^{1}}\leq G$ (see \autoref{maintain}), a contradiction.
		
		\vspace{-5mm}
		
		\begin{figure}[H]

			\centering
			
			\begin{tikzpicture}[myNode/.style = black node, scale=.8]
			
			\draw (0:0.5) node[myNode, label=above:$x$] {} -- (120:0.5) node[myNode] {} -- (240:0.5) node[myNode] {} -- cycle;
			\begin{scope}[on background layer]
			\filldraw[fill=mustard!90, dashed, thick] (0:0.5) to [out=60, in=90] coordinate[pos=0.4] (u1) coordinate[pos=1] (v1) (0:4.5) to [out=270, in=-60]
			coordinate[pos=0.6] (u2) (0:0.5);
			\end{scope}
			
			\draw[-] (u1) node[myNode] {} to coordinate[pos=.4] (e1) (v1) node[myNode, label=right:$u$] {};
			\draw[-] (u2) node[myNode] {} to coordinate[pos=.4] (e2) (v1) node[myNode] {};
			\draw[-] (u1) to coordinate[pos=.5] (e3) (u2);
			%\draw[-] (u1) -- (u2);
			
			\node[label=$e_{1}$] () at (-2:3) {};
			\node[label=$e_2$] () at (-16:3.1) {};
			\node[label=$e$] () at (-10:2) {};
			
			\end{tikzpicture}
			\vspace{-2mm}\caption{The case where $e$ is not incident to $x$.}\label{maintain}
		\end{figure}

		If now $e$ is incident to $x$ then,
		$\{\hyperref[segments]{{O}_{7}^{1}},\hyperref[immature]{{O}_{11}^{1}}, \hyperref[narcotic]{{O}_{3}^{0}}\}\leq G$ (see \autoref{waitress}), a contradiction.
	
		\begin{figure}[H]
		\centering
		\begin{tikzpicture}[myNode/.style = black node, scale=.8]
		\begin{scope}

		\draw (0:0.5) node[myNode, label=above:$x$] {} -- (120:0.5) node[myNode] {} -- (240:0.5) node[myNode] {} -- cycle;
		\begin{scope}[on background layer]
		\filldraw[fill=mustard!90, dashed, thick] (0:0.5) to [out=60, in=90] coordinate[pos=0.8] (u) coordinate[pos=0.4] (v1) (0:4.5) to [out=270, in=-60] coordinate[pos=0.4] (v2) coordinate[pos=0.5] (u2) (0:0.5);
		\end{scope}
		
		\node[myNode, label=above right:$u$] (u) at ($(u)$) {};
		\draw[-] (v1) node[myNode] {} to (u) to (v2) node[myNode] {};
		\draw[-] (0:0.5) to (u2) node[myNode] {};
		\node[label=$e$] () at (-20:2) {};
		
		\end{scope}
		
		\begin{scope}[xshift=6cm]
		
		\draw (0:0.5) node[myNode, label=above:$x$] {} -- (120:0.5) node[myNode] {} -- (240:0.5) node[myNode] {} -- cycle;
		\begin{scope}[on background layer]
		\filldraw[fill=mustard!90, dashed, thick] (0:0.5) to [out=60, in=90] coordinate[pos=0.8] (u) (0:4.5) to [out=270, in=-60] coordinate[pos=0.3] (v2) coordinate[pos=0.4] (v1) coordinate[pos=0.5] (u2) (0:0.5);
		\end{scope}
		
		\node[myNode, label=above right:$u$] (u) at ($(u)$) {};
		\draw[-] (v1) node[myNode] {} to (u) to (v2) node[myNode] {};
		\draw[-] (0:0.5) to (u2) node[myNode] {};
				\node[label=$e$] () at (-20:2) {};
		\end{scope} 
		
		\begin{scope}[xshift = 12cm]
		
		\draw (0:0.5) node[myNode, label=above:$x$] {} -- (120:0.5) node[myNode] {} -- (240:0.5) node[myNode] {} -- cycle;
		\begin{scope}[on background layer]
		\filldraw[fill=mustard!90, dashed, thick] (0:0.5) to [out=60, in=90] coordinate[pos=0.5] (u2) coordinate[pos=0.8] (u) (0:4.5) to [out=270, in=-60] coordinate[pos=0.3] (v2) coordinate[pos=0.4] (v1)  (0:0.5);
		\end{scope}
		
		\node[myNode, label=above right:$u$] (u) at ($(u)$) {};
		\draw[-] (v1) node[myNode] {} to (u) to (v2) node[myNode] {};
		\draw[-] (0:0.5) to (u2) node[myNode] {};
		\node[label=$e$] () at (-8:2) {};
		\end{scope}
		
		\end{tikzpicture}
		\vspace{-2mm}\caption{The possible configurations of the chords in the case where $e$ is incident to $x$.}
		\label{waitress}
	\end{figure}
%\gstam{Νομίζω τα captions που λένε για την e είναι καλύτερα. Τι λες;}
%\aliv{Ναι, ωραίος!}
\end{proof}

\subsection{Proving triconnectivity}

The purpose of this subsection is to prove that 
all graphs in $ \obs({\cal A}_{1}({\cal P}))\setminus  {\cal O}$ are triconnected
(\autoref{personal}).

\begin{lemma}\label{atrocity}
	If $G\in \obs({\cal A}_{1}({\cal P}))\setminus  {\cal O}$ then $K_{4}\not\leq G$  or $G$ is triconnected.
\end{lemma}

\begin{proof}
	Suppose, to the contrary, that $G$ is not triconnected and $K_{4}\leq G,$ which, by \autoref{absolute}, implies that $G$ is biconnected.
	Let $(H,K)$ be an ${r}$-wheel-subdivision pair of $G.$
	Since $G$ is biconnected and $G\not\in {\cal A}_{1}({\cal P})$, \autoref{patience} implies that $H$ is isomorphic to $K_{4}.$
	{We stress that, due to \autoref{campaign}(2), for every flap $F$ of $(H,K)$, $G[V(F)]$ contains a cycle.}

%	\aliv{Add that $G\not\in {\cal P}$ in the argument?}\gstam{Sure! Done!}
%	\aliv{OK}
%	
%	\st{Keep in mind that, by \autoref{campaign}, for every $(u,v)$-flap}
%	
%	\st{$F$ of $(H,K)$ there exists an edge $e\in E(H)$ such that $u,v$ belong}
%	
%	\st{both to the subdivision of $e$ in $K$ and $G[V(F)]$ contains a cycle.}
%	
%	
%	\st{Also, notice that for every $(u,v)$-flap of $(H,K)$ one of}
%	
%	\st{$u,v$ is a branch vertex of $K,$ otherwise $\hyperref[holiness]{{O}_{9}^{2}}\leq G,$ a contradiction (see \autoref{reproved}).}
	
%	\aliv{ATTENTION! We may have to keep in mind that $G[V(F)]$ contains a cycle.}\gstam{Αν κάποιος έχει καταφέρει να διαβάσει ως εδώ, σίγουρα θα το έχει στο μυαλό του ότι το $G[V(F)]$ περιέχει κύκλο. Μπορεί να έχει στο μυαλό του πολύ πιο επικίνδυνα και βλαβερά πράγματα.}
%	\aliv{χαχααχχα. Παρ' όλ' αυτά, θα έλεγα να το γράψουμε τουλάχιστον.}
	
	{\smallskip
	\noindent{\em Observation:} Every flap of $(H,K)$ is an $(x,u)$-flap where $x$ is a
	branch vertex of $K$ and $u$ is a vertex in the subdivision of an edge
	$e=xy\in E(H)$ in $K$. Indeed, let $F$ be a $(u,v)$-flap of $(H,K)$. By \autoref{campaign}(3),
	there exists an edge $e\in E(H)$ such that $u,v$ belong
	both to the subdivision of $e$ in $K$. If neither of $u,v$ is a
	branch vertex of $K$ then, since, by \autoref{campaign}(2), $G[V(F)]$ contains a
	cycle, we have that $\hyperref[holiness]{{O}_{9}^{2}}\leq G$
	(see \autoref{reproved}), a contradiction.
    }
%    \aliv{Re-written as an observation, without "keep in mind".}\gstam{Ωραία!}
%    \aliv{OK}

	\begin{figure}[H]
		\centering
		\begin{tikzpicture}[every node/.style = black node, scale=.8]
		
		\draw[dashed, thick] (30:1.5) -- (0,0) -- (270:1.5) -- cycle;
		\draw[dashed, thick] (0,0) -- (150:1.5) -- (270:1.5);
		
		\fill[applegreen!80] (130:1) .. controls +(0,0.9) and +(0,0.9) .. (50:1) {};
		
		\draw[dashed, thick] (130:1) --  (50:1) -- (30:1.5);
		\draw[dashed, thick] (130:1) -- (150:1.5) ;
		
		\draw (150:1.5) node {};
		\draw (0,0) node {};
		\draw (270:1.5) node {};
		\draw (30:1.5) node {};
		\draw (50:1) node[label=30:$v$] {};
		\draw (130:1) node[label=120:$u$] {};
		
		\end{tikzpicture}
		\vspace{-1mm}\caption{An example of a $(u,v)$-flap such that both $u,v$ are subdividing vertices of the corresponding path.}
		\label{reproved}
	\end{figure}
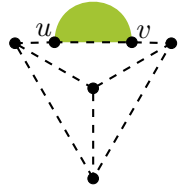

	{Following the above Observation}, we distinguish the following two cases, depending on whether there exists an $(x,u)$-flap of $(H,K)$ such that $x$ is a branch vertex of $K$ and $u$ is a subdividing vertex in the subdivision of an  $e=x y \in E(H)$ in $K$ or for every flap its base is a subset of the branch vertices of $K$:

	\smallskip
	
	\noindent {\em Case 1:} There exists an $(x,u)$-flap $F$, where $x$ is a branch vertex of $K$ and $u$ is a subdividing vertex in the subdivision of an edge $e=x y \in E(H)$ in $K$.
	
	Let $z_{1},z_{2}$ be the two other branch vertices of $K$ (as
	shown in \autoref{backdrop}).

	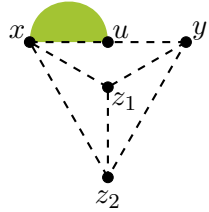
\begin{figure}[H]
		\centering
		\begin{tikzpicture}[every node/.style = black node, scale=.8]
		
		\draw[dashed, thick] (30:1.5) -- (0,0) -- (270:1.5) -- cycle;
		\draw[dashed, thick] (0,0) -- (150:1.5) -- (270:1.5);
		
		\fill[applegreen!80] (150:1.5) .. controls +(0,0.9) and +(0,0.9) .. (0,0.75) {};
		
		\draw[dashed, thick]  (150:1.5) -- (0,0.75);
		\draw[dashed, thick] (30:1.5) --  (0,0.75) ;
		
		\draw (150:1.5) node[label=150:$x$] {};
		\draw (0,0) node[label=-30:$z_{1}$] {};
		\draw (270:1.5) node[label=270:$z_{2}$] {};
		\draw (30:1.5) node[label=30:$y$] {};
		\draw (0,0.75) node[label=30:$u$] {};

		\end{tikzpicture}
		\vspace{-2mm}\caption{The structure of $G$ in Case 1.}\label{backdrop}
	\end{figure}
	
	\noindent{\em Claim 1:} All flaps of $(H,K)$ are $(x,w)$-flaps where $w$ is a vertex in the subdivision of an edge $e^{\prime}\in \{xy,xz_{1},xz_{2}\}$ in $K.$
	\smallskip
	
	\noindent{\em Proof of Claim 1:} Suppose, to the contrary, that Claim 1 does not hold. {Following the above Observation,} we distinguish the following subcases:
	
	\smallskip

	\noindent{\em Subcase 1.1:} There exists a $(y,v)$-flap $F'$
	where $v$ is a subdividing vertex in the subdivision of $e$ in $K.$
	{Observe that $F\neq F'$.}
	By \autoref{campaign}, it holds that $V(F)\cap V(F')\subseteq \{x,u\}\cap \{y,v\}$.
	Therefore, if $u=v$ then $V(F)\cap V(F')=\{u\}$ and so
	$\hyperref[begrudge]{{O}_{4}^{2}}\leq G$, while if $u\neq v$ then
	$V(F)\cap V(F')=\emptyset$ and so
	$\hyperref[consists]{{O}_{1}^{1}}\leq G$ (see \autoref{dialogic}),
	a contradiction in both cases.

	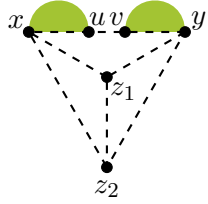
\begin{figure}[H]
		\centering
		\begin{tikzpicture}[every node/.style = black node, scale=.8]
		\draw[dashed, thick] (30:1.5) -- (0,0) -- (270:1.5) -- cycle;
		\draw[dashed, thick] (0,0) -- (150:1.5) -- (270:1.5);
		
		\fill[applegreen!80] (150:1.5) .. controls +(0,0.7) and +(0,0.7) .. (-0.3,0.75) {};
		
		\fill[applegreen!80] (30:1.5) .. controls +(0,0.7) and +(0,0.7) .. (0.3,0.75) {};
		
		\draw[dashed, thick] (30:1.5) -- (150:1.5);
		
		\draw (150:1.5) node[label=150:$x$] {};
		\draw (0,0) node[label=-30:$z_{1}$] {};
		\draw (270:1.5) node[label=270:$z_{2}$] {};
		\draw (30:1.5) node[label=30:$y$] {};
		\draw (-0.3,0.75) node[label=80:$u$] {};
		\draw (0.3,0.75) node[label=100:$v$] {};
		
		\end{tikzpicture}
		\vspace{-2mm}\caption{The configuration of the flaps of $(H,K)$ in Subcase 1.1 of Claim 1.}\label{dialogic}
	\end{figure}

	\noindent{\em Subcase 1.2:} There exists a flap {whose base is} in the subdivision
	of the edges $y z_{1}$ or $y z_{2}.$ But, then $\hyperref[operetta]{{O}_{6}^{2}}\leq G$ (see \autoref{executed}), a contradiction.
	
	\begin{figure}[H]
		\centering
		\vspace{-.5cm}
		\begin{tikzpicture}[every node/.style = black node, scale=.8]
		
		\begin{scope}
		\draw[dashed, thick, red] (30:1.5) -- (0,0) ;
		\draw[dashed, thick, red] (30:1.5) -- (270:1.5) ;
		\draw[dashed, thick] (0,0) -- (150:1.5) -- (270:1.5)--cycle;
		\draw[dashed, thick] (150:1.5) -- (30:1.5);
		
		\begin{scope}[on background layer]
		\fill[applegreen!80] (150:1.5) .. controls +(0,0.9) and +(0,0.9) .. (0,0.75) {};
		
		\fill[applegreen!80] (30:1.5) .. controls +(330:1.8) and +(330:1.8) .. (270:1.5) {};
		\end{scope}

		\draw (150:1.5) node[label=150:$x$] {};
		\draw (0,0) node[label=90:$z_{1}$] {};
		\draw (270:1.5) node[label=270:$z_{2}$] {};
		\draw (30:1.5) node[label=30:$y$] {};
		\draw (0,0.75) node[label=30:$u$] {};
		\end{scope}
		
		\end{tikzpicture}
		\vspace{-2mm}\caption{A flap whose base is in the subdivision of some edge $yz_{i}, \ i \in[2]$ (depicted in red) in Subcase 1.2 of Claim 1.}
		\label{executed}
	\end{figure}
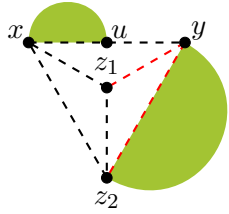

	\noindent{\em Subcase 1.3:} There exists a flap whose base is in the subdivision of the
	edge $z_{1} z_{2}.$ But, then $\hyperref[bringing]{{O}_{5}^{2}}\leq G$
	(see \autoref{infested}), a contradiction.
	
	\begin{figure}[H]
		\centering
		
		\begin{tikzpicture}[every node/.style = black node, scale=.8]
		
		\begin{scope}
		
		\draw[dashed, thick] (30:1.5) -- (270:1.5) ;
		
		\fill[applegreen!80] (150:1.5) .. controls +(0,0.9) and +(0,0.9) .. (0,0.75) {};
		\fill[applegreen!80] (0,0) .. controls +(355:0.5) and  +(320:0.3).. (270:1.5) {};

		\draw[dashed, thick] (0,0) -- (150:1.5) -- (270:1.5) -- cycle;
		\draw[dashed, thick] (150:1.5) -- (30:1.5)-- (0,0) ;
		
		\draw (150:1.5) node[label=150:$x$] {};
		\draw (0,0) node[label=90:$z_{1}$] {};
		\draw (270:1.5) node[label=270:$z_{2}$] {};
		\draw (30:1.5) node[label=30:$y$] {};
		\draw (0,0.75) node[label=30:$u$] {};
		\end{scope}
		
		\end{tikzpicture}
		\vspace{-2mm}\caption{A flap whose base is in the subdivision of $z_{1}z_{2}$ in Subcase 1.3 of Claim 1.}
		\label{infested}
	\end{figure}
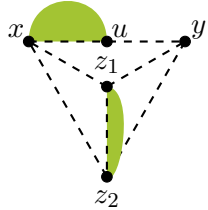

	\noindent{\em Subcase 1.4:} There exists a $(z_{i},v)$-flap such that $v$ is a subdividing vertex in the subdivision
	of the edge $z_{i} x$, $i\in [2]$. But, then $\hyperref[appetite]{{O}_{2}^{1}}\leq G$ (see \autoref{goethean}), a contradiction.
	
	\begin{figure}[H]
		\centering
		\begin{tikzpicture}[every node/.style = black node, scale=.8]
		
		\fill[applegreen!80] (150:1.5) .. controls +(0,0.9) and +(0,0.9) .. (0,0.75) {};
		
		\fill[applegreen!80] (210:0.75) .. controls +(205:1.15) and +(215:1.2)  .. (270:1.5) {};
		
		\draw[dashed, thick] (270:1.5) -- (30:1.5) -- (0,0);
		\draw[dashed, thick, blue] (0,0) -- (150:1.5) -- (270:1.5) ;
		\draw[dashed, thick] (30:1.5) -- (150:1.5) ;
		\draw[dashed, thick] (270:1.5) -- (0,0);
		
		\draw (150:1.5) node[label=150:$x$] {};
		\draw (0,0) node[label=-30:$z_{1}$] {};
		\draw (270:1.5) node[label=270:$z_{2}$] {};
		\draw (30:1.5) node[label=30:$y$] {};
		\draw (0,0.75) node[label=30:$u$] {};
		\draw (210:0.75) node[label=180:$v$] {};
		
		\end{tikzpicture}
		\vspace{-2mm}\caption{A flap whose base is in the subdivision of some edge $z_{i} x, \ i \in[2]$ (depicted in blue) in Subcase 1.4 of Claim 1.}
		\label{goethean}
	\end{figure}
	
	Since we have exhausted all possible cases, Claim 1 follows.
	
	\medskip
	
	Now, to conclude Case 1 we prove the following:
	
	\medskip
	
	\noindent{\em Claim 2:} Every flap of $(H,K)$ is $x$-oriented.
	
	\smallskip
	
	\noindent{\em Proof of Claim 2:}
	Suppose, towards a contradiction, that there exists a flap $F'$ that is not $x$-oriented.
	Then, by Claim 1, this is an $(x,w)$-flap where $w$ is
	some vertex in the subdivision of an edge
	$e'\in\{xy,xz_{1},xz_{2}\}.$ Since $F'$ is not $x$-oriented then \autoref{mobility} implies that there exists a cycle $C$ in $F'$ that contains $w$ but not $x$.
	If $e'=x z_{i},$ for some $i\in \{1,2\},$ then $\hyperref[appetite]{{O}_{2}^{1}}\leq G$ (see the left figure of \autoref{fiendish}), a
	contradiction. Thus, $e'=xy$. Then, if $w\not=y$ we have that
	$\hyperref[skeleton]{{O}_{4}^{1}}\leq G$ (see the central figure of \autoref{fiendish}), a contradiction. Therefore, $w=y$ and so
	$F'\neq F$, which, by \autoref{campaign}, implies that $V(F)\cap V(F')=\{x\}$. Therefore, by
	contracting $F$ to a cycle we get a cycle containing $x$ but not $y$
	disjoint to $C$. Hence $\hyperref[consists]{{O}_{1}^{1}}\leq G$
	 (see the right figure of \autoref{fiendish}), a contradiction. Claim 2 follows.

	\begin{figure}[H]
		\centering
		\vspace{-.5cm}
		\begin{tikzpicture}[every node/.style = black node, scale=.8]
		
		\begin{scope}[xshift=5cm]
		
		\coordinate (x) at (150:1.5);
		\coordinate (y) at (30:1.5);
		\coordinate (u) at ($(x.center)!0.6!(y.center)$) {};
		\fill[applegreen!80] (x.center)to [out= 90, in=90, min distance=1cm]  (u.center) {};
		\draw[-] (u.center).. controls ++(180:1.7) and ++(110:1.5).. (u.center);
		
		\draw[dashed, thick] (270:1.5) -- (30:1.5) -- (0,0);
		\draw[dashed, thick] (0,0) -- (150:1.5) -- (270:1.5) ;
		\draw[dashed, thick] (30:1.5) -- (150:1.5) ;
		\draw[dashed, thick] (270:1.5) -- (0,0);
		
		\draw (150:1.5) node[label=150:$x$] {};
		\draw (0,0) node[label=-30:$z_{1}$] {};
		\draw (270:1.5) node[label=270:$z_{2}$] {};
		\draw (30:1.5) node[label=30:$y$] {};
		\draw (u) node[label=30:$w$] {};

		\end{scope}
		
		\begin{scope}
		
		\coordinate (x) at (150:1.5);
		\coordinate (y) at (30:1.5);
		\coordinate (z2) at (270:1.5);
		\node (m1) at ($(x.center)!0.6!(y.center)$) {};
		\node (m2) at ($(x.center)!0.6!(z2.center)$) {};
		
		\begin{scope}[on background layer]
		\fill[applegreen!80] (x.center) to [out= 90, in=90, min distance=1cm] (m1.center) {};
		\fill[applegreen!80] (x.center) to [bend right = 90, min distance=1cm] (m2.center) {};
		\path (m2) edge [out=190,in=130, looseness=27] (m2);
		\end{scope}
		
		\draw[dashed, thick] (270:1.5) -- (30:1.5) -- (0,0) -- (150:1.5) -- (270:1.5);
		\draw[dashed, thick] (30:1.5) -- (150:1.5) ;
		\draw[dashed, thick] (270:1.5) -- (0,0);

		\draw (m1.center) node[label=above right:$u$] {};
		
		\draw (150:1.5) node[label=150:$x$] {};
		\draw (0,0) node[label=-30:$z_{1}$] {};
		\draw (270:1.5) node[label=270:$z_{2}$] {};
		\draw (30:1.5) node[label=30:$y$] {};

		\end{scope}
		
		\begin{scope}[xshift=10cm]
		
		\coordinate (x) at (150:1.5);
		\coordinate (y) at (30:1.5);
		\node (m) at ($(x.center)!0.5!(y.center)$) {};

		\fill[applegreen!80] (150:1.5) to [out= 60, in=90, min distance=2cm] (30:1.5) {};
		\fill[red!30!yellow, opacity=0.7] (150:1.5) to [out= 100, in=120, min distance=1cm] (m.center) {};

		\draw[dashed, thick] (270:1.5) -- (30:1.5) -- (0,0) -- (150:1.5) -- (270:1.5);
		\draw[dashed, thick] (30:1.5) -- (150:1.5) ;
		\draw[dashed, thick] (270:1.5) -- (0,0);
		
		\draw[-] (30:1.5).. controls ++(170:2.2) and ++(110:2).. (30:1.5);
		
		\draw (m.center) node[label=above:$u$] {};
		
		\draw (150:1.5) node[label=150:$x$] {};
		\draw (0,0) node[label=-30:$z_{1}$] {};
		\draw (270:1.5) node[label=270:$z_{2}$] {};
		\draw (30:1.5) node[label=30:$y$] {};
		
		\end{scope}
		\end{tikzpicture}
		\vspace{-2mm}\caption{The possible configurations of the non $x$-oriented flap $F'$ in the proof of Claim 2.}
		\label{fiendish}
	\end{figure}
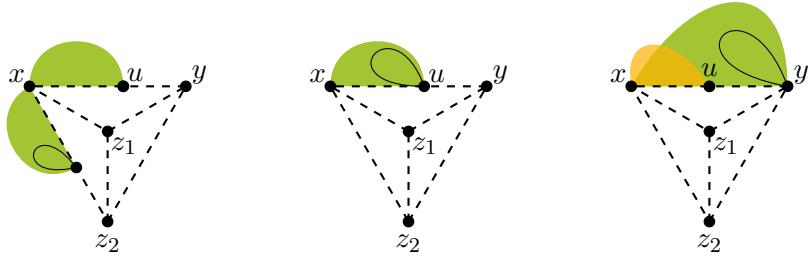

	Claim 2 {implies that every cycle of every flap contains $x$. Hence,
    \autoref{fanfares} implies that every cycle in $G$ except for one contains $x$}.
     Therefore, $x$ is a {${\cal P}$-apex} vertex of $G$ and so we arrive at a contradiction.
	
	\medskip
	
	\noindent {\em Case 2:} Every flap is a $(u,v)$-flap, where $u,v$ are branch vertices of $K.$
	
	\medskip
	
	We argue that the following holds:
	
	%   Since $G\not\in {\cal A}_{1}({\cal P})$ and $G$ is biconnected there exists an $(x,y)$-flap $F$, where $x,y$ are branch vertices of $K$. Let $z_{1},z_{2}$ be the rest branch vertices of $K$ (as in \autoref{fig41}).
	
	%   \begin{figure}[H]
	%   \centering
	%   \begin{tikzpicture}[every node/.style = black node]
	%   \fill[applegreen!80] (150:1.5) .. controls +(0,1.8) and +(0,1.8) .. (30:1.5) {};
	
	%   \draw[dashed, thick] (270:1.5) -- (30:1.5) -- (0,0) -- (150:1.5) -- (270:1.5);
	%   \draw[dashed, thick] (30:1.5) -- (150:1.5) ;
	%   \draw[dashed, thick] (270:1.5) -- (0,0);
	
	%   \draw (150:1.5) node[label=150:$x$] {};
	%   \draw (0,0) node[label=90:$z_{1}$] {};
	%   \draw (270:1.5) node[label=270:$z_{2}$] {};
	%   \draw (30:1.5) node[label=30:$y$] {};
	
	%   \end{tikzpicture}
	%   \vspace{-2mm}\caption{The structure of $G$ in Case 2.}\label{fig41}
	%   \end{figure}
	\smallskip
	
	\noindent{\em Claim 3:} %All flaps of $(H,K)$ share a branch vertex of $K$.
	There is a branch vertex $x$ of  $K$ such that the bases of all flaps of $(H,K)$ share $x$.
	%  \aliv{Εναλλακτικα: There exists a branch vertex $u$ of $K$ such that every chord of $C$ not incident to $x$ is incident to $u$.}
	
	\smallskip
	
	\noindent{\em Proof of Claim 3:} To prove Claim 3 we make the following observation:
	
	\smallskip
	
	\noindent{\em Observation:} %Every two flaps of $(H,K)$ share a branch vertex of $K$.
	For every two flaps of $(H,K)$, there exists a branch vertex $x$ of $K$ such that their bases share $x$.
	Indeed, suppose to the contrary, that there exist two flaps of $(H,K)$, an $(x,y)$-flap and an $(x',y')$-flap,
	such that $xy,x'y'$ are two non-incident edges of $H$. But then,
	$\hyperref[bringing]{{O}_{5}^{2}}\leq G,$ a contradiction (see \autoref{greyness}).
	
	\begin{figure}[H]
		\centering
		\vspace{-.5cm}
		\begin{tikzpicture}[every node/.style = black node, scale=.8]
		
		\node[label=150:$x$] (x) at (150:1.5)  {};
		\node[label=90:$x'$] (z1) at (0,0) {};
		\node[label=270:$y'$] (z2) at (270:1.5)  {};
		\node[label=30:$y$] (y) at (30:1.5)  {};
		
		\begin{scope}[on background layer]
		\draw[dashed, thick] (z2) -- (y) -- (z1) -- (x) -- (z2);
		\fill[applegreen!80] (x.center) .. controls +(0,1.8) and +(0,1.8) .. (y.center) {};
		\fill[applegreen!80] (z2.center) .. controls +(355:0.5) and  +(320:0.3).. (z1.center) {};
		\end{scope}
		
		\draw[dashed, thick] (z2) -- (z1);
		\draw[dashed, thick] (x) -- (y) ;
		
		\end{tikzpicture}
		\vspace{-2mm}\caption{The configuration of the flaps of $G$ in the proof of Observation.}
		\label{greyness}
	\end{figure}
	
{If $F_{1}$, $F_{2}$ are two different flaps of $(H,K)$, then, by the above Observation, there exist two branch vertices $y,z$ of $K$ {different from $x$} such that $F_{1}$ is an $(x,y)$-flap and $F_{2}$ is an $(x,z)$-flap.
Recall that a 2-separator $S\subseteq V(K)$ of $G$ defines at most one flap,
and hence we have that $y\neq z$.
Moreover, if there exists a flap $F_{3}$ of $(H,K)$ different than $F_1, F_2$, then its base also contains $x$, since otherwise, the above Observation implies that $F_{3}$ is a $(y,z)$-flap and therefore $\hyperref[furrowed]{{O}_{8}^{2}}\leq G$ (see \autoref{sublated}),
	a contradiction. Claim 3 follows.
	}

%	\noindent{\em Observation 2:} Every three flaps of $(H,K)$ share a branch vertex of
%    $K$.
%     Indeed, suppose the contrary, that there exist three flaps  
%    $F_{1},F_{2},F_{3}$ such that $V(F_{1})\cap V(F_{2})\cap V(F_{3})=\emptyset$. 
%    Observation 1 implies that $V(F_{1})\cap V(F_{2})=\{x\}$, $V(F_{1})\cap 
%    V(F_{3})=\{y\}$, and $V(F_{2})\cap V(F_{3})=\{z\}$, where $x,y,z$ are three
%    distinct branch vertices of $K$. Hence, $\hyperref[furrowed]{{O}_{8}^{2}}\leq G$,
%    a contradiction (see \autoref{sublated}).
	
	\begin{figure}[H]
		\centering
		\vspace{-.5cm}
		\begin{tikzpicture}[every node/.style = black node, scale=.8]
		
		\begin{scope}
		\draw[dashed, thick] (30:1.5) -- (0,0) ;
		\draw[dashed, thick] (270:1.5) -- (0,0);
		\draw[dashed, thick] (150:1.5) -- (0,0);
		
		\fill[applegreen!80] (150:1.5) .. controls +(0,1.8) and +(0,1.8) .. (30:1.5) {};
		\fill[applegreen!80] (30:1.5) .. controls +(330:1.8) and  +(330:1.8)  .. (270:1.5) {};
		\fill[applegreen!80] (150:1.5) .. controls +(210:1.8) and  +(210:1.8)  .. (270:1.5) {};
		
		\draw[dashed, thick] (30:1.5) -- (270:1.5) ;
		\draw[dashed, thick] (150:1.5) -- (270:1.5);
		\draw[dashed, thick] (30:1.5) -- (150:1.5) ;
		
		\draw (150:1.5) node[label=150:$x$] {};
		\draw (0,0) node[] {};
		\draw (270:1.5) node[label=270:$z$] {};
		\draw (30:1.5) node[label=30:$y$] {};
		\end{scope}
		
		\end{tikzpicture}
		\vspace{-.5cm}
		\vspace{-0mm}\caption{The configuration of the flaps of $G$ in the end of the proof of Claim 3.}\label{sublated}
	\end{figure}
%	
%	Claim 3 follows immediately from the above two Observations.
%	
%	\medskip 
	
	{According to Claim 3, there exists a branch vertex $x\in V(K)$ such that every 
	flap of $(H,K)$ is an $(x,y)$-flap, where $y$ is a branch vertex of $K$ different
	from $x$. The following claim will conclude Case 2 and thus the Lemma.} %Therefore every flap we can assume that all flaps of $G$ are $(x,y)$-flaps where $t\in\{y,z_{1},z_{2}\}.$

	\medskip
	
	\noindent{\em Claim 4:} Every flap of $(H,K)$ is $x$-oriented.
	
	\smallskip
	
	\noindent{\em Proof of Claim 4:}
    Suppose, towards a contradiction, that there exists an $(x,y)$-flap $F$ of
	$(H,K)$ that is not $x$-oriented.
	Then, \autoref{mobility} implies that $F$ is
	$y$-oriented and so there exists a cycle in $F$ that contains $y$ but not $x$. 
	
	We will prove that $F$ is the unique flap of $(H,K).$ Indeed, if there
	exists a flap $F'$ of $(H,K)$ that is different than $F$, then it is an $(x,y')$-flap, where $y'$ is a
	branch vertex of $K$ different from both $x$ and $y$.
	As above, we have that $y\neq y'$, since $F\neq F'$. Thus,
	$\hyperref[friesian]{{O}_{5}^{1}} \leq G$ (see \autoref{ancients}),  a contradiction. 
	
	Therefore, $F$ is the only flap of $(H,K)$ and since it is $y$-oriented, 
	{\autoref{fanfares} implies that} $y$ is a {${\cal P}$-apex} vertex of $G,$ a contradiction. Claim 4 follows.
	
	\begin{figure}[H]
		\centering
		\vspace{-.5cm}
		\begin{tikzpicture}[every node/.style = black node, scale=.8]
		\begin{scope}
		\fill[applegreen!80] (150:1.5) .. controls +(0,1.8) and +(0,1.8) .. (30:1.5) {};
		\fill[applegreen!80] (150:1.5) .. controls +(210:1.8) and  +(210:1.8)  .. (270:1.5) {};
		
		\draw[dashed, thick] (270:1.5) -- (30:1.5) -- (0,0) -- (150:1.5) -- (270:1.5);
		\draw[dashed, thick] (30:1.5) -- (150:1.5) ;
		\draw[dashed, thick] (270:1.5) -- (0,0);
		
		\draw[-] (30:1.5).. controls ++(170:2.2) and ++(110:2).. (30:1.5);
		
		\draw (150:1.5) node[label=150:$x$] {};
		\draw (0,0) node {};
		\draw (270:1.5) node[label=270:$y'$] {};
		\draw (30:1.5) node[label=30:$y$] {};
		
		\end{scope}
		\end{tikzpicture}
		\vspace{-.4cm}
		\vspace{-0mm}\caption{An example of the $(x,y)$-flap and the $(x,y')$-flap in the proof of Claim 4.} \label{ancients}
	\end{figure}
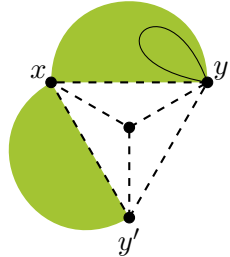
	
	Claim 4 {and \autoref{fanfares}}, taking account that $G$ is biconnected,
	imply that $x$ is a {${\cal P}$-apex} vertex of $G,$  a contradiction. 
	This concludes Case 2 and hence \autoref{atrocity}.
\end{proof}

\begin{lemma}\label{toulouse}
	If $G\in \obs({\cal A}_{1}({\cal P}))\setminus  {\cal O}$ 
	then $K_{2,3}\not\leq G$  or $G$ is triconnected.
\end{lemma}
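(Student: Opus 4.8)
The plan is to peel off the easy case and reduce everything to a single structural dichotomy. First, if $K_{4}\leq G$ then \autoref{atrocity} already gives that $G$ is triconnected, so the statement holds; hence I assume $K_{4}\nleq G$, and I may also assume $K_{2,3}\leq G$, since otherwise the first disjunct holds. By \autoref{destined}, $G$ is biconnected, so by \autoref{negative} it admits a b-rich separator, and by \autoref{symphony} this separator $S=\{x,y\}$ is unique. Each augmented component $H\in{\cal C}(G,S)$ is biconnected, and it is $K_{2,3}$-free: otherwise, exactly as in Observation~4 of the proof of \autoref{symphony}, $K_{2,3}\leq H$ would produce a b-rich separator of $G$ other than $S$ (or, through a connecting path in $G$, a $K_{4}$-minor), contradicting either uniqueness of $S$ or $K_{4}$-freeness. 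Thus every $H$ is outerplanar and, being biconnected, has a Hamiltonian cycle $C_{H}$ with $xy\in E(C_{H})$ (\autoref{decoding}). Finally, since $G\in\obs({\cal P}^{(1)})$, \autoref{selected} tells us that $G$ has no ${\cal P}$-apex at all.

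The engine of the proof is a chord-counting criterion for the two separator vertices. Call an edge a \emph{chord} if it is a chord of some $C_{H}$; each chord is incident to exactly one of $x,y$, or to neither. I claim that, for $w\in\{x,y\}$, the vertex $w$ is a ${\cal P}$-apex of $G$ if and only if at most one chord fails to be incident to $w$. Indeed, $G\setminus w$ is connected by biconnectivity; each $H\setminus w$ turns $C_{H}$ into a path and deletes the chords at $w$, so the cyclomatic number of $H\setminus w$ equals the number of chords of $C_{H}$ avoiding $w$; the pieces $H\setminus w$ meet only at the remaining vertex of $S$, so these cyclomatic numbers add, and $G\setminus w\in{\cal P}$ exactly when the total is at most one. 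As $G$ has no ${\cal P}$-apex, at least two chords avoid $x$ and at least two avoid $y$.

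I then split on the number of augmented components that are not cycles (a cycle component carries no chords). If none is a non-cycle, there are no chords at all and $x$ is trivially a ${\cal P}$-apex, a contradiction. If at least two are non-cycles, \autoref{caillois} applies and both of its alternatives collapse against the criterion: in the first (a unique $(x,y)$-disjoint chord, and not both $x$- and $y$-chords) the chords avoiding the separator vertex whose chord-type is absent number exactly one, namely the single disjoint chord, so that vertex is an apex; in the second (no $(x,y)$-disjoint chord, and at most one $x$-chord or at most one $y$-chord) the chords avoiding the corresponding vertex number at most one, again an apex. Either way we contradict the absence of a ${\cal P}$-apex.

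The one remaining situation --- exactly one non-cycle component $H_{0}$ --- is where I expect the real work to be, since \autoref{caillois} does not apply. Here all chords live in $H_{0}$, the b-rich hypothesis forces at least two further (cycle) components, and the no-apex conclusion of the previous paragraph becomes: $H_{0}$ carries at least two chords avoiding $x$ and at least two avoiding $y$. The plan is to show that such an $H_{0}$, together with the surrounding cycle components through $\{x,y\}$, already realizes a member of ${\cal O}$, contradicting \autoref{selected}. The auxiliary cycles supplied by the cycle components play exactly the role that the second non-cycle component plays in \autoref{caillois}, so I would mirror its figure-by-figure bookkeeping (homologous versus non-homologous chord pairs, and $x$-chords versus $y$-chords versus $(x,y)$-disjoint chords) to locate the obstruction in each configuration. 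This final obstruction hunt is the main obstacle; the reduction and the chord-counting criterion make all the other cases immediate.
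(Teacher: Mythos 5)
Your reduction is sound and, up to the point where you stop, it coincides with the paper's own proof: the paper's Claim~1 likewise rules out the cases of zero and of at least two non-cycle augmented components, the latter by exactly the same appeal to \autoref{caillois}. Your chord-counting criterion ($w\in\{x,y\}$ is a ${\cal P}$-apex of $G$ if and only if at most one chord avoids $w$) is correct --- $G\setminus w$ is connected, each $H\setminus w$ is a Hamiltonian path plus the chords of $C_{H}$ avoiding $w$, and these pieces pairwise meet in a single vertex --- and it does streamline the paper's repeated apex checks. (One slip in wording: in the first alternative of \autoref{caillois}, if $y$-chords are absent then it is $x$ that becomes the apex, since the chords avoiding $x$ are the $y$-chords together with the unique disjoint chord; you attribute the apex to the wrong endpoint, though the contradiction is unaffected.)

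The genuine gap is the case you explicitly defer: exactly one non-cycle augmented component $H_{0}$. This is not a residual technicality to be handled by ``mirroring'' \autoref{caillois} --- it is the core of the paper's proof, and the arguments of \autoref{caillois} do not transfer, because they rely on a second non-cycle component to supply chords, whereas here the remaining components (at least two, by b-richness) are chordless cycles through $x$ and $y$. Two specific steps are needed and are missing. First, one must show that $H_{0}$ has no $(x,y)$-disjoint chord (the paper's Claim~2): such a chord is unique (else $O_{1}^{0}$ or $O_{3}^{0}$ arises as a minor), and if $uv$ is one, a Subclaim shows every other chord must be incident to one and the same separator vertex --- the two configurations to exclude yield $O_{3}^{2}$ and $O_{10}^{2}$ --- whence that vertex has exactly one chord avoiding it and is a ${\cal P}$-apex by your own criterion, a contradiction. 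Second, once every chord is an $x$-chord or a $y$-chord, your criterion gives at least two $x$-chords and at least two $y$-chords, and one must then exhibit $O_{13}^{2}$ as a minor of $G$, using these four chords on the Hamiltonian cycle of $H_{0}$ together with the two auxiliary cycle components through $x,y$. Neither obstruction argument is carried out in your proposal, so as written it does not prove the lemma.
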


\begin{proof}
	
	Suppose, to the contrary, that $G$ is not triconnected and $K_{2,3}\leq G,$ which, by \autoref{elicited}, implies that $G$ is biconnected.
	Also, since $G$ is not triconnected, by  \autoref{atrocity}, it is $K_{4}$-free.
	
	Since $G\in \obs({\cal A}_{1}({\cal P}))\setminus  {\cal O}$, by \autoref{princess}, every b-rich separator of $G$ is nice. Also, by \autoref{negative} we have that there exists a b-rich separator $S=\{x,y\}$ of $G$, that is unique due to \autoref{symphony}.
	 We argue that the following holds:
	
	\medskip
	
	\noindent{\em Claim 1:} There exists a unique augmented component in ${\cal C}(G,S)$ not isomorphic to a cycle.
	
	\smallskip
	
	\noindent{\em Proof of Claim 1:} Observe that there exists an augmented connected component in ${\cal C}(G,S)$ not isomorphic to a cycle, otherwise $G\in {\cal A}_{1}({\cal P})$.
	Suppose then, towards a contradiction, that there exist two augmented connected components in ${\cal C}(G,S)$ that are not isomorphic to a cycle. Since $S$ is the unique b-rich separator of $G$, then by \autoref{outerplanarfl}, we have that every $H\in {\cal C}(G,S)$ is outerplanar. Thus, by	
	\autoref{caillois}, one of the following holds:
	
	\begin{itemize}
		\item There exists a unique $(x,y)$-disjoint chord of $G$ and there do not exist both $x$-chords and $y$-chords in $G$. But then, if there do not exist $x$-chords (or $y$-chords) of $G$, then $y$ (or $x$, respectively) is a ${\cal P}$-apex of $G$, a contradiction.
		
		\item There do not exist $(x,y)$-disjoint chords of $G$ and there exists at most one $x$-chord or at most one $y$-chord of $G$. But then, if there exists at most one $x$-chord (or $y$-chord) of $G$, then $y$ (or $x$, respectively) is a ${\cal P}$-apex of $G$, a contradiction.
	\end{itemize}
	Since we arrived at a contradiction in both cases Claim 1 follows.
	
	\medskip
	
	According to Claim 1, let $H$ be the unique augmented connected component in ${\cal C}(G,S)$ that
	is not isomorphic to a cycle. Note that by \autoref{generals} every
	$H'\in{\cal C}(G,S)\setminus\{H\}$ is isomorphic to $K_{3}$.
	Also, since $H$ is biconnected and outerplanar, following \autoref{decoding},
	we can consider the Hamiltonian cycle $C$ of $H$.
	Notice that, due to $K_{4}$-freeness of $G$, $xy\in E(C).$
	
	\medskip
	
	\noindent{\em Claim 2:} Every chord of $C$ is either an $x$-chord or a $y$-chord of $G$.
	
	\smallskip
	
	\noindent {\em Proof of Claim 2:}
	Suppose, towards a contradiction, that there exists some $(x,y)$-disjoint chord $uv$ of $G$.
	Observe that there is a unique such chord, since otherwise $\{\hyperref[magnetic]{{O}_{1}^{0}},\hyperref[narcotic]{{O}_{3}^{0}}\} \leq G$. 
	
	Suppose, without loss of generality, that the $(x,u)$-subpath of $C \setminus xy$ does not contain $v$
	-- we denote this path by $P_1.$ Let $P_2$ be the $(y,v)$-path in $C \setminus xy$ (as shown in \autoref{fidelity}).

	\begin{figure}[H]
		\centering
		\vspace{-.5cm}
		\begin{tikzpicture}[myNode/.style = black node, scale=.7]
		
		\node[myNode, label=above:$x$] (A) at (0,2) {};
		\node[myNode, label=below:$y$] (B) at (0,0) {};

		\node[myNode] (v1) [below left= 0.1cm and 0.5cm of A] {};
		\node[myNode] (v2) [below left= 0.9cm and 0.5cm of A] {};
		
		\coordinate (Middle) at ($(A)!0.5!(B)$);
		\node (H) [right =0.15cm of Middle] {$H$};
		
		\begin{scope}[on background layer]
		\filldraw[fill=green!70!blue,  opacity=0.7, dashed, thick]
		(B.center) to [bend right =110, min distance =5cm] coordinate[pos=0.15] (p2) coordinate[pos=0.3] (v) coordinate[pos=0.7] (u) coordinate[pos=0.85] (p1) (A.center);
		\end{scope}

		\node[myNode, label=above:$u$] (C) at ($(u)$) {};
		\node[myNode, label=below:$v$] (D) at ($(v)$) {};
		
		\node[label=above:$P_{1}$] () at ($(p1)$) {};
		\node[label=below:$P_{2}$] () at ($(p2)$) {};
		
		\foreach \from/\to in {v1/A,v1/B,v2/A,v2/B}
		\draw (\from) -- (\to);
		
		\draw[-] (A)    to (B);
		\draw[-] (C)    to (D);

		\end{tikzpicture}
		\vspace{-.5cm}
		\vspace{-0mm}\caption{The chord $uv$ and paths $P_{1},$ $P_{2}$ as in the proof of Claim 1.}\label{fidelity}
	\end{figure}
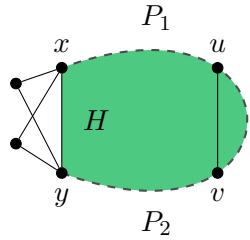

	We argue that the following holds:
	
	\smallskip 
	
	\noindent{\em Subclaim:} All chords of $C$, other than $uv$, are incident to the same vertex of the separator $\{x,y\}$.
	
	\smallskip
	
	\noindent{\em Proof of Subclaim:}
	Suppose, towards a contradiction, that there exists an $x$-chord $xx'$ of $G$ and a $y$-chord $yy'$ of $G$.
	{Then, due to outerplanarity of $H$, $x'$ and $y'$ are vertices in $V(P_{1})\cup V(P_2)$.
	If $x' \in V(P_{1})$ and $y' \in V(P_{2}),$ then $\hyperref[softened]{{O}_{3}^{2}} \leq G$ (see leftmost figure of  \autoref{estrange}) while if  both $x'$ and $y'$ belong to the
	same $P_{i}, i \in [2]$, then $\hyperref[supports]{{O}_{10}^{2}} \leq G$ (see rightmost figure of \autoref{estrange}). Subclaim follows.}
%    \gstam{Προτείνω αυτό.}\aliv{Το non-crossing να το αφήσουμε μόνο μια φορά δηλαδή.
%    ΟΚ, αν είσαι εντάξει με αυτό.}\aliv{Στο τελευταίο λήμμα για παράδειγμα το γράφουμε,
%    έχω αφήσει ένα σχόλιο για να δεις που. Δεν ξέρω...}
	Then, it cannot be the case that $x' \in V(P_{1})$ and $y' \in V(P_{2}),$ otherwise $\hyperref[softened]{{O}_{3}^{2}} \leq G.$
	On the other hand, it also cannot be the case that both $x'$ and $y'$ belong to the
	same $P_{i}, i \in [2]$ since that implies that
	$\hyperref[supports]{{O}_{10}^{2}} \leq G$ (see \autoref{estrange}).
	Subclaim follows.

	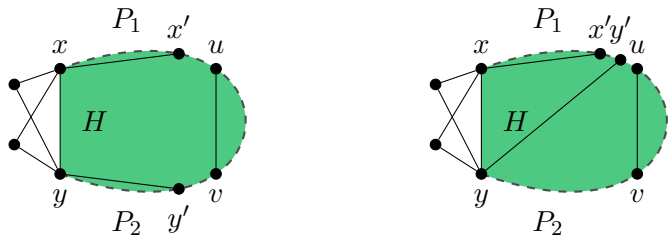
\begin{figure}[H]
		\centering
		\vspace{-.5cm}
		\begin{tikzpicture}[myNode/.style = black node, scale=.7]
		\begin{scope}
		
		\node[myNode, label=above:$x$] (A) at (0,2) {};
		\node[myNode, label=below:$y$] (B) at (0,0) {};

	    \node[myNode] (v1) [below left= 0.1cm and 0.5cm of A] {};
		\node[myNode] (v2) [below left= 0.9cm and 0.5cm of A] {};
		
		\coordinate (Middle) at ($(A)!0.5!(B)$);
		\node (H) [right =0.15cm of Middle] {$H$};
		
		\begin{scope}[on background layer]
		\filldraw[fill=green!70!blue,  opacity=0.7, dashed, thick]
		(B.center) to [bend right =110, min distance =5cm] coordinate[pos=0.1] (p2) coordinate[pos=0.2] (yy) coordinate[pos=0.3] (v) coordinate[pos=0.7] (u) coordinate[pos=0.8] (xx) coordinate[pos=0.9] (p1) (A.center);
		\end{scope}  
		
		\node[myNode, label=above:$u$] (C) at ($(u)$) {};
		\node[myNode, label=below:$v$] (D) at ($(v)$) {};
		
		\node[label=above:$P_{1}$] () at ($(p1)$) {};
		\node[label=below:$P_{2}$] () at ($(p2)$) {};
		\node[myNode, label=above:$x'$] () at ($(xx)$) {};
		\node[myNode, label=below:$y'$] () at ($(yy)$) {};

		\foreach \from/\to in {v1/A,v1/B,v2/A,v2/B}
		\draw (\from) -- (\to);
		
		\draw[-] (A)    to (B);
		\draw[-] (C)    to (D);
		\draw[-] (A) to (xx);
		\draw[-] (B) to (yy);
		
		\end{scope}
		
		\begin{scope}[xshift=8cm]
		
		\node[myNode, label=above:$x$] (A) at (0,2) {};
		\node[myNode, label=below:$y$] (B) at (0,0) {};
		
		\node[myNode] (v1) [below left= 0.1cm and 0.5cm of A] {};
		\node[myNode] (v2) [below left= 0.9cm and 0.5cm of A] {};
		
		\coordinate (Middle) at ($(A)!0.5!(B)$);
		\node (H) [right =0.15cm of Middle] {$H$};
		
		\begin{scope}[on background layer]
		\filldraw[fill=green!70!blue,  opacity=0.7, dashed, thick]
		(B.center) to [bend right =110, min distance =5cm] coordinate[pos=0.1] (p2)  coordinate[pos=0.3] (v) coordinate[pos=0.7] (u) coordinate[pos=0.75] (yy) coordinate[pos=0.8] (xx) coordinate[pos=0.9] (p1) (A.center);
		\end{scope}  
		
		\node[myNode, label=above:$u$] (C) at ($(u)$) {};
		\node[myNode, label=below:$v$] (D) at ($(v)$) {};
		
		\node[label=above:$P_{1}$] () at ($(p1)$) {};
		\node[label=below:$P_{2}$] () at ($(p2)$) {};
		\node[myNode, label=above:$x'$] () at ($(xx)$) {};
		\node[myNode, label=above:$y'$] () at ($(yy)$) {};

		\foreach \from/\to in {v1/A,v1/B,v2/A,v2/B}
		\draw (\from) -- (\to);
		
		\draw[-] (A)    to (B);
		\draw[-] (C)    to (D);
		\draw[-] (A) to (xx);
		\draw[-] (B) to (yy);
		
		\end{scope}
		\end{tikzpicture}
		\vspace{-.5cm}
		\vspace{-0mm}\caption{Possible configurations of chords $xx',$ $yy',$ as in the proof of Subclaim.}\label{estrange}
	\end{figure}
	
	According to the Subclaim, all chords of $C$, other than $uv,$ are incident to the same vertex of the separator, say $x$ - but then $x$ is a
	{${\cal P}$-apex} vertex of $G,$ a contradiction. Therefore, an $(x,y)$-disjoint chord of $G$ cannot exist and this concludes the proof of Claim 2.\vspace{-4mm}

	\begin{figure}[H]
		\centering
		\vspace{-.5cm}
		\begin{tikzpicture}[myNode/.style = black node, scale=0.7]
		\node[myNode, label=above:$x$] (A) at (0,2) {};
		\node[myNode, label=below:$y$] (B) at (0,0) {};

		\node[myNode] (v1) [below left= 0.1cm and 0.5cm of A] {};
		\node[myNode] (v2) [below left= 0.9cm and 0.5cm of A] {};
		
		\coordinate (Middle) at ($(A)!0.5!(B)$);
		\node (H) [right =0.15cm of Middle] {$H$};
		
		\begin{scope}[on background layer]
		\filldraw[fill=green!70!blue,  opacity=0.7, dashed, thick]
		(B.center) to [bend right =110, min distance =5cm] coordinate[pos=0.35] (y2) coordinate[pos=0.45] (y1) coordinate[pos=0.55] (x1) coordinate[pos=0.65] (x2) (A.center);
		\end{scope}

		\node[myNode] () at ($(x1)$) {};
		\node[myNode] () at ($(x2)$) {};
		\node[myNode] () at ($(y1)$) {};
		\node[myNode] () at ($(y2)$) {};
		
		\foreach \from/\to in {v1/A,v1/B,v2/A,v2/B, A/x1, A/x2, B/y1, B/y2}
		\draw (\from) -- (\to);
		
		\draw[-] (A)    to (B);
		\end{tikzpicture}
		\vspace{-.5cm}
		\vspace{-2mm}\caption{An example of $H$ having at least two $x$-chords and at least two $y$-chords.} 
		\label{outsized}
	\end{figure}
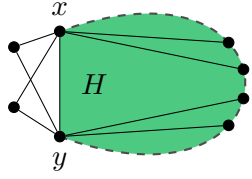
	
	Now, since $x$ is not an {${\cal P}$-apex} there exist two chords of $C$ not incident to $x.$ By Claim 2, these are $y$-chords of $G$. Symmetrically, there exist two $x$-chords of $G$.
	Therefore, we have that $\hyperref[solution]{{O}_{13}^{2}}\leq G$ 
	(as shown in \autoref{outsized}), a contradiction. 
\end{proof}

\begin{lemma}\label{personal}
	If $G\in \obs({\cal A}_{1}({\cal P}))\setminus  {\cal O}$ 
	then $G$ is triconnected.
\end{lemma}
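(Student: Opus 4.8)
The plan is to assume, for contradiction, that $G$ is not triconnected, and then to produce a ${\cal P}$-apex of $G$, which contradicts $G\notin{\cal P}^{(1)}$ (\autoref{selected}). By \autoref{destined} we already know $G$ is biconnected, so it has a separator of size exactly $2$. Since $G$ is not triconnected, \autoref{atrocity} yields $K_4\not\leq G$ and \autoref{toulouse} yields $K_{2,3}\not\leq G$; as $\{K_4,K_{2,3}\}$ is the obstruction set of the class of outerplanar graphs, $G$ is outerplanar. Being biconnected and outerplanar, $G$ has a Hamiltonian cycle $C$ by \autoref{decoding}, every edge of $G$ outside $C$ is a chord of $C$, and outerplanarity makes these chords pairwise non-crossing. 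Write $c$ for the number of chords and, for $v\in V(G)$, let $d_v$ denote the number of chords incident to $v$.

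The core of the argument is a local pseudoforest criterion. As $G$ is biconnected, $G\setminus v$ is connected for every $v$; since every vertex lies on $C$, the graph $G\setminus v$ consists of the Hamiltonian path $C\setminus v$ together with exactly the chords not incident to $v$, so its cyclomatic number equals $c-d_v$. Hence $G\setminus v$ is a pseudoforest if and only if at most one chord avoids $v$, i.e. $d_v\geq c-1$, and therefore $v$ is a ${\cal P}$-apex of $G$ precisely when $v$ meets all but at most one chord. In particular $G\in{\cal P}^{(1)}$ whenever $c\leq 2$, or all chords share a common endpoint (then $d_v=c$), or $c=3$ and the three chords pairwise meet, forming a chord-triangle $\{vu,uw,wv\}$ so that $d_v=2=c-1$. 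Thus it suffices to rule out, under the assumption $G\notin{\cal P}^{(1)}$, every distribution of chords in which no vertex meets all but one of them.

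The remaining and main step is to show that such a ``spread'' distribution forces a member of ${\cal O}$ as a minor, contradicting ${\cal O}\not\leq G$ (\autoref{selected}). Assuming that no vertex meets all but one chord, the criterion first gives $c\geq 3$. I would then analyze the mutual position of the chords along $C$ in the style of the chord arguments of \autoref{destined} and \autoref{toulouse}: each chord together with a subarc of $C$ cuts off a ``lobe'' (a cycle meeting $C$), and once the lobes cannot all be attached at a single vertex (nor form the harmless chord-triangle), two vertex-disjoint lobes joined by disjoint subarcs of $C$---possibly together with a third lobe---can be contracted to one of the low-connectivity graphs in ${\cal O}$ (and, after deleting a joining arc, to one of the disconnected obstructions ${O}_1^0,{O}_2^0,{O}_3^0$). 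The hard part is exactly this exhaustive case analysis: distinguishing disjoint, nested, and single-endpoint-sharing chord configurations and verifying that each ``spread'' pattern realizes a concrete obstruction. Once every case yields a member of ${\cal O}$, we reach a contradiction with ${\cal O}\not\leq G$, so some vertex must meet all but at most one chord and is therefore a ${\cal P}$-apex, giving $G\in{\cal P}^{(1)}$---the desired contradiction.
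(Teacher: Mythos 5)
Your reduction is set up correctly and matches the paper's opening moves: by \autoref{destined}, \autoref{atrocity}, and \autoref{toulouse}, a non-triconnected $G$ would be biconnected, $K_{4}$-free and $K_{2,3}$-free, hence outerplanar, and so it has a Hamiltonian cycle $C$ by \autoref{decoding}. Your ``local pseudoforest criterion'' is also correct, and it is a clean systematization of what the paper uses only implicitly: since $G\setminus v$ is connected (biconnectivity), it is a pseudoforest iff its cyclomatic number $c-d_{v}$ is at most $1$, i.e.\ iff at most one chord avoids $v$.

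However, there is a genuine gap: the entire combinatorial core of the lemma is deferred rather than proved. Everything after ``The remaining and main step is to show\ldots'' describes work to be done instead of doing it. You must show that in \emph{every} chord configuration in which each vertex misses at least two chords, one of the $33$ graphs of ${\cal O}$ occurs as a minor; and this cannot be finessed, because arguing ``no vertex is an apex, hence $G$ contains an obstruction'' would be circular --- completeness of ${\cal O}$ is exactly what \autoref{ruthless} is proving, so a concrete member of ${\cal O}$ must be exhibited in each case. This is precisely where the paper's proof spends its effort: it first pins down the number of degree-$2$ vertices (at least two by \autoref{hegelian}; at most three, since four of them, being simplicial by \autoref{generals}, yield ${O}_{3}^{2}$ as a minor), and then carries out two detailed case analyses (exactly two, respectively exactly three, degree-$2$ vertices), each with several claims and observations matching chord patterns to specific obstructions such as ${O}_{15}^{2}$, ${O}_{13}^{2}$, ${O}_{14}^{2}$, ${O}_{12}^{2}$, ${O}_{7}^{2}$, ${O}_{1}^{0}$, before concluding that some vertex is a ${\cal P}$-apex. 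Your sketch of contracting ``lobes'' is plausible in individual instances --- for example, three pairwise disjoint chords do produce ${O}_{14}^{2}$ --- but until the disjoint, nested, and endpoint-sharing configurations are exhaustively enumerated and each is matched to a concrete member of ${\cal O}$, the lemma is not established. Note finally that your enumeration, once done, would essentially reproduce the paper's: the degree-$2$ vertices around which the paper organizes its cases are exactly the vertices of $C$ incident to no chord, so the two organizations of the casework are equivalent rather than genuinely different.
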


\begin{proof}
	Suppose, to the contrary, that $G$ is not triconnected. Then, by \autoref{destined},
	\autoref{atrocity}, and \autoref{toulouse},
	$G$ is biconnected and outerplanar and so, due to \autoref{decoding}, it contains a Hamiltonian cycle, namely $C.$
	
	Observe that $G$ has at most 3 vertices of degree 2. Indeed, if there exist 4 vertices $v_{1},
	v_{2}, v_{3}, v_{4}\in V(G)$ of degree 2, then, by \autoref{generals},
	they are simplicial, which also implies that no pair of them is adjacent.
	Thus, by contracting all edges of $C,$ except those that are
	incident to $v_{1}, v_{2}, v_{3}, v_{4},$ we can form 
	$\hyperref[softened]{{O}_{3}^{2}}$ as a minor of
	$G$ (as depicted in \autoref{backpack}), a contradiction.

	\begin{figure}[H]
		\centering
		\begin{tikzpicture}[myNode/.style = black node, scale = 0.4]
		
		\begin{scope}
		\begin{scope}[xshift=2.5cm]
		\node[myNode, label=right:$v_{2}$] (v2) at (0:0.4) {};
		\node[myNode] (r2) at (120:0.9) {};
		\node[myNode] (l2) at (240:0.9) {};
		\draw (v2) -- (r2) -- (l2) -- (v2);
		\end{scope}
		
		\begin{scope}[xshift=-2.5cm]
		\node[myNode, label=left:$v_{4}$] (v4) at (180:0.4) {};
		\node[myNode] (r4) at (300:0.9) {};
		\node[myNode] (l4) at (60:0.9) {};
		\draw (v4) -- (r4) -- (l4) -- (v4);
		\end{scope}
		
		\begin{scope}[yshift=-2.5cm]
		\node[myNode, label=below:$v_{3}$] (v3) at (270:0.4) {};
		\node[myNode] (r3) at (30:0.9) {};
		\node[myNode] (l3) at (150:0.9) {};
		\draw (v3) -- (r3) -- (l3) -- (v3);
		\end{scope}
		
		\begin{scope}[yshift=2.5cm]
		\node[myNode, label=above:$v_{1}$] (v1) at (90:0.4) {};
		\node[myNode] (r1) at (210:0.9) {};
		\node[myNode] (l1) at (330:0.9) {};
		\draw (v1) -- (r1) -- (l1) -- (v1);
		\end{scope}
		
		\begin{scope}[on background layer]
		\draw[dashed, thick, fill=mustard!90] (l1.center) to [out=-30, in=120] (r2.center);
		\draw[dashed, thick, fill=mustard!90] (l2.center) to [out=240, in=30] (r3.center);
		\draw[dashed, thick, fill=mustard!90] (l3.center) to [out=150, in=300] (r4.center);
		\draw[dashed, thick, fill=mustard!90] (l4.center) to [out=60, in=210] (r1.center);
		\draw[color=mustard!90, ultra thick] (l1) -- (r2);
		\draw[color=mustard!90, ultra thick] (l2) -- (r3);
		\draw[color=mustard!90, ultra thick] (l3) -- (r4);
		\draw[color=mustard!90, ultra thick] (l4) -- (r1);
		\filldraw[color=mustard!90, fill=mustard!90] (l1.center) -- (r2.center) -- (l2.center) -- (r3.center) -- (l3.center) -- (r4.center) -- (l4.center) -- (r1.center) -- cycle;
		\end{scope}
		
		\end{scope}
		
		\end{tikzpicture}
		\vspace{-2mm}\caption{An outerplanar graph $G\in \obs({\cal A}_{1}({\cal P}))\setminus {{\cal O}}$ having 4 vertices of degree 2.}\label{backpack}
	\end{figure}
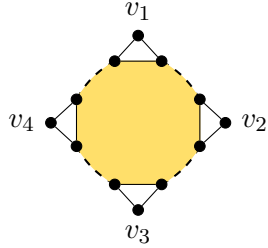

	On the other hand, by \autoref{hegelian}, $G$ has at least two vertices of degree 2. Thus, we distinguish the following two cases:
	
	\medskip
	
	\noindent {\em Case 1:} $G$ has exactly 2 vertices $u,v$ of degree 2.
	Note that, by \autoref{generals}, $u,v$ are simplicial. 
	
	%  Let $P_{1},$ $P_{2}$ be the two vertex disjoint paths of $C \setminus u \setminus v$.
	Observe that $C \setminus u \setminus v$ is the union of two vertex disjoint paths $P_{1}, P_{2}$.
	Let $u_{1}, u_{2}$ be the neighbors of $u$ in $P_{1}, P_{2},$ respectively and $v_{1}, v_{2}$ be the neighbors of $v$ in $P_{1}, P_{2},$ respectively. Therefore, the structure of the graph $G$ is as follows:

	\begin{figure}[H]
		\centering
		\begin{tikzpicture}[myNode/.style = black node, scale=.6]
		
		\begin{scope}
		\draw (60:1) {} -- (180:0.5) {} -- (300:1) {} -- cycle;
		\node[myNode, label=above:$u_{1}$] (u1) at (60:1) {};
		\node[myNode, label=left:$u$] (u) at (180:0.5) {};
		\node[myNode, label=below:$u_{2}$] (u2) at (300:1) {};
		
		\begin{scope}[xshift=4.5cm]
		\node[myNode, label=right:$v$] (v) at (0:0.5) {};
		\node[myNode, label=above:$v_{1}$] (v1) at (120:1) {};
		\node[myNode, label=below:$v_{2}$] (v2) at (240:1) {};
		\draw[-] (v) -- (v1) -- (v2) -- (v);
		\end{scope}

		\begin{scope}[on background layer]
		\draw[dashed, thick, fill=mustard!90] (u1.center) to [out=25, in=155]  coordinate[pos=.5] (A) coordinate[pos=.3] (B) coordinate[pos=.7] (C) (v1.center);
		\draw[dashed, thick, fill=mustard!90] (u2.center) to [out=-25, in=-155]  coordinate[pos=.5] (B) (v2.center);
		\filldraw[color=mustard!90, fill=mustard!90] (v2) rectangle (u1);
		\end{scope}
		
		\node[label=above:$P_{1}$] () at (A) {};
		\node[label=below:$P_{2}$] () at (B) {};
		
		\end{scope}

		\end{tikzpicture}
		\vspace{-2mm}\caption{The structure of the graph $G$ in Case 1 of \autoref{personal}.} \label{matthias}
	\end{figure}
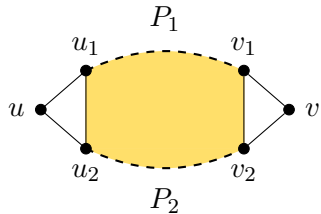

	We prove the following claim concerning the chords of $C.$ 
	
	\medskip
	
	\noindent{\em Claim 1:} Every chord of $C$ is between a vertex of $P_{1}$ and a vertex of $P_{2}$.
	
	\smallskip
	
	\noindent{\em Proof of Claim 1:}
	Suppose, to the contrary, that there exists an edge connecting non-consecutive
	vertices of $P_1$ or $P_2,$ say $P_1,$ and let $e = xy$ be such an edge whose 
	endpoints have the smallest possible distance in $P_{1}.$
	
	Let $P_{1}'$ be the subpath of $P_{1}$ between $x$ and $y.$ 
	Since $e$ is a chord of $C$, $P_{1}'$ contains an internal vertex $w.$ 
	Note then that, since $u,v$ are the only vertices of $G$ of degree 2, $w$ is of degree greater than $2$
	and so there exists a neighbour of $w,$ say $z,$ such that $wz\not\in E(P_{1})$.
	Observe that $K_{4}$-freeness of $G$ implies that $z$ is a vertex of $P_{1}'$
	(see leftmost figure in \autoref{exhibits}).
	But then $wz$ is an edge connecting non-consecutive vertices of $P_{1}$ 
	whose endpoints have smaller distance (in $P_{1}$) than that of $x,y$ 
	(see rightmost figure in \autoref{exhibits}), a 
	contradiction to the minimality of $P_{1}'$.
	This concludes the proof of Claim 1.
	
	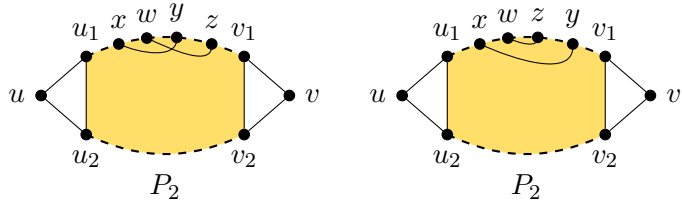
\begin{figure}[H]
		\centering
		\begin{tikzpicture}[myNode/.style = black node, scale=.6]
		
		\begin{scope}
		\draw (60:1) {} -- (180:0.5) {} -- (300:1) {} -- cycle;
		\node[myNode, label=above:$u_{1}$] (u1) at (60:1) {};
		\node[myNode, label=left:$u$] (u) at (180:0.5) {};
		\node[myNode, label=below:$u_{2}$] (u2) at (300:1) {};
		
		\begin{scope}[xshift=4.5cm]
		\node[myNode, label=right:$v$] (v) at (0:0.5) {};
		\node[myNode, label=above:$v_{1}$] (v1) at (120:1) {};
		\node[myNode, label=below:$v_{2}$] (v2) at (240:1) {};
		\draw[-] (v) -- (v1) -- (v2) -- (v);
		\end{scope}
		
		\begin{scope}[on background layer]
		\draw[dashed, thick, fill=mustard!90] (u1.center) to [out=25, in=155]  coordinate[pos=.2] (V1) 
		coordinate[pos=.38] (V2) 
		coordinate[pos=.5] (M1) 
		coordinate[pos=.575] (V3) 
		coordinate[pos=.8] (V4) 
		(v1.center);
		\draw[dashed, thick, fill=mustard!90] (u2.center) to [out=-25, in=-155]  coordinate[pos=.5] (M2) (v2.center);
		\filldraw[color=mustard!90, fill=mustard!90] (v2) rectangle (u1);
		\end{scope}
		
		\node[myNode, label=above:$x$] (k1) at (V1) {};
		\node[myNode, label=above:$w$] (k2) at (V2) {};
		\node[myNode, label=above:$y$] (k3) at (V3) {};
		\node[myNode, label=above:$z$] (k4) at (V4) {};
		
		\draw (k1.center) to [out=-25, in=-85]  (k3.center);
		\draw (k2.center) to [out=-25, in=-85]  (k4.center);
		
		%\node[label=above:$P_{1}$] () at (M1) {};
		\node[label=below:$P_{2}$] () at (M2) {};
		\end{scope}

		\begin{scope}[xshift=8cm]
		\draw (60:1) {} -- (180:0.5) {} -- (300:1) {} -- cycle;
		\node[myNode, label=above:$u_{1}$] (u1) at (60:1) {};
		\node[myNode, label=left:$u$] (u) at (180:0.5) {};
		\node[myNode, label=below:$u_{2}$] (u2) at (300:1) {};
		
		\begin{scope}[xshift=4.5cm]
		\node[myNode, label=right:$v$] (v) at (0:0.5) {};
		\node[myNode, label=above:$v_{1}$] (v1) at (120:1) {};
		\node[myNode, label=below:$v_{2}$] (v2) at (240:1) {};
		\draw[-] (v) -- (v1) -- (v2) -- (v);
		\end{scope}
		
		\begin{scope}[on background layer]
		\draw[dashed, thick, fill=mustard!90] (u1.center) to [out=25, in=155]  coordinate[pos=.2] (V1) 
		coordinate[pos=.38] (V2) 
		coordinate[pos=.5] (M1) 
		coordinate[pos=.575] (V3) 
		coordinate[pos=.8] (V4) 
		(v1.center);
		\draw[dashed, thick, fill=mustard!90] (u2.center) to [out=-25, in=-155]  coordinate[pos=.5] (M2) (v2.center);
		\filldraw[color=mustard!90, fill=mustard!90] (v2) rectangle (u1);
		\end{scope}
		
		\node[myNode, label=above:$x$] (k1) at (V1) {};
		\node[myNode, label=above:$w$] (k2) at (V2) {};
		\node[myNode, label=above:$z$] (k3) at (V3) {};
		\node[myNode, label=above:$y$] (k4) at (V4) {};
		
		\draw (k1.center) to [out=-25, in=-85]  (k4.center);
		\draw (k2.center) to [out=-25, in=-85]  (k3.center);
		
		%\node[label=above:$P_{1}$] () at (M1) {};
		\node[label=below:$P_{2}$] () at (M2) {};
		\end{scope}
		\end{tikzpicture}

		\vspace{-2mm}\caption{The chord $wz$ in the proof of Claim 1.}
		\label{exhibits}
	\end{figure}
	
	We now make a series of observations:
	
	\smallskip
	
	\noindent{\em Observation 1:} Every internal vertex of $P_{1}$ and $P_{2}$ is 
	incident to a chord. This follows immediately from \autoref{generals} and the
	fact that $u,v$ are the only vertices of degree $2$.
	
	\smallskip
	
	\noindent{\em Observation 2:} Every internal vertex of $P_{1}$ is adjacent to $u_{2}$ or
	$v_{2}$. Respectively, every internal vertex of $P_{2}$ is adjacent to $u_{1}$ or $v_{1}$.
	Indeed, if there exists an internal vertex $x$ of $P_{1}$ or $P_{2}$, say $P_{1}$, 
	not incident to $u_{2}$ or $v_{2}$ then by Observation 1 and Claim 1 $x$ is adjacent 
	to an internal vertex of $P_{2}$ and hence $\hyperref[straight]{{O}_{15}^{2}}\leq G$
	(as shown in \autoref{brawling}), a contradiction.

	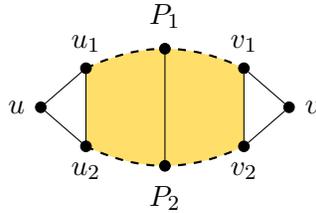
\begin{figure}[H]
		\centering
		\vspace{-0.5cm}
		\begin{tikzpicture}[myNode/.style = black node, scale=.6]
		
		\begin{scope}
		\draw (60:1) {} -- (180:0.5) {} -- (300:1) {} -- cycle;
		\node[myNode, label=above:$u_{1}$] (u1) at (60:1) {};
		\node[myNode, label=left:$u$] (u) at (180:0.5) {};
		\node[myNode, label=below:$u_{2}$] (u2) at (300:1) {};
		
		\begin{scope}[xshift=4.5cm]
		\node[myNode, label=right:$v$] (v) at (0:0.5) {};
		\node[myNode, label=above:$v_{1}$] (v1) at (120:1) {};
		\node[myNode, label=below:$v_{2}$] (v2) at (240:1) {};
		\draw[-] (v) -- (v1) -- (v2) -- (v);
		\end{scope}
		
		\begin{scope}[on background layer]
		\draw[dashed, thick, fill=mustard!90] (u1.center) to [out=25, in=155]  coordinate[pos=.5] (M1) coordinate[pos=.3] (L1) coordinate[pos=.7] (R1) (v1.center);
		\draw[dashed, thick, fill=mustard!90] (u2.center) to [out=-25, in=-155]  coordinate[pos=.5] (M2) (v2.center);
		\filldraw[color=mustard!90, fill=mustard!90] (v2) rectangle (u1);
		\end{scope}
		
		\draw (M1) node[myNode] {} -- (M2) node[myNode] {};
		
		\node[label=above:$P_{1}$] () at (M1) {};
		\node[label=below:$P_{2}$] () at (M2) {};
		
		\end{scope}
		
		\end{tikzpicture}
		\vspace{-2mm}\caption{A chord connecting two internal vertices of $P_{1},P_{2}$ in the proof of Observation 2 of Case 1.}
		\label{brawling}
	\end{figure}

	\noindent{\em Observation 3:} One of $P_{1}, P_{2}$ must be of length at most 1. Indeed, suppose
	to the contrary, that both $P_{1},P_{2}$ are of length at least $2$. Then, both $P_{1},P_{2}$
	contain an internal vertex, say $x_{1},x_{2}$, respectively.
	Then, by Observation 1, they are both incident to some chord of $C$.
	By Observation 2, $x_{1}$ is adjacent to $u_{2}$ or $v_{2}$, say $u_{2}$. Then, again by Observation
	2 and $K_{4}$-freeness  of $G$, $x_{2}$ is adjacent to $v_{1}$, as shown in \autoref{stifling}.
	But then, $\hyperref[magnetic]{{O}_{1}^{0}}\leq G$, a contradiction.
	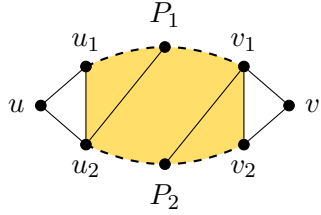
\begin{figure}[H]
		\centering
		\begin{tikzpicture}[myNode/.style = black node, scale=.6]
		
		\begin{scope}
		\draw (60:1) {} -- (180:0.5) {} -- (300:1) {} -- cycle;
		\node[myNode, label=above:$u_{1}$] (u1) at (60:1) {};
		\node[myNode, label=left:$u$] (u) at (180:0.5) {};
		\node[myNode, label=below:$u_{2}$] (u2) at (300:1) {};
		
		\begin{scope}[xshift=4.5cm]
		\node[myNode, label=right:$v$] (v) at (0:0.5) {};
		\node[myNode, label=above:$v_{1}$] (v1) at (120:1) {};
		\node[myNode, label=below:$v_{2}$] (v2) at (240:1) {};
		\draw[-] (v) -- (v1) -- (v2) -- (v);
		\end{scope}
		
		\begin{scope}[on background layer]
		\draw[dashed, thick, fill=mustard!90] (u1.center) to [out=25, in=155]  coordinate[pos=.5] (M1) coordinate[pos=.3] (L1) coordinate[pos=.7] (R1) (v1.center);
		\draw[dashed, thick, fill=mustard!90] (u2.center) to [out=-25, in=-155]  coordinate[pos=.5] (M2) (v2.center);
		\filldraw[color=mustard!90, fill=mustard!90] (v2) rectangle (u1);
		\end{scope}
		
		\draw (u2) -- (M1) node[myNode] {};
		\draw (v1) -- (M2) node[myNode] {};
		
		\node[label=above:$P_{1}$] () at (M1) {};
		\node[label=below:$P_{2}$] () at (M2) {};
		
		\end{scope}

		\end{tikzpicture}
		\vspace{-2mm}\caption{An example of the graph $G$ in the proof of Observation  3 of Case 1.}
		\label{stifling}
	\end{figure}

	\noindent{\em Observation 4:} Either $u_{1}$ or $v_{1}$ is adjacent to every internal vertex of
	$P_{2}$. Respectively, either $u_{2}$ or $v_{2}$ is adjacent to every internal vertex of $P_{1}$.
	Indeed, if otherwise, then Observations 1 and 2 imply, without loss of generality, that there exist
	$x,y\in V(P_{1})\setminus \{u_{1},v_{1}\}$ such that $x,y$ are adjacent to $u_{2},v_{2}$,
	respectively. Hence, $\hyperref[solution]{{O}_{13}^{2}}\leq G$ (see \autoref{murkiest}). 
	
	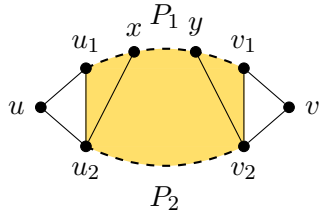
\begin{figure}[H]
		\centering
		\begin{tikzpicture}[myNode/.style = black node, scale=.6]
		
		\begin{scope}
		\draw (60:1) {} -- (180:0.5) {} -- (300:1) {} -- cycle;
		\node[myNode, label=above:$u_{1}$] (u1) at (60:1) {};
		\node[myNode, label=left:$u$] (u) at (180:0.5) {};
		\node[myNode, label=below:$u_{2}$] (u2) at (300:1) {};
		
		\begin{scope}[xshift=4.5cm]
		\node[myNode, label=right:$v$] (v) at (0:0.5) {};
		\node[myNode, label=above:$v_{1}$] (v1) at (120:1) {};
		\node[myNode, label=below:$v_{2}$] (v2) at (240:1) {};
		\draw[-] (v) -- (v1) -- (v2) -- (v);
		\end{scope}
		
		\begin{scope}[on background layer]
		\draw[dashed, thick, fill=mustard!90] (u1.center) to [out=25, in=155]  coordinate[pos=.5] (M1) coordinate[pos=.3] (L1) coordinate[pos=.7] (R1) (v1.center);
		\draw[dashed, thick, fill=mustard!90] (u2.center) to [out=-25, in=-155]  coordinate[pos=.5] (M2) (v2.center);
		\filldraw[color=mustard!90, fill=mustard!90] (v2) rectangle (u1);
		\end{scope}
		
		\draw (u2) -- (L1) node[myNode, label=above:$x$] {};
		\draw (v2) -- (R1) node[myNode, label=above:$y$] {};
		
		\node[label=above:$P_{1}$] () at (M1) {};
		\node[label=below:$P_{2}$] () at (M2) {};
		
		\end{scope}
		
		\end{tikzpicture}
		\vspace{-2mm}\caption{Two chords $x u_{2} , y v_{2} $ where $x,y$ are internal vertices of $P_{1}$ in Observation 4 of Case 1.}
		\label{murkiest}
	\end{figure}
	
	Now, by Observation 3, we may assume that $P_{2}$ is of length $j \leq 1.$
	Then, by Observation 4, $u_{2}$ or $v_{2}$, say $u_{2}$, is adjacent to every 
	internal vertex of $P_{1}$.
	This implies that every chord of $C,$ except for $v_{1} v_{2}$ (if $v_{2}\not=u_{2}$),
	is incident to $u_{2}$ and hence $u_{2}$ is a {${\cal P}$-apex} vertex of $G,$ a contradiction. This concludes Case 1.

	\medskip

	\noindent {\em Case  2:} G has exactly 3 vertices $u,v,w$ of degree 2.
	Note that, by \autoref{generals}, $u,v,w$ are simplicial.

	Note that if all three vertices have pairwise disjoint closed neighbourhoods,
	then $\hyperref[barracks]{{O}_{14}^{2}}\leq G$,  which is a contradiction. 
	Therefore, at least two of them, say $u$ and $v,$ have non-disjoint neighbourhoods.
	We argue that the following holds:
	
	\medskip
	
	\noindent{\em Claim 2:} $N_{G}(u) \cap N_{G}(v) = \{x\}$ for some $x \in V(G).$\medskip
	
%	\aliv{Isn't this too obvious?}\gstam{We better leave it here.}
	\noindent{\em Proof of Claim 2:}
	Since $u,v$ have non-disjoint neighbourhoods, then either $N_{G}(u) \cap N_{G}(v) = e$ for some
	edge $e \in E(G)$ or $N_{G}(u) \cap N_{G}(v) =\{x\}$ for some vertex $x \in V(G).$
	
	Suppose that $N_{G}(u) \cap N_{G}(v) = \{a,b\}$ for some edge $e = ab \in E(G)$
	and consider any two internally vertex disjoint paths
	(which exist due to biconnectivity of $G$) from $w$ to, say, $v.$ Observe that one of the paths
	contains $a$ and the other contains $b$. Therefore, $K_{2,3} \leq G,$ with $\{a,b\}$ forming
	one part of the $K_{2,3}$ minor and $\{u,v,w\}$ forming the other, a contradiction. Claim 2 follows.
	
	\medskip
	
	By Claim 2, $C\setminus \{u, v, w, x\}$ is the union of two vertex disjoint paths $R_{1}, R_{2}$.
	We can assume that $u$ has a neighbor $u_{1}$ in $R_{1}$, $v$ has a neighbor $v_{2}$ in $R_{2},$ and $N_{G}(w)=\{w_{1},w_{2}\},$ where $w_{1}\in R_{1}$ and $w_{2}\in R_{2}.$
	By arguments identical to the proof of Claim 1 in Case 1, we have that every chord of $C$ is between a vertex in $R_{1} \cup \{ x \}$ and a vertex in $R_{2} \cup \{ x \}$.
	Therefore, the structure of the graph $G$ is as follows (\autoref{overcome}):

	\begin{figure}[H]
		\centering
		\vspace{-.5cm}
		\begin{tikzpicture}[myNode/.style = black node, scale=.6]
		
		\begin{scope}
		\begin{scope}[yshift=0.8cm]
		\node[myNode, label=above:$u_{1}$] (u1) at (40:0.7) {};
		\node[myNode, label=left:$u$] (u) at (150:0.8) {};
		\node[myNode, label=left:$x$] (x) at (270:0.8) {};
		\draw (x) -- (u1) -- (u) -- (x);
		\end{scope}
		
		\begin{scope}[yshift=-0.8cm]
		\node[myNode, label=left:$v$] (v) at (210:0.8) {};
		\node[myNode, label=below:$v_{2}$] (v2) at (320:0.7) {};
		\draw (90:0.8) {} -- (v) {} -- (v2) {} -- (90:0.8);
		\end{scope}
		
		\begin{scope}[xshift=5cm]
		\node[myNode, label=right:$w$] (w) at (0:0.5) {};
		\node[myNode, label=above:$w_{1}$] (w1) at (120:1) {};
		\node[myNode, label=below:$w_{2}$] (w2) at (240:1) {};
		\draw (w) {} -- (w1) {} -- (w2) {} -- (w);
		\end{scope}
		
		\begin{scope}[on background layer]
		\draw[dashed, thick, fill=mustard!90] (u1.center) to [out=15, in=150]  coordinate[pos=.5] (M1) coordinate[pos=.3] (L1) coordinate[pos=.7] (R1) (w1.center);
		\draw[dashed, thick, fill=mustard!90] (v2.center) to [out=-15, in=-150]  coordinate[pos=.5] (M2) (w2.center);
		\filldraw[color=mustard!90, fill=mustard!90] (v2.center) -- (x.center) --  (u1.center) -- (w1.center) -- (w2.center) -- cycle;
		\end{scope}
		
		\node[label=above:$R_{1}$] () at (M1) {};
		\node[label=below:$R_{2}$] () at (M2) {};
		\end{scope}

		\end{tikzpicture}
		\vspace{-2mm}\caption{The structure of the graph $G$ in Case 2 of \autoref{personal}.}\label{overcome}
	\end{figure}
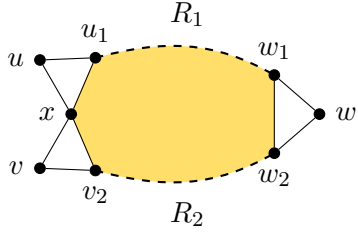

	We now observe the following about the paths $R_1,R_2$:\vspace{-2mm}
%	\gstam{Τα σχήματα εδώ προηγούνται του κειμένου που αναφέρονται. Να αλλάξουν θέση;}
	
	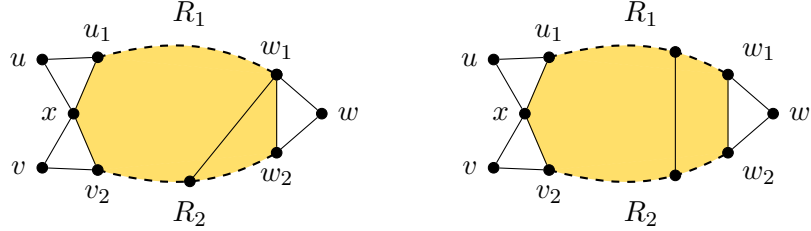
\begin{figure}[H]
		\centering
		\begin{tikzpicture}[myNode/.style = black node, scale=.6]
		
		\begin{scope}
		\begin{scope}[yshift=0.8cm]
		\node[myNode, label=above:$u_{1}$] (u1) at (40:0.7) {};
		\node[myNode, label=left:$u$] (u) at (150:0.8) {};
		\node[myNode, label=left:$x$] (x) at (270:0.8) {};
		\draw (x) -- (u1) -- (u) -- (x);
		\end{scope}
		
		\begin{scope}[yshift=-0.8cm]
		\node[myNode, label=left:$v$] (v) at (210:0.8) {};
		\node[myNode, label=below:$v_{2}$] (v2) at (320:0.7) {};
		\draw (90:0.8) {} -- (v) {} -- (v2) {} -- (90:0.8);
		\end{scope}
		
		\begin{scope}[xshift=5cm]
		\node[myNode, label=right:$w$] (w) at (0:0.5) {};
		\node[myNode, label=above:$w_{1}$] (w1) at (120:1) {};
		\node[myNode, label=below:$w_{2}$] (w2) at (240:1) {};
		\draw (w) {} -- (w1) {} -- (w2) {} -- (w);
		\end{scope}
		
		\begin{scope}[on background layer]
		\draw[dashed, thick, fill=mustard!90] (u1.center) to [out=15, in=150]  coordinate[pos=.5] (M1) coordinate[pos=.3] (L1) coordinate[pos=.7] (R1) (w1.center);
		\draw[dashed, thick, fill=mustard!90] (v2.center) to [out=-15, in=-150]  coordinate[pos=.5] (M2) (w2.center);
		\filldraw[color=mustard!90, fill=mustard!90] (v2.center) -- (x.center) --  (u1.center) -- (w1.center) -- (w2.center) -- cycle;
		\end{scope}
		
		\draw (w1) node[myNode] {} -- (M2) node[myNode] {};
		
		\node[label=above:$R_{1}$] () at (M1) {};
		\node[label=below:$R_{2}$] () at (M2) {};
		\end{scope}
		
		\begin{scope}[xshift=10cm]
		\begin{scope}[yshift=0.8cm]
		\node[myNode, label=above:$u_{1}$] (u1) at (40:0.7) {};
		\node[myNode, label=left:$u$] (u) at (150:0.8) {};
		\node[myNode, label=left:$x$] (x) at (270:0.8) {};
		\draw (x) -- (u1) -- (u) -- (x);
		\end{scope}
		
		\begin{scope}[yshift=-0.8cm]
		\node[myNode, label=left:$v$] (v) at (210:0.8) {};
		\node[myNode, label=below:$v_{2}$] (v2) at (320:0.7) {};
		\draw (90:0.8) {} -- (v) {} -- (v2) {} -- (90:0.8);
		\end{scope}
		
		\begin{scope}[xshift=5cm]
		\node[myNode, label=right:$w$] (w) at (0:0.5) {};
		\node[myNode, label=above right:$w_{1}$] (w1) at (120:1) {};
		\node[myNode, label=below right:$w_{2}$] (w2) at (240:1) {};
		\draw (w) {} -- (w1) {} -- (w2) {} -- (w);
		\end{scope}
		
		\begin{scope}[on background layer]
		\draw[dashed, thick, fill=mustard!90] (u1.center) to [out=15, in=150]  coordinate[pos=.5] (M1) coordinate[pos=.3] (L1) coordinate[pos=.7] (R1) (w1.center);
		\draw[dashed, thick, fill=mustard!90] (v2.center) to [out=-15, in=-150]  coordinate[pos=.5] (M2) coordinate[pos=.7] (R2) (w2.center);
		\filldraw[color=mustard!90, fill=mustard!90] (v2.center) -- (x.center) --  (u1.center) -- (w1.center) -- (w2.center) -- cycle;
		\end{scope}
		
		\draw (R1) node[myNode] {} -- (R2) node[myNode] {};
		
		\node[label=above:$R_{1}$] () at (M1) {};
		\node[label=below:$R_{2}$] () at (M2) {};
		\end{scope}
		
		\end{tikzpicture}
		\vspace{-2mm}\caption{The two configurations of the chord not incident to $x$ in the proof of Observation 5 of Case 2.}\label{rebutted}
	\end{figure}

	\noindent{\em Observation 5:} One of $R_{1},R_{2}$ is of length $0$. Indeed, suppose, towards a
	contradiction, that both $R_{1},R_{2}$ are of length at least $1$. Then,
	since $x$ is not a ${\cal P}$-apex vertex of $G$, there exists a chord $e$ between a vertex in $R_{1}$ and a vertex in $R_{2}$ such that
	$e\neq w_{1}w_{2}.$ Then, if $e$ is incident to $w_{1}w_{2}$ we have that $\hyperref[typifies]{{O}_{12}^{2}}\leq G,$ while if $e$ is disjoint from $w_{1}w_{2}$ we have that $\hyperref[uttering]{{O}_{7}^{2}}\leq G,$ a contradiction in both
	cases (see \autoref{rebutted}).\vspace{-4mm}

	\begin{figure}[H]
		\centering
		\begin{tikzpicture}[myNode/.style = black node, scale=.6]
		
		\begin{scope}
		\begin{scope}
		\node[myNode, label=below:$v$] (v) at (270:0.9) {};
		\node[myNode, label=220:$x$] (x) at (150:0.9) {};
		\draw (v) -- (x) -- (30:0.9) -- (v);
		\end{scope}
		
		\begin{scope}[shift=(30:1.8cm)]
		\node[myNode, label=330:$w_{2}$] (w2) at (210:0.9) {};
		\node[myNode, label=330:$w$] (w) at (330:0.9) {};
		\node[myNode, label=right:$w_{1}$] (w1) at (90:0.9) {};
		\draw (w) -- (w1) -- (w2) -- (w);
		\end{scope}
		
		\begin{scope}[shift=(150:1.8cm)]
		\node[myNode, label=left:$u_{1}$] (u1) at (90:0.9) {};
		\node[myNode, label=210:$u$] (u) at (210:0.9) {};
		\draw (330:0.9) -- (u) -- (u1) -- (330:0.9);
		\end{scope}
		
		\begin{scope}[on background layer]
		\draw[dashed, thick, fill=mustard!90] (u1.center) to [out=50, in=130]  coordinate[pos=.5] (M) coordinate[pos=.3] (L) coordinate[pos=.7] (R) (w1.center);
		\filldraw[color=mustard!90, fill=mustard!90] (x.center) -- (u1.center) --  (w1.center) -- (w2.center) -- cycle;
		\end{scope}
		
		\node[label=above:$R_{1}$] () at (M) {};
		
		\draw (w2) -- (R) node[myNode, label=60:$z$] {};
		\draw (x) -- (L) node[myNode, label=140:$y$] {};
		\end{scope}

		\end{tikzpicture}
		\vspace{-2mm}\caption{An example of the graph $G$ in the last part of the proof of \autoref{personal}.}
		\label{undercut}
	\end{figure}
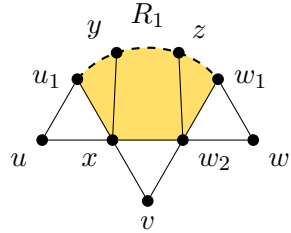

	By Observation 5, we can assume that $R_{2}$ is of length $0$, i.e. $v_{2}=w_{2}$. 
	Then, every chord of $C$, except for $xw_{2}$, is between a vertex of $R_{1}$ and a 
	vertex in $\{x,w_{2}\}$. Then, since $x$ and $w_{2}$ are not ${\cal P}$-apex 
	vertices of $G$ there exist $y,z$ (possibly with $y=z$)
	internal vertices of $R_{1}$ incident to $x$ and $w_{2},$ respectively.
	Hence, $\hyperref[solution]{{O}_{13}^{2}}\leq G$  (see \autoref{undercut}),
	a contradiction. The proof of \autoref{personal} is complete. 
\end{proof}

We are now in position to prove~\autoref{ruthless}.

\begin{proof}[{Proof} of~\autoref{ruthless}]
	As we mentioned in  the end of \autoref{writings}, $\obs({\cal A}_{1}({\cal P}))\supseteq  {\cal O}$ and what remains is 
	to prove that ${\cal O}\supseteq  \obs({\cal A}_{1}({\cal P}))$ or alternative that $\obs({\cal A}_{1}({\cal P}))\setminus {\cal O} = \emptyset.$
	For this assume, towards a contradiction, that there exists a graph $G\in  \obs({\cal A}_{1}({\cal P}))\setminus {\cal O}.$
	From \autoref{personal}, $G$ should be triconnected. Therefore, from \autoref{reliably}, either  ${{\cal O}^{3}}\leq G,$ a contradiction, or $G$ is  isomorphic to $W_{r},$ for some $r\geq 3,$ again a {contradiction}, as $W_{r}\in {\cal A}_{1}({\cal P})$ for all $r\geq 3$.
\end{proof}

%\bibliography{biblio}

\end{document}